\DeclareFontFamily{U} {MnSymbolC}{}
\DeclareFontShape{U}{MnSymbolC}{m}{n}{
	<-6> MnSymbolC5
	<6-7> MnSymbolC6
	<7-8> MnSymbolC7
	<8-9> MnSymbolC8
	<9-10> MnSymbolC9
	<10-12> MnSymbolC10
	<12-> MnSymbolC12}{}
\DeclareFontShape{U}{MnSymbolC}{b}{n}{
	<-6> MnSymbolC-Bold5
	<6-7> MnSymbolC-Bold6
	<7-8> MnSymbolC-Bold7
	<8-9> MnSymbolC-Bold8
	<9-10> MnSymbolC-Bold9
	<10-12> MnSymbolC-Bold10
	<12-> MnSymbolC-Bold12}{}
\DeclareSymbolFont{MnSyC} {U} {MnSymbolC}{m}{n}
\DeclareMathSymbol{\filleddiamond}{\mathbin}{MnSyC}{109}
\tikzset{+ /.tip = {Turned Square}}
\newtheorem{defn}{Definition}[section]
\newtheorem{exmp}[defn]{Example}
\newtheorem{lem}[defn]{Lemma}
\newtheorem{prop}[defn]{Proposition}
\newtheorem{thm}[defn]{Theorem}
\newtheorem{cor}[defn]{Corollary}
\newtheorem{dec}[defn]{Decision problem}
\newcommand\restr[2]{{
		\left.\kern-\nulldelimiterspace 
		#1 
		\vphantom{\big|} 
		\right|_{#2} 
	}}
\newcommand{\an}{\mathrm{an}}
\newcommand{\con}{\mathrm{con}}
\newcommand{\disjU}{\mathbin{\dot{\cup}}}
\newcommand{\dtr}{\mathrm{dtr}}
\newcommand{\id}{\mathrm{in}}
\newcommand{\od}{\mathrm{out}}
\newcommand{\starrightarrow}{\ *\!\!\rightarrow}
\newcommand{\musep}[3]{#1 \perp_\mu #2 \mid #3}
\newcommand{\musepG}[4]{#1 \perp_\mu #2 \mid #3\ [#4]}
\DeclareRobustCommand{\allEdges}{%
	\mathrel{%
		\text{%
			\ooalign{$\filleddiamond\!\!\!\!\;-\!\!\!\!\;\filleddiamond$}%
		}%
	}%
}
\title{Weak equivalence of local independence graphs}
\author{Søren Wengel Mogensen}
\date{\vspace*{-.3cm} \normalsize \it Department of Automatic Control, Lund 
University}
\begin{document}
	\maketitle
	
	\begin{abstract}
		Classical graphical modeling of multivariate random vectors uses graphs 
		to encode conditional independence. In graphical modeling of 
		multivariate stochastic processes, graphs may encode so-called local 
		independence 
		analogously. If some coordinate processes of the multivariate 
		stochastic process are unobserved, the 
		local independence graph of the observed coordinate processes is a 
		directed mixed graph (DMG). Two DMGs may 
		encode the same local independences in which case we say that they are 
		Markov 
		equivalent.
		
		Markov equivalence is a central notion in graphical modeling. We show 
		that deciding Markov equivalence of DMGs is coNP-complete, even under a 
		sparsity assumption. As a remedy, we introduce a collection of 
		equivalence relations on DMGs that are all less granular 
		than Markov 
		equivalence and we say that they are weak equivalence relations. This 
		leads to feasible algorithms for naturally occurring computational 
		problems related to weak equivalence of DMGs. The equivalence classes 
		of a weak equivalence 
		relation have attractive properties. In particular, each equivalence 
		class has a greatest element which leads to a concise representation of 
		an equivalence class. Moreover, these equivalence relations define a 
		hierarchy of granularity in the graphical modeling which leads to 
		simple and interpretable connections between equivalence relations 
		corresponding to different levels of granularity.
	\end{abstract}

\section{Introduction}

The distribution of a multivariate random vector, $(X^\alpha)_{\alpha\in V}$, 
induces an \emph{independence model}, $\mathcal{I}$, which is simply the 
collection of triples, 
$(A,B,C)$, such that $X^A$ and $X^B$ are conditionally independent given $X^C$. 
Graphs are often used as convenient representations of such independence models 
\citep{lauritzen1996, maathuis2018}. The graphical theory reflects the fact 
that conditional independence is \emph{symmetric} in $A$ and $B$, i.e., 
$(A,B,C) \in \mathcal{I}$ if and only if $(B,A,C)\in\mathcal{I}$. In graphical 
modeling of 
multivariate stochastic 
processes, it is 
useful to apply a notion of independence that distinguishes between past and 
present and for this purpose several authors have used \emph{local 
independence},
analogously to how conditional independence is used in classical graphical 
modeling. However, local independence is not symmetric in the above sense and 
its graphical representation therefore requires a specialized framework. Local 
independence was first introduced by \cite{schweder1970} in 
composable Markov processes and 
later studied by \cite{aalen1987} in a broader class of stochastic processes. 
\cite{didelez2000,didelez2008} described 
graphical modeling of marked point processes based on local independence and 
\cite{mogensenUAI2018} extended this theory to It\^o processes.

Graphs are said to be \emph{Markov equivalent} if they represent the same 
independences, i.e., if they are indistinguishable when observing only the 
induced independences. Several characterizations of Markov equivalence are 
available in different classes of graphs representing classical conditional 
independence \citep{frydenberg1990, verma1990equi, 
spirtes1992equivalence,
andersson1997characterization,andersson1997markov,richardson1997,
andersson2001alternative,zhao2005,zhang2007characterization, ali2009}. 
\cite{Mogensen2020a} used 
\emph{directed mixed graphs} as representations of local independences 
in partially observed stochastic processes and they characterized 
Markov 
equivalence in this class of graphs by proving that each equivalence class 
contains a greatest element. Their equivalence result also 
provided a simple approach to visualizing and understanding an entire 
equivalence class. \cite{Mogensen2020b} characterized Markov equivalence of 
\emph{directed correlation graphs} representing local independence in the 
presence of correlated noise processes. Recent work 
studied local independence testing 
in point
processes \citep{thams2021local} and \cite{christgau2022nonparametric} 
described 
nonparametric tests of local independence. It is 
worth noting that local independence is a continuous-time version of 
discrete-time 
Granger causality which has been used in graphical models of time series
\citep{eichler2007, eichler2010, eichler2012, eichler2013}. The graphical 
theory of directed mixed graphs and the results
in this paper may be applied in both continuous-time and discrete-time 
stochastic processes \citep[supplementary 
material]{Mogensen2020a}. 

In graphs representing classical 
conditional independence, several characterizations of Markov equivalence lead 
to 
polynomial-time 
algorithms for deciding Markov equivalence \citep[e.g.,][]{richardson1997, 
ali2009}. In the local independence framework, \cite{Mogensen2020b} proved 
that
deciding Markov equivalence of two directed correlation graphs is coNP-complete 
which 
means that we should not expect to find a polynomial-time algorithm in this 
case. 
In this paper, we show that deciding Markov equivalence of directed mixed 
graphs is also coNP-complete. We 
further show that assuming sparsity of the directed mixed graphs does not 
generally remedy this. Our results imply that several computational problems 
that occur naturally when using directed mixed graphs are also computationally 
hard. For this reason, Markov 
equivalence in partially observed local independence graphs may not always be a 
practical notion. 
Instead, we introduce a class of \emph{weak equivalence relations} between 
local independence graphs. We characterize the corresponding equivalence 
classes 
and show that they too contain a greatest element. \cite{Mogensen2020a} argued 
that the existence of a greatest element leads to a straightforward Markov 
equivalence theory. We extend this theory to the more general weak 
equivalences studied in this paper. This allows a simple
representation 
of weak equivalence classes. A subset of the weak 
equivalence relations may be understood as creating a \emph{hierarchy} of 
equivalence 
relations in which a parameter, $k$, creates a trade-off between the size of 
the equivalence classes and the computational complexity, 
leading to a graphical theory which is both useful and practical. This 
hierarchy also illustrates interpretable connections between equivalence 
classes across different values of $k$.

The paper is structured in the following way. In Section \ref{sec:graphs}, we 
introduce necessary terminology and notation. We also describe \emph{global 
Markov properties} that connect so-called \emph{$\mu$-separation} in graphs to 
local 
independence and provide justification for using graphs as representations of 
local 
independence. Moreover, we give an example to illustrate the framework 
and 
purpose of the paper. In Section \ref{sec:hard}, we prove that deciding Markov 
equivalence of directed mixed graphs is computationally hard, even under 
sparsity restrictions, and we discuss the implications of this result. In 
Section 
\ref{sec:we}, we introduce the notion of \emph{weak equivalence} of graphs. We 
describe its properties and compare it with Markov equivalence. Section 
\ref{sec:greatElemWEqui} proves that, under a regularity condition, every weak 
equivalence class has a greatest element. Using the main result from the 
previous section, Section \ref{sec:representkWeak} first describes a graph 
which concisely represents an entire equivalence class. It then describes a 
\emph{hierarchy} of certain weak equivalence classes and how they represent 
different levels of granularity in their description of the underlying graphs. 
Section \ref{sec:algo} discusses algorithmic aspects of weak equivalence, and 
in Section \ref{sec:learn} we briefly outline how results from the previous 
sections relate to graphical structure learning. Section \ref{sec:discuss} 
provides a discussion of the results.

\section{Local independence and graphs}
\label{sec:graphs}

The interest in $\mu$-separation arises from its connection to local 
independence as formalized through various \emph{global Markov properties}. We 
start by defining local independence following the exposition in 
\cite{christgau2022nonparametric}. We will give the definition for counting 
processes, though, it can be extended to other classes of stochastic processes 
\citep{didelez2008, mogensenUAI2018, Mogensen2020b}. 

We consider a multivariate counting processes, $N_t = (N_t^1,\ldots,N_t^n)$, on 
a probability space, $(\Omega, \mathbb{F}, P)$, and we assume that $N_t$ is 
observed over some interval $[0,T]$. We let $V$ denote the set 
$\{1,2,\ldots,n\}$. We use $\mathcal{F}_t^D$ to 
denote the right-continuous and complete filtration generated by $N_t^D = 
(N_t^\alpha: \alpha \in D)$. One can think of $\mathcal{F}_t^D$ as consisting 
of the information in the coordinate processes in $D\subseteq V$ up until time 
point $t$. 
For $\beta\in V$ and $C\subseteq V$, we assume that 
$N_t^\beta$ has a $\mathcal{F}_t^C$-intensity, $\lambda_t^{\beta,C}$. The 
stochastic process
$\lambda_t^{\beta,C}$ is $\mathcal{F}_t^C$-predictable and 
$N_t^\beta - \int_{0}^{t} \lambda_s^{\beta,C} \mathrm{d}s$ is a local 
$\mathcal{F}_t^C$-martingale.

\begin{defn}[Local independence]
	Let $\alpha,\beta\in V$ and let $C\subseteq V$. We say that $N_t^\beta$ is 
	\emph{locally independent of $N_t^\alpha$ given $N_t^C$} (or simply, that 
	\emph{$\beta$ 
	is locally independent of $\alpha$ given $C$}) if the local 
	$\mathcal{F}_t^C$-martingale as defined above is also a local 
	$\mathcal{F}_t^{C\cup\{ 
	\alpha\} }$-martingale. For $A,B,C\subseteq V$, we say that $B$ is locally 
	independent of $A$ given $C$ if $\beta$ is locally independent of $\alpha$ 
	given $C$ for all $\alpha\in A$ and $\beta\in B$, and we denote this by 
	$A\not\rightarrow B\mid C$.
	\label{def:li}
\end{defn}

\cite{christgau2022nonparametric} use the term \emph{conditional local 
independence} instead of local independence which highlights the fact that 
Definition \ref{def:li} is analogous to classical conditional independence of 
random variables. Intuitively, when $\beta$ is locally independent of $\alpha$ 
given $C$, observation of the $\alpha$-process over the interval $[0,t]$ does 
not provide additional information other than that contained in 
$\mathcal{F}_{t-}^C$ when trying to predict if there will be an event in 
process $\beta$ in the interval $[t,t+\mathrm{d}t)$.

Local independence was first used by \cite{schweder1970} in composable Markov 
processes and later studied by \cite{aalen1987}. \cite{didelez2000,didelez2008} 
described graphical modeling based on local independence. Other work on local 
independence Markov properties go into more detail 
\citep{didelez2000,didelez2008,mogensenUAI2018,Mogensen2020b}.

\begin{defn}[Local independence graph]
	We consider a multivariate counting process, $N_t = (N_t^1,\ldots,N_t^n)$, 
	as above, $V=\{1,\ldots,n\}$. Its \emph{local independence graph} is the 
	directed graph, $\mathcal{D}$, on nodes $V$ such that
	
	$$
	\alpha\not\rightarrow\beta \text{ in }\mathcal{D} \Leftrightarrow 
	\alpha\not\rightarrow\beta\mid V\setminus \{\alpha\}
	$$
	
	\noindent for $\alpha,\beta\in V$ where $\alpha\not\rightarrow\beta$ 
	indicates the absence of the directed edge from $\alpha$ to $\beta$.
	\label{def:liGraph}
\end{defn}

The statement $\{\alpha\}\not\rightarrow\{\beta\}\mid V\setminus \{\alpha\}$ 
denotes that 
$\beta$ is locally independent of $\alpha$ given $V\setminus \{\alpha\}$, and 
above we have simply written the singletons $\{\alpha\}$ and $\{\beta\}$ as 
$\alpha$ and $\beta$, respectively. The 
implication from left to right in Definition \ref{def:liGraph} is known as the 
\emph{pairwise Markov 
property}. When this property holds, we see that the absence of an edge implies 
a local independence. The \emph{global Markov property} allows one to read off 
more general local independences from a local independence graph using 
$\delta$- or $\mu$-separation 
(Definition \ref{def:muSep}). This is similar to other classes of graphical 
models \citep{maathuis2018}. Several results state conditions 
for the equivalence of pairwise and global Markov properties 
\citep{didelez2008,mogensenUAI2018}. 

Local independence is a continuous-time analogue of Granger causality in 
discrete-time stochastic processes. The results of this paper also applies to 
Granger-causal graphs, see, e.g., the supplementary material of 
\cite{Mogensen2020a} and \cite{eichlerGranger2007}.


\subsection{Alarm network}
\label{ssec:alarm1}

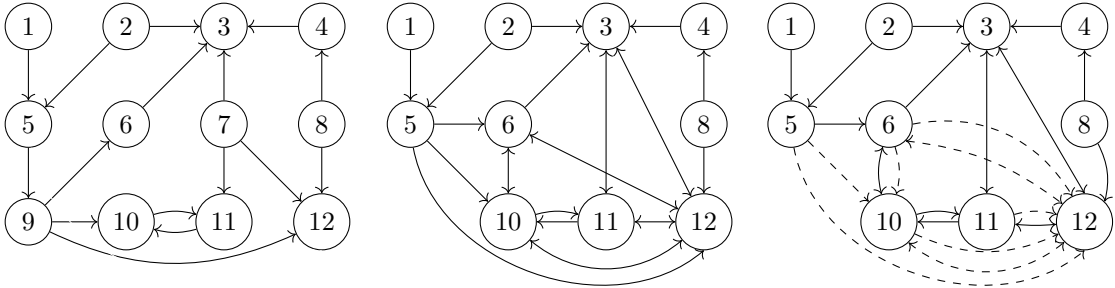
\begin{figure}[h]
	\begin{subfigure}{.32\textwidth}
		\centering
		\begin{tikzpicture}[scale = .65]
		\node[shape=circle,draw=black] (a) at (0,0) {$1$};
		\node[shape=circle,draw=black] (b) at (2,0) 
		{$2$};
		\node[shape=circle,draw=black] (c) at (4,0) {$3$};
		\node[shape=circle,draw=black] (d) at (6,0) {$4$};
		\node[shape=circle,draw=black] (e) at (0,-2) {$5$};
		\node[shape=circle,draw=black] (f) at (2,-2) 
		{$6$};
		\node[shape=circle,draw=black] (g) at (4,-2) {$7$};
		\node[shape=circle,draw=black] (h) at (6,-2) {$8$};
		\node[shape=circle,draw=black] (i) at (0,-4) {$9$};
		\node[shape=circle,draw=black] (j) at (2,-4) 
		{$10$};
		\node[shape=circle,draw=black] (k) at (4,-4) {$11$};
		\node[shape=circle,draw=black] (l) at (6,-4) {$12$};

		\path 
		[->](a) edge [bend left = 0] node {} (e)
		[->](b) edge [bend left = 0] node {} (e)
		[->](b) edge [bend left = 0] node {} (c)
		[->](d) edge [bend left = 0] node {} (c)
		[->](e) edge [bend left = 0] node {} (i)
		[->](f) edge [bend left = 0] node {} (c)
		[->](g) edge [bend left = 0] node {} (c)
		[->](g) edge [bend left = 0] node {} (k)
		[->](g) edge [bend left = 0] node {} (l)
		[->](h) edge [bend left = 0] node {} (d)
		[->](h) edge [bend left = 0] node {} (l)
		[->](i) edge [bend left = 0] node {} (f)
		[->](i) edge [bend left = 0] node {} (j)
		[->](i) edge [bend left = -25] node {} (l)
		[->](j) edge [bend left = 15] node {} (k)
		[->](k) edge [bend left = 15] node {} (j)
		[->](e) edge [bend left = -60, white] node {} (l.south);
		\end{tikzpicture}
	\end{subfigure}\hfill
	\begin{subfigure}{.32\textwidth}
		\centering
		\begin{tikzpicture}[scale = .65]
		\node[shape=circle,draw=black] (a) at (0,0) {$1$};
		\node[shape=circle,draw=black] (b) at (2,0) 
		{$2$};
		\node[shape=circle,draw=black] (c) at (4,0) {$3$};
		\node[shape=circle,draw=black] (d) at (6,0) {$4$};
		\node[shape=circle,draw=black] (e) at (0,-2) {$5$};
		\node[shape=circle,draw=black] (f) at (2,-2) 
		{$6$};
		\node[shape=circle,draw=black] (h) at (6,-2) {$8$};
		\node[shape=circle,draw=black] (j) at (2,-4) 
		{$10$};
		\node[shape=circle,draw=black] (k) at (4,-4) {$11$};
		\node[shape=circle,draw=black] (l) at (6,-4) {$12$};

		\path 
		[->](a) edge [bend left = 0] node {} (e)
		[->](b) edge [bend left = 0] node {} (e)
		[->](b) edge [bend left = 0] node {} (c)
		[->](d) edge [bend left = 0] node {} (c)
		[->](e) edge [bend left = 0] node {} (f)
		[->](e) edge [bend left = 0] node {} (j)
		[->](e) edge [bend left = -60] node {} (l.south)
		[->](f) edge [bend left = 0] node {} (c)
		[->](h) edge [bend left = 0] node {} (d)
		[->](h) edge [bend left = 0] node {} (l)
		[->](j) edge [bend left = 15] node {} (k)
		[->](k) edge [bend left = 0] node {} (j);
		
		\path
		[<->](c) edge [bend left = 0] node {} (k)
		[<->](c) edge [bend left = 0] node {} (l)
		[<->](f) edge [bend left = 0] node {} (j)
		[<->](f) edge [bend left = 0] node {} (l)
		[<->](j) edge [bend left = -40] node {} (l)
		[<->](k) edge [bend left = 0] node {} (l);
		\end{tikzpicture}
	\end{subfigure}\hfill
	\begin{subfigure}{.32\textwidth}
		\centering
		\begin{tikzpicture}[scale = .65]
		\node[shape=circle,draw=black] (a) at (0,0) {$1$};
		\node[shape=circle,draw=black] (b) at (2,0) 
		{$2$};
		\node[shape=circle,draw=black] (c) at (4,0) {$3$};
		\node[shape=circle,draw=black] (d) at (6,0) {$4$};
		\node[shape=circle,draw=black] (e) at (0,-2) {$5$};
		\node[shape=circle,draw=black] (f) at (2,-2) 
		{$6$};
		\node[shape=circle,draw=black] (h) at (6,-2) {$8$};
		\node[shape=circle,draw=black] (j) at (2,-4) 
		{$10$};
		\node[shape=circle,draw=black] (k) at (4,-4) {$11$};
		\node[shape=circle,draw=black] (l) at (6,-4) {$12$};

		\path 
		[->](a) edge [bend left = 0] node {} (e)
		[->](b) edge [bend left = 0] node {} (e)
		[->](b) edge [bend left = 0] node {} (c)
		[->](d) edge [bend left = 0] node {} (c)
		[->](e) edge [bend left = 0] node {} (f)
		[->](e) edge [bend left = 0, dashed] node {} (j)
		[->](e) edge [bend left = -60, dashed] node {} (l.south)
		[->](f) edge [bend left = 0] node {} (c)
		[->](h) edge [bend left = 0] node {} (d)
		[->](h) edge [bend left = 20] node {} (l.north east)
		[->](j) edge [bend left = 15] node {} (k)
		[->](k) edge [bend left = 0] node {} (j)
		[->](f.east) edge [bend left = 35, dashed] node {} (l)
		[->](f) edge [bend left = 15, dashed] node {} (j)
		[->](j) edge [bend left = -25, dashed] node {} (l)
		[->](k) edge [bend left = 15, dashed] node {} (l);
		
		\path
		[<->](c) edge [bend left = 0] node {} (k)
		[<->](c) edge [bend left = 0] node {} (l.north)
		[<->](f) edge [bend left = -15] node {} (j)
		[<->](f.south east) edge [bend left = 15, dashed] node {} (l)
		[<->](j) edge [bend left = -40, dashed] node {} (l.south west)
		[<->](k) edge [bend left = -5] node {} (l);
		\end{tikzpicture}
	\end{subfigure}
	\vspace*{-.7cm}
	\caption{Graphs from the example in
		Subsection \ref{ssec:alarm1}. Loops are omitted from the 
		visualization. Left: underlying local independence graph 
		(directed graph, $\mathcal{D}$,) representing the alarm 
		network. Middle: latent projection, $\mathcal{G}$, of the 
		graph $\mathcal{D}$ when processes $7$ and $9$ are 
		unobserved. The graph $\mathcal{G}$ is a \emph{directed mixed 
			graph} and it represents the partially observed alarm network. 
		The bidirected edges, e.g., $6\leftrightarrow 12$, 
		represent correlation mediated by unobserved nodes, e.g., 
		$6\leftarrow 9 \rightarrow 12$. Unobserved directed paths 
		may create new edges, e.g., $5\rightarrow 6$ in 
		$\mathcal{G}$ corresponds to $5\rightarrow 9 \rightarrow 6$ 
		in $\mathcal{D}$. Right: the directed mixed equivalence 
		graph of the Markov equivalence class containing $\mathcal{G}$.}
	\label{fig:alarm1}
\end{figure}

We describe an example application based on modeling how alarms propagate 
through a complex industrial system. Example data is in Figure 
\ref{fig:localIndep}. In this industrial system, a number of 
\emph{process variables} 
(e.g., temperatures and pressures) are measured repeatedly. Each process 
variable corresponds to an \emph{alarm process}, and if a measured process is 
outside the normal 
range of operations an event occurs in the corresponding alarm process. The 
stochastic system is described by a 
$12$-dimensional counting process, $N_t^V$,

\begin{align*}
	V = 
	\{\mathrm{A1},\mathrm{A2},\mathrm{A3},\mathrm{A4},\mathrm{A5},\mathrm{A6},
	\mathrm{A7},\mathrm{A8},\mathrm{A9},\mathrm{A10},\mathrm{H},\mathrm{E}\},
\end{align*} 

\noindent observed over the interval $[0,1]$. The 
coordinate processes in $V\setminus \{ E\}$ are alarm 
	processes. Process $\mathrm{E}$ represents exogenous 
events 
that feed into the system, e.g., changes in operating conditions, and this 
process is unobserved. Process $\mathrm{H}$ is an alarm process, but 
unavailable for 
some reason, and the observed processes are those in $V\setminus \{E,H\}$. 	
We assume that $\mathcal{D}$ is a local independence graph in the sense 
of Definition \ref{def:liGraph}. Under some regularity conditions, this 
implies that the global Markov property is satisfied in this graph
\citep{didelez2008} and therefore $\mu$-separation (Definition \ref{def:muSep}) 
in the graph implies local independence.

The graph $\mathcal{G}$ in Figure \ref{fig:alarm1} (the \emph{latent 
	projection} of 
$\mathcal{D}$, see Section \ref{app:margin}) represents the observable 
local 
independences in the sense that for $A,B,C \subseteq V\setminus \{E,H\}$ it 
holds that $B$ is $\mu$-separated from $A$ given $C$ in $\mathcal{D}$ if 
and only if $B$ is $\mu$-separated from $A$ given $C$ in $\mathcal{G}$. The 
underlying graph of the full system, $\mathcal{D}$, is a \emph{directed graph} 
while the 
latent projection is a \emph{directed mixed graph}. In general, this larger 
class 
of graphs is needed to represent the local independences of partially 
observed multivariate stochastic processes.

Local independence asks the following question. If we are to predict if 
processes $B$ will have an event in the immediate future and we have the 
information in the past of processes $C$ will the information in the past of 
proesses $A$ add anything? This is illustrated visually in Figure 
\ref{fig:localIndep} with $A = \{\mathrm{A1}\}$, $B = \{ \mathrm{A3}\}$, and $C 
= \{\mathrm{A2},\mathrm{A6}\}$. In this specific example, $\{ \mathrm{A3}\}$ is 
$\mu$-separated from $\{ \mathrm{A1}\}$ given $ \{\mathrm{A2},\mathrm{A6}\}$ in 
$\mathcal{G}$ and under the global Markov property this implies that the 
corresponding local independence holds. Therefore, the information in the past 
of 
process $ \{\mathrm{A1}\}$ is superfluous when already accounting for the 
information in the past of processes $ \{\mathrm{A2},\mathrm{A6}\}$.

Several directed mixed graphs may induce the same $\mu$-separations which means 
that they represent the same local independences. In this case, we say that 
they 
are \emph{Markov equivalent}. The graph on the right in Figure \ref{fig:alarm1} 
is the \emph{directed mixed equivalence graph} of $\mathcal{G}$. It represents 
the entire Markov equivalence class by indicating if an edge is in every Markov 
equivalent graph (solid), in no Markov equivalent graph (absent), or in only 
some Markov equivalent graphs (dashed). This is a useful representation, but 
it may not be a practical one for all applications as it leads to 
computationally hard problems. In this paper, we trade away some of the 
expressive power of Markov equivalence to obtain a more feasible notion of 
equivalence and we show that weaker notions of equivalence remain easily 
interpretable.

\begin{figure}
	\includegraphics[width=1\textwidth]{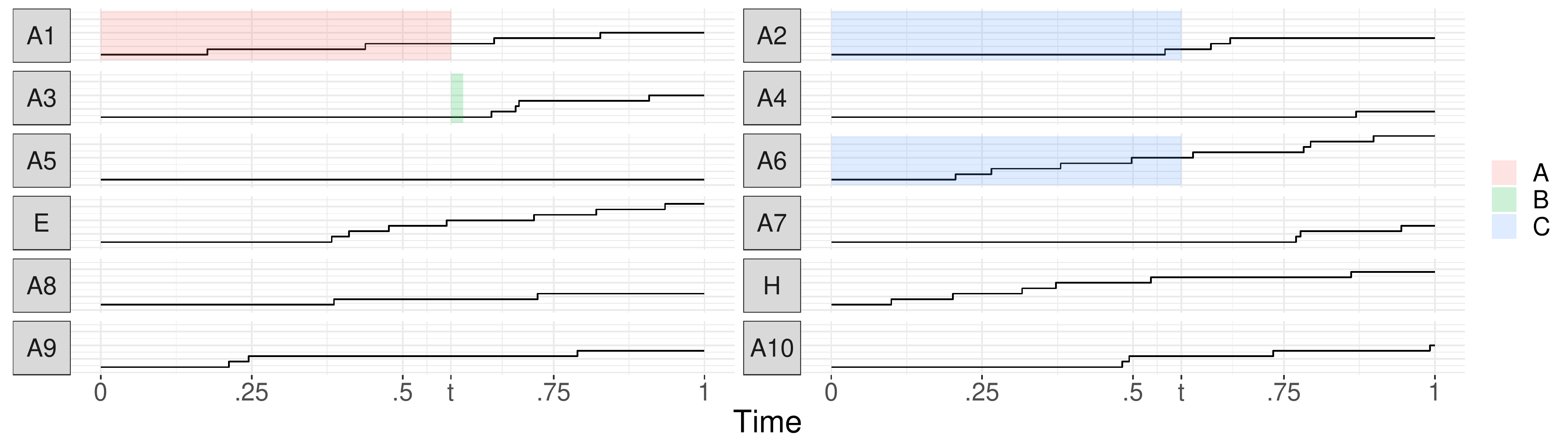}
	\caption{Example data from the alarm network in Subsection 
	\ref{ssec:alarm1}. Each jump of a step 
	function corresponds to an event in the corresponding counting process. 
	Under the global Markov property, the 
	local independence graph $\mathcal{D}$ in Figure \ref{fig:alarm1} implies 
	that $\mathrm{A3}$ ($B$) is locally independent of $\mathrm{A1}$ ($A$) 
	given $\{\mathrm{A2},\mathrm{A6}\}$ ($C$). 
	The meaning of this is illustrated in the above plot. We are trying to 
	predict if there is an event in process $\mathrm{A3}$ in the 
	infinitesimal 
	interval $[t, t+\mathrm{d}t)$ ($B$, green), and we are asking if the 
	information 
	in the past of process $\mathrm{A1}$ ($A$, red) adds anything when 
	accounting 
	for the past of processes $\{\mathrm{A2},\mathrm{A6}\}$ ($C$, blue).}
	\label{fig:localIndep}
\end{figure}


\subsection{Graphs}
\label{ssec:graphs}
A \emph{graph} is a pair $(V,E)$ where $V$ is a finite \emph{node set} and $E$ 
is an \emph{edge set}. The 
edge set $E$ is a disjoint union, $E = E_d \disjU E_b$, where $E_d$ is a set of 
ordered pairs, corresponding to directed edges, $\rightarrow$, and $E_b$ is a 
set of unordered 
pairs, corresponding to bidirected edges, $\leftrightarrow$. We 
use 
$\alpha\leftrightarrow_\mathcal{G} \beta$ to 
denote that there is a bidirected edge between 
$\alpha$ and $\beta$ in the 
graph $\mathcal{G}$, or just $\alpha\leftrightarrow \beta$ when it is clear 
from the context to which graph the statement refers, and we use 
$\alpha\rightarrow_\mathcal{G}$ and $\alpha\rightarrow\beta$ analogously. The 
definition of the node set implies that we 
allow multiple edges between a pair of nodes, however, the edges 
between two nodes $\alpha$ and $\beta$ is always a subset of 
$\{\alpha\rightarrow\beta, \alpha\leftarrow\beta, \alpha\leftrightarrow\beta 
\}$. Moreover, $\alpha\leftrightarrow\beta$ and $\beta\leftrightarrow\alpha$ 
are equivalent while $\alpha\rightarrow\beta$ and $\alpha\leftarrow\beta$ are 
different edges. We emphasize that the edge $\alpha\leftrightarrow\beta$ is not 
shorthand for the two edges $\alpha\rightarrow\beta$ and 
$\alpha\leftrightarrow\beta$, and the meaning of the bidirected edge is 
different from that of the two directed edges. This will be clear from 
subsequent definitions. 

We use $\alpha\sim\beta$ to 
denote a generic edge of either type between $\alpha$ and $\beta$, and we say 
that $\alpha$ and $\beta$ are \emph{adjacent} in $\mathcal{G}$ when there 
exists an edge between them, $\alpha\sim\beta$. When there 
are multiple 
nodes on each side of the edge, 
$\alpha_1,\ldots,\alpha_k \sim 
\beta_1,\ldots,\beta_l$, this means that $\alpha_i \sim \beta_j$ for all 
$i=1,\ldots,k$ and $j=1,\ldots,l$. We separate such statements by semicolons, 
$\alpha_1,\ldots,\alpha_k \sim \beta_1,\ldots,\beta_l$; 
$\gamma_1,\ldots,\gamma_r \sim \delta_1,\ldots,\delta_s$. We use 
$\alpha\starrightarrow\beta$ 
to mean 
that $\alpha\rightarrow\beta$ or $\alpha\leftrightarrow\beta$. We say that 
edges $\alpha\rightarrow\beta$ and $\alpha\leftrightarrow\beta$ have a 
\emph{head} at $\beta$, and that $\alpha\rightarrow\beta$ has a \emph{tail} at 
$\alpha$. If an edge $e$ 
is between $\alpha$ and 
$\beta$ and $\alpha=\beta$, we say that $e$ is a \emph{loop}.

We use $V$ as a generic node set and let $n$ denote the 
cardinality of $V$, $n = \vert V 
\vert$. The graphs described above are \emph{directed mixed graphs} as 
formalized in the next definition.

\begin{defn}[Directed mixed graph (DMG)]
	We say that $\mathcal{G}=(V,E)$ is a \emph{directed mixed 
	graph} if its edge set, $E$, consists of directed and bidirected edges.
\label{def:DMG}
\end{defn}

We say that a DMG is a \emph{directed graph} (DG) if it has no bidirected 
edges. A \emph{walk} between $\gamma_1$ and $\gamma_{l+1}$ is an alternating 
sequence 
of nodes, $\gamma_1,\ldots,\gamma_{l+1}$ and edges $\sim_1,\ldots,\sim_{l}$

$$
\gamma_1 \sim_1 \gamma_2 \sim_2 \ldots \sim_{l} \gamma_{l+1}
$$

\noindent such that for each $i = 1,\ldots,l$, $\sim_i$ is between $\gamma_i$ 
and $\gamma_{i+1}$. Let $e_i$ denote the edge $\sim_i$ above. We will sometimes 
write a walk as $(\gamma_1,e_1,\gamma_2,\ldots,e_l,\gamma_{l+1})$. A walk also 
specifies an \emph{orientation} for each edge 
as one can otherwise not distingush between $\alpha \leftarrow \alpha$ and 
$\alpha\rightarrow\alpha$. We say that $\gamma_i$, $1<i<l+1$, is a 
\emph{collider} if $\sim_{i-1}$ and $\sim_{i}$ both have head at $\gamma_i$. 
Otherwise, we say that it is a 
\emph{noncollider}. A node may be repeated on a walk, $\gamma_i = \gamma_j$, 
$i\neq j$, and may therefore occur both as a collider and as a noncollider on 
the same walk. Thus, the property of being a collider/noncollider pertains to 
the specific \emph{instance} of the node on the walk. We say 
that $\gamma_1$ and $\gamma_{l+1}$ are \emph{endpoints} of the walk. Note that 
endpoints of a walk are neither colliders nor noncolliders. We say that a walk 
is \emph{nontrivial} if it has at least one edge. A walk on 
which no node is repeated is a \emph{path}.

Let $\mathcal{G} = (V,E)$. When $e$ is an edge we use $\mathcal{G} + e$ to 
denote the graph $(V, E\cup \{e \})$, and we use $\mathcal{G} - e$ to denote 
the graph $(V,E\setminus \{ e\})$. We say that $\mathcal{G}$ is 
\emph{complete} if it contains 
$\alpha\rightarrow\beta$; $\alpha\leftarrow\beta$, and 
$\alpha\leftrightarrow\beta$ for all $\alpha,\beta\in V$, and we say that 
$\mathcal{G}$ is \emph{empty} if $E=\emptyset$. We say that a walk 
between $\alpha$ and $\beta$ is \emph{directed} from $\alpha$ to $\beta$ if 
every edge on the walk is directed and points towards (the last) $\beta$, 
$\alpha\rightarrow\ldots\rightarrow\beta$. We say that $\alpha$ 
is an \emph{ancestor} of $\beta$ in $\mathcal{G}$ if there exists a directed 
walk from $\alpha$ to $\beta$, 
and we allow this walk to be trivial (no edges) meaning that a node is always 
an 
ancestor of itself. We define $\an_\mathcal{G}(\alpha)$, or simply 
$\an(\alpha)$, to be the set of 
ancestors of $\alpha$, and for $C\subseteq V$ we define $\an_\mathcal{G}(C) = 
\cup_{\alpha\in C} \an_\mathcal{G}(\alpha)$. Note that $C\subseteq 
\an_\mathcal{G}(C)$.

\begin{defn}[$\mu$-connecting walk]
	We say that a nontrivial walk in a DMG, $\mathcal{G}$,
	
	$$
	\alpha \sim_1 \gamma_1 \sim_2 \ldots \sim_l 
	\beta
	$$
	
	\noindent is \emph{$\mu$-connecting} from $\alpha$ to $\beta$ given $C$ if 
	$\alpha\notin C$, the edge $\sim_l$ has a head at $\beta$, every collider 
	is in $\an(C)$ and no noncollider is in $C$. 
\end{defn}

The $\mu$-connecting walks are used in the definition of
\emph{$\mu$-separation} below which will help us 
connect DMGs to local independence. \cite{mogensenUAI2018} and 
\cite{Mogensen2020a} defined $\mu$-separation as an 
extension to 
\emph{$\delta$-separation} \citep{didelez2000,didelez2008}. One can think of 
$\delta$- 
and 
$\mu$-separation as analogous to $d$- and $m$-separation in 
DAG-based graphical models \citep{pearl2009,richardson2002,richardson2003}.

\begin{defn}[$\mu$-separation]
	Let $\mathcal{G} = (V,E)$ and let $A,B,C\subseteq V$. We say that $B$ is 
	\emph{$\mu$-separated from $A$ given $C$ in $\mathcal{G}$} if there is no 
	$\mu$-connecting walk from any $\alpha\in A$ to any $\beta\in B$ given $C$. 
	We write this as $\musepG{A}{B}{C}{\mathcal{G}}$, or simply 
	$\musep{A}{B}{C}$. We say that $C$ is a \emph{conditioning set}.
	\label{def:muSep}
\end{defn}

By definition, $B$ is $\mu$-separated from $A$ given $C$ if 
$A\subseteq C$. One should also note that $\mu$-separation is not symmetric in 
$A$ and $B$ in that $\musepG{A}{B}{C}{\mathcal{G}}$ does not 
imply $\musepG{A}{B}{C}{\mathcal{G}}$, and neither is local 
independence. This lack of 
symmetry sets the graphical modeling of local independence apart from the 
classical graphical modeling of
conditional independence \citep{lauritzen1996}. In contrast to $m$-separation, 
$\mu$-separation cannot be characterized using 
only paths \citep{Mogensen2020a}. It is, however, possible to obtain a 
characterization using only \emph{routes} which are a finite subset of all 
possible walks (see Definition \ref{def:route} in 
Appendix 
\ref{app:proofs} or \cite{Mogensen2020a}). The next example illustrates the 
concept 
of $\mu$-connecting walks and $\mu$-separation in a DMG.

\begin{exmp}
	We consider the DMG, $\mathcal{G}$, in Figure \ref{fig:DMGindep}. The walk 
	$1\leftrightarrow 2\rightarrow 3$ is 
	$\mu$-connecting from $1$ to $3$ given $\emptyset$. It is not 
	$\mu$-connecting from $1$ to $3$ given $\{ 2\}$ as $2$ is a noncollider. On 
	the walk $1 \leftrightarrow 2 \leftarrow 2 \rightarrow 3$ the node $2$ is a 
	collider in its first instance and a noncollider in its second. The walk $3 
	\rightarrow 2 \leftrightarrow 1$ is $\mu$-connecting from $3$ to $1$ given 
	$\{2\}$, however, the \emph{reverse} walk, $1\leftrightarrow 2 
	\leftarrow 3$ is not $\mu$-connecting from $1$ to $3$ given $\{2\}$.
	
	We see that $3$ is $\mu$-separated from $1$ given $\{2,3\}$ in 
	$\mathcal{G}$. On the other 
	hand, $3$ is not $\mu$-separated from $1$ given $\{2\}$ as the walk $1 
	\leftrightarrow 2 \leftarrow 3 \rightarrow 3$ is $\mu$-connecting.
	\label{exmp:DMGsep}
	
	\begin{figure}[h]
		\centering
		\begin{tikzpicture}
		\node[shape=circle,draw=black] (a) at (0,0) {$1$};
		\node[shape=circle,draw=black] (b) at (2,0) 
		{$2$};
		\node[shape=circle,draw=black] (c) at (4,0) {$3$};
		
		\path 
		[<->](a) edge [bend left = 0] node {} (b)
		[->](b) edge [bend left = 20] node {} (c)
		[->](c) edge [bend left = 20] node {} (b)
		[->](a) edge [loop above] node {} (a)
		[<->](b) edge [loop above] node {} (b)
		[->](c) edge [loop above] node {} (c);
		\end{tikzpicture}
		\caption{The graph $\mathcal{G}$ in Examples \ref{exmp:DMGsep} and
			\ref{exmp:DMGindep}.}
		\label{fig:DMGindep}
	\end{figure}
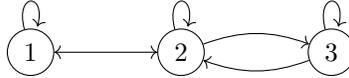
\end{exmp}

\subsection{Independence models and Markov equivalence}
\label{ssec:indep}

For a fixed stochastic process, $X_t = (X_t^1,\ldots,X_t^n)^T$, and a DMG, 
$\mathcal{G}=(V,E)$, both local independence and $\mu$-separation can be 
thought as 
\emph{ternary 
relations} on a finite set $\mathbb{P}(V) \times \mathbb{P}(V)\times 
\mathbb{P}(V)$ 
where $V = \{1,2,\ldots,n\}$ and $\mathbb{P}(\cdot)$ denotes power set. We use 
$\mathcal{P}$ to denote $\mathbb{P}(V) \times \mathbb{P}(V)\times 
\mathbb{P}(V) = \{(A,B,C): A,B,C \subseteq V \}$ and we define an abstract 
independence model, $\mathcal{I}$, to 
be a subset of $\mathcal{P}$. Thus, $\mathcal{I}$ is a collection of 
triples $(A,B,C)$ such that $A,B,C\subseteq V$. We say that $\mathcal{I}$ is an 
\emph{independence model over} $V$. When $A,B$, or $C$ are 
singletons, we will often omit the set notation and write, e.g., 
$(\alpha,\beta,C)$ instead of $(\{\alpha\},\{\beta\},C)$.

We use $\mathcal{I}(\mathcal{G})$ to denote the independence model induced by 
$\mathcal{G}$, that is, the set of 
$\mu$-separations that are true in $\mathcal{G}$, $\mathcal{I}(\mathcal{G}) = 
\{(A,B,C) \in \mathcal{P}: \musepG{A}{B}{C}{\mathcal{G}} \}$. Similarly, an 
independence model can be defined as the set of local independences that hold 
in the distribution of a multivariate stochastic process. We say that an 
independence model, $\mathcal{I}$, is \emph{graphical}, if there 
exist a DMG, $\mathcal{G}$, such that $\mathcal{I} = \mathcal{I}(\mathcal{G})$.

\begin{defn}[Markov equivalence]
	Let $\mathcal{G}_1 = (V,E_1)$ and $\mathcal{G}_2 = (V,E_2)$ be DMGs. We say 
	that $\mathcal{G}_1$ and $\mathcal{G}_2$ are \emph{Markov equivalent} if 
	for all $A,B,C\subseteq V$ it holds that $B$ is $\mu$-separated from $A$ 
	given $C$ in $\mathcal{G}_1$ if and only if $B$ is $\mu$-separated from $A$ 
	given $C$ in $\mathcal{G}_1$. Equivalently, $\mathcal{G}_1$ and 
	$\mathcal{G}_2$ are Markov equivalent if $\mathcal{I}(\mathcal{G}_1) = 
	\mathcal{I}(\mathcal{G}_2)$. We use $[\mathcal{G}_1]$ to denote the Markov 
	equivalence class of $\mathcal{G}_1$.
	\end{defn}

\begin{exmp}	
	We return to the graph, $\mathcal{G}$, in Figure \ref{fig:DMGindep}. 
	By definition, its independence model, $\mathcal{I}(\mathcal{G})$, consists 
	of all triples 
	$(A,B,C)$ such that $B$ is $\mu$-separated from $A$ given $C$ in 
	$\mathcal{G}$. It is enough to consider $(A,B,C)$ such that $A$ and $B$  
	are 
	singletons and $A\not\subseteq C$ as these characterize 
	$\mathcal{I}(\mathcal{G})$ (Proposition 
	\ref{prop:singletonGraphIndep}). We see that $3$ is 
	$\mu$-separated from $1$ given $\{2,3\}$, and this is the only 
	$\mu$-separation of this type in the graph.
	
	\label{exmp:DMGindep}
\end{exmp}

\subsubsection{Extremal elements of sets of DMGs}

Let $\mathbb{G} = \{\mathcal{G}_1 = (V,E_1),\ldots,\mathcal{G}_l=(V,E_l) \}$ be 
a set of DMGs on a common node set, $V$. If $E_i\subseteq E_j$, we write 
$\mathcal{G}_i\subseteq \mathcal{G}_j$, and we say that $\mathcal{G}_i$ is a 
\emph{subgraph} of $\mathcal{G}_j$, and that $\mathcal{G}_j$ is a 
\emph{supergraph} of $\mathcal{G}_i$. We write $\mathcal{G}_i\subsetneq 
\mathcal{G}_j$ when $E_i\subseteq E_j$ and $E_i\neq E_j$. The following 
definitions are common 
set-theoretic notions when considering the set $\mathbb{G}$ with the partial 
order, $\subseteq$.

\begin{defn}[Maximal element, DMG]
	We say that $\mathcal{G} \in \mathbb{G}$ is a \emph{maximal element} of 
	$\mathbb{G}$ if there is no $\bar{\mathcal{G}} \in \mathbb{G}$, 
	$\bar{\mathcal{G}}\neq {\mathcal{G}}$, such that $\mathcal{G}\subseteq 
	\bar{\mathcal{G}}$.
	\label{def:maximalDMG}
\end{defn}

\begin{defn}[Greatest element, DMG]
	We say that $\mathcal{G} \in \mathbb{G}$ is a \emph{greatest element} of 
	$\mathbb{G}$ if
	$\bar{\mathcal{G}}\subseteq 
{\mathcal{G}}$ for all $\bar{\mathcal{G}} \in \mathbb{G}$.
\label{def:greatDMG}
\end{defn}

When a greatest element exists, it is unique. It is also maximal, and it is the 
only maximal 
element. In this paper, we are mostly concerned with maximal and greatest 
elements, however, we also define \emph{minimal} and \emph{least} elements of 
sets of DMGs. We say that $\mathcal{G} \in \mathbb{G}$ is a \emph{minimal 
element} of $\mathbb{G}$ if there is no $\bar{\mathcal{G}} \in \mathbb{G}$, 
	$\bar{\mathcal{G}}\neq {\mathcal{G}}$, such that 
	$\bar{\mathcal{G}}\subseteq 
	\mathcal{G}$. We say that $\mathcal{G} \in \mathbb{G}$ is a \emph{least 
	element} of 
	$\mathbb{G}$ if
	${\mathcal{G}}\subseteq 
	\bar{\mathcal{G}}$ for all $\bar{\mathcal{G}} \in \mathbb{G}$. 
	The set $\mathbb{G}$ will most often be an equivalence class in our usage 
	of 
	the above 
	terms, and we sometimes simply say that $\mathcal{G}$ is a 
	maximal/minimal/greatest/least element when the 
	equivalence class is understood from the context.

\begin{exmp}
If we consider the set of graphs $\mathbb{G} = \{\mathbf{A}, \mathbf{B}, 
\mathbf{C}, \mathbf{D}\}$ in Figure \ref{fig:greatME}, we see that graph 
$\mathbf{D}$ is the greatest 
element of $\mathbb{G}$ as every graph in $\mathbb{G}$ is a subgraph of 
$\mathbf{D}$, and therefore $\mathbf{D}$ is also the unique maximal element of 
$\mathbb{G}$. The smaller set $\bar{\mathbb{G}} = 
\{\mathbf{A},\mathbf{B},\mathbf{C} \}$ does not have a greatest element and 
graphs $\mathbf{B}$ and $\mathbf{C}$ are maximal elements of $\bar{\mathbb{G}}$.
\label{exmp:greatestMaximal}
\end{exmp}

\subsubsection{Representation of Markov equivalence classes}

We introduce a central result from \cite{Mogensen2020a}. They show 
that every Markov equivalence class has a greatest element. Section 
\ref{sec:greatElemWEqui} extends this theorem to weak equivalence relations.

\begin{thm}[Greatest element of a Markov equivalence class, 
\citep{Mogensen2020a}]
	Let $\mathcal{G}$ be a DMG, and let $[\mathcal{G}]$ be its Markov 
	equivalence class. There exists $\mathcal{N}\in [\mathcal{G}]$ such 
	that for all $\bar{\mathcal{G}} \in [\mathcal{G}]$ the edge set of 
	$\bar{\mathcal{G}}$ is a 
	subset of the edge set of $\mathcal{N}$. 
	\label{thm:greatME}
\end{thm}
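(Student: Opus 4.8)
The plan is to exhibit $\mathcal{N}$ explicitly as the edge-wise union of the entire equivalence class and then verify that this union is still Markov equivalent to $\mathcal{G}$. Since $V$ is finite there are only finitely many DMGs on $V$, so $[\mathcal{G}]$ is a finite set $\{\mathcal{G}_1,\ldots,\mathcal{G}_m\}$, and I define $\mathcal{N} = (V, E_1 \cup \cdots \cup E_m)$. By construction every $\bar{\mathcal{G}} \in [\mathcal{G}]$ satisfies $\bar{\mathcal{G}}\subseteq \mathcal{N}$, so it remains only to show $\mathcal{N}\in[\mathcal{G}]$. One inclusion is immediate: adding edges can only create $\mu$-connecting walks and never destroy them (the collider/noncollider status of a walk is unchanged and $\an(C)$ only grows), so $\mathcal{I}$ is anti-monotone under edge inclusion; from $\mathcal{G}_i\subseteq\mathcal{N}$ we get $\mathcal{I}(\mathcal{N})\subseteq\mathcal{I}(\mathcal{G}_i)=\mathcal{I}(\mathcal{G})$. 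The entire difficulty lies in the reverse inclusion $\mathcal{I}(\mathcal{G})\subseteq\mathcal{I}(\mathcal{N})$.

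I would reduce this to a two-graph statement and induct: it suffices to prove that if $\mathcal{G}_1,\mathcal{G}_2\in[\mathcal{G}]$ then their union $\mathcal{G}_1\cup\mathcal{G}_2$ also lies in $[\mathcal{G}]$, since $\mathcal{N}$ can then be assembled from $\mathcal{G}_1$ by successively adjoining the remaining graphs while staying inside the class. Using Proposition \ref{prop:singletonGraphIndep} I may restrict to triples $(\alpha,\beta,C)$ with singleton endpoints. Taking the contrapositive, the claim becomes: every $\mu$-connecting walk from $\alpha$ to $\beta$ given $C$ in $\mathcal{G}_1\cup\mathcal{G}_2$ forces a $\mu$-connecting walk from $\alpha$ to $\beta$ given $C$ in $\mathcal{G}_1$ \emph{or} in $\mathcal{G}_2$ alone, for such a single-graph walk witnesses $(\alpha,\beta,C)\notin\mathcal{I}(\mathcal{G})$.

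The core of the argument is this walk-splicing step, and it is where I expect the real work to be. I would choose a $\mu$-connecting walk $w$ in $\mathcal{G}_1\cup\mathcal{G}_2$ that is minimal, either shortest or minimal in the route ordering of Definition \ref{def:route} (the route characterization being necessary precisely because $\mu$-connection does not reduce to paths). If $w$ is not contained in a single $\mathcal{G}_i$, it must contain a vertex $\gamma$ where it switches from an edge present only in $\mathcal{G}_1$ to an edge present only in $\mathcal{G}_2$. The idea is to exploit Markov equivalence to rewrite $w$ near $\gamma$: an edge lying in $\mathcal{G}_1$ but not $\mathcal{G}_2$ witnesses the failure of some pairwise $\mu$-separation in $\mathcal{G}_1$, hence its failure in $\mathcal{G}_2$, which yields a replacement connection inside $\mathcal{G}_2$; splicing this replacement into $w$ should reduce the number of cross-graph switches while preserving $\mu$-connectivity, contradicting minimality.

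The main obstacle is that $\mu$-connectivity is a \emph{global} and \emph{non-symmetric} condition: whether $\gamma$ is a collider depends on both of its incident edges, each collider must lie in $\an(C)$, and this ancestry may shrink when we pass from $\mathcal{G}_1\cup\mathcal{G}_2$ to a single $\mathcal{G}_i$. The splicing must therefore simultaneously preserve (i) the head at $\beta$, (ii) the collider/noncollider status at every affected vertex, and (iii) the condition that every collider lies in $\an(C)$ \emph{within the single graph}. Controlling (iii) is the delicate point, and I would isolate it as a companion lemma: the ancestral relations relevant to the colliders of $w$ must themselves be recoverable inside one $\mathcal{G}_i$ from Markov equivalence, so that repaired colliders stay in $\an(C)$. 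Assembling these local repairs into a global induction on walk length is the step most likely to demand a careful case analysis over the possible edge types $\rightarrow$ and $\leftrightarrow$ (equivalently $\starrightarrow$) meeting at the switch vertex $\gamma$.
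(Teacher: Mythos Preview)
The theorem is quoted from \cite{Mogensen2020a}; in this paper it reappears as the special case $\mathcal{J}=\mathcal{P}$ of Theorem~\ref{thm:weGreat}. The approach there is quite different from yours. Rather than forming the union of the class and arguing about walks in the union, one defines $\mathcal{N}$ \emph{intrinsically from the independence model}: for each ordered pair $(\alpha,\beta)$ one writes down explicit conditions on $\mathcal{I}(\mathcal{G})$---the $C$-potential parent conditions (cp1)--(cp4) and $C$-potential sibling conditions (cs1)--(cs3) of Definitions~\ref{def:Cps}--\ref{def:Cpp}---and includes $\alpha\to\beta$ (respectively $\alpha\leftrightarrow\beta$) in $\mathcal{N}$ exactly when these hold for every $C$. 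Because the conditions depend only on $\mathcal{I}(\mathcal{G})$, they are invariant across $[\mathcal{G}]$; Propositions~\ref{prop:bidirEdgeCps}--\ref{prop:dirEdgeCpp} show every edge of every $\bar{\mathcal{G}}\in[\mathcal{G}]$ satisfies them, so $\bar{\mathcal{G}}\subseteq\mathcal{N}$. The hard direction is Lemmas~\ref{lem:Cps}--\ref{lem:Cpp}: if $e$ satisfies the conditions in $\mathcal{I}(\mathcal{G})$, then $\mathcal{G}$ and $\mathcal{G}+e$ are Markov equivalent. One then builds $\mathcal{N}$ from $\mathcal{G}$ by adding such edges one at a time; crucially the conditions, being functions of the (unchanged) independence model, remain valid after each addition, so the sequential argument terminates.

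Your plan has a real gap precisely where you flag it. Markov equivalence gives you, for a $\mathcal{G}_1$-only edge $\alpha\to\beta$ and any $C\not\ni\alpha$, a $\mu$-connecting walk from $\alpha$ to $\beta$ given $C$ in $\mathcal{G}_2$---with a head at $\beta$, but with \emph{no control over the mark at $\alpha$}. If the original walk has a head into $\alpha$ on the preceding edge, then $\alpha$ was a noncollider there (via the outgoing tail of $\alpha\to\beta$); after splicing in a replacement that begins with a head at $\alpha$, the node becomes a collider and you would need $\alpha\in\an(C)$, which nothing guarantees. Condition (cp2) is exactly what repairs this case, and (cp3), (cp4) handle the remaining endpoint configurations; some conditions of this shape are unavoidable, and identifying them \emph{is} the proof. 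Your induction on the number of cross-graph switches also does not close: the replacement walk lives in $\mathcal{G}_2$, so at its two junction nodes with the adjacent $\mathcal{G}_1$-only edges of $w$ you may create new switches, and since the replacement can be longer than one edge you lose any length-based minimality as well. The edge-at-a-time scheme via Lemma~\ref{lem:addJequi}, with conditions that are provably stable along the sequence, is what makes the induction both well-founded and tractable.
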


The next example illustrates the utility of this theorem.

\begin{exmp}
	Graphs $\mathbf{A}$-$\mathbf{D}$ in Figure \ref{fig:greatME} constitute a 
	Markov 
	equivalence class, $[\mathcal{G}]$ (for simplicity, we assume that 
	all loops are present, 
	and do not 
	consider Markov equivalent graphs obtained by removing loops). Graph 
	$\mathbf{D}$ is the \emph{greatest 
	element}
	of $[\mathcal{G}]$ in the sense that all Markov equivalent graphs are 
	subgraphs of graph $\mathbf{D}$. In other words, if a graph in the 
	Markov equivalence class contains the edge $e$, then $e$ is also in 
	the graph $\mathbf{D}$. This means that we can represent the entire 
	Markov equivalence class using graph \textbf{E}. 
	The edges are the same as in the greatest element. Edges are solid in graph 
	$\mathbf{E}$ if they are in every 
	Markov equivalent graph and they are dashed if they are in some Markov 
	equivalent 
	graphs, but not in others. Absent edges are not in any graph in the Markov 
	equivalence class. Therefore, graph $\mathbf{E}$ represents a summary of 
	the information the Markov equivalence class provides on each edge. 
	Moreover, Theorem \ref{thm:greatME} implies that every Markov equivalence 
	class 
	contains a greatest element, and therefore this is a general approach to 
	representing and understanding Markov equivalence classes 
	\citep{Mogensen2020a}.
	
		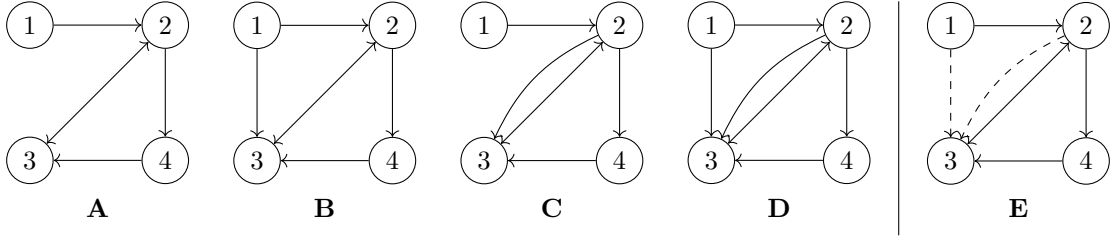
\begin{figure}[h]
			\begin{subfigure}{.19\textwidth}
				\centering
				\begin{tikzpicture}[scale = .9]
				\node[shape=circle,draw=black] (a) at (0,0) {$1$};
				\node[shape=circle,draw=black] (b) at (2,0) 
				{$2$};
				\node[shape=circle,draw=black] (c) at (0,-2) {$3$};
				\node[shape=circle,draw=black] (d) at (2,-2) {$4$};
				
				\node[shape=circle,draw=white] (e) at (1,-2.7) {\textbf{A}};
				
				\path 
				[->](a) edge [bend left = 0] node {} (b)
				[->](b) edge [bend left = 0] node {} (d)
				[->](d) edge [bend left = 0] node {} (c)
				[<->](c) edge [bend left = 0] node {} (b);
				\end{tikzpicture}
			\end{subfigure}\hfill
			\begin{subfigure}{.19\textwidth}
				\centering
				\begin{tikzpicture}[scale = .9]
				\node[shape=circle,draw=black] (a) at (0,0) {$1$};
				\node[shape=circle,draw=black] (b) at (2,0) 
				{$2$};
				\node[shape=circle,draw=black] (c) at (0,-2) {$3$};
				\node[shape=circle,draw=black] (d) at (2,-2) {$4$};
				
				\node[shape=circle,draw=white] (e) at (1,-2.7) {\textbf{B}};
				
				\path 
				[->](a) edge [bend left = 0] node {} (b)
				[->](a) edge [bend left = 0] node {} (c)
				[->](b) edge [bend left = 0] node {} (d)
				[->](d) edge [bend left = 0] node {} (c)
				[<->](c) edge [bend left = 0] node {} (b);
				\end{tikzpicture}
			\end{subfigure}\hfill
			\begin{subfigure}{.19\textwidth}
				\centering
				\begin{tikzpicture}[scale = .9]
				\node[shape=circle,draw=black] (a) at (0,0) {$1$};
				\node[shape=circle,draw=black] (b) at (2,0) 
				{$2$};
				\node[shape=circle,draw=black] (c) at (0,-2) {$3$};
				\node[shape=circle,draw=black] (d) at (2,-2) {$4$};
				
				\node[shape=circle,draw=white] (e) at (1,-2.7) {\textbf{C}};
				
				\path 
				[->](a) edge [bend left = 0] node {} (b)
				[->](b) edge [bend left = -20] node {} (c)
				[->](b) edge [bend left = 0] node {} (d)
				[->](d) edge [bend left = 0] node {} (c)
				[<->](c) edge [bend left = 0] node {} (b);
				\end{tikzpicture}
			\end{subfigure}\hfill
			\begin{subfigure}{.19\textwidth}
				\centering
				\begin{tikzpicture}[scale = .9]
				\node[shape=circle,draw=black] (a) at (0,0) {$1$};
				\node[shape=circle,draw=black] (b) at (2,0) 
				{$2$};
				\node[shape=circle,draw=black] (c) at (0,-2) {$3$};
				\node[shape=circle,draw=black] (d) at (2,-2) {$4$};
				
				\node[shape=circle,draw=white] (e) at (1,-2.7) {\textbf{D}};
				
				\path 
				[->](a) edge [bend left = 0] node {} (b)
				[->](b) edge [bend left = -20] node {} (c)
				[->](a) edge [bend left = 0] node {} (c)
				[->](b) edge [bend left = 0] node {} (d)
				[->](d) edge [bend left = 0] node {} (c)
				[<->](c) edge [bend left = 0] node {} (b);
				\end{tikzpicture}
			\end{subfigure}\hfill\vline\hfill
			\begin{subfigure}{.19\textwidth}
				\centering
				\begin{tikzpicture}[scale = .9]
				\node[shape=circle,draw=black] (a) at (0,0) {$1$};
				\node[shape=circle,draw=black] (b) at (2,0) 
				{$2$};
				\node[shape=circle,draw=black] (c) at (0,-2) {$3$};
				\node[shape=circle,draw=black] (d) at (2,-2) {$4$};
				
				\node[shape=circle,draw=white] (e) at (1,-2.7) {\textbf{E}};
				
				\path 
				[->](a) edge [bend left = 0] node {} (b)
				[->](b) edge [bend left = -20, dashed] node {} (c)
				[->](a) edge [bend left = 0, dashed] node {} (c)
				[->](b) edge [bend left = 0] node {} (d)
				[->](d) edge [bend left = 0] node {} (c)
				[<->](c) edge [bend left = 0] node {} (b);
				\end{tikzpicture}
			\end{subfigure}
			\caption{Graphs from 
				Examples \ref{exmp:greatestMaximal} and \ref{exmp:greatME}. 
				All loops are present in the graphs but omitted from the 
				visualization.}
			\label{fig:greatME}
		\end{figure}

\label{exmp:greatME}
\end{exmp}

\section{Hardness of marginalized local independence graphs}
\label{sec:hard}

In this section, we argue that certain computational problems in relation to 
DMGs and Markov equivalence are hard. For this purpose, we give a very short 
introduction to the concepts from complexity theory that we will need. A 
decision problem is in \textbf{coNP} if 
no-instances have certificates which can be evaluated in polynomial time. For 
instance, if $\mathcal{G}_1$ and $\mathcal{G}_2$ are not Markov equivalent 
(they are a 
\emph{no-instance} when deciding Markov equivalence) a triple $(A,B,C)$ such 
that $B$ is $\mu$-separated from $A$ given $C$ in $\mathcal{G}_1$, but not in 
$\mathcal{G}_2$, may function as a certificate as one can check this specific 
separation in both graphs and conclude that they are not Markov equivalent. A 
decision problem is in \textbf{P} if it can be solved by a deterministic 
Turing machine in polynomial time. A decision problem is \emph{coNP-hard} if it 
is at least as hard as any problem in \textbf{coNP}, and it is 
\emph{coNP-complete} if it is coNP-hard and in \textbf{coNP}. It is generally 
believed that \textbf{P} $\neq$ \textbf{coNP} in which case there are no 
polynomial-time algorithm which can solve a coNP-hard problem. The 
\emph{complement} of a decision problem arises from interchanging 
\emph{yes} and \emph{no}. A decision problem is in \textbf{coNP} if and only if 
its complement is in \textbf{NP}. We now introduce some decision problems 
relating to DMGs.

\begin{dec}[Markov equivalence in DMGs]
	Let $\mathcal{G}_1 = (V,E)_1$ and $\mathcal{G}_2 = (V,E_2)$ be DMGs. Are 
	$\mathcal{G}_1$ and $\mathcal{G}_2$ Markov equivalent?
	\label{dc:ME}
\end{dec}

The development in this paper is partly motivated by the fact that the above 
decision problem is hard (Corollary \ref{cor:MEhard}). We can formulate a 
restricted version of the problem in which the pair of graphs for which to 
decide Markov 
equivalence only differ by a single (bidirected or directed) edge, as 
formalized in Decision problems \ref{dc:add1bidir} (bidirected) and 
\ref{dc:add1dir} (directed). These 
problems are also hard 
and we prove this in Theorem \ref{thm:MEhard}. Corollary \ref{cor:MEhard} 
follows immediately from this theorem.

\begin{thm}
	Let $\mathcal{G}$ be a DMG and let $e$ denote an edge. Deciding Markov 
	equivalence of $\mathcal{G}$ and $\mathcal{G} + e$ is 
	coNP-complete (Decision problems \ref{dc:add1bidir} and \ref{dc:add1dir}).
	\label{thm:MEhard}
\end{thm}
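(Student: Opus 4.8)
The plan is to establish the two halves of coNP-completeness separately: membership in \textbf{coNP} and coNP-hardness. For membership I would first observe that edge addition is monotone for the induced independence model: since every $\mu$-connecting walk in $\mathcal{G}$ is also present in $\mathcal{G}+e$, we have $\mathcal{I}(\mathcal{G}+e)\subseteq\mathcal{I}(\mathcal{G})$. Hence $\mathcal{G}$ and $\mathcal{G}+e$ are Markov equivalent if and only if $\mathcal{I}(\mathcal{G})\subseteq\mathcal{I}(\mathcal{G}+e)$, i.e.\ no $\mu$-separation valid in $\mathcal{G}$ is destroyed by adding $e$. A no-instance (non-equivalence) is therefore certified by a single triple $(\alpha,\beta,C)$ with $\musepG{\alpha}{\beta}{C}{\mathcal{G}}$ but not $\musepG{\alpha}{\beta}{C}{\mathcal{G}+e}$; by Proposition~\ref{prop:singletonGraphIndep} it suffices to take $\alpha,\beta$ singletons, so the certificate has size polynomial in $n$. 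Since $\mu$-separation can be decided in polynomial time via the route characterization of \cite{Mogensen2020a}, the certificate is verifiable in polynomial time, placing the complement in \textbf{NP} and the problem in \textbf{coNP}.

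For hardness I would reduce an NP-complete problem, namely 3-SAT, to the complement, i.e.\ to deciding \emph{non}-equivalence. Given a formula $\phi$ with variables $x_1,\dots,x_n$ and clauses $c_1,\dots,c_m$, I would build a DMG $\mathcal{G}$ together with a distinguished edge $e$ (of the relevant type) and two nodes $\alpha,\beta$ so that: in $\mathcal{G}+e$ every $\mu$-connecting walk from $\alpha$ to $\beta$ is forced to traverse $e$; such a walk exists for some conditioning set $C$ iff $\phi$ is satisfiable; and in $\mathcal{G}$ (without $e$) the nodes $\alpha$ and $\beta$ are $\mu$-separated given every such $C$. Then $\phi$ is satisfiable iff $\mathcal{G}$ and $\mathcal{G}+e$ are not Markov equivalent. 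The heart of the construction encodes a truth assignment as the choice of conditioning set: each variable $x_i$ is represented by a node whose presence in $C$ is read as $x_i=\mathrm{true}$, and each clause $c_j$ is represented by a gadget that the walk can cross exactly when at least one of its literals is satisfied. The asymmetry of $\mu$-separation, that a collider must lie in $\an(C)$ while a noncollider must lie outside $C$, is precisely the logical switch I would exploit: a clause gadget is made passable by routing the walk through a collider that is an ancestor of $C$ exactly under a satisfying literal, and blocked by a noncollider in $C$ otherwise. To realise ``collider in $\an(C)$'' I would attach to each such collider a short directed tail ending in the variable node placed in $C$.

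Both edge types must be handled. For the bidirected case (Decision problem~\ref{dc:add1bidir}) I would take $e=\alpha\leftrightarrow v$ and use the head-at-both-ends property to open the intended walk; for the directed case (Decision problem~\ref{dc:add1dir}) I would take $e=u\to v$ and adjust the incident gadget so that the tail/head pattern at $u,v$ still forces traversal of $e$ and respects the requirement that the connecting walk end with a head at $\beta$. The two reductions would share the same variable and clause gadgets, differing only in the local structure incident to $e$, so that the polynomial-size construction and its correctness proof are essentially common to both.

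The main obstacle I expect is \emph{faithfulness}: ensuring that the only way to $\mu$-connect $\alpha$ and $\beta$ in $\mathcal{G}+e$ is the intended assignment-encoding walk, with no spurious walk that connects them independently of $\phi$ or under a non-satisfying $C$. Because $\mu$-separation is defined through walks rather than paths, a node may recur and act as a collider in one instance and a noncollider in another, and loops can silently make a node its own ancestor; I must design the gadgets, for instance by controlling which nodes carry loops and which directed tails exist, so that these extra walks never create unintended connections. Verifying the ``no connection without a satisfying assignment'' direction, that for every non-satisfying $C$ all walks through $e$ are blocked, is the delicate part and will require a careful case analysis of how a walk can enter and leave each clause gadget. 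Once faithfulness is secured in both directions, the equivalence ``$\phi$ satisfiable $\Leftrightarrow$ $\mathcal{G}\not\equiv\mathcal{G}+e$'' follows, and combined with coNP membership yields coNP-completeness; Corollary~\ref{cor:MEhard} for the general problem is then immediate.
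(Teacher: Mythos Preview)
Your plan is the paper's strategy in dual form: where you reduce 3-SAT to non-equivalence, the paper reduces 3DNF tautology directly to equivalence, and these are the same reduction up to negating the formula. Your coNP-membership argument coincides with the paper's.

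There is, however, a real gap in how you scope the hardness direction. You set things up so that satisfiability of $\phi$ corresponds to the existence of a $\mu$-connecting walk \emph{from $\alpha$ to $\beta$} in $\mathcal{G}+e$ that is absent in $\mathcal{G}$, and your entire faithfulness discussion is about controlling walks between this one pair. But to conclude that $\mathcal{G}$ and $\mathcal{G}+e$ are Markov equivalent in the unsatisfiable case you must show that adding $e$ creates no new $\mu$-connection between \emph{any} pair $(\gamma,\delta)$ for \emph{any} $C$, not just between $\alpha$ and $\beta$. This is where most of the technical work in the paper's construction goes: for every node $\rho$ in the interior of the gadget it attaches auxiliary nodes $\nu_\varepsilon^\rho,\nu_\beta^\rho$ (with $\rho\rightarrow\nu_\varepsilon^\rho,\nu_\beta^\rho$; $\rho\leftarrow\nu_\varepsilon^\rho,\nu_\beta^\rho$; $\varepsilon\leftrightarrow\nu_\varepsilon^\rho$; $\beta\leftrightarrow\nu_\beta^\rho$) so that whenever a walk in $\mathcal{G}+e$ starts at some $\rho_1\neq\alpha$ and uses the added edge, or has any noncollider at all, one can reroute through the $\nu$-layer to reach $\beta$ with a head already in $\mathcal{G}$, independently of the formula. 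Only when the walk starts at $\alpha$ and is a pure collider walk does the logical content of the formula come into play. Without an analogous device your construction would need $e$ to be so isolated that no other pair can exploit it, and it is not clear this is compatible with the clause gadgets still being reachable through $e$.

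A smaller difference worth noting: for the directed-edge case the paper does not run a parallel reduction. It adds one extra node $\phi$ with $\phi\leftrightarrow\varepsilon,\beta$, lets $e_d$ be $\phi\rightarrow\varepsilon$, and proves that $\mathcal{G}+e_b$ and $\mathcal{G}+e_d$ are \emph{always} Markov equivalent regardless of the formula, so Decision problem~\ref{dc:add1dir} inherits hardness from Decision problem~\ref{dc:add1bidir}. This is cleaner than maintaining two gadget families differing only near $e$.
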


\begin{cor}
	Deciding Markov equivalence of DMGs is 
	coNP-complete (Decision problem \ref{dc:ME}).
	\label{cor:MEhard}
\end{cor}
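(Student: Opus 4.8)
The plan is to read off the corollary from Theorem \ref{thm:MEhard} by observing that Decision problem \ref{dc:ME} contains the one-edge problem as a special case, and to establish membership in \textbf{coNP} directly from the certificate discussion already sketched in the text. So the two things to argue are an upper bound (in \textbf{coNP}) and a lower bound (coNP-hard), and the latter is essentially inherited for free.

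First I would handle membership in \textbf{coNP}. If $\mathcal{G}_1$ and $\mathcal{G}_2$ are a no-instance, i.e.\ not Markov equivalent, then by definition $\mathcal{I}(\mathcal{G}_1) \neq \mathcal{I}(\mathcal{G}_2)$, so some triple $(A,B,C)$ lies in exactly one of the two induced independence models. Such a triple has size polynomial in $n = |V|$ and serves as a certificate: to verify it one checks whether $B$ is $\mu$-separated from $A$ given $C$ in each of $\mathcal{G}_1$ and $\mathcal{G}_2$ and confirms that the two answers differ. Deciding a single $\mu$-separation is a polynomial-time reachability-type computation, as for $d$- and $m$-separation, and as enabled by the route characterization referenced after Definition \ref{def:muSep}; hence the certificate is checkable in polynomial time and Decision problem \ref{dc:ME} is in \textbf{coNP}.

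Next I would argue coNP-hardness. Theorem \ref{thm:MEhard} asserts that deciding Markov equivalence of $\mathcal{G}$ and $\mathcal{G}+e$ is coNP-hard. But every such instance is literally an instance of Decision problem \ref{dc:ME} obtained by setting $\mathcal{G}_1 = \mathcal{G}$ and $\mathcal{G}_2 = \mathcal{G}+e$. The map sending the pair $(\mathcal{G}, e)$ to the pair $(\mathcal{G}, \mathcal{G}+e)$ is a trivial polynomial-time reduction from the restricted problem to the general one, so the coNP-hard lower bound transfers to Decision problem \ref{dc:ME}. Combining the two parts yields coNP-completeness.

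There is no genuine obstacle at the level of the corollary: all of the work, namely engineering a family of graph pairs differing by one edge whose Markov (non)equivalence encodes a known coNP-complete problem, is done in the proof of Theorem \ref{thm:MEhard}. The only point deserving a moment of care is the membership claim, i.e.\ that a single $\mu$-separation query is polynomial-time decidable so the no-certificate can be verified efficiently; granting this, the corollary is immediate.
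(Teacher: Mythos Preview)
Your proposal is correct and matches the paper's treatment: the paper states that the corollary follows immediately from Theorem \ref{thm:MEhard}, and your write-up simply makes explicit the two obvious pieces (coNP-hardness inherited because the one-edge problem is a subproblem, and membership in \textbf{coNP} via a separating triple as certificate), exactly as the paper's proof of Theorem \ref{thm:MEhard} already argues in its final paragraph.
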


Decision problem \ref{dc:add1bidir} has been proven to be coNP-complete (PhD 
thesis, \cite{mogensenThesis2020}) and this was used to obtain the result in 
Corollary \ref{cor:MEhard}. We will 
give a slightly different proof to make the generalization to the proof in the 
sparse setting 
more transparent and to also prove that Decision problem \ref{dc:add1dir} is 
coNP-complete. The graphs $\mathcal{G}$, $\mathcal{G}_1$, and $\mathcal{G}_2$ 
used in 
the proof of Theorem \ref{thm:MEhard} are clearly not sparse, that is, for the 
size of the node set going to 
infinity there are nodes with unbounded connectivity (formal definitions of 
node connectivity are in Subsection \ref{ssec:sparseDMGs} and Section 
\ref{app:nodeConn}). In the 
next section, we will show that the hardness results remain true under certain 
sparsity assumptions. We include the proof 
of the non-sparse result in Theorem \ref{thm:MEhard} to illustrate the 
technique as the more 
general result can be proved using a similar approach, even if some 
additional ideas are needed.

\cite{Mogensen2020b} showed that deciding $\mu$-separation Markov equivalence of
so-called \emph{directed correlation graphs} (cDGs) is coNP-complete, though 
only in the non-sparse case. Their proof of coNP-hardness uses a reduction from 
3DNF tautology as does the proof of Theorem \ref{thm:MEhard}. 
However, their proof is specific to cDGs as it uses a characterization of 
Markov 
equivalence which holds in cDGs, but not in DMGs \citep{Mogensen2020b}. While a 
DMG represents the local independences of a partially observed multivariate 
stochastic process, i.e., some coordinate processes are unobserved, a cDG 
represents a multivariate stochastic process driven by correlated noise. 
\cite{Mogensen2020b} compared DMGs and cDGs further and showed that a Markov 
equivalence 
class of cDGs need not have a greatest element.


\begin{proof}
We consider $n$ Boolean variables, $x_1,\ldots,x_n$, and a Boolean formula, $H$,

$$
(z_1^1 \wedge z_2^1 \wedge z_3^1) \vee (z_1^2 \wedge z_2^2 \wedge z_3^2) \vee 
\ldots \vee (z_1^N \wedge z_2^N \wedge z_3^N)
$$

\noindent such that $z_i^k$ is a \emph{literal} of a variable, that is, either 
$x_l$ (a \emph{positive} literal) or $\neg x_l$ (a \emph{negative} literal). We 
assume $H$ to be in 3DNF form (each conjunction has at most three literals).
$N$ is the number of conjunctions in the formula and $n$ is the number of 
variables. We define $n_j$ to be the 
number of factors in the $j$'th conjunction. Deciding whether $H$ is a 
\emph{tautology} (evaluates to true for all inputs) is known to be 
coNP-complete \cite{Garey1979} and we will use a reduction from this problem to 
show coNP-hardness of Decision problems \ref{dc:add1bidir} and \ref{dc:add1dir}.

We construct three graphs, 
$\mathcal{G} = (V,E)$, $\mathcal{G}_1 = (V,E_1)$, and $\mathcal{G}_2 = (V,E_2)$ 
from $H$ such that $\mathcal{G}_1 = \mathcal{G} + e_b$ and $\mathcal{G}_2 = 
\mathcal{G} + e_d$ where $e_b$ is a bidirected edge and $e_d$ is a directed 
edge. We then show that $\mathcal{G}$ and 
$\mathcal{G}_1$ are Markov equivalent if and only if $H$ is a tautology and 
that $\mathcal{G}_1$ and $\mathcal{G}_2$ are Markov equivalent if and only if 
$H$ is a tautology.

First, we define the set $V^-$.

\begin{align*}
	V^- &= \{\gamma,\bar{\gamma},\delta,\bar{\delta} \} \\ &\cup 
	\{\phi_i^k\}_{i = 
	1,\ldots,n_k, k = 1,\ldots,N} \\ &\cup  \{\bar{\phi}_i^k\}_{i = 
	1,\ldots,n_k, k = 1,\ldots,N} \\ &\cup \{\chi_i, \lambda_i\}_{i = 
	1,\ldots,n}.
\end{align*}

We define the node set $V =\{\alpha,\beta,\varepsilon,\phi\} \cup  V^- \cup 
\{\nu_\beta^\rho, \nu_\varepsilon^\rho \}_{\rho \in V^-}$ and 
$V$ is 
the node set of all three graphs $\mathcal{G}=(V,E)$, $\mathcal{G}_1=(V,E_1)$, 
and 
$\mathcal{G}_2=(V,E_2)$. Note that each literal, $z_i^k$, corresponds to two 
nodes, $\phi_i^k$ and $\bar{\phi}_i^k$.
	
We now define the edge set $E$. We add $\gamma \rightarrow \bar{\gamma}$; 
$\gamma \leftarrow \bar{\gamma}$ ; 
$\delta \rightarrow \bar{\delta}$ ; $\delta \leftarrow \bar{\delta}$. For each 
node $\rho \in V^-$, 
we add 
edges $\rho \rightarrow \nu_\varepsilon^\rho, \nu_\beta^\rho$ and $\rho 
\leftarrow 
\nu_\varepsilon^\rho, \nu_\beta^\rho$. We also add edges $\varepsilon 
\leftrightarrow \nu_\varepsilon^\rho$; $\beta \leftrightarrow \nu_\beta^\rho$. 
We add edges 
$\nu_\varepsilon^\rho \rightarrow \nu_\beta^\rho$ and $\nu_\varepsilon^\rho 
\rightarrow \nu_\beta^\rho$ for each $\rho\in V^-$. We also add all directed 
and bidirected loops, $\rho \sim \rho$, for all $\rho\in V$. We add edges 
$\alpha\leftrightarrow\gamma,\bar{\gamma}$; 
$\varepsilon\leftrightarrow\bar{\delta}$; $\beta\leftrightarrow\delta$, and 
$\varepsilon \rightarrow\beta$; $\varepsilon\leftarrow\beta$ as well as $\phi 
\leftrightarrow \varepsilon,\beta$. For each $k = 
1,\ldots, N$, we add $\gamma \leftrightarrow \phi_1^k \leftrightarrow \ldots 
\leftrightarrow \phi_{n_k}^k \leftrightarrow \delta$ and $\bar{\gamma} 
\leftrightarrow \bar{\phi}_1^k \leftrightarrow \ldots \leftrightarrow 
\bar{\phi}_{n_k}^k \leftrightarrow \bar{\delta}$. We add $\bar{\gamma} 
\leftrightarrow \chi_1,\lambda_1$ and $\bar{\delta}
\leftrightarrow\chi_n,\lambda_n$. For each $i = 1,\ldots,n-1$, we add 
$\chi_i,\lambda_i \leftrightarrow \chi_{i+1},\lambda_{i+1}$. Finally, we add 
for 
each $l=1,\ldots,n$ a directed cycle containing $\chi_l$ as well as every 
$\phi_i^k$ and  $\bar{\phi}_i^k$
corresponding to a positive literal of the variable $x_l$, and we add a 
directed cycle containing $\lambda_l$ as well as every $\phi_i^k$ and 
$\bar{\phi}_i^k$ 
corresponding to a 
negative literal of the variable $x_l$. This defines the edge set $E$, 
$\mathcal{G} = (V,E)$. We obtain $\mathcal{G}_1 = (V,E_1)$ from $\mathcal{G}$ 
by adding the edge $\varepsilon\leftrightarrow\beta$, that is, $E_1 = E \cup 
\{\varepsilon\leftrightarrow \beta \}$. Note that $\rho_1$ is an ancestor of 
$\rho_2$ in $\mathcal{G}$ if and only if $\rho_1$ is an ancestor of $\rho_2$ in 
$\mathcal{G}_1$. We obtain $\mathcal{G}_2 = (V,E_2)$ from $\mathcal{G}$ by 
adding the edge $\phi \rightarrow \varepsilon$, $E_2 = E \cup \{\phi 
\rightarrow \varepsilon \}$.

We will first argue that $\mathcal{G}$ and $\mathcal{G}_1$ are Markov 
equivalent if and only if $H$ is a tautology. Assume first that $H$ is a 
tautology and consider a $\mu$-connecting walk in 
$\mathcal{G}_1$,

$$
\rho_1 \sim \ldots \varepsilon \leftrightarrow \beta \ldots \sim \rho_m
$$

\noindent Using the fact that all loops are included, we can always find a 
$\mu$-connecting walk such that the edge $\varepsilon\leftrightarrow\beta$ 
occurs at most once and we assume that this is the case. We can assume that 
$\rho_1$ only occurs once on the walk. If $\rho_1 \neq \alpha$, there is a 
$\mu$-connecting walk from 
$\rho_1$ to $\beta$ with a head at $\beta$: If $\rho_1 \in V^-$, or  $\rho_1 = 
\nu_\varepsilon^\rho$ for some $\rho\in V^-$, either $\rho_1 \rightarrow 
\nu_\beta^\rho \leftrightarrow \beta$ or 
$\rho_1 \leftarrow\nu_\beta^\rho \leftrightarrow \beta$ is connecting and can 
be composed with the subwalk from $\beta$ to $\rho_m$ to obtain a connecting 
walk in $\mathcal{G}$. If 
$\rho_1 = \varepsilon,\beta,\phi$ or $\rho_1 = \nu_\beta^\rho$ for some $\rho 
\in 
V^-$, then $\rho_1\starrightarrow \beta$ is in $\mathcal{G}$. Assume instead 
that $\rho_1 = \alpha$,

$$
\alpha \sim \ldots \varepsilon \leftrightarrow \beta \ldots \sim \rho_m
$$

\noindent and consider the 
subwalk from $\alpha$ to $\varepsilon$, $\omega_1$. 
If 
there is a noncollider on $\omega_1$, say $\psi$, 
then 
$\psi \notin C$ and $\psi \in \an(C)$. We use this to argue that we can always 
find a walk from $\psi$ to $\beta$ such that when concatenated with the subwalk 
from $\alpha$ to $\psi$ we obtain a $\mu$-connecting walk from $\alpha$ to 
$\beta$. If $\psi \in V^-$, we can find a 
connecting walk from 
$\alpha$ to $\beta$ with a head at $\beta$ by concatenating the subwalk from 
$\alpha$ to $\psi$ with $\psi \rightarrow \nu_\beta^\psi \leftrightarrow \beta$ 
if 
$\nu_\beta^\psi \in C$ and $\psi \leftarrow \nu_\beta^\psi \leftrightarrow 
\beta$ if 
$\nu_\beta^\psi \notin C$.  If 
$\psi=\nu_\varepsilon^\rho$ for some $\rho$, we can concatenate with  $\psi 
\rightarrow \nu_\beta^\rho \leftrightarrow \beta$ or $\psi \leftarrow 
\nu_\beta^\rho \leftrightarrow 
\beta$. If $\psi = \nu_\beta^\rho$ for some $\rho$, we can concatenate with 
$\psi \leftrightarrow\beta$. If $\psi = \varepsilon$, then we can replace 
$\varepsilon \leftrightarrow \beta$ with 
$\varepsilon \rightarrow \beta$ to obtain a connecting walk in $\mathcal{G}$. 
If $\psi = \beta$, we can concatenate with $\psi\rightarrow\beta$. If $\psi= 
\phi$, we can concatenate with $\psi\leftrightarrow\beta$. Finally, 
$\psi=\alpha$ is not possible as $\rho_1 = \alpha$ only occurs once on the 
original walk.

Assume now that $\omega_1$ is 
a collider walk. If it goes through a $\bar{\phi}$-segment, then the 
corresponding 
$\phi$-segment is open (note that $\gamma$ and $\bar{\gamma}$ are in a directed 
cycle and so are $\delta$ and $\bar{\delta}$). If it goes through the 
$\chi$-$\lambda$-segment, then 
for each $l = 1,\ldots,n$ either $\chi_l\in \an(C)$ or $\lambda_l\in \an(C)$. 
Let $x_l = 1$ if $\chi_l \in \an(C)$ and $x_l = 0$ otherwise. The formula $H$  
is a tautology and therefore it evaluates to $1$ under this assignment of truth
values. Thus, there exists $k$ such that $z_i^k = 1$ for $i = 
1,\ldots,n_k$. Assume first that $z_i^k$ is a positive literal corresponding to 
the variable $x_l$. In this case, $x_l = 1$ and $\chi_l\in \an(C)$, and 
therefore $\phi_i^k \in \an(C)$. Assume instead that $z_i^k$ is a negative 
literal corresponding to the variable $x_l$. In this case, $x_l = 0$ and 
$\chi_l\notin \an(C)$ which means that $\lambda_l \in \an(C)$ and 
$\phi_i^k\in\an(C)$.
This means that the walk $\alpha \leftrightarrow \gamma \leftrightarrow 
\phi_1^k \leftrightarrow \ldots \leftrightarrow \phi_{n_k}^k \leftrightarrow 
\delta \leftrightarrow \beta$ is open for some $k = 1,\ldots, N$ and this gives 
us a $\mu$-connecting walk from $\alpha$ to $\rho_m$ in $\mathcal{G}$ also in 
this case.

If instead 

$$
\rho_1 \sim \ldots \beta  \leftrightarrow \varepsilon \ldots \sim \rho_m
$$

\noindent then the same arguments hold.

On the other hand, say that $H$ is not a tautology, and consider an assignment, 
$A$, of truth values such that $H$ evaluates to false. Define the set

$$
C = \an\Bigl(\{\chi_i : x_i = 1 \text{ in } A \} \cup \{\lambda_i : x_i = 0
\text{ in } A \} \cup \{\gamma,\delta,\varepsilon,\beta \}\Bigr).
$$

\noindent In $\mathcal{G}_1$, there is an open, bidirected walk from $\alpha$ 
to $\beta$ through the $\chi$-$\lambda$ segment, and we see that $\beta$ is not 
$\mu$-separated from $\alpha$ given $C$. On the other hand, consider a walk 
between $\alpha$ and $\beta$ in $\mathcal{G}$. The first and last edges on a 
connecting walk from $\alpha$ to $\beta$ given $C$ must be bidirected and as $C 
= \an(C)$, this means that 
the walk must be a collider walk to be 
$\mu$-connecting from $\alpha$ to $\beta$ given $C$, and it must 
go through $\delta$. If $\phi_i^k$ corresponds to a positive literal and it is 
open (i.e., in $\an(C)$) then 
the correspond variable is $1$ in $A$ and $z_i^k = 1$. If it corresponds to a 
negative literal 
and it is open, then the corresponding variable is $0$ in $A$ and $z_i^k = 
1$. This means 
that each $\phi_i^k$ segment must be closed in at least one node as the 
assignment $A$ evaluates to $0$. Therefore, $\beta$ is $\mu$-separated from 
$\alpha$ given $C$ in $\mathcal{G}$, and we conclude that $\mathcal{G}$ and 
$\mathcal{G}_1$ are Markov equivalent if and only if $H$ is a tautology.

We now show that $\mathcal{G}_1$ and $\mathcal{G}_2$ are Markov equivalent. 
Take any 
$\mu$-connecting walk in $\mathcal{G}_1$. Any occurrence of 
$\varepsilon\leftrightarrow\beta$ can be replaced by either $\beta 
\leftrightarrow \phi \rightarrow \varepsilon$ or $\beta \leftrightarrow \phi 
\leftrightarrow \varepsilon$, depending on whether $\phi\in C$. The resulting 
walk is present and connecting in $\mathcal{G}_2$. On the other 
hand, consider a $\mu$-connecting walk from $\rho_1$ to $\rho_m$ given $C$ in 
$\mathcal{G}_2$. We start by 
removing 
all non-endpoint occurrences of $\phi$. Say 

$$
\rho_1 \sim \ldots \sim \rho_i \sim \phi \rightarrow \varepsilon \sim \ldots 
\sim \rho_m.
$$

\noindent If $\rho_i = \beta$, then $\rho_i \leftrightarrow \phi \rightarrow 
\varepsilon$ 
can be replaced by $\rho_i\leftrightarrow \varepsilon$. If $\rho_i = \phi$ or 
if $\rho_i = \varepsilon$, we 
can remove the cycle ($\varepsilon = \rho_m$ we may need to concatenate with 
$\varepsilon\rightarrow\varepsilon$ to obtain a $\mu$-connecting walk after 
removing a cycle). If 
instead

$$
\rho_1 \sim \ldots \sim \rho_i \sim \varepsilon \leftarrow \phi \sim \rho_j 
\sim \ldots 
\sim \rho_m
$$

\noindent we do the same depending on $\rho_j$ (if $\phi= \rho_m$ then we 
concatenate the subwalk from $\rho_1$ to $\varepsilon$ with 
$\varepsilon\leftrightarrow\phi$). This gives us a 
$\mu$-connecting walk in $\mathcal{G}_2$ such that $\phi$ is not a 
non-endpoint 
node. Finally, if $\phi \rightarrow \varepsilon$ is still on the walk $\phi$, 
we must have $\rho_1=\psi$ and this edge can be substituted by 
$\phi\leftrightarrow\varepsilon$. The resulting walk is present in 
$\mathcal{G}_1$. Every collider is different from $\phi$ and this means that it 
is in $\an_{\mathcal{G}_1}(C)$ as well. Therefore, this walk is 
$\mu$-connecting in $\mathcal{G}_1$. It follows that $\mathcal{G}_1$ and 
$\mathcal{G}_2$ are Markov equivalent (regardless of whether $H$ is a 
tautology). Therefore, $H$ is a tautology 
if and only if $\mathcal{G}$ are ${\mathcal{G}}_2$ Markov equivalent.

The reduction from 3DNF tautology to Markov equivalence of $\mathcal{G}$ and 
$\mathcal{G}_1$ (or of $\mathcal{G}$ and $\mathcal{G}_2$) is done in polynomial 
time in the number of conjunctions and it follows that Decision problems 
\ref{dc:add1bidir} and \ref{dc:add1dir} are coNP-hard. Given a triple 
$(A,B,C)$, one can decide $\mu$-separation in polynomial time. If two graphs 
are not Markov equivalent, then there exists a triple $(A,B,C)$ such that 
$\mu$-separation holds in one and not in the other. This is a 
polynomially-sized 
certificate, and this means 
that these problems are in \textbf{coNP}, thus, coNP-complete.
\end{proof}

	\begin{figure}[h]
		\centering
		\begin{tikzpicture}
		\node[shape=circle,draw=black] (a) at (-1,0) {$\alpha$};
		\node[shape=circle,draw=black] (x1) at (2,1) {$\chi_1$};
		\node[shape=circle,draw=black] (y1) at (2,-1) {$\lambda_1$};
		\node[shape=circle,draw=black] (x2) at (4,1) {$\chi_2$};
		\node[shape=circle,draw=black] (y2) at (4,-1) {$\lambda_2$};
		\node[shape=circle,draw=black] (x3) at (6,1) {$\chi_3$};
		\node[shape=circle,draw=black] (y3) at (6,-1) {$\lambda_3$};
		\node[draw=none] (xnm1) at (8,1) {};
		\node[draw=none] (ynm1) at (8,-1) {};
		\node[shape=circle,draw=black] (xn) at (10,1) {$\chi_n$};
		\node[shape=circle,draw=black] (yn) at (10,-1) {$\lambda_n$};
		\node[shape=circle,draw=black] (e) at (13,0) {$\varepsilon$};
		\node[shape=circle,draw=black] (b) at (15,0) {$\beta$};
		\node[shape=circle,draw=black] (f) at (14,-1) {$\phi$};
		\node[shape=circle,draw=black] (c) at (.3,2) {$\gamma$};
		\node[shape=circle,draw=black] (d) at (11.5,2.5) {$\delta$};
		\node[shape=circle,draw=black] (cb) at (.3,-2) {$\bar{\gamma}$};
		\node[shape=circle,draw=black] (db) at (11.5,-2.5) {$\bar{\delta}$};
		
		\path (x3) -- node[auto=false]{\ldots} (xnm1);
		\path (x3) -- node[auto=false, sloped, pos = .5]{\ldots\ \ \ \ \ 
			\ldots} 
		(ynm1);
		\path (y3) -- node[auto=false, sloped, pos = .5]{\ldots\ \ \ \ \ 
			\ldots} 
		(xnm1);
		\path (y3) -- node[auto=false]{\ldots} (ynm1);
		
		\path 
		[<->](cb) edge [bend left = 0] node {} (x1)
		[<->](cb) edge [bend left = 0] node {} (y1)
		[<->](x1) edge [bend left = 0] node {} (x2)
		[<->](y1) edge [bend left = 0] node {} (x2)
		[<->](x1) edge [bend left = 0] node {} (y2)
		[<->](y1) edge [bend left = 0] node {} (y2)
		[<->](x2) edge [bend left = 0] node {} (x3)
		[<->](y2) edge [bend left = 0] node {} (x3)
		[<->](x2) edge [bend left = 0] node {} (y3)
		[<->](y2) edge [bend left = 0] node {} (y3)
		[<->](xnm1) edge [bend left = 0] node {} (xn)
		[<->](ynm1) edge [bend left = 0] node {} (xn)
		[<->](xnm1) edge [bend left = 0] node {} (yn)
		[<->](ynm1) edge [bend left = 0] node {} (yn)
		[<->](xn) edge [bend left = 0] node {} (db)
		[<->](yn) edge [bend left = 0] node {} (db)
		[->](b) edge [bend right = 20] node {} (e)
		[<-](b) edge [bend left = 20] node {} (e)
		[<->](b) edge [bend left = 0] node {} (f)
		[<->](f) edge [bend left = 0] node {} (e);
		
		\node[shape=circle,draw=black] (p1) at (2,3) {$\phi_1^1$};
		\node[shape=circle,draw=black] (p2) at (6,3) {$\phi_2^1$};
		\node[shape=circle,draw=black] (p3) at (10,3) {$\phi_3^1$};
		
		\path 
		[<->](c) edge node  {} (p1)
		[<->] (p1) edge node {} 
		(p2)
		[<->] (p2) edge node {} 
		(p3)
		[<->] (p3) edge node {} 
		(d);
		
		\node[shape=circle,draw=black] (p1b) at (2,-3) 
		{$\bar{\phi}_1^1$};
		\node[shape=circle,draw=black] (p2b) at (6,-3) 
		{$\bar{\phi}_2^1$};
		\node[shape=circle,draw=black] (p3b) at (10,-3) 
		{$\bar{\phi}_3^1$};
		
		\path 
		[<->](cb) edge node  {} (p1b)
		[<->] (p1b) edge node {} 
		(p2b)
		[<->] (p2b) edge node {} 
		(p3b)
		[<->] (p3b) edge node {} 
		(db);
		
		\node[shape=circle,draw=black] (p1bN) at (2,-5) 
		{$\bar{\phi}_1^N$};
		\node[shape=circle,draw=black] (p2bN) at (6,-5) 
		{$\bar{\phi}_2^N$};
		\node[shape=circle,draw=black] (p3bN) at (10,-5) 
		{$\bar{\phi}_3^N$};
		
		\path 
		[<->](cb) edge node  {} (p1bN)
		[<->] (p1bN) edge node {} 
		(p2bN)
		[<->] (p2bN) edge node {} 
		(p3bN)
		[<->] (p3bN) edge node {} 
		(db);
		
		\path (p1b) -- node[auto=false, sloped, pos = .5]{\ldots} (p1bN);
		\path (p2b) -- node[auto=false, sloped, pos = .5]{\ldots} (p2bN);
		\path (p3b) -- node[auto=false, sloped, pos = .5]{\ldots} (p3bN);

		\node[shape=circle,draw=black] (p1N) at (2,5) 
		{${\phi}_1^N$};
		\node[shape=circle,draw=black] (p2N) at (6,5) 
		{${\phi}_2^N$};
		\node[shape=circle,draw=black] (p3N) at (10,5) 
		{${\phi}_3^N$};
		
		\path 
		[<->](c) edge node  {} (p1N)
		[<->] (p1N) edge node {} 
		(p2N)
		[<->] (p2N) edge node {} 
		(p3N)
		[<->] (p3N) edge node {} 
		(d);
		
		\path (p1) -- node[auto=false, sloped, pos = .5]{\ldots} (p1N);
		\path (p2) -- node[auto=false, sloped, pos = .5]{\ldots} (p2N);
		\path (p3) -- node[auto=false, sloped, pos = .5]{\ldots} (p3N);
		
		\path 
		[<->](a) edge node  {} (c)
		[<->] (a) edge node {} 
		(cb)
		[<->] (db) edge node {} 
		(e)
		[<->] (d) edge node {} 
		(b);
		\end{tikzpicture}
		\caption{A subgraph of $\mathcal{G}$ in the proof of Theorem 
			\ref{thm:MEhard}.}
		\label{fig:MEhard}
	\end{figure}
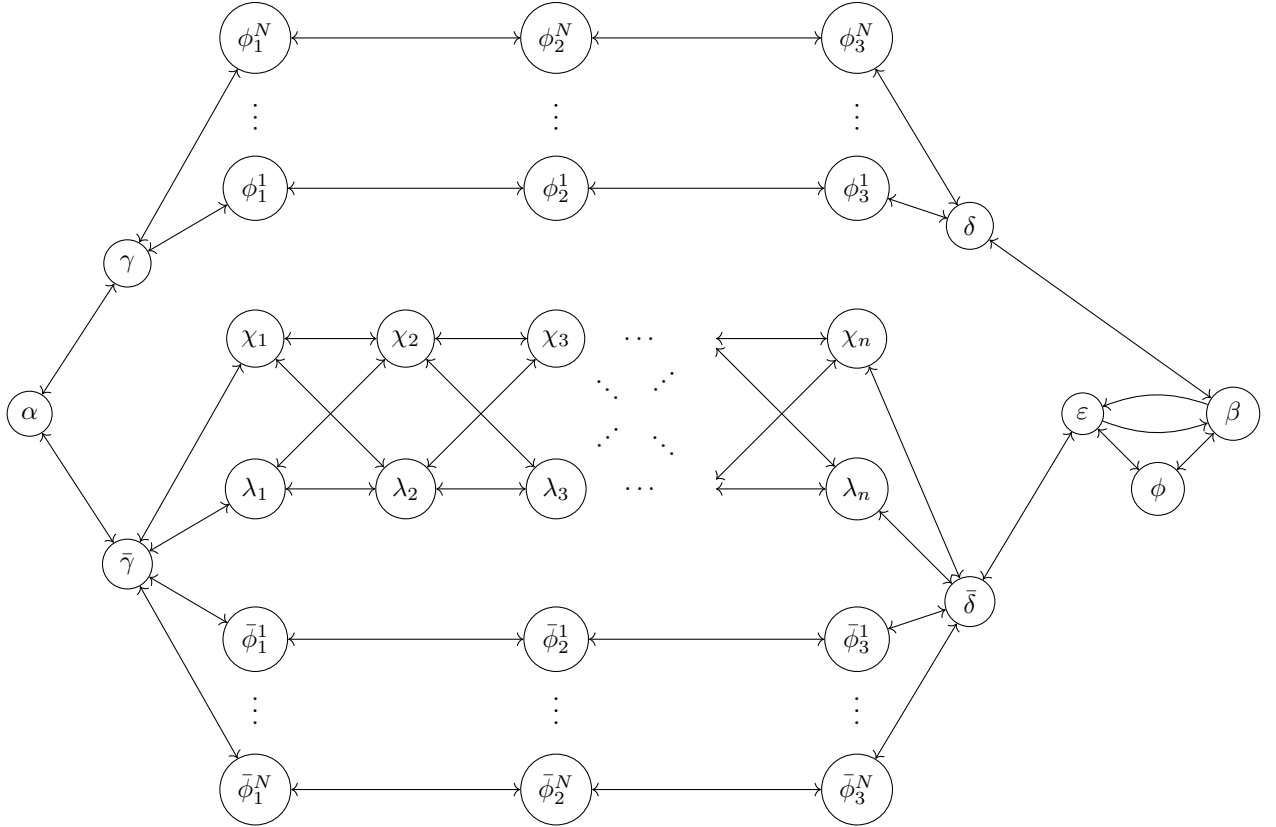
	
		\begin{figure}[h]
			\centering
			\begin{tikzpicture}
			\node[shape=circle,draw=black] (v) at (0,0) {$\rho$};
			\node[shape=circle,draw=black] (ne) at (0,2) 
			{$\nu_\varepsilon^\rho$};
			\node[shape=circle,draw=black] (nb) at (-2,0) {$\nu_\beta^\rho$};
			\node[shape=circle,draw=black] (e) at (-4,0) {$\varepsilon$};
			\node[shape=circle,draw=black] (b) at (0,4) {$\beta$};
			
			\path 
			[->](v) edge [bend left = 20] node {} (nb)
			[->](nb) edge [bend left = 0] node {} (v)
			[->](v) edge [bend left = 0] node {} (ne)
			[->](ne) edge [bend left = 20] node {} (v)
			[->](nb) edge [bend left = 20] node {} (ne)
			[->](ne) edge [bend left = 0] node {} (nb)
			[<->](ne) edge [bend left = 0] node {} (b)
			[<->](e) edge [bend left = 0] node {} (nb);
			\end{tikzpicture}
			\caption{A subgraph of $\mathcal{G}$ in the proof of Theorem 
				\ref{thm:MEhard}.}
			\label{fig:nuMEhard}
		\end{figure}
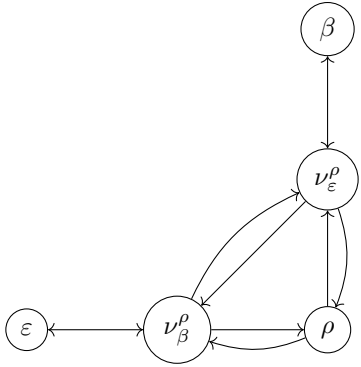
		

Theorem \ref{thm:MEhard} shows that deciding Markov equivalence is not 
computationally 
feasible for large graphs which hurts the practical applicability of 
$\mu$-separation DMGs. We discuss the implications further in Subsection 
\ref{ssec:implic}. We now consider the analogous decision problems in a 
\emph{sparse} setting.

\subsection{Sparse DMGs}
\label{ssec:sparseDMGs}

We may ask if the hardness results still apply if we fix the maximal 
connectivity of each node and let the size of the node set grow. As a 
formalization of this, we first define a notion of node connectivity based on 
\emph{inseparability}. We say that $\beta$ is \emph{inseparable} from
$\alpha$ in 
$\mathcal{I}(\mathcal{G})$ if there is no $C\subseteq V\setminus \{\alpha\}$ 
such that $\beta$ is $\mu$-separated from $\alpha$ given $C$ in $\mathcal{G}$ 
\citep{mogensenUAI2018}. We let 
$\overset{{\scaleto{\rightarrow}{1.5pt}}}{u}(\beta,\mathcal{I}(\mathcal{G}))$ 
denote the 
set of nodes $\alpha$ such that $\beta$ is inseparable from $\alpha$ in 
$\mathcal{G}$, and we let 
$\overset{{\scaleto{\leftarrow}{1.5pt}}}{u}(\beta,\mathcal{I}(\mathcal{G}))$ 
denote the set of nodes $\alpha$ such that $\alpha$ is inseparable from 
$\beta$.

\begin{defn}[Node connectivity in DMG]
	We define $\con_\mathcal{G}^{\scaleto{\rightarrow}{1.8pt}}(\beta)$ as the 
	cardinality of the set
	$\overset{{\scaleto{\rightarrow}{1.5pt}}}{u}(\beta,\mathcal{I}(\mathcal{G}))$
	 and 
	 we define $\con_\mathcal{G}^{\scaleto{\leftarrow}{1.8pt}}(\beta)$ as the 
	 cardinality of the 
	 set 
	 $\overset{{\scaleto{\leftarrow}{1.5pt}}}{u}(\beta,\mathcal{I}(\mathcal{G}))$.
We define $\con_\mathcal{G}(\beta)$ as the maximum of 
	$\con_\mathcal{G}^{\scaleto{\rightarrow}{1.8pt}}(\beta)$ and 
	$\con_\mathcal{G}^{\scaleto{\leftarrow}{1.8pt}}(\beta)$.
\end{defn}

We see that the above definitions are invariant under Markov equivalence, i.e., 
$\con_{\mathcal{G}_1}(\beta) = 
\con_{\mathcal{G}_2}(\beta)$, 
$\con_{\mathcal{G}_1}^{\scaleto{\rightarrow}{1.8pt}}(\beta) = 
\con_{\mathcal{G}_2}^{\scaleto{\rightarrow}{1.8pt}}(\beta)$, and 
$\con_{\mathcal{G}_1}^{\scaleto{\leftarrow}{1.8pt}}(\beta) = 
\con_{\mathcal{G}_2}^{\scaleto{\leftarrow}{1.8pt}}(\beta)$ when $\mathcal{G}_1$ 
and $\mathcal{G}_2$ are 
Markov equivalent. One can define other notions of node 
connectivity in a 
DMG, in particular based on the edges directly, instead of using separability. 
However, a DMG in which every node is adjacent with only a 
small number of nodes may be Markov equivalent with the complete DMG (see 
Figure \ref{fig:completeME}). Even 
in a maximal DMG, the lack of an 
edge between a pair of nodes does not generally imply separability (Appendix 
\ref{app:nodeConn}), and therefore connectivity based on separability 
appears to be a more useful notion of connectivity. Moreover, the 
graphs are intended as representations of stochastic systems, thus
functional sparsity (i.e., sparsity in the implied dependence structure) seems 
more useful 
than representational sparsity (sparsity in node adjacency). Appendix 
\ref{app:nodeConn} provides more details and examples.

		\begin{figure}[h]
			\centering
			\begin{tikzpicture}
			\node[shape=circle,draw=black,inner sep=0pt,minimum size=28pt] (a) 
			at (0,0) {$1$};
			\node[shape=circle,draw=black,inner sep=0pt,minimum size=28pt] (b) 
			at (2.25,0) 
			{$2$};
			\node[shape=circle,draw=black,inner sep=0pt,minimum size=28pt] (c) 
			at (4.5,0) {$3$};
			\node[shape=circle,draw=white,inner sep=0pt,minimum size=28pt] (d) 
			at (6.75,0) {};
			\node[shape=circle,draw=white,inner sep=0pt,minimum size=28pt] (e) 
			at (7.75,0) {\ldots};
			\node[shape=circle,draw=white,inner sep=0pt,minimum size=28pt] (f) 
			at (8.75,0) {};
			\node[shape=circle,draw=black,inner sep=0pt,minimum size=28pt] (g) 
			at (11,0) {$n-1$};
			\node[shape=circle,draw=black,inner sep=0pt,minimum size=28pt] (h) 
			at (13.25,0) {$n$};
			
			\path 
			[->](a) edge [bend left = -20] node {} (b)
			[->](b) edge [bend left = -20] node {} (c)
			[->](c) edge [bend left = -20] node {} (d)
			[->](f) edge [bend left = -20] node {} (g)
			[->](g) edge [bend left = -20] node {} (h);
			\path 
			[<->](a) edge [bend left = 0] node {} (b)
			[<->](b) edge [bend left = 0] node {} (c)
			[<->](c) edge [bend left = 0] node {} (d)
			[<->](f) edge [bend left = 0] node {} (g)
			[<->](g) edge [bend left = 0] node {} (h);
			\path
			[->](a) edge [loop above] node {} (a)
			[->](b) edge [loop above] node {} (b)
			[->](c) edge [loop above] node {} (c)
			[->](g) edge [loop above] node {} (g)
			[->](h) edge [loop above] node {} (h);
			\end{tikzpicture}
			\caption{Loops are omitted from the 
				visualization. This graph is Markov equivalent with the 
				complete DMG on nodes $\{1,2,\ldots,n \}$.}
			\label{fig:completeME}
		\end{figure}
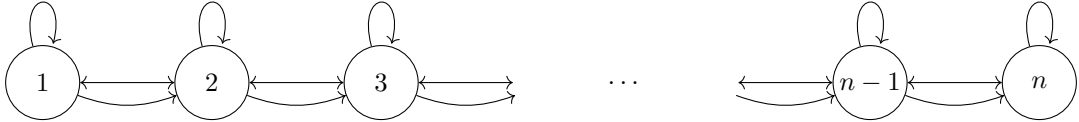

\begin{defn}[$m$-sparsity]
	Let $\mathcal{G}$ be a DMG. The \emph{maximal 
		connectivity} of $\mathcal{G}$ is defined as $\max_{\alpha\in 
		V}(\con_\mathcal{G}(\alpha))$. We say that $\mathcal{G}=(V,E)$ is 
	\emph{$m$-sparse} 
	if $\max_{\alpha\in V}(\con_\mathcal{G}(\alpha)) \leq m$. 
	\label{def:mSparse}
\end{defn}

We now state a sparse version of Decision problem \ref{dc:ME}.

\begin{dec}[Markov equivalence in $m$-sparse DMGs]
	Let $m$ be a nonnegative integer and let $\mathcal{G}_1 = (V,E_1)$ and 
	$\mathcal{G}_2 = (V,E_2)$ be $m$-sparse 
	DMGs. Are $\mathcal{G}_1$ and $\mathcal{G}_2$ Markov equivalent?
	\label{dc:MEsparse}
\end{dec}

The following are sparse versions of Theorem \ref{thm:MEhard} and Corollary 
\ref{cor:MEhard}.

\begin{thm}
	Let $m \geq 16$, let $\mathcal{G} = (V,E)$ be an $m$-sparse graph, and let 
	$e$ denote an edge. 
	Deciding Markov equivalence of $\mathcal{G}$ and $\mathcal{G}+e$ is 
	coNP-complete (Decision problems \ref{dc:add1bidirSparse} and 
	\ref{dc:add1dirSparse}).
		\label{thm:sparseMEhard}
\end{thm}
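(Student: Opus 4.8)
The plan is to adapt the reduction from 3DNF tautology used in the proof of Theorem \ref{thm:MEhard}, modifying the construction so that the maximal connectivity (Definition \ref{def:mSparse}) stays bounded by a fixed constant as $N$ and $n$ grow. Since $m$-sparsity is inherited by every larger $m$, it suffices to exhibit a $16$-sparse family of instances for which Markov equivalence encodes tautology; the theorem then follows for all $m \geq 16$. As in Theorem \ref{thm:MEhard} I would keep the two extra edges, a bidirected $e_b$ and a directed $e_d$, arrange that $\mathcal{G}+e_b$ and $\mathcal{G}+e_d$ are always Markov equivalent, and deduce hardness of both Decision problems \ref{dc:add1bidirSparse} and \ref{dc:add1dirSparse} simultaneously. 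Membership in \textbf{coNP} and the polynomial running time are inherited verbatim (a triple $(A,B,C)$ witnessing a discrepancy in $\mu$-separation is a polynomial certificate, and the gadgets below add only polynomially many nodes and edges), so the entire content is coNP-hardness under the sparsity constraint.

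First I would locate the nodes of unbounded connectivity in the construction of Theorem \ref{thm:MEhard}. These are of two kinds. The anchors $\gamma,\bar\gamma,\delta,\bar\delta$ together with $\beta$ and $\varepsilon$ act as hubs: $\gamma$ is joined by bidirected edges to the chain-start of each of the $N$ conjunctions, $\delta$ to each chain-end, and $\beta,\varepsilon$ to the gadget nodes $\nu_\beta^\rho,\nu_\varepsilon^\rho$ for all $\rho\in V^-$, so each has connectivity growing with $N$. The variable nodes $\chi_l,\lambda_l$ form the second kind: $\chi_l$ lies on a directed cycle through every $\phi_i^k$ (and $\bar\phi_i^k$) encoding an occurrence of $x_l$, so if $x_l$ occurs in arbitrarily many conjunctions then $\chi_l$ is inseparable from arbitrarily many nodes.

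The variable-node blow-up I would remove at the source, by reducing not from general 3DNF tautology but from a bounded-occurrence variant in which every variable appears in at most a fixed number of conjunctions; this remains coNP-complete, being the complement of a bounded-occurrence 3SAT, which is NP-complete. Under this restriction each directed variable cycle has bounded length, so $\con_\mathcal{G}(\chi_l)$ and $\con_\mathcal{G}(\lambda_l)$ are uniformly bounded. For the remaining hubs $\gamma,\bar\gamma,\delta,\bar\delta,\beta,\varepsilon$ I would replace each star by a bounded-degree routing gadget, a chain or balanced tree of fresh auxiliary nodes in which each node carries only a constant number of the original incident edges and is linked to its gadget-neighbours. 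The correctness arguments of Theorem \ref{thm:MEhard} should transfer because the gadget merely re-routes the same reachability: an $\alpha$-to-$\phi_1^k$ connection through $\gamma$ becomes one through the gadget, and since a single hub already permitted crossing between distinct conjunction chains, the gadget introduces no bypass of the literal-controlled colliders $\phi_i^k$ that govern whether a conjunction's chain is open.

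The step I expect to be the main obstacle is verifying $16$-sparsity itself, because connectivity here is measured through inseparability rather than adjacency. Bounding the degree of each gadget node is immediate, but I must show that every node, in particular every fresh routing node $g$, is inseparable from at most $16$ others; this means producing, for each such $g$ and each far-away endpoint $\ell$, an explicit conditioning set $C$ with $g\notin C$ (respectively $\ell\notin C$) that $\mu$-separates them. The delicate point is that the internal gadget colliders must be kept out of $\an(C)$ along every competing walk, whereas the very edges that make the reduction work tend to place these colliders inside $\an(C)$. Calibrating the gadget branching so that such separating sets always exist, while the specific conditioning sets driving the tautology and non-tautology arguments still behave exactly as in Theorem \ref{thm:MEhard}, is where the additional ideas beyond the non-sparse proof are concentrated and where the constant $16$ is ultimately pinned down.
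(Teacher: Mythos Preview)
Your plan follows the same overall strategy as the paper: replace the high-degree hubs $\gamma,\bar\gamma,\delta,\bar\delta$ by balanced binary trees of auxiliary nodes, and replace the $\nu_\beta^\rho\leftrightarrow\beta$ and $\nu_\varepsilon^\rho\leftrightarrow\varepsilon$ stars by structured copies of $V^-$ that attach to $\beta,\varepsilon$ only at a single corner, then rerun the tautology argument of Theorem~\ref{thm:MEhard}. The one substantive methodological difference is your handling of the variable nodes. You propose restricting to a bounded-occurrence variant of 3DNF tautology so that each directed variable cycle has bounded length. The paper does \emph{not} do this: it reduces from unrestricted 3DNF tautology and allows the variable cycles to be arbitrarily long, relying on the observation that a node on a long directed cycle still has only two cycle-neighbours and that distant cycle members remain $\mu$-separable, so inseparability-connectivity stays bounded regardless of cycle length. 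Your shortcut is legitimate and arguably cleaner, since it eliminates one case from the sparsity verification at the cost of invoking bounded-occurrence SAT hardness; the paper's route avoids that external citation but must absorb the long-cycle case into the sparsity check. You are also right that the genuinely delicate step is bounding inseparability rather than adjacency; the paper does not spell this verification out either and simply asserts $16$-sparsity after ``careful examination,'' so your honest flagging of this as the crux is well placed.
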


Theorem \ref{thm:sparseMEhard} is a stronger version of Theorem 
\ref{thm:MEhard} as it shows that the 
problem of deciding Markov equivalence of DMGs remains coNP-complete when 
restricting to sparse DMGs. We discuss the implications in Subsection 
\ref{ssec:implic}.

\begin{cor}
	Let $m \geq 16$. Deciding Markov equivalence of $m$-sparse DMGs is 
	coNP-complete.
	\label{cor:sparseMEhard}
\end{cor}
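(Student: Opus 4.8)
The plan is to obtain Corollary \ref{cor:sparseMEhard} from Theorem \ref{thm:sparseMEhard} in exactly the way Corollary \ref{cor:MEhard} is obtained from Theorem \ref{thm:MEhard}: the single-edge decision problems are restrictions of the general problem, so their hardness transfers, and membership in \textbf{coNP} is established directly. Two things must be checked: that Decision problem \ref{dc:MEsparse} lies in \textbf{coNP}, and that it is coNP-hard.

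Membership in \textbf{coNP} is the easier half and does not interact with the sparsity constraint. If two $m$-sparse DMGs $\mathcal{G}_1$ and $\mathcal{G}_2$ fail to be Markov equivalent, then by definition some triple $(A,B,C)$ is $\mu$-separated in one graph but not the other; by Proposition \ref{prop:singletonGraphIndep} we may take $A$ and $B$ to be singletons, so the triple has size polynomial in $\vert V \vert$. Since $\mu$-separation of a fixed triple is decidable in polynomial time, such a triple is a polynomial-size certificate for a no-instance, placing the complement in \textbf{NP} and the problem itself in \textbf{coNP}.

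For coNP-hardness I would reduce from the single-edge problem. Theorem \ref{thm:sparseMEhard} states that, for $m \geq 16$, deciding Markov equivalence of an $m$-sparse graph $\mathcal{G}$ and $\mathcal{G}+e$ (Decision problems \ref{dc:add1bidirSparse} and \ref{dc:add1dirSparse}) is coNP-hard. The reduction to Decision problem \ref{dc:MEsparse} simply forwards the pair $(\mathcal{G},\mathcal{G}+e)$, which is answer-preserving by construction; it is a legal instance, however, only once we know that both $\mathcal{G}$ and $\mathcal{G}+e$ are $m$-sparse.

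This proviso is the main obstacle, since the hypothesis of Theorem \ref{thm:sparseMEhard} only guarantees that $\mathcal{G}$ itself is $m$-sparse, and adding $e$ could a priori raise the connectivity of an endpoint of $e$. I would resolve it along the clean route of inspecting the graphs built in the proof of Theorem \ref{thm:sparseMEhard} and verifying that $\mathcal{G}+e$ is $m$-sparse as well, independently of the encoded formula; then $(\mathcal{G},\mathcal{G}+e)$ is a legal instance and the reduction is immediate. A construction-free alternative uses the Markov invariance of $\con_\mathcal{G}(\cdot)$ recorded in Subsection \ref{ssec:sparseDMGs}: if $\mathcal{G}$ and $\mathcal{G}+e$ are Markov equivalent then $\con_{\mathcal{G}+e}(\cdot)=\con_\mathcal{G}(\cdot)\leq m$, so every yes-instance already has $\mathcal{G}+e$ being $m$-sparse. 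Hence a failure of $m$-sparsity of $\mathcal{G}+e$ certifies a no-instance, and the reduction may map such inputs to a fixed pair of inequivalent $m$-sparse graphs; this is polynomial time provided $m$-sparsity can be tested efficiently, which follows once inseparability is decided by a single canonical conditioning set per ordered pair. Either route yields coNP-hardness, and together with membership in \textbf{coNP} this gives that Decision problem \ref{dc:MEsparse} is coNP-complete for all $m \geq 16$.
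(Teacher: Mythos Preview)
Your proposal is correct and matches the paper's intended derivation: the corollary is meant to follow immediately from Theorem \ref{thm:sparseMEhard}, with membership in \textbf{coNP} argued exactly as you do and hardness inherited from the single-edge subproblem.

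One remark on the ``main obstacle'' you flag. The decision problems referenced in Theorem \ref{thm:sparseMEhard} (Decision problems \ref{dc:add1bidirSparse} and \ref{dc:add1dirSparse}) already require \emph{both} $\mathcal{G}_1$ and $\mathcal{G}_2=\mathcal{G}_1+e$ to be $m$-sparse as part of the instance, so forwarding the pair to Decision problem \ref{dc:MEsparse} is automatically legal. The paper's proof of Theorem \ref{thm:sparseMEhard} also explicitly states that all three constructed graphs are $16$-sparse, which is precisely your first proposed resolution. Your construction-free alternative is clever but leans on efficient testability of $m$-sparsity (i.e., of inseparability), which the paper does not establish here; since the direct route via the construction already suffices, that detour is unnecessary.
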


The value $m = 16$ may not be what we expect from `sparse' graphical models and 
two comments are in order. First, the adjacency sparsity (see Section 
\ref{app:nodeConn}) of the graphs in the 
proof are only $8$, also in the maximal Markov equivalent graphs of the graphs 
used in the proof. Second, the upshot of the corollary is that there exists a 
finite number such that deciding Markov equivalence of $m$-sparse DMGs is 
coNP-complete. This means that fixing the value of $m$ does not generally lead 
to computational problems that scale as polynomials in the size of the graph. 
On the other hand, the so-called $k$-weak equivalences that are introduced in 
this paper 
provide polynomial-time algorithms for each fixed $k$ (Section \ref{sec:algo}). 
Note that results analogous to those of Theorems \ref{thm:MEhard} and 
\ref{thm:sparseMEhard} do not hold for ADMGs with 
$m$-separation. For those, polynomial-time algorithms for Markov equivalence 
are known, without making sparsity 
assumptions \citep{hu2020faster}.

	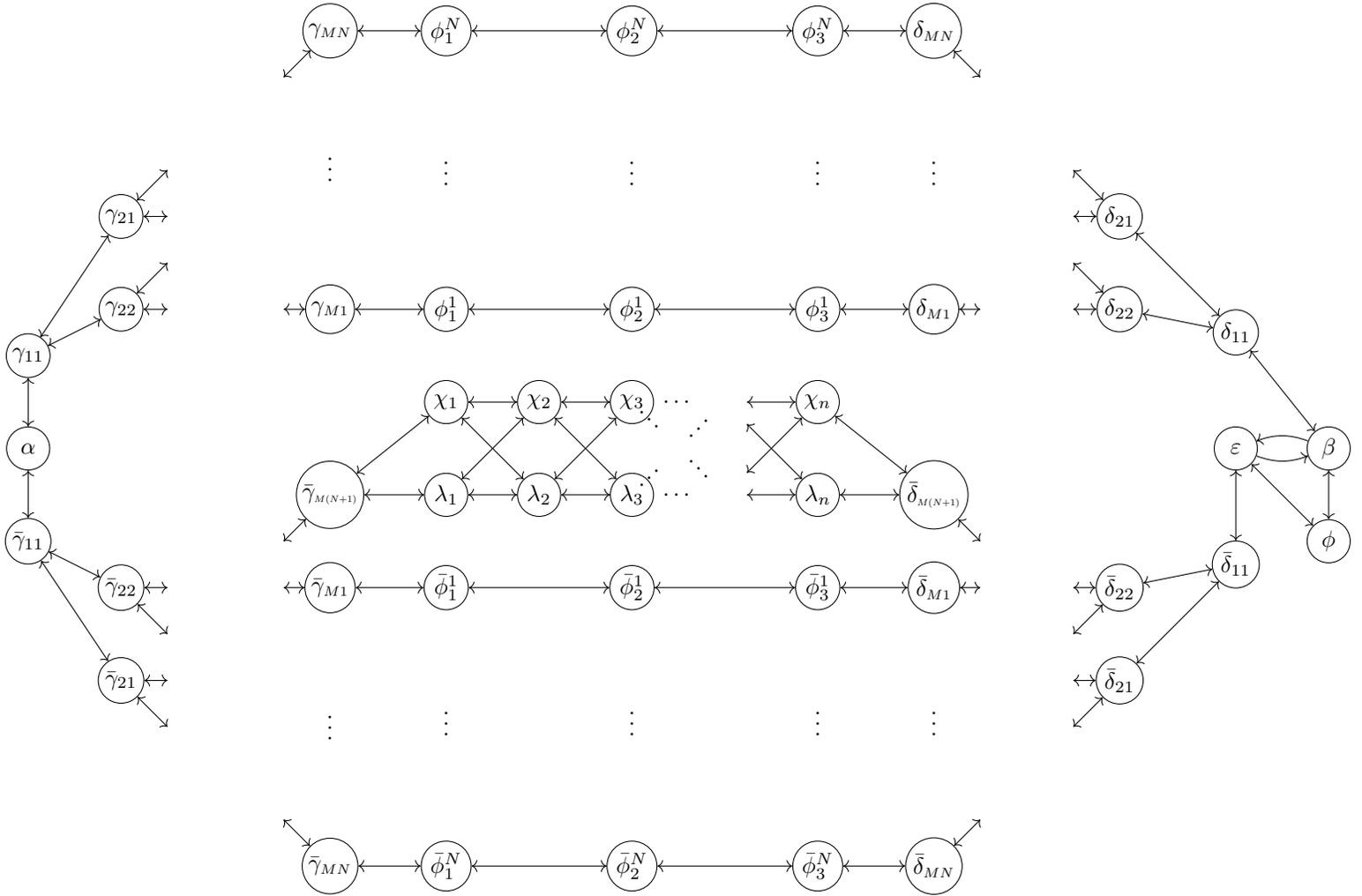
\begin{figure}
		\hspace*{-2.5cm}%
		\centering
		\begin{minipage}{\dimexpr\textwidth+3.5cm}
		\begin{tikzpicture}[scale = .7]
		\tikzset{minimum size= .65cm, inner sep = .05cm}
		\node[shape=circle,draw=black] (a) at (-7,0) {$\alpha$};
		\node[shape=circle,draw=black] (x1) at (2,1) {$\chi_1$};
		\node[shape=circle,draw=black] (y1) at (2,-1) {$\lambda_1$};
		\node[shape=circle,draw=black] (x2) at (4,1) {$\chi_2$};
		\node[shape=circle,draw=black] (y2) at (4,-1) {$\lambda_2$};
		\node[shape=circle,draw=black] (x3) at (6,1) {$\chi_3$};
		\node[shape=circle,draw=black] (y3) at (6,-1) {$\lambda_3$};
		\node[draw=none] (xnm1) at (8,1) {};
		\node[draw=none] (ynm1) at (8,-1) {};
		\node[shape=circle,draw=black] (xn) at (10,1) {$\chi_n$};
		\node[shape=circle,draw=black] (yn) at (10,-1) {$\lambda_n$};
		\node[shape=circle,draw=black] (e) at (19,0) {$\varepsilon$};
		\node[shape=circle,draw=black] (b) at (21,0) {$\beta$};
		\node[shape=circle,draw=black] (f) at (21,-2) {$\phi$};
		\node[shape=circle,draw=black] (c11) at (-7,2) {$\gamma_{11}$};
		\node[shape=circle,draw=black] (d11) at (19,2.5) {$\delta_{11}$};
		\node[shape=circle,draw=black] (cb11) at (-7,-2) {$\bar{\gamma}_{11}$};
		\node[shape=circle,draw=black] (db11) at (19,-2.5) 
		{$\bar{\delta}_{11}$};
		\node[shape=circle,draw=black] (cM1) at (-.5,3) 
		{$\gamma_{\scaleto{M1}{4pt}}$};
		\node[shape=circle,draw=black] (dM1) at (12.5,3) 
		{$\delta_{\scaleto{M1}{4pt}}$};
		\node[shape=circle,draw=black] (cbM1) at (-.5,-3) 
		{$\bar{\gamma}_{\scaleto{M1}{4pt}}$};
		\node[shape=circle,draw=black] (dbM1) at (12.5,-3) 
		{$\bar{\delta}_{\scaleto{M1}{4pt}}$};
		\node[shape=circle,draw=black] (cMN) at (-.5,9) 
		{$\gamma_{\scaleto{MN}{4pt}}$};
		\node[shape=circle,draw=black] (dMN) at (12.5,9) 
		{$\delta_{\scaleto{MN}{4pt}}$};
		\node[shape=circle,draw=black] (cbMN) at (-.5,-9) 
		{$\bar{\gamma}_{\scaleto{MN}{4pt}}$};
		\node[shape=circle,draw=black] (dbMN) at (12.5,-9) 
		{$\bar{\delta}_{\scaleto{MN}{4pt}}$};
		\node[shape=circle,draw=black] (cbMN1) at (-.5,-1) 
		{$\bar{\gamma}_{\scaleto{M(N+1)}{4pt}}$};
		\node[shape=circle,draw=black] (dbMN1) at (12.5,-1) 
		{$\bar{\delta}_{\scaleto{M(N+1)}{4pt}}$};
		\node[shape=circle,draw=black] (cb21) at (-5,-5) {$\bar{\gamma}_{21}$};
		\node[shape=circle,draw=black] (db21) at (16.5,-5) 
		{$\bar{\delta}_{21}$};
		\node[shape=circle,draw=black] (cb22) at (-5,-3) 
		{$\bar{\gamma}_{22}$};
		\node[shape=circle,draw=black] (db22) at (16.5,-3) 
		{$\bar{\delta}_{22}$};
		\node[shape=circle,draw=black] (c21) at (-5,5) {${\gamma}_{21}$};
		\node[shape=circle,draw=black] (d21) at (16.5,5) 
		{${\delta}_{21}$};
		\node[shape=circle,draw=black] (c22) at (-5,3) 
		{${\gamma}_{22}$};
		\node[shape=circle,draw=black] (d22) at (16.5,3) 
		{${\delta}_{22}$};
		
		\path (x3) -- node[auto=false]{\ldots} (xnm1);
		\path (x3) -- node[auto=false, sloped, pos = .5]{\ldots\ \ \ \ \ 
			\ldots} 
		(ynm1);
		\path (y3) -- node[auto=false, sloped, pos = .5]{\ldots\ \ \ \ \ 
			\ldots} 
		(xnm1);
		\path (y3) -- node[auto=false]{\ldots} (ynm1);
		
		\path 
		[<->](cbMN1) edge [bend left = 0] node {} (x1)
		[<->](cbMN1) edge [bend left = 0] node {} (y1)
		[<->](x1) edge [bend left = 0] node {} (x2)
		[<->](y1) edge [bend left = 0] node {} (x2)
		[<->](x1) edge [bend left = 0] node {} (y2)
		[<->](y1) edge [bend left = 0] node {} (y2)
		[<->](x2) edge [bend left = 0] node {} (x3)
		[<->](y2) edge [bend left = 0] node {} (x3)
		[<->](x2) edge [bend left = 0] node {} (y3)
		[<->](y2) edge [bend left = 0] node {} (y3)
		[<->](xnm1) edge [bend left = 0] node {} (xn)
		[<->](ynm1) edge [bend left = 0] node {} (xn)
		[<->](xnm1) edge [bend left = 0] node {} (yn)
		[<->](ynm1) edge [bend left = 0] node {} (yn)
		[<->](xn) edge [bend left = 0] node {} (dbMN1)
		[<->](yn) edge [bend left = 0] node {} (dbMN1)
		[->](b) edge [bend right = 20] node {} (e)
		[<-](b) edge [bend left = 20] node {} (e)
		[<->](f) edge [bend right = 0] node {} (e)
		[<->](b) edge [bend left = 0] node {} (f);
		
		\node[shape=circle,draw=black] (p1) at (2,3) {$\phi_1^1$};
		\node[shape=circle,draw=black] (p2) at (6,3) {$\phi_2^1$};
		\node[shape=circle,draw=black] (p3) at (10,3) {$\phi_3^1$};
		
		\path 
		[<->](cM1) edge node  {} (p1)
		[<->] (p1) edge node {} 
		(p2)
		[<->] (p2) edge node {} 
		(p3)
		[<->] (p3) edge node {} 
		(dM1);
		
		\node[shape=circle,draw=black] (p1b) at (2,-3) 
		{$\bar{\phi}_1^1$};
		\node[shape=circle,draw=black] (p2b) at (6,-3) 
		{$\bar{\phi}_2^1$};
		\node[shape=circle,draw=black] (p3b) at (10,-3) 
		{$\bar{\phi}_3^1$};
		
		\path 
		[<->](cb11) edge node  {} (cb21)
		[<->](cb11) edge node  {} (cb22)
		[<->] (p1b) edge node {} 
		(p2b)
		[<->] (p2b) edge node {} 
		(p3b)
		[<->] (p3b) edge node {} 
		(dbM1)
		[<->] (db11) edge node {} 
		(db21)
		[<->] (db11) edge node {} 
		(db22);
		
		\node[shape=circle,draw=black] (p1bN) at (2,-9) 
		{$\bar{\phi}_1^N$};
		\node[shape=circle,draw=black] (p2bN) at (6,-9) 
		{$\bar{\phi}_2^N$};
		\node[shape=circle,draw=black] (p3bN) at (10,-9) 
		{$\bar{\phi}_3^N$};
		
		\path 
		[<->](cbMN) edge node  {} (p1bN)
		[<->](cbM1) edge node  {} (p1b)
		[<->] (p1bN) edge node {} 
		(p2bN)
		[<->] (p2bN) edge node {} 
		(p3bN)
		[<->] (p3bN) edge node {} 
		(dbMN);
		
		\path (p1b) -- node[auto=false, sloped, pos = .5]{\ldots} (p1bN);
		\path (p2b) -- node[auto=false, sloped, pos = .5]{\ldots} (p2bN);
		\path (p3b) -- node[auto=false, sloped, pos = .5]{\ldots} (p3bN);

		\path (cMN) -- node[auto=false, sloped, pos = .5]{\ldots} (cM1);		
		\path (cbMN) -- node[auto=false, sloped, pos = .5]{\ldots} (cbM1);
		\path (dMN) -- node[auto=false, sloped, pos = .5]{\ldots} (dM1);
		\path (dbMN) -- node[auto=false, sloped, pos = .5]{\ldots} (dbM1);
				
		\node[shape=circle,draw=black] (p1N) at (2,9) 
		{${\phi}_1^N$};
		\node[shape=circle,draw=black] (p2N) at (6,9) 
		{${\phi}_2^N$};
		\node[shape=circle,draw=black] (p3N) at (10,9) 
		{${\phi}_3^N$};
		
		\path 
		[<->](c11) edge node  {} (c21)
		[<->](c11) edge node  {} (c22)
		[<->] (cMN) edge node {} 
		(p1N)
		[<->] (p1N) edge node {} 
		(p2N)
		[<->] (p2N) edge node {} 
		(p3N)
		[<->] (p3N) edge node {} 
		(dMN)
		[<->] (d11) edge node {} 
		(d21)
		[<->] (d11) edge node {} 
		(d22);

\draw[<->] (c21) --++(1,0);
\draw[<->] (c21) --++(1,1);
\draw[<->] (c22) --++(1,0);
\draw[<->] (c22) --++(1,1);
\draw[<->] (cMN) --++(-1,-1);
\draw[<->] (cM1) --++(-1,0);

\draw[<->] (cb21) --++(1,0);
\draw[<->] (cb21) --++(1,-1);
\draw[<->] (cb22) --++(1,0);
\draw[<->] (cb22) --++(1,-1);
\draw[<->] (cbMN) --++(-1,1);
\draw[<->] (cbM1) --++(-1,0);

\draw[<->] (d21) --++(-1,0);
\draw[<->] (d21) --++(-1,1);
\draw[<->] (d22) --++(-1,0);
\draw[<->] (d22) --++(-1,1);
\draw[<->] (dMN) --++(1,-1);
\draw[<->] (dM1) --++(1,0);

\draw[<->] (db21) --++(-1,0);
\draw[<->] (db21) --++(-1,-1);
\draw[<->] (db22) --++(-1,0);
\draw[<->] (db22) --++(-1,-1);
\draw[<->] (dbMN) --++(1,1);
\draw[<->] (dbM1) --++(1,0);

\draw[<->] (cbMN1) --++(-1,-1);
\draw[<->] (dbMN1) --++(1,-1);
		
		\path (p1) -- node[auto=false, sloped, pos = .5]{\ldots} (p1N);
		\path (p2) -- node[auto=false, sloped, pos = .5]{\ldots} (p2N);
		\path (p3) -- node[auto=false, sloped, pos = .5]{\ldots} (p3N);
		
		\path 
		[<->](a) edge node  {} (c11)
		[<->] (a) edge node {} 
		(cb11)
		[<->] (db11) edge node {} 
		(e)
		[<->] (d11) edge node {} 
		(b);
		\end{tikzpicture}
		\caption{A subgraph of $\mathcal{G}$ in the proof of Theorem 
			\ref{thm:sparseMEhard}.}
		\label{fig:sparseMEhard}
\end{minipage}%
\hspace*{-1cm}
	\end{figure}
 

\begin{proof}
	We consider a Boolean formula in 3DNF form as in the proof of Theorem 
	\ref{thm:MEhard} (see that proof for related notation and terminology). We 
	will define 
	three $m$-sparse graphs 
	$\mathcal{G} = (V,E)$, $\mathcal{G}_1=(V,E_1)$, and $\mathcal{G}_2 = 
	(V,E_2)$ and show that $\mathcal{G}$ and $\mathcal{G}_1$ are Markov 
	equivalent if and only if $H$ is a tautology while $\mathcal{G}_1$ and 
	$\mathcal{G}_2$ are always Markov equivalent.

We define 
$M$ to be the smallest integer such that $2^{M-1} \geq N+1$. We first define a 
number of sets that will be subsets of the node set $V$. Note 
that these 
sets are all pairwise disjoint. 

	\begin{align*}
	\Gamma &= \{\gamma_{ij}, i = 1,\ldots,M, j = 1\ldots, 2^{i-1} \} \\
	\bar{\Gamma} &= \{\bar{\gamma}_{ij}, i = 1,\ldots,M, j = 1\ldots, 2^{i-1} 
	\} 
	\\
	\Delta &= \{\delta_{ij}, i = 1,\ldots,M, j = 1\ldots, 2^{i-1} \} \\
	\bar{\Delta} &= \{\bar{\delta}_{ij}, i = 1,\ldots,M, j = 1\ldots, 2^{i-1} 
	\} 
	\\	
	\Phi &= \{\phi_i^j, j = 1,\ldots,N, i=1,\ldots,n_j \}	\\
	\bar{\Phi} &= \{\bar{\phi}_i^j,  j = 1,\ldots,N, i=1,\ldots,n_j \}	\\
	\mathrm{X} &= \{{\chi}_l,   l=1,\ldots,n \} \\
	\Lambda &= \{{\lambda}_l,   l=1,\ldots,n \} 
	\end{align*}
	
	\noindent The node 
	$\chi_l$ corresponds to the Boolean variable $x_l$ and the node $\lambda_l$ 
	corresponds to the negation of $x_l$. Nodes $\phi_i^j$ and $\bar{\phi}_i^j$ 
	both correspond to the literal $z_i^j$ (see also the proof of Theorem 
	\ref{thm:MEhard} for additional explanation). We define
	
	\begin{align*}
		V^- &= \Gamma  \cup \Delta 
		\cup  \Phi   \cup \mathrm{X} \cup 
		\Lambda \\
\bar{V}^- &=   \bar{\Gamma} 
\cup \bar{\Delta}  \cup \bar{\Phi} \\
		\mathrm{N}_\varepsilon &= \{\nu_\varepsilon^\phi, 
				\}_{\phi \in V^-} \\
		\mathrm{N}_\beta	&= \{\nu_\beta^\phi \}_{\phi \in V^-} \\
		\bar{\mathrm{N}}_\varepsilon &= \{\bar{\nu}_\varepsilon^\phi, 
		\}_{\phi \in \bar{V}^-} \\
		\bar{\mathrm{N}}_\beta	&= \{\bar{\nu}_\beta^\phi \}_{\phi \in 
		\bar{V}^-}.
	\end{align*}

	\noindent We now define the node 
	set $V$ as a disjoint union,
	
	$$
	V = \{\alpha,\beta,\varepsilon,\phi\} \cup V^- \cup \bar{V}^- \cup 
	\mathrm{N}_\varepsilon \cup \mathrm{N}_\beta \cup 
	\bar{\mathrm{N}}_\varepsilon \cup \bar{\mathrm{N}}_\beta.
	$$
  
  We add some intuition on the construction of the graph. The 
  $\Gamma$- and $\Delta$-nodes (and their barred versions) are `triangular' in 
  shape and help connect a 
  single node to many more in a sparse manner (see Figure 
  \ref{fig:sparseMEhard}). The $\Phi$- and $\bar{\Phi}$-nodes correspond to 
  literals in the conjunctions of the Boolean formula, $H$. The elements of 
  $\mathrm{X}$ correspond to 
  variables in $H$, and the elements of $\Lambda$ to their 
  negation. The $\nu_\varepsilon$- and 
  $\nu_\beta$-components will help connect every node to $\varepsilon$ and to 
  $\beta$ and are copies of the $V^{-}$ and $\bar{V}^-$ sets in the 
  sense that $\rho \mapsto \nu_\varepsilon^\rho$ is a bijection from $V^-$ to 
  $\mathrm{N}_\varepsilon$,  $\rho \mapsto \nu_\beta^\rho$ is a bijection from 
  $V^-$ to $\mathrm{N}_\beta$,  ${\rho} \mapsto 
  \bar{\nu}_\varepsilon^{{\rho}}$ is a bijection from $\bar{V}^-$ to 
  $\bar{\mathrm{N}}_\varepsilon$, and  ${\rho} \mapsto 
  \bar{\nu}_\beta^{{\rho}}$ is a bijection from $\bar{V}^-$ to 
  $\bar{\mathrm{N}}_\beta$, though the edges are not exact copies as explained 
  below.
  
  We now define the edge set of $\mathcal{G}$. We add 
  bidirected edges $\gamma_{ij} \leftrightarrow \gamma_{(i+1)(2j)}, 
  \gamma_{(i+1)(2j-1)}$ for $i = 1,\ldots,M-1$, and analogously for 
  $\bar{\Gamma}$, 
  $\Delta$, and $\bar{\Delta}$ (see Figure \ref{fig:sparseMEhard}). Moreover, 
  we add 
  $\gamma_{Mj} \leftrightarrow \phi_1^j$; $\bar{\gamma}_{Mj} \leftrightarrow 
  \bar{\phi}_1^j$; $\delta_{Mj} \leftrightarrow \delta_{n_j}^j$; 
  $\bar{\delta}_{Mj} \leftrightarrow \bar{\delta}_{n_j}^j$ for $j \leq N$. We 
  also add $\bar{\gamma}_{M2^{M-1}} 
  \leftrightarrow {\chi}_1,{\lambda}_1$; $\bar{\delta}_{M2^{M-1}} 
  \leftrightarrow 
  \chi_n,\lambda_n$. We add $\alpha 
  \leftrightarrow \gamma_{11}, \bar{\gamma}_{11}$. 
  We also add $\varepsilon \leftrightarrow \bar{\delta}_{11}$ and 
  $\beta \leftrightarrow {\delta}_{11}$. We add 
  $\varepsilon\rightarrow\beta$ and $\beta\rightarrow \varepsilon$ as well as 
  $\phi\leftrightarrow\varepsilon,\beta$. We add for each $j = 1,\ldots, N$, 
  $\phi_i^j \leftrightarrow \phi_{i+1}^j$ 
  and  $\bar{\phi}_i^j \leftrightarrow \bar{\phi}_{i+1}^j$ 
  for $1\leq i \leq n_j-1$.
  
  For $\phi_1,\phi_2\in V^-$ such that $\phi_1\notin \Phi$ or 
  $\phi_2\notin \Phi$, we add $\nu_\varepsilon^{\phi_1} 
  \leftrightarrow 
  \nu_\varepsilon^{\phi_2}$ 
  and $\nu_\beta^{\phi_1} \leftrightarrow \nu_\beta^{\phi_2}$ if 
  and only if $\phi_1\leftrightarrow\phi_2$ was added above. For each $j$, we 
  also add
  ${\nu}_\beta^{\gamma_{Mj}} \leftrightarrow 
  {\nu}_\beta^{\phi_i^j}\leftrightarrow {\nu}_\beta^{\delta_{Mj}}$ and  
  $\bar{\nu}_\varepsilon^{\gamma_{Mj}} \leftrightarrow 
  \bar{\nu}_\varepsilon^{\phi_i^j}\leftrightarrow 
  \bar{\nu}_\varepsilon^{\delta_{Mj}}$ for 
  each $i = 1,\ldots,n_j$. We also add 
  $\nu_\varepsilon^{\delta_{11}} \leftrightarrow \varepsilon$ ; 
  $\nu_\beta^{\delta_{11}} \leftrightarrow \beta$ ; 
  $\bar{\nu}_\varepsilon^{\bar{\delta}_{11}} \leftrightarrow \varepsilon$ and 
  $\bar{\nu}_\beta^{\bar{\delta}_{11}} \leftrightarrow \beta$. Note that 
  $\nu_\varepsilon^{\gamma_{11}}$, $\nu_\beta^{\gamma_{11}}$, 
  $\bar{\nu}_\varepsilon^{\bar{\gamma}_{11}}$, 
  $\bar{\nu}_\beta^{\bar{\gamma}_{11}}$ are 
  not 
  adjacent with $\alpha$. For $\phi_1,\phi_2\in \bar{V}^-$ such that 
  $\phi_1\notin \bar{\Phi}$ or $\phi_2\notin \bar{\Phi}$, we add 
  $\bar{\nu}_\varepsilon^{\phi_1} \leftrightarrow 
  \bar{\nu}_\varepsilon^{\phi_2}$ 
  and $\bar{\nu}_\beta^{\phi_1} \leftrightarrow \bar{\nu}_\beta^{\phi_2}$ if 
  and only if $\phi_1\leftrightarrow\phi_2$ was added above. For each $j$, we 
  also add
  $\bar{\nu}_\beta^{\bar{\gamma}_{Mj}} \leftrightarrow 
  \bar{\nu}_\beta^{\phi_i^j}\leftrightarrow 
  \bar{\nu}_\beta^{\bar{\delta}_{Mj}}$ and  
  $\bar{\nu}_\varepsilon^{\bar{\gamma}_{Mj}} \leftrightarrow 
  \bar{\nu}_\varepsilon^{\phi_i^j}\leftrightarrow 
  \bar{\nu}_\varepsilon^{\bar{\delta}_{Mj}}$ for 
  each $i = 1,\ldots,n_j$.
   
   In this proof, we will say that sets $
   V^-, 
   	\bar{V}^-, 
   	N_\varepsilon, 
   	N_\beta, 
   	\bar{N}_{{\varepsilon}}$, and 
   $\bar{N}_{{\beta}}$  are \emph{line segments}. We define
   
   \begin{alignat*}{3}
   V^i &= \{\gamma_{ij}, 
   \nu_\varepsilon^{\gamma_{ij}}, \nu_\beta^{\gamma_{ij}}, \bar{\gamma}_{ij}, 
   \bar{\nu}_\varepsilon^{\bar{\gamma}_{ij}}, 
   \bar{\nu}_\beta^{\bar{\gamma}_{ij}}, j = 
   1,\ldots,2^{i-1}  
   \}, &&i = -M,\ldots,-1,  \\
   V^i &= \{\delta_{ij}, 
   \nu_\varepsilon^{\delta_{ij}}, \nu_\beta^{\delta_{ij}}, \bar{\delta}_{ij}, 
   \bar{\nu}_\varepsilon^{\bar{\delta}_{ij}}, 
   \bar{\nu}_\beta^{\bar{\delta}_{ij}}, j = 
   1,\ldots,2^{i-1}  
   \}, &&i = 1,\ldots,M, \\ 
   V^0 &= \{\phi_{i}^j, 
   \nu_\varepsilon^{\phi_{i}^j}, \nu_\beta^{\phi_{i}^j}, \bar{\phi}_{i}^j, 
   \bar{\nu}_\varepsilon^{\bar{\phi}_{i}^j}, 
   \bar{\nu}_\beta^{\bar{\phi}_{i}^j}, j = 
   1,\ldots,N,i=1,\ldots,n_j \}\ \cup && \\
    &\qquad \{\chi_i,\lambda_i, \nu_\varepsilon^{\chi_i}, 
   \nu_\varepsilon^{\lambda_i}, 
   \nu_\beta^{\chi_i}, \nu_\beta^{\lambda_i} \},&& \\
   V^{-(M+1)} &= \{\alpha \} && \\
   V^{M+1} &= \{\beta,\varepsilon,\phi \} && \\
   \end{alignat*}
   
   \noindent and we say that $V^i$ is a 
   \emph{vertical segment} for $i = -(M+1),M,\ldots,-1,0,1,\ldots, M,M+1$. 
   `Vertical' refers to the specific visualization of $\mathcal{G}$ used in 
   Figure \ref{fig:sparseMEhard}. The 
   sets, $V_j^i$, defined 
   above are disjoint 
   and $\bigcup_{i = -(M+1)}^{M+1} V^i = V$.
     
      We now add a number of directed edges. For every node $\phi\in V^-$, we 
      add 
      $\phi, \nu_\varepsilon^\phi, \nu_\beta^\phi
      \rightarrow \phi, \nu_\varepsilon^\phi, \nu_\beta^\phi$. For every node 
      $\phi\in \bar{V}^-$, we 
      add 
      $\phi, \bar{\nu}_\varepsilon^\phi, \bar{\nu}_\beta^\phi
      \rightarrow \phi, \bar{\nu}_\varepsilon^\phi, \bar{\nu}_\beta^\phi$. For 
      each $i = \pm 1,\ldots,\pm M$, we connect the nodes in the vertical 
      segment 
      $V^i$ by a directed cycle (any will work). We add directed 
    cycles containing 
    $\chi_k$ 
    and 
    all $\phi_i^j$ and $\bar{\phi}_i^j$ such that $z_i^j$ is a 
    positive literal of the variable $x_k$.  We add directed cycles containing 
    $\lambda_k$ and all $\phi_i^j$ and $\bar{\phi}_i^j$ such 
    that $z_i^j$ is a negative literal of the variable $x_k$.
    
     Finally, we add all directed and bidirected loops. The above defines 
     the edge set $E$ and we let
     $\mathcal{G}=(V,E)$. Note that the nodes in a vertical 
     segment are connected by a directed 
 cyclic walk for $i\neq -(M-1),0,M+1$.  We also define $\mathcal{G}_1 = 
 (V,E_1)$ where $E_1 = E \cup \{\beta\leftrightarrow\varepsilon \}$ and 
 $\mathcal{G}_2 = (V,E_2)$ where $E_2 = E \cup \{\phi\rightarrow\varepsilon 
 \}$. Note that in all three graphs, if $\rho_1\sim_e\rho_2$ and $\rho_1$ and 
 $\rho_2$ are in different 
  vertical segments, $V^{i_1}$ and $V^{i_2}$, respectively, then $e$ is 
  bidirected and $i_1 - i_2 = \pm 1$.
   
  We will first show that $\mathcal{G}$ and $\mathcal{G}_1$ are Markov 
  equivalent if and only if $H$ is a tautology. Assume first that $H$ 
  is a 
  tautology and consider a $\mu$-connecting walk from 
  $\rho_1$ to $\rho_m$ in 
  $\mathcal{G}_1$,
  
  $$
  \rho_1 \sim \ldots \sim \rho_m.
  $$
  
  Every node has a self-loop, so it suffices to consider walks where $e_1$ (the 
  edge $\varepsilon \leftrightarrow \beta$) only occurs once. If it does not 
  occur 
  at all the walk is present in $\mathcal{G}$ as well and connecting 
  (ancestry is the same in $\mathcal{G}$ and $\mathcal{G}_1$). Say

  $$
  \underbrace{\rho_1 \sim \ldots \sim \varepsilon}_{\omega_1} \leftrightarrow 
  \underbrace{\beta \sim \ldots \sim \rho_m}_{\omega_2}.
  $$	
  
If $\rho \in V^i$, then we say that $i$ is the \emph{order} of $\rho$.

\begin{lem}
	Let $\rho\in V$ be of order $j$. If there is 
	an open walk from 
	$\rho$ to $\beta$ given $C$ in $\mathcal{G}$ or in $\mathcal{G}_1$ then 
	the $k$'th vertical segment 
	, $j < k < M+1$, contains at least one node in $C$.
	\label{lem:verticalSegmentC}
\end{lem}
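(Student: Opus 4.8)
The plan is to reduce the whole statement to a single claim: that for every order $k$ with $j<k<M+1$ the walk contains a \emph{collider} of order $k$. This reduction rests on two structural facts. The first is the observation recorded just before the lemma: any edge joining two distinct vertical segments is bidirected and joins segments whose orders differ by exactly $1$. The second, complementary fact is that every \emph{directed} edge of $\mathcal{G}$ lies inside one vertical segment; this holds because the only directed edges are the loops, the directed edges among $\phi,\nu_\varepsilon^\phi,\nu_\beta^\phi$, the directed cycles within each $V^i$, the literal cycles (all contained in $V^0$), and $\varepsilon\rightarrow\beta,\ \beta\rightarrow\varepsilon$ (contained in $V^{M+1}$), none of which leaves its segment. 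Hence a directed walk never changes order, so $\an_\mathcal{G}(v)\subseteq V^{\mathrm{order}(v)}$ for every node $v$; the same holds in $\mathcal{G}_1$, since the added edge $\varepsilon\leftrightarrow\beta$ is bidirected, lies in $V^{M+1}$, and does not alter ancestry. This ancestry fact is exactly what turns ``there is an open collider of order $k$'' into ``$C$ meets $V^k$''.

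Fix $k$ with $j<k<M+1$ and consider the open walk $\rho=\rho_1\sim\cdots\sim\rho_m=\beta$, recording the order of each $\rho_t$. Consecutive orders differ by $0$ (within-segment edge) or by $\pm1$ (cross-segment, hence bidirected, edge). As $\mathrm{order}(\rho_1)=j<k$ and $\mathrm{order}(\rho_m)=M+1>k$, let $p$ be the largest index with $\mathrm{order}(\rho_p)\le k-1$; then $\mathrm{order}(\rho_p)=k-1$, $\mathrm{order}(\rho_{p+1})=k$, and every later node has order $\ge k$. Let $w_1=\rho_{p+1},\dots,w_r$ be the maximal block of consecutive nodes of order $k$ beginning at $p+1$. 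The edge entering $w_1$ comes from order $k-1$, so it is bidirected with a head at $w_1$; and since all nodes after $\rho_p$ have order $\ge k$ while $\rho_m$ has order $M+1>k$, the block must be left \emph{upward}, so the edge leaving $w_r$ goes to order $k+1$ and is bidirected with a head at $w_r$.

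I then show any such block contains a collider. If $r=1$ this is immediate, as $w_1=w_r$ carries heads from both the entering and the leaving edge. If $r\ge 2$, suppose no $w_t$ is a collider. The entering edge puts a head at $w_1$, so non-collideriness forces the edge $w_1\sim w_2$ to have a tail at $w_1$; a bidirected edge would put a head there, so this edge must be directed, $w_1\rightarrow w_2$, leaving a head at $w_2$. Iterating, a head arriving at $w_t$ from the left forces $w_t\rightarrow w_{t+1}$ (it again cannot be bidirected), so by induction every within-block edge points forward and $w_r$ receives a head from $w_{r-1}\rightarrow w_r$; combined with the head from the leaving edge, $w_r$ is a collider — a contradiction. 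Thus the block contains a collider $v$ of order $k$. This head-propagation step, which must remain valid in the presence of the within-segment bidirected edges (notably the $\phi$-chains inside $V^0$), is the crux of the argument; I expect it to be the main obstacle, together with the bookkeeping that loops, which change neither order nor the entering/leaving heads, can be deleted from the block without loss.

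To conclude, $v$ is a non-endpoint node of the walk, since its order $k$ differs from both $j$ and $M+1$; hence on the open walk $v\in\an(C)$. By the ancestry fact, any directed walk witnessing $v\in\an(C)$ stays inside $V^k$, so the relevant element of $C$ lies in $V^k$, giving $V^k\cap C\neq\emptyset$. As $k$ ranged over all orders strictly between $j$ and $M+1$, this proves the lemma, and because the only features used — the two edge-type facts, the segment-confinement of ancestry, and the open-walk property that every collider lies in $\an(C)$ — hold verbatim in $\mathcal{G}_1$, the identical argument applies there.
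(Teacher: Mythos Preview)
Your proof is correct and takes essentially the same approach as the paper: both rely on the facts that cross-segment edges are bidirected and that directed edges (hence ancestry) remain within a single vertical segment, and both conclude by locating a collider of the walk inside $V^k$. The paper is terser, asserting only that the node where the walk enters $V^k$ is either itself a collider or an ancestor of one in $V^k$, while you spell out the head-propagation explicitly on a specific block; your flagged worry about within-segment bidirected edges is unfounded, since encountering one inside the block would immediately make the current $w_t$ a collider and terminate the argument.
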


\begin{proof}
	If $j = M,M+1$ this is vacuously true as no vertical segment satisfies the 
	condition, and we can assume that $\rho\neq \varepsilon,\beta,\phi$. Note 
	that 
	this
	walk must necessarily pass through a collider in each vertical segment 
	$V^k$ such 
	that $k > j$ which gives the result. To see this, note that removing any 
	vertical segment such that $k>j$ gives us a disconnected graph with $\rho$ 
	in one component and $\beta$ in the other as a vertical segment, 
	$k$, is only adjacent to vertical segments $k-1$ and $k+1$. When a walk 
	contains a subwalk $\rho_1 \sim \rho_2$ such that $\rho_1$ is in $V^{k-1}$ 
	and 
	$\rho_2$ 
	is in $V^k$, then the connecting edge must be bidirected. If $\rho_2$ is a 
	collider, we must have $\rho_2\in \an_\mathcal{G}(C)$ and $\rho_2$ is only 
	an 
	ancestor of nodes in $V^k$. Otherwise, $\rho_2$ is an ancestor of a 
	collider in $V^k$ and the same argument applies.
\end{proof}

   \begin{lem}
   	Let $\rho\neq \alpha$ be a node in $\mathcal{G}$. If there exists an open 
   	walk from $\rho$ to $\beta$ in $\mathcal{G}_1$ with a head at $\beta$, then 
   	there exists an open walk $\rho \sim \nu_\beta^{\rho} \sim \ldots \sim 
   	\beta$ in $\mathcal{G}$ with a head at $\beta$ such that every nonendpoint 
   	node equals $\nu_\beta^\rho$ for $\rho \in V^-$ or $\bar{\nu}_\beta^\rho$ 
   	for $\rho \in \bar{V}^-$.
   	\label{lem:connInNu}
   \end{lem}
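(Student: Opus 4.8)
The plan is to build the required walk explicitly inside the $\nu_\beta$-copy network and to certify its openness with Lemma \ref{lem:verticalSegmentC}. Three ancestry facts drive everything. First, adding a bidirected edge does not change ancestry, so $\an_{\mathcal{G}}(C) = \an_{\mathcal{G}_1}(C)$ and I may work with a single ancestor operator. Second, for every $\sigma \in V^-$ the three nodes $\sigma$, $\nu_\varepsilon^\sigma$, $\nu_\beta^\sigma$ lie on a common directed cycle, whence $\nu_\beta^\sigma \in \an(C)$ if and only if $\sigma \in \an(C)$ (and symmetrically for barred copies). Third, for each $i \in \{\pm 1,\dots,\pm M\}$ the vertical segment $V^i$ is traversed by a single directed cycle and is therefore strongly connected, so $V^i \cap C \neq \emptyset$ forces $V^i \subseteq \an(C)$; in particular every $\nu_\beta$-copy sitting in such a $V^i$ is then an ancestor of $C$.

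The easy cases $\rho \in \{\beta,\varepsilon,\phi\}$ are dispatched at once using the loop $\beta \rightarrow \beta$, the edge $\varepsilon \rightarrow \beta$, and the edge $\phi \leftrightarrow \beta$, none of which has an interior node. So assume $\rho \in V^-$ (the case $\rho \in \bar{V}^-$ is identical after barring, and if $\rho$ is itself a copy node $\nu_\varepsilon^\sigma$ or $\nu_\beta^\sigma$ the route simply starts there). I would then take the walk
$$
\rho \sim_1 \nu_\beta^{\rho} \leftrightarrow \cdots \leftrightarrow \nu_\beta^{\delta_{11}} \leftrightarrow \beta ,
$$
whose interior is a route through the $\nu_\beta$-copies that climbs monotonically in order: up the relevant copy-tree, across to the $\Delta$-side copy-tree through a single crossing copy $\nu_\beta^{\phi_i^{j'}}$ when $\rho$ has order $\le 0$, and up the $\Delta$ copy-tree to the root copy $\nu_\beta^{\delta_{11}}$. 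The first edge $\sim_1$ is oriented $\rho \rightarrow \nu_\beta^\rho$ when $\nu_\beta^\rho \in \an(C)$, making $\nu_\beta^\rho$ a collider in $\an(C)$, and $\rho \leftarrow \nu_\beta^\rho$ otherwise, making it a noncollider which then lies outside $C$ since $C \subseteq \an(C)$.

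Openness of the interior is where Lemma \ref{lem:verticalSegmentC} enters. Every interior node past $\nu_\beta^\rho$ is a collider, because both of its incident copy-edges are bidirected, and --- since the route increases in order --- each lies in a vertical segment $V^k$ with $\mathrm{order}(\rho) < k \le M$. By Lemma \ref{lem:verticalSegmentC} each such $V^k$ meets $C$; for $k \neq 0$ the strong connectivity noted above then places the whole of $V^k$, and in particular the copy we pass through, into $\an(C)$, so that collider is open. The endpoints impose no condition, and the terminal edge $\nu_\beta^{\delta_{11}} \leftrightarrow \beta$ supplies the required head at $\beta$; it remains only to open the at most one interior node of order $0$.

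The main obstacle is exactly this order-$0$ crossing copy $\nu_\beta^{\phi_i^{j'}}$, together with the correct treatment of the order-$0$ nodes $\chi_l,\lambda_l$ and the barred/unbarred split: the segment $V^0$ is not strongly connected, so Lemma \ref{lem:verticalSegmentC} alone does not place $\nu_\beta^{\phi_i^{j'}}$ in $\an(C)$. I would resolve this by reading a usable conjunction $j'$ off the given walk itself. When $\rho$ has negative order, the hypothesised open walk in $\mathcal{G}_1$ must pass from a negative-order segment to a positive-order one, hence through a collider in $V^0$ that is an ancestor of $C$; tracing this crossing (a $\Phi$-chain or the $\chi$--$\lambda$ ladder) and using that $\phi_i^{j}$ shares a directed variable-cycle with the corresponding $\chi_l$ (resp.\ $\lambda_l$) identifies some $\phi_i^{j'} \in \an(C)$ whose $\Gamma$-leaf and $\Delta$-leaf copies are both reachable in the $\nu_\beta$-network, so the crossing through $\nu_\beta^{\phi_i^{j'}}$ is open. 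When $\rho$ already has order $\ge 0$ no crossing is needed and the construction goes through unconditionally, completing the walk with all interior nodes of the required copy form.
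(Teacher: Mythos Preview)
Your overall route is the paper's: enter $N_\beta$ (or $\bar N_\beta$) through one suitably oriented directed edge and then follow a bidirected path to $\beta$, with Lemma~\ref{lem:verticalSegmentC} opening the interior colliders. Your first-edge rule (collider when $\nu_\beta^\rho\in\an(C)$, noncollider otherwise) is in fact the correct orientation; the paper's stated rule has the two cases swapped. Where you diverge is that the paper effectively treats the bidirected $N_\beta$-path as if it skipped $V^0$ --- it invokes an edge $\nu_\beta^{\gamma_{Mj}}\leftrightarrow\nu_\beta^{\delta_{Mj}}$ and then applies Lemma~\ref{lem:verticalSegmentC} directly --- so no special $V^0$ argument is given there; you instead route through a copy $\nu_\beta^{\phi_i^{j'}}\in V^0$ and must open that collider separately.

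Your $V^0$ resolution has a real gap. Extracting a conjunction $j'$ from the hypothesised walk yields some $\phi_i^{j'}\in\an(C)$, but you never argue that this $j'$ is reachable from $\nu_\beta^{\rho}$ by an order-nondecreasing bidirected path inside $N_\beta$. For $\rho=\gamma_{i_0 j_0}$ the only such $j'$ are those in the subtree below $(i_0,j_0)$, whereas the given walk in $\mathcal G_1$ may first move within $V^{\mathrm{order}(\rho)}$ along the directed cycle to a different branch of $\Gamma$ before crossing $V^0$, producing a $j'$ outside that subtree; reaching $\nu_\beta^{\gamma_{Mj'}}$ then forces you through some $V^k$ with $k<\mathrm{order}(\rho)$, for which Lemma~\ref{lem:verticalSegmentC} says nothing. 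Separately, you flag but do not handle $\rho\in\mathrm{X}\cup\Lambda$: the copies $\nu_\beta^{\chi_l},\nu_\beta^{\lambda_l}$ form an isolated bidirected chain in $N_\beta$ with no exit to $V^1$ (the edges $\chi_1\leftrightarrow\bar\gamma_{M2^{M-1}}$ and $\chi_n\leftrightarrow\bar\delta_{M2^{M-1}}$ are not mirrored into $N_\beta$, since one endpoint lies in $\bar V^-$), so the walk you describe cannot even be written down in that case.
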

   
   \begin{proof}
   	If $\rho = \beta,\varepsilon,\phi$, this is immediate.
   	Assume instead that $\rho \in V^- \cup \bar{V}^-$. Choose first the edge 
   	$\rho \leftarrow \nu_\beta^\rho$ if 
   	$\nu_\beta^\rho\in C$, and otherwise $\rho \rightarrow \nu_\beta^\rho$. 
   	We concatenate this with the open bidirected path to $\beta$. Such a path 
   	exists as 
   	$\nu_\beta^{\gamma_{Mj}}\leftrightarrow\nu_\beta^{\delta_{Mj}}$ and 
   	$\bar{\nu}_\beta^{\bar{\gamma}_{Mj}}\leftrightarrow\bar{\nu}_\beta^{\bar{\delta}_{Mj}}$.
   	 This is open 
   	since all vertical segments between $\rho$ and $\beta$ must contain at 
   	least one node which is in $C$ by Lemma \ref{lem:verticalSegmentC}.
   	
   	If instead $\rho \in \mathrm{N}_\varepsilon \cup 
   	\bar{\mathrm{N}}_\varepsilon$ we can do as above as $\rho \leftarrow 
   	\nu_\beta^{\rho}$ and  $\rho \rightarrow \nu_\beta^{\rho}$ are in the 
   	graph. If $\rho \in 
   	\mathrm{N}_\beta \cup \bar{\mathrm{N}}_\beta$, then there is an open 
   	bidirected path with a head at $\beta$ between $\rho$ and $\beta$. If 
   	$\rho= \varepsilon$ or $\rho = \beta$ it follows directly. 
   \end{proof}
  
  We split into cases depending on whether $\rho_1 = \alpha$.
  
  \paragraph{$\rho_1 \neq \alpha$:} 
  There is an open walk (given $C$) from $\rho_1$ with a head at $\beta$ (Lemma 
  \ref{lem:connInNu}) that 
  we can 
  concatenate with $\omega_2$ to obtain a connecting walk in $\mathcal{G}$.
  
  If instead
  
  $$
  \rho_1 \sim \ldots \sim \beta \leftrightarrow \varepsilon \sim \ldots \sim 
  \rho_m
  $$
  
  \noindent the same argument holds.
  
  \paragraph{$\rho_1 = \alpha$:} If we have a subwalk between $\alpha$ and 
  $\beta$ with a noncollider, then we 
  can find a connecting path in the following way. Say we have
  
  $$
  \rho_1 \sim \ldots \sim \psi_0 \sim \psi_1 \sim \psi_2 \sim \ldots  
  \varepsilon
  \leftrightarrow \beta \sim \ldots \sim \rho_m
  $$
  
  \noindent such that $\psi_1$ 
  is a noncollider (note that, ignoring $\alpha\rightarrow\alpha$, 
  $\alpha$ only has bidirected edges at it, so $\psi_1\neq \alpha$ if we remove 
  $\alpha$-loops). There 
  is 
  necessarily a tail at $\psi_1$ on one of the adjacent edges, $\psi_1\notin 
  C$, and $\psi_1\in \an(C)$. We concatenate the subwalk from $\alpha$ to 
  $\psi_1$ with the open walk from $\psi_1$ to $\beta$ that has a head at 
  $\beta$. 
  Lemma \ref{lem:connInNu} gives the existence of this walk. This also holds if 
  $\rho_1 = \psi_0$, $\psi_2 = \varepsilon$, or $\psi_1 = \varepsilon$.
  
  On the other hand, if the subwalk between $\alpha$ and $\beta$ has no 
  noncolliders, then either it stays within a line segment or either 
  $\alpha$, $\beta$, or $\varepsilon$ occur on the subwalk as a nonendpoint. We 
  can assume 
  that $\alpha$ is only an endpoint. If $\beta$ occurs as a nonendpoint, then 
  this $\beta$ 
  is a collider and this means that there is an open subwalk from $\alpha$ to 
  $\beta$ with a head at $\beta$ which we can concatenate with $\omega_2$. If 
  $\varepsilon$ is a collider (other than right before the final $\beta$), then 
  we can remove the cycle from $\varepsilon$ 
  to $\varepsilon$ from the walk. In any case, we can find a connecting 
  collider walk in 
  $\mathcal{G}_1$ (no noncolliders) such that
  $\alpha$, $\beta$, and $\varepsilon$ will each occur once. This means that 
  the subwalk 
  only contains nodes from a single line segment. This segment cannot be 
  $N_\varepsilon$, $N_\beta$, $\bar{N}_\varepsilon$, nor $\bar{N}_\beta$ as 
  $\alpha$ is not 
  adjacent with any node in these line segments. If the walk only intersects 
  the 
  $V^-$-line segment, then it must either go through $\Phi$-nodes or the 
  $\mathrm{X}\cup\Lambda$-nodes, not both, as it has no noncolliders (or such a 
  walk can be found). If it 
  does not visit any 
  $\chi$- or $\lambda$-nodes, then there is 
  an open walk in the $\bar{\Gamma}\cup \bar{\Phi} \cup 
  \bar{\Delta}$-segment (the analogous walk through the 
  barred versions). Finally, assume it does not visit any $\Phi$-nodes. As $H$ 
  is a tautology, there is 
  also a conjunction segment in $\bar{\Phi}$ which is open and connecting 
  from $\alpha$ to 
  $\beta$ with a head at $\beta$. If instead the bidirected walk is in 
  $\bar{V}^-$, the result follows, 
  and if $\varepsilon$ and $\beta$ occur in the opposite order on the original 
  $\mu$-connecting walk, we can use similar arguments.
  
  If the formula is not a tautology,
  let $A$ be an assignment of values such that the formula evaluates to false. 
  We then consider the set
  
  $$
  C^- = \{\chi_l,\nu_\varepsilon^{\chi_l},\nu_\beta^{\chi_l}: x_l = 1 \text{ in 
  } A  \} \cup \{\lambda_l,\nu_\varepsilon^{\lambda_l},\nu_\beta^{\lambda_l}: 
  x_l = 0 \text{ 
  	in } A  \} \cup \Gamma \cup \Delta.
  $$
  
  \noindent 
  We also define $C = 
  \mathrm{an}(C^-) \cup \{\beta,\delta\}$. We see immediately that $\beta$ is 
  not $\mu$-separated from $\alpha$ given $C$ in $\mathcal{G}_1$ as the $\chi - 
  \lambda$-segment 
  contains an open path from $\alpha$ to $\varepsilon$ with a head at 
  $\varepsilon$ 
  and 
  furthermore $\varepsilon\leftrightarrow\beta$ is in the graph. On the other 
  hand, consider a potential $\mu$-connecting walk from $\alpha$ to $\beta$ in 
  $\mathcal{G}$. If 
  $\varepsilon$ is on the walk, 
  it can only return to $\alpha$. It cannot go between bidirected components 
  because 
  the directed cycles are either completely contained in $C$ or in its 
  complement. It cannot go through a $\phi$-component because of the choice 
  of $A$, and we conclude that it cannot be $\mu$-connecting. In conclusion, 
  $\mathcal{G}$ and $\mathcal{G}_1$ are Markov equivalent if and only if $H$ is 
  a tautology.
  
  The arguments in the proof of Theorem \ref{thm:MEhard} show that 
  $\mathcal{G}_1$ and $\mathcal{G}_2$ are Markov equivalent. Arguments similar 
  to those in the proof of Theorem \ref{thm:MEhard} furthermore show that 
  Decision problems \ref{dc:add1bidirSparse} and \ref{dc:add1dirSparse} are 
  coNP-complete.
  
  Careful examination of the graphs reveals that all three are $16$-sparse.
\end{proof}

One should note that the graphs in the proof of Theorem \ref{thm:sparseMEhard} 
could also be interpreted as $\delta$-separation graphs \citep{didelez2008}. In 
this case, the result also holds, i.e., determining 
$\delta$-separation Markov equivalence of sparse DMGs is also coNP-complete. To 
see this one 
should simply note that $\mu$-separation Markov equivalence implies 
$\delta$-separation Markov equivalence and that the conditioning set used in 
the proof when $H$ is not a tautology contains $\beta$. The hardness result in 
the 
$\delta$-separation case then follows from the (A.1) 
property of the supplementary material of \cite{Mogensen2020a} and from noting 
that the
latent projection technique can also be used for $\delta$-separation.

\cite{richardson1997} studied DGs under $d$-separation and gave an example of 
`nonlocality' in this setting. The example consisted of a sequence of pairs of 
graphs, $\mathcal{D}_n^1$ and $\mathcal{D}_n^2$, such that $\mathcal{D}_n^1$ 
and $\mathcal{D}_n^2$ are not Markov equivalent, but the only separation on 
which the graphs disagree involves nodes that are arbitrarily far apart (for 
increasing values of $n$). Our 
setting is quite different, however, DMGs under $\mu$-separation do exibit the 
same 
`nonlocality' as seen from the proof of Theorem \ref{thm:sparseMEhard}. Say 
that $H$ is not a tautology, in which 
case $\mathcal{G}$ and $\mathcal{G}_1$ in the proof of Theorem 
\ref{thm:sparseMEhard} are not Markov equivalent. From the proof, it follows 
that the graphs only disagree on triples $(A,B,C)$ such that $\alpha\in A$ and 
$\beta\in B$, and this means that the proof (for 
non-tautological $H$ of increasing size) gives a sequence of pairs of graphs 
that only disagree on 
$\mu$-separation of a pair of nodes, $\alpha$ and $\beta$, that are
arbitrarily far from each other as measured by the shortest path between 
$\alpha$ and 
$\beta$. Note that this also holds in the maximal Markov equivalent graphs of 
$\mathcal{G}$ and $\mathcal{G}_1$, and it is therefore not due to 
non-maximality.

\subsection{Implications of hardness results}
\label{ssec:implic}

The hardness results have several implications that we will outline in this 
section, in particular, we argue that several other computational problems are 
also hard in $\mu$-separation DMGs.

Every Markov equivalence class has a greatest element \citep{Mogensen2020a}, 
and one can decide if two DMGs are Markov equivalent
by computing the greatest Markov equivalent graph for each of them and compare. 
This means that finding such a greatest element is also hard. There are similar 
implications for \emph{oracle} learning algorithms. A \emph{(local 
independence) oracle} is an abstract function 
which a learning algorithm may query and which, when provided with a triple 
$(A,B,C)$, outputs whether the corresponding local independence holds or not. 
The oracle gives the correct answer, but when using real data, the oracle has 
to be replaced by hypothesis tests of local independence, and the purpose of 
the oracle formalism is simply 
to separate the algorithmic aspects from the hypothesis testing. If we assume 
that there exists a constraint-based learning algorithm which can recover a 
unique 
representative of the Markov equivalence class (say the greatest element, or 
some other uniquely defined 
representative) of the true graph from when given access 
to a local independence oracle, then using this algorithm, one can 
decide Markov equivalence by 
querying the $\mu$-separation models of the graphs. This is done by 
testing $\mu$-separation in the graph and each test is done in polynomial time 
\citep{mogensenThesis2020}. If only a polynomial number of 
queries are required we could also solve Markov equivalence in polynomial time 
by comparing the output for two graphs. Again, this means that such a learning 
algorithm would need an exponential number of tests.

\subsubsection{Sparse DMGs}

All of the above holds even if we are willing to assume that all graphs are 
somewhat sparse ($m$-sparse, $m \geq 16$). This means that a restriction to 
sparse graphs will not remedy this. This is also different from DAG-based 
models in the following sense. In partially observed DAGs, we may learn a 
graphical representation of the equivalence class using tests of conditional 
independence. If we fix $m$ such that the node degree is less than $m$, this 
can be done in polynomial time \citep{claassen2013}.

These hardness results motivate the second part of this 
paper. Instead 
of requiring sparsity of the DMGs, we will reinterpret them to obtain a 
weaker type of equivalence. Essentially, the DMGs are too expressive leading to 
the above infeasibility results in connection to their Markov equivalence 
classes. We can avoid this by considering a weaker type of equivalence. This 
leads to a simple and useful theory and to practical graph learning algorithms 
as we will see in subsequent sections.


\section{Weak equivalence}
\label{sec:we}

In this section, we introduce a notion of \emph{weak equivalence} and 
argue that it provides a computationally feasible notion of equivalence of 
DMGs. Under a regularity condition, the associated equivalence classes each 
have a greatest element 
and this leads 
to a simple graphical theory.

\subsection{Classes of weak equivalence}

We define three types of equivalence in this section and present them in 
decreasing order of 
generality. They each limit the set of 
triples, $(A,B,C)$, that are used to distinguish between independence 
models represented by DMGs.

\subsubsection{General weak equivalence}

If $\mathcal{G}_1 = (V,E_1)$ and $\mathcal{G}_2 = (V,E_2)$ are Markov 
equivalent, then $(A,B,C) \in \mathcal{I}(\mathcal{G}_1)$ if and only if 
$(A,B,C) \in \mathcal{I}(\mathcal{G}_2)$ for all $A,B,C \subseteq V$. This 
means that Markov equivalence requires the independence models of 
$\mathcal{G}_1$ and $\mathcal{G}_2$ to agree on all 
triplets in the set $\mathcal{P} = \{(A,B,C): A,B,C \subseteq V \}$. A very 
general approach 
to defining weaker notions of equivalence is to only 
compare the independence models on a subset of $\mathcal{P}$.

\begin{defn}[General weak equivalence]
	Let $\mathcal{J} \subseteq \{(A,B,C): A,B,C \subseteq V \}$. We say that 
	$\mathcal{G}_1=(V,E_1)$ and $\mathcal{G}_2=(V,E_2)$ are 
	$\mathcal{J}$-weakly equivalent if 
	
	$$
	\mathcal{I}(\mathcal{G}_1) \cap \mathcal{J} = \mathcal{I}(\mathcal{G}_2) 
	\cap \mathcal{J}.
	$$
	
	\noindent We use $\mathcal{I}_\mathcal{J}(\mathcal{G}_1)$ to denote the 
	\emph{$\mathcal{J}$-weak independence 
	model} 
	induced by $\mathcal{G}_1$, 	
	$\mathcal{I}_\mathcal{J}(\mathcal{G}_1) = \mathcal{I}(\mathcal{G}_1) \cap 
	\mathcal{J}
	$. We use $[\mathcal{G}_1]_\mathcal{J}$ to denote the $\mathcal{J}$-weak 
	equivalence class of $\mathcal{G}_1$, that is, the set of graphs, 
	$\mathcal{G}=(V,E)$, such 
	that 
	$\mathcal{I}_\mathcal{J}(\mathcal{G})=\mathcal{I}_\mathcal{J}(\mathcal{G}_1)$.
	\label{def:genEqui}
\end{defn}

\begin{prop}
	Let $\mathcal{J} \subseteq \mathcal{P}$ and let $V$ be a finite set. 
	Definition \ref{def:genEqui} defines an equivalence relation on the set of 
	DMGs with node set $V$.
	\label{prop:weakIsEqui}
\end{prop}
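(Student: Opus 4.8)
The plan is to verify the three defining properties of an equivalence relation—reflexivity, symmetry, and transitivity—directly from Definition \ref{def:genEqui}. The key observation is that $\mathcal{J}$-weak equivalence is defined entirely through equality of the derived sets $\mathcal{I}_\mathcal{J}(\mathcal{G}) = \mathcal{I}(\mathcal{G}) \cap \mathcal{J}$: two DMGs $\mathcal{G}_1$ and $\mathcal{G}_2$ are $\mathcal{J}$-weakly equivalent precisely when $\mathcal{I}_\mathcal{J}(\mathcal{G}_1) = \mathcal{I}_\mathcal{J}(\mathcal{G}_2)$. Since the relation is thus the pullback of set-equality along the map $\mathcal{G} \mapsto \mathcal{I}_\mathcal{J}(\mathcal{G})$, and set-equality is itself an equivalence relation, the result follows immediately; I would nonetheless spell out the three axioms for completeness.

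For reflexivity, I would note that for any DMG $\mathcal{G} = (V,E)$ we trivially have $\mathcal{I}_\mathcal{J}(\mathcal{G}) = \mathcal{I}_\mathcal{J}(\mathcal{G})$, so $\mathcal{G}$ is $\mathcal{J}$-weakly equivalent to itself. For symmetry, if $\mathcal{G}_1$ and $\mathcal{G}_2$ satisfy $\mathcal{I}_\mathcal{J}(\mathcal{G}_1) = \mathcal{I}_\mathcal{J}(\mathcal{G}_2)$, then by symmetry of set-equality $\mathcal{I}_\mathcal{J}(\mathcal{G}_2) = \mathcal{I}_\mathcal{J}(\mathcal{G}_1)$, whence the relation is symmetric. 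For transitivity, if $\mathcal{I}_\mathcal{J}(\mathcal{G}_1) = \mathcal{I}_\mathcal{J}(\mathcal{G}_2)$ and $\mathcal{I}_\mathcal{J}(\mathcal{G}_2) = \mathcal{I}_\mathcal{J}(\mathcal{G}_3)$, then $\mathcal{I}_\mathcal{J}(\mathcal{G}_1) = \mathcal{I}_\mathcal{J}(\mathcal{G}_3)$ by transitivity of set-equality, so $\mathcal{G}_1$ and $\mathcal{G}_3$ are $\mathcal{J}$-weakly equivalent.

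There is no real obstacle here: the statement is a formal consequence of the fact that any relation of the form ``agrees under a fixed map'' inherits the equivalence-relation axioms from equality on the codomain. The only point worth recording is that the relation is well-defined on the intended domain, i.e.\ that $\mathcal{I}_\mathcal{J}(\mathcal{G})$ is a well-defined subset of $\mathcal{J}$ for every DMG $\mathcal{G}$ on node set $V$; this is immediate, since $\mathcal{I}(\mathcal{G}) \subseteq \mathcal{P}$ by construction and $\mathcal{J} \subseteq \mathcal{P}$, so the intersection $\mathcal{I}(\mathcal{G}) \cap \mathcal{J}$ indeed lies in $\mathcal{J}$. I would therefore expect the entire argument to occupy only a few lines.
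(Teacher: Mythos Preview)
Your proposal is correct and takes essentially the same approach as the paper: both verify reflexivity, symmetry, and transitivity directly from the definition. The paper's own proof is in fact even terser than yours—it merely asserts that the three axioms hold without spelling them out—so your version is a strictly more detailed rendering of the same argument.
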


\begin{proof}
	Let $\mathcal{G}$ be a DMG. We see that $\mathcal{G}$ is 
	$\mathcal{J}$-weakly equivalent with itself such that the relation is 
	reflexive. The 
	relation is also symmetric and transitive. 
\end{proof}

The next statement follows directly from the definition of weak equivalence.

\begin{prop}
	Let $\mathcal{J}_1\subseteq \mathcal{J}_2\subseteq \mathcal{P}$ and let 
	$\mathcal{G}$ be a DMG. It holds that 
	$\mathcal{I}_{\mathcal{J}_1}(\mathcal{G})\subseteq 
	\mathcal{I}_{\mathcal{J}_2}(\mathcal{G})$.
\end{prop}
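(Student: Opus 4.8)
The plan is to unfold the definition of the $\mathcal{J}$-weak independence model and reduce the claim to an elementary monotonicity property of set intersection. By Definition~\ref{def:genEqui}, for any $\mathcal{J}\subseteq\mathcal{P}$ we have $\mathcal{I}_\mathcal{J}(\mathcal{G}) = \mathcal{I}(\mathcal{G})\cap\mathcal{J}$. Applying this with $\mathcal{J}=\mathcal{J}_1$ and with $\mathcal{J}=\mathcal{J}_2$ turns the desired inclusion $\mathcal{I}_{\mathcal{J}_1}(\mathcal{G})\subseteq\mathcal{I}_{\mathcal{J}_2}(\mathcal{G})$ into the statement $\mathcal{I}(\mathcal{G})\cap\mathcal{J}_1 \subseteq \mathcal{I}(\mathcal{G})\cap\mathcal{J}_2$.

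The single remaining step is to verify this inclusion directly from the hypothesis $\mathcal{J}_1\subseteq\mathcal{J}_2$. First I would take an arbitrary triple $(A,B,C)\in\mathcal{I}(\mathcal{G})\cap\mathcal{J}_1$. By definition of intersection this means $(A,B,C)\in\mathcal{I}(\mathcal{G})$ and $(A,B,C)\in\mathcal{J}_1$. Since $\mathcal{J}_1\subseteq\mathcal{J}_2$, the second membership yields $(A,B,C)\in\mathcal{J}_2$, and combining with the first gives $(A,B,C)\in\mathcal{I}(\mathcal{G})\cap\mathcal{J}_2$. As the chosen triple was arbitrary, the inclusion follows.

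There is no genuine obstacle here: the statement is a purely set-theoretic consequence of the fact that intersecting with a fixed set (namely $\mathcal{I}(\mathcal{G})$) is monotone in its other argument, so the only thing to be careful about is citing the correct definition of $\mathcal{I}_\mathcal{J}$. I would keep the argument to the two sentences above rather than introducing any further machinery, since nothing about the graph $\mathcal{G}$ or about $\mu$-separation is needed beyond the abstract identity $\mathcal{I}_\mathcal{J}(\mathcal{G}) = \mathcal{I}(\mathcal{G})\cap\mathcal{J}$.
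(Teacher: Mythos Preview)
Your proof is correct and matches the paper's approach exactly: the paper simply states that the result ``follows directly from the definition of weak equivalence'' without writing out a proof, and your argument is precisely the one-line unfolding of $\mathcal{I}_\mathcal{J}(\mathcal{G}) = \mathcal{I}(\mathcal{G})\cap\mathcal{J}$ together with monotonicity of intersection that this remark points to.
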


A Markov equivalence class has a greatest element. However, a 
$\mathcal{J}$-weak equivalence class does not necessarily have a 
greatest element as illustrated by the following example.

\begin{exmp}
We consider the graph, $\mathcal{G}$, in Figure \ref{fig:genEquiNoMax} with all 
loops included 
	as well. We define the set $\mathcal{J} \subseteq \mathcal{P}$,
	
	\begin{align*}
		\mathcal{J} = \left[\bigcup_{\alpha,\beta \in V} (\alpha,\beta,\beta) 
		\right] \cup \bigg\{\big(1,5, \{2,3,4,5 \} \big) \bigg\}.
	\end{align*}
	
	\noindent We also define three other graphs from $\mathcal{G} = (V,E)$, 
	$\mathcal{G}_i = (V,E_i)$, where $i = 1,2,3$, and
	
	\begin{align*}
		E_1 = E \cup \{2\leftrightarrow 3 \},\qquad E_2 = E \cup 
		\{3\leftrightarrow 4 
		\},\qquad E_3 = E \cup \{2\leftrightarrow 3, 3\leftrightarrow 4 \}.
	\end{align*}
	
	\noindent  Graphs $\mathcal{G}_1$ and $\mathcal{G}_2$ are both 
	$\mathcal{J}$-weakly equivalent with $\mathcal{G}$ which can be seen from 
	simply listing their $\mathcal{J}$-weak independence models. 
	
	We see that $(1,5, \{2,3,4,5\}) \in \mathcal{I}_\mathcal{J}(\mathcal{G})$, 
	but $(1,5, \{2,3,4,5\}) \notin \mathcal{I}_\mathcal{J}(\mathcal{G}_3)$ 
	which means that 
	$\mathcal{G}$ and $\mathcal{G}_3$ are not $\mathcal{J}$-weakly equivalent. 
	We have that $\mathcal{G}_1,\mathcal{G}_2\in [\mathcal{G}]_\mathcal{J}$, 
	and a greatest element of $[\mathcal{G}]_\mathcal{J}$ must be a supergraph 
	of both $\mathcal{G}_1$ and $\mathcal{G}_2$, and therefore of 
	$\mathcal{G}_3$. If $\mathcal{N}$ is a supergraph of $\mathcal{G}_3$, then 
	$\mathcal{I}_\mathcal{J}(\mathcal{N}) \subseteq 
	\mathcal{I}_\mathcal{J}(\mathcal{G}_3) \subsetneq 
	\mathcal{I}_\mathcal{J}(\mathcal{G})$, and  
	we conclude that the 
	$\mathcal{J}$-weak equivalence class of $\mathcal{G}$ does not contain a 
	greatest element.
	
		\begin{figure}[h]
			\centering
			\begin{tikzpicture}
			\node[shape=circle,draw=black] (a) at (0,0) {$1$};
			\node[shape=circle,draw=black] (b) at (2,0) 
			{$2$};
			\node[shape=circle,draw=black] (c) at (4,0) {$3$};
			\node[shape=circle,draw=black] (d) at (6,0) {$4$};
			\node[shape=circle,draw=black] (e) at (8,0) {$5$};
			
			\path 
			[->](a) edge [bend left = 0] node {} (b)
			[->](b) edge [bend left = 20] node {} (c)
			[->](c) edge [bend left = 20] node {} (b)
			[->](d) edge [bend left = 20] node {} (c)
			[->](c) edge [bend left = 20] node {} (d)
			[<->](e) edge [bend left = 0] node {} (d)
	[->](a) edge [loop above,  in=220,out=250, min distance=5mm] node {} (a)
	[->](b) edge [loop above,  in=220,out=250, min distance=5mm] node {} (b)
	[->](c) edge [loop above,  in=220,out=250, min distance=5mm] node {} (c)
	[->](d) edge [loop above,  in=220,out=250, min distance=5mm] node {} (d)
	[->](e) edge [loop above,  in=220,out=250, min distance=5mm] node {} (e);
			\end{tikzpicture}
			\caption{The graph $\mathcal{G}$ in Example 
			\ref{exmp:genEquiNoMax}.}
			\label{fig:genEquiNoMax}
		\end{figure}
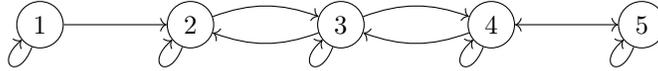
	\label{exmp:genEquiNoMax}
\end{exmp}

Let $\mathcal{J} \subseteq \mathcal{P}$. If two graphs are Markov equivalent, 
they are of course also equivalent when 
restricting to comparisons on the set $\mathcal{J}$. Therefore, every graph is 
also weakly equivalent with the 
unique, maximal graph of its Markov equivalence class. However, the above 
example shows that when considering a general $\mathcal{J}$-weak 
equivalence, an equivalence class need not have a greatest element as the 
maximal Markov equivalent graph need not be a greatest element of the larger 
weak equivalence class. This leads us to introducing the notion of a 
\emph{homogeneous} weak equivalence by imposing a regularity condition on the 
set 
$\mathcal{J}$. The equivalence classes of a homogeneous weak equivalence 
relation do indeed contain a greatest element (Section 
\ref{sec:greatElemWEqui}).

\subsubsection{Homogeneous weak equivalence}

We define \emph{homogeneous} equivalence relation to obtain well-behaved 
equivalence classes.

\begin{defn}[Homogeneous equivalence]
	Consider some weak equivalence induced by $\mathcal{J} \subseteq 
	\mathcal{P}$. 
	We say that this equivalence is \emph{homogeneous} if there exists a set 
	$\mathcal{C}$, $\mathcal{C} \subseteq \{C: C\subseteq V\}$, such 
	that 

$$
\mathcal{J} = \{(A,B,C)\in \mathcal{P}: A,B \subseteq V, C\in \mathcal{C} \}.
$$

\noindent In this case, we will also say that the set $\mathcal{J}$ is 
\emph{homogeneous} and we will say that $\mathcal{C}$ is the \emph{collection 
of conditioning sets} of $\mathcal{J}$.
	\label{def:homogenEqui}
\end{defn}

In other words, a \emph{homogeneous} equivalence relation is one that restricts 
only the set of conditioning sets, $C$. That is, if $\mathcal{J}$ is 
homogeneous, then $\mathcal{J}$-weak equivalence of $\mathcal{G}_1$ and 
$\mathcal{G}_2$ means that for all $A,B \subseteq V$ and $C \in \mathcal{C}$ 
we have $(A,B,C) \in 
\mathcal{I}(\mathcal{G}_1)$ if and only if $(A,B,C) \in 
\mathcal{I}(\mathcal{G}_2)$ 
where $\mathcal{C}$ is some collection of subsets of $V$.  Therefore, the 
restriction of the independence model imposed by a homogeneous $\mathcal{J}$ 
only applies to 
the conditioning 
sets. 

\subsubsection{$k$-weak equivalence}

We will now introduce a certain type of homogeneous equivalence which simply 
restricts the size of the conditioning sets.

\begin{defn}[$k$-weak equivalence]
	Let $0\leq k \leq n$. We say 
	that $\mathcal{G}_1$ and 
	$\mathcal{G}_2$ are \emph{$k$-weakly equivalent} if for 
	all $C$ such that 
	$\vert C\vert \leq k$, it holds that $(A,B,C) \in 
	\mathcal{I}(\mathcal{G}_1)$ if and 
	only if $(A,B,C) \in \mathcal{I}(\mathcal{G}_2)$.
	\label{def:kWeakEqui}
\end{defn}

 The above is formulated slightly differently than Definitions 
 \ref{def:genEqui} and \ref{def:homogenEqui}, however, $k$-weak equivalence is 
 a 
 homogeneous weak equivalence relation by using the set $\mathcal{C} = 
 \{C\subseteq V: \vert 
 C\vert \leq k \}$ in Definition \ref{def:homogenEqui}. On the other hand, not 
 all homogeneous 
 equivalences correspond to a $k$-weak equivalence. We see that $k$-weak 
 equivalence only compares graphs using `small' conditioning sets of size less 
 than $k$ and that Markov equivalence is the same as $n$-weak equivalence.

For $\mathcal{G}_1 = (V,E_1)$, we use $\mathcal{I}_k(\mathcal{G}_1)$ to denote 
the $k$-weak independence model of $\mathcal{G}_1$, 
$\mathcal{I}_k(\mathcal{G}_1) 
= \{(A,B,C) \in \mathcal{I}(\mathcal{G}_1), \vert C\vert \leq k \}$. We let 
$[\mathcal{G}]_k$ denote the set of 
graphs on nodes $V$ that are $k$-weakly equivalent with 
$\mathcal{G}$, and we say that $[\mathcal{G}]_k$ is the \emph{$k$-weak 
	equivalence class} of $\mathcal{G}$. When $k = n$, we also use 
	$\mathcal{I}(\mathcal{G})$, that is,  
$\mathcal{I}(\mathcal{G}) = \mathcal{I}_n(\mathcal{G})$.

\subsection{Properties of weak equivalence}
\label{ssec:propOfWeakEqui}

This section describes some properties of weak equivalence and weak 
equivalence 
classes. Throughout the section $\mathcal{J}$ is a subset of $\mathcal{P} = 
\{(A,B,C) : A,B,C \subseteq V \}$. For 
Markov equivalence, it holds that $\mathcal{G}_1\subseteq 
\mathcal{G}_2$ implies
$\mathcal{I}(\mathcal{G}_2) \subseteq \mathcal{I}(\mathcal{G}_1)$ which follows 
from the definition of $\mu$-separation. This is quite natural as a larger 
graph has more 
edges, therefore fewer independences. The same holds for weak equivalence 
classes as shown by the next proposition.

\begin{prop}
	If $\mathcal{G}_1\subseteq \mathcal{G}_2$, then 
	$\mathcal{I}_\mathcal{J}(\mathcal{G}_2) 
	\subseteq 
	\mathcal{I}_\mathcal{J}(\mathcal{G}_1)$.
	\label{prop:monotonicity}
\end{prop}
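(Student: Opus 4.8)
The plan is to factor the claim through the corresponding statement for full Markov equivalence, namely that $\mathcal{G}_1 \subseteq \mathcal{G}_2$ implies $\mathcal{I}(\mathcal{G}_2) \subseteq \mathcal{I}(\mathcal{G}_1)$, and then intersect with $\mathcal{J}$. As already noted before the proposition, this unrestricted monotonicity follows from the definition of $\mu$-separation, so the core of the work is to make that reasoning explicit at the level of $\mu$-connecting walks.

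First I would argue the contrapositive of the unrestricted inclusion: if $(A,B,C)\notin \mathcal{I}(\mathcal{G}_1)$, then $(A,B,C)\notin \mathcal{I}(\mathcal{G}_2)$. So suppose there is a $\mu$-connecting walk $w$ from some $\alpha\in A$ to some $\beta\in B$ given $C$ in $\mathcal{G}_1$. Since $E_1\subseteq E_2$, every edge traversed by $w$ is present in $\mathcal{G}_2$ as well, so $w$ is literally a walk in $\mathcal{G}_2$ with the same sequence of nodes, edges, and orientations. The collider/noncollider classification of each nonendpoint instance depends only on the heads and tails of the two incident edges on $w$, hence it is unchanged in $\mathcal{G}_2$.

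Next I would verify that $w$ still satisfies the $\mu$-connecting conditions in $\mathcal{G}_2$. The requirements $\alpha\notin C$, that the final edge has a head at $\beta$, and that no noncollider lies in $C$ refer only to the walk and to $C$, not to the ambient graph, so they transfer verbatim. The only condition that invokes the graph is that every collider lies in $\an(C)$; here I would use the auxiliary fact $\an_{\mathcal{G}_1}(C)\subseteq \an_{\mathcal{G}_2}(C)$, which holds because any directed walk witnessing ancestry in $\mathcal{G}_1$ is again a directed walk in the larger graph $\mathcal{G}_2$. Thus each collider of $w$, being in $\an_{\mathcal{G}_1}(C)$, also lies in $\an_{\mathcal{G}_2}(C)$, so $w$ is $\mu$-connecting in $\mathcal{G}_2$, establishing $\mathcal{I}(\mathcal{G}_2)\subseteq \mathcal{I}(\mathcal{G}_1)$.

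Finally I would intersect both sides with $\mathcal{J}$: from $\mathcal{I}(\mathcal{G}_2)\subseteq \mathcal{I}(\mathcal{G}_1)$ and monotonicity of intersection, $\mathcal{I}_\mathcal{J}(\mathcal{G}_2)=\mathcal{I}(\mathcal{G}_2)\cap \mathcal{J}\subseteq \mathcal{I}(\mathcal{G}_1)\cap \mathcal{J}=\mathcal{I}_\mathcal{J}(\mathcal{G}_1)$. I do not anticipate a genuine obstacle; the statement is a direct monotonicity fact, and the only point requiring care is the observation that adding edges can only enlarge the ancestor sets and hence can only open more colliders, never close a $\mu$-connecting walk already present in the smaller graph.
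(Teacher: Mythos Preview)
Your argument is correct and follows the same route as the paper: use the unrestricted monotonicity $\mathcal{I}(\mathcal{G}_2)\subseteq\mathcal{I}(\mathcal{G}_1)$ and then intersect with $\mathcal{J}$. The paper simply invokes this unrestricted inclusion (stated just before the proposition as a consequence of the definition of $\mu$-separation) in one line, whereas you additionally spell out the walk-level justification, including the observation $\an_{\mathcal{G}_1}(C)\subseteq\an_{\mathcal{G}_2}(C)$; this extra detail is sound but not needed given what the paper has already noted.
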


\begin{proof}
	If $(A,B,C) \in \mathcal{I}_\mathcal{J}(\mathcal{G}_2)$ then $(A,B,C) \in 
	\mathcal{I}(\mathcal{G}_2)$ and $(A,B,C) \in \mathcal{J}$, and 
	therefore $(A,B,C) \in \mathcal{I}(\mathcal{G}_1)$. This means that 
	$(A,B,C) \in 
	\mathcal{I}_\mathcal{J}(\mathcal{G}_1)$.
\end{proof}

\begin{prop}[Well-ordered $\mathcal{J}$-classes]
	Let $\mathcal{J}_1 \subseteq \mathcal{J}_2 \subseteq \mathcal{P}$. If 
	$\mathcal{G}_1$ and 
	$\mathcal{G}_2$ are 
	$\mathcal{J}_2$-weakly 
	equivalent, then they are also $\mathcal{J}_1$-weakly equivalent.
	\label{prop:wellorder}
\end{prop}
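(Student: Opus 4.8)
The plan is to reduce the claim to a one-line set-theoretic identity, since $\mathcal{J}$-weak equivalence is, by Definition \ref{def:genEqui}, literally the equality of the intersections $\mathcal{I}(\mathcal{G}) \cap \mathcal{J}$. By hypothesis, $\mathcal{G}_1$ and $\mathcal{G}_2$ are $\mathcal{J}_2$-weakly equivalent, which unpacks to
$$
\mathcal{I}(\mathcal{G}_1) \cap \mathcal{J}_2 = \mathcal{I}(\mathcal{G}_2) \cap \mathcal{J}_2.
$$
What I want to produce is the same equality with $\mathcal{J}_2$ replaced by $\mathcal{J}_1$.

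First I would intersect both sides of the displayed equality with $\mathcal{J}_1$; since applying $\cap\, \mathcal{J}_1$ to two equal sets preserves the equality, this yields
$$
\mathcal{I}(\mathcal{G}_1) \cap \mathcal{J}_2 \cap \mathcal{J}_1 = \mathcal{I}(\mathcal{G}_2) \cap \mathcal{J}_2 \cap \mathcal{J}_1.
$$
Then I would invoke the defining inclusion $\mathcal{J}_1 \subseteq \mathcal{J}_2$, which gives $\mathcal{J}_2 \cap \mathcal{J}_1 = \mathcal{J}_1$, to collapse each side into $\mathcal{I}(\mathcal{G}_1) \cap \mathcal{J}_1 = \mathcal{I}(\mathcal{G}_2) \cap \mathcal{J}_1$. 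By Definition \ref{def:genEqui} this is precisely $\mathcal{J}_1$-weak equivalence of $\mathcal{G}_1$ and $\mathcal{G}_2$, which completes the argument.

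There is no genuine obstacle here: the proposition is an immediate consequence of the monotonicity of intersection combined with $\mathcal{J}_1 \subseteq \mathcal{J}_2$. The only point worth stating explicitly is the conceptual reason behind the computation, namely that weak equivalence is defined as agreement of the two induced independence models on a fixed index set of triples; shrinking that index set from $\mathcal{J}_2$ to a subset $\mathcal{J}_1$ can only discard triples on which $\mathcal{G}_1$ and $\mathcal{G}_2$ already agreed, so it can never create a new disagreement. Hence coarser comparisons are always implied by finer ones, which is the ordering asserted by the statement.
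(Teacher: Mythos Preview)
Your proof is correct and essentially the same as the paper's. The paper does an element-chase (take $(A,B,C)\in\mathcal{I}_{\mathcal{J}_1}(\mathcal{G}_1)$, push it into $\mathcal{J}_2$, use the hypothesis, push it back), while you phrase the identical idea as a one-line set identity by intersecting both sides with $\mathcal{J}_1$; the underlying content is the same.
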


\begin{proof}
	Let $(A,B,C) \in \mathcal{I}_{\mathcal{J}_1}(\mathcal{G}_1)$, then 
	$(A,B,C) \in 
	\mathcal{I}(\mathcal{G}_1)$ and $(A,B,C)\in \mathcal{J}_1$. Therefore 
	$(A,B,C)\in \mathcal{J}_2$ 
	and  $(A,B,C) \in \mathcal{I}_{\mathcal{J}_2}(\mathcal{G}_1) = 
	\mathcal{I}_{\mathcal{J}_2}(\mathcal{G}_2)$. It follows that $(A,B,C) \in 
	\mathcal{I}(\mathcal{G}_2)$ and $(A,B,C) \in 
	\mathcal{I}_{\mathcal{J}_1}(\mathcal{G}_2)$. Interchanging the roles of 
	$\mathcal{G}_1$ and $\mathcal{G}_2$ and repeating the argument gives the 
	result.
\end{proof}

From the above, we also see that $\mathcal{J}_1\subseteq \mathcal{J}_2$ implies 
$[\mathcal{G}]_{\mathcal{J}_2}\subseteq 
[\mathcal{G}]_{\mathcal{J}_1}$. The next corollary follows directly 
from the above proposition.

\begin{cor}[Well-ordered $k$-classes]
	Let $0\leq k_1 \leq k_2 \leq n$. If $\mathcal{G}_1$ and 
	$\mathcal{G}_2$ are 
	$k_2$-weakly 
	equivalent, then they are also $k_1$-weakly equivalent.
	\label{cor:wellorder}
\end{cor}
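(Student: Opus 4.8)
The plan is to recognize $k$-weak equivalence as a special case of the general $\mathcal{J}$-weak equivalence and then to invoke Proposition \ref{prop:wellorder}. Recall from the remark following Definition \ref{def:kWeakEqui} that $k$-weak equivalence is precisely the homogeneous weak equivalence induced by the set $\mathcal{J}_k = \{(A,B,C) \in \mathcal{P} : \vert C\vert \leq k \}$, obtained by taking $\mathcal{C} = \{C\subseteq V : \vert C\vert \leq k\}$ in Definition \ref{def:homogenEqui}. Hence $\mathcal{G}_1$ and $\mathcal{G}_2$ are $k$-weakly equivalent if and only if they are $\mathcal{J}_k$-weakly equivalent, and the corollary becomes a statement about the sets $\mathcal{J}_{k_1}$ and $\mathcal{J}_{k_2}$.

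The key step is then the monotonicity of these index sets in $k$. Assuming $k_1 \leq k_2$, any triple $(A,B,C)$ with $\vert C\vert \leq k_1$ automatically satisfies $\vert C\vert \leq k_2$, so $\mathcal{J}_{k_1} \subseteq \mathcal{J}_{k_2}$. With this inclusion in hand I would apply Proposition \ref{prop:wellorder} with $\mathcal{J}_1 = \mathcal{J}_{k_1}$ and $\mathcal{J}_2 = \mathcal{J}_{k_2}$: since $\mathcal{G}_1$ and $\mathcal{G}_2$ are $k_2$-weakly equivalent, they are $\mathcal{J}_{k_2}$-weakly equivalent, and the proposition immediately gives that they are $\mathcal{J}_{k_1}$-weakly equivalent, i.e.\ $k_1$-weakly equivalent.

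There is essentially no obstacle in this argument; the only thing to verify is the elementary inclusion $\mathcal{J}_{k_1} \subseteq \mathcal{J}_{k_2}$, after which the conclusion is handed to us by Proposition \ref{prop:wellorder}. If one preferred to avoid appealing to the $\mathcal{J}$-formulation, the same proof could be carried out directly in the notation of Definition \ref{def:kWeakEqui}: for any $C$ with $\vert C\vert \leq k_1 \leq k_2$, the $k_2$-weak equivalence guarantees $(A,B,C) \in \mathcal{I}(\mathcal{G}_1)$ if and only if $(A,B,C) \in \mathcal{I}(\mathcal{G}_2)$, which is exactly the condition defining $k_1$-weak equivalence.
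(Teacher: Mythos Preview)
Your proposal is correct and matches the paper's approach exactly: the paper simply states that the corollary follows directly from Proposition~\ref{prop:wellorder}, and you have spelled out precisely this deduction by noting that $\mathcal{J}_{k_1}\subseteq\mathcal{J}_{k_2}$ when $k_1\leq k_2$.
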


\begin{defn}
	We say that $\mathcal{J}$ is \emph{singleton stable} if for all 
	$A,B,C\subseteq V$, $(A,B,C) 
	\in \mathcal{J}$ implies that $(\alpha,\beta,C)\in \mathcal{J}$ for all 
	$\alpha\in A$ and $ \beta \in B$.
\end{defn}

Note that the requirement is only on the $A$- and $B$-sets, not the $C$-set. If 
$\mathcal{J}$ is homogeneous and $(A,B,C)\in \mathcal{J}$, then 
$(\bar{A},\bar{B},C)\in \mathcal{J}$ for all $\bar{A},\bar{B}\subseteq V$, thus 
a homogeneous $\mathcal{J}$ is also singleton stable. The following proposition 
shows, for a singleton stable 
$\mathcal{J}$, the independence model $\mathcal{I}_\mathcal{J}(\mathcal{G})$
is characterized by the independences $(A,B,C)$ where $A$ 
and $B$ are singletons and $A$ and $C$ are disjoint. This proof uses the fact 
that 
$\mu$-separation models satisfy so-called left and 
right composition as well as left and right
decomposition which are \emph{asymmetric graphoid properties} 
\citep{didelezUAI2006,mogensenUAI2018}. These are similar to classical graphoid 
properties \citep{lauritzen1996}, but left and right version are needed due to 
the lack of symmetry.

\begin{prop}
	Let $\mathcal{J}$ be singleton stable, let $V$ be a finite set and let 
	$\mathcal{S} = 
	\{(A,B,C) \in \mathcal{P}: \vert 
	A\vert =\vert B\vert = 1, A\cap C=\emptyset \}$. 
	If 
	$\mathcal{I}_\mathcal{J}(\mathcal{G}_1) \cap \mathcal{S} \subseteq 
	\mathcal{I}_\mathcal{J}(\mathcal{G}_2) \cap \mathcal{S} $, then 
	$\mathcal{I}_\mathcal{J}(\mathcal{G}_1) \subseteq 
	\mathcal{I}_\mathcal{J}(\mathcal{G}_2)$.
	\label{prop:singletonGraphIndep}
\end{prop}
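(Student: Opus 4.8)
The plan is to reduce an arbitrary triple in $\mathcal{I}_\mathcal{J}(\mathcal{G}_1)$ down to singleton triples that land in $\mathcal{S}$, apply the hypothesis there, and then reassemble the full triple in $\mathcal{G}_2$. Concretely, I would take any $(A,B,C)\in \mathcal{I}_\mathcal{J}(\mathcal{G}_1)$, so that $\musepG{A}{B}{C}{\mathcal{G}_1}$ and $(A,B,C)\in\mathcal{J}$; since $\mathcal{J}$-membership is already in hand, it suffices to establish $\musepG{A}{B}{C}{\mathcal{G}_2}$. The cases $A=\emptyset$ or $B=\emptyset$ give the separation in every graph vacuously, so I assume both are nonempty.

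First I would decompose. Applying left and right decomposition to $\musepG{A}{B}{C}{\mathcal{G}_1}$ yields $\musepG{\alpha}{\beta}{C}{\mathcal{G}_1}$ for every $\alpha\in A$ and $\beta\in B$. Singleton stability of $\mathcal{J}$ then gives $(\alpha,\beta,C)\in\mathcal{J}$, so $(\alpha,\beta,C)\in\mathcal{I}_\mathcal{J}(\mathcal{G}_1)$ for all such $\alpha,\beta$. Next I would transfer these facts to $\mathcal{G}_2$. For $\alpha\in A\setminus C$ we have $(\alpha,\beta,C)\in\mathcal{S}$, hence $(\alpha,\beta,C)\in\mathcal{I}_\mathcal{J}(\mathcal{G}_1)\cap\mathcal{S}\subseteq\mathcal{I}_\mathcal{J}(\mathcal{G}_2)\cap\mathcal{S}$ by hypothesis, so $\musepG{\alpha}{\beta}{C}{\mathcal{G}_2}$. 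For $\alpha\in A\cap C$ we have $\{\alpha\}\subseteq C$, so $\musepG{\alpha}{\beta}{C}{\mathcal{G}_2}$ holds automatically by the observation following Definition \ref{def:muSep}. Thus $\musepG{\alpha}{\beta}{C}{\mathcal{G}_2}$ for every $\alpha\in A$ and $\beta\in B$. Finally I would recompose: fixing $\alpha$ and iterating right composition over $\beta\in B$ gives $\musepG{\alpha}{B}{C}{\mathcal{G}_2}$, and iterating left composition over $\alpha\in A$ gives $\musepG{A}{B}{C}{\mathcal{G}_2}$. Together with $(A,B,C)\in\mathcal{J}$ this yields $(A,B,C)\in\mathcal{I}_\mathcal{J}(\mathcal{G}_2)$.

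The only step that requires genuine care is the $A\cap C=\emptyset$ condition built into $\mathcal{S}$: at first glance restricting to $\mathcal{S}$ seems to discard exactly the singleton triples with $\alpha\in C$, which are precisely the ones the hypothesis cannot see. The key point is that these triples are $\mu$-separated for free on the $\mathcal{G}_2$ side because of automatic separation when $\{\alpha\}\subseteq C$, so no information is actually lost and the decomposition/recomposition argument goes through. Everything else is a routine application of the left and right composition and decomposition properties of $\mu$-separation together with singleton stability of $\mathcal{J}$, so I do not expect any further obstacle.
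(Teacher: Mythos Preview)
Your proposal is correct and follows essentially the same approach as the paper's proof: decompose to singleton triples, use singleton stability plus the hypothesis (handling $\alpha\in C$ trivially), then recompose. The only cosmetic difference is that you explicitly invoke the asymmetric graphoid properties (left/right decomposition and composition), whereas the paper simply appeals to the definition of $\mu$-separation directly.
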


Without the assumption of singleton stability, this above statement is not 
true. 
For instance, if $\mathcal{J}\cap \mathcal{S} = \emptyset$, then 
$\mathcal{I}(\mathcal{G}_1)_\mathcal{J} \cap \mathcal{S} \subseteq 
\mathcal{I}(\mathcal{G}_2)_\mathcal{J} \cap \mathcal{S} $ is trivially true for 
any pair of graphs.

\begin{proof}
	Let $(A,B,C) \in \mathcal{I}_\mathcal{J}(\mathcal{G}_1)$. If $A$ or $B$ is 
	empty, then it follows immediately that $(A,B,C) \in 
	\mathcal{I}_\mathcal{J}(\mathcal{G}_2)$. Assume that $A$ and $B$ are both 
	nonempty. We can write $A = 
	\{\alpha_1,\ldots,\alpha_{n_a}\}$ and $B = \{\beta_1,\ldots,\beta_{n_b}\}$. 
	From the definition of $\mu$-separation and using singleton stability of 
	$\mathcal{J}$ it follows that 
	$(\alpha_i,\beta_j,C) \in 
	\mathcal{I}_\mathcal{J}(\mathcal{G}_1)$ for all $i = 1,\ldots,n_a$ and $j = 
	1,\ldots,n_b$. Therefore $(\alpha_i,\beta_j,C) \in  
	\mathcal{I}_\mathcal{J}(\mathcal{G}_2)$ for all $i = 1,\ldots,n_a$ and $j = 
	1,\ldots,n_b$ (if $\alpha_i \in C$, then it holds trivially). From the 
	definition of $\mu$-separation, $(A,B,C) \in  
	\mathcal{I}(\mathcal{G}_2)$ and therefore also $(A,B,C) \in  
	\mathcal{I}_\mathcal{J}(\mathcal{G}_2)$. 
\end{proof}

\begin{prop}[Maximality]
	The graph $\mathcal{G}=(V,E) \in [{\mathcal{G}}_1]_\mathcal{J}$ is 
	maximal in $[{\mathcal{G}}_1]_\mathcal{J}$ if and only if it is 
	complete 
	or if $\mathcal{G} + e \notin [{\mathcal{G}}_1]_\mathcal{J}$ for all 
	edges $e$ 
	such that $e\notin E$.
	\label{prop:max}
\end{prop}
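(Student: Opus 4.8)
The plan is to prove the biconditional directly, handling each direction separately and isolating the degenerate case in which $\mathcal{G}$ is complete. The one nontrivial tool I would invoke is the monotonicity of Proposition~\ref{prop:monotonicity}: whenever $\mathcal{G}' \subseteq \mathcal{G}''$ one has $\mathcal{I}_\mathcal{J}(\mathcal{G}'') \subseteq \mathcal{I}_\mathcal{J}(\mathcal{G}')$. This is exactly what lets me turn a claim quantified over all supergraphs into a claim about single-edge additions, which is the whole content of the proposition.

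For the forward direction I would assume $\mathcal{G}$ is maximal in $[\mathcal{G}_1]_\mathcal{J}$. If $\mathcal{G}$ is complete the first disjunct of the right-hand side holds, so I would suppose it is not complete and take any edge $e \notin E$. Then $\mathcal{G} \subsetneq \mathcal{G}+e$, and if $\mathcal{G}+e$ were in $[\mathcal{G}_1]_\mathcal{J}$ it would be a strict supergraph of $\mathcal{G}$ inside the class, contradicting maximality; hence $\mathcal{G}+e \notin [\mathcal{G}_1]_\mathcal{J}$, which gives the second disjunct.

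The reverse direction is where the real argument sits. Assuming the right-hand side, the complete case is immediate, since a complete graph already has the largest possible edge set and admits no strictly larger equivalent graph. Otherwise I would assume $\mathcal{G}+e \notin [\mathcal{G}_1]_\mathcal{J}$ for every $e \notin E$ and argue by contradiction: if $\mathcal{G}$ were not maximal there would be $\bar{\mathcal{G}} = (V,\bar{E}) \in [\mathcal{G}_1]_\mathcal{J}$ with $E \subsetneq \bar{E}$. Choosing $e \in \bar{E} \setminus E$ produces the chain $\mathcal{G} \subseteq \mathcal{G}+e \subseteq \bar{\mathcal{G}}$, and applying Proposition~\ref{prop:monotonicity} along it yields $\mathcal{I}_\mathcal{J}(\bar{\mathcal{G}}) \subseteq \mathcal{I}_\mathcal{J}(\mathcal{G}+e) \subseteq \mathcal{I}_\mathcal{J}(\mathcal{G})$. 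Since $\mathcal{G}$ and $\bar{\mathcal{G}}$ are $\mathcal{J}$-weakly equivalent, the outer two models coincide, so the intermediate inclusions collapse to equalities and $\mathcal{G}+e \in [\mathcal{G}_1]_\mathcal{J}$, contradicting the assumption. Hence $\mathcal{G}$ is maximal.

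I expect the only real obstacle to be conceptual: recognizing that full maximality, a condition quantified over all supergraphs, can be certified by the purely local test of adding one edge at a time. The squeezing step---wedging $\mathcal{G}+e$ between $\mathcal{G}$ and any hypothetical larger equivalent graph and using monotonicity to force the independence models to agree---is the crux and the sole nonroutine ingredient. I would also verify the boundary reading of the disjunction, namely that for a complete $\mathcal{G}$ there is no edge $e \notin E$, so the second disjunct is then vacuously true as well; this confirms that the \emph{complete} clause is stated only to make the degenerate case explicit rather than because it is logically separate.
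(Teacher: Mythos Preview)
Your proposal is correct and matches the paper's proof essentially line for line: both directions are handled the same way, with the substantive step being the sandwich $\mathcal{G}\subseteq \mathcal{G}+e\subseteq \bar{\mathcal{G}}$ together with Proposition~\ref{prop:monotonicity}. The only cosmetic difference is that the paper phrases the reverse direction as a direct argument (showing any strict supergraph $\mathcal{G}_2$ has $\mathcal{I}_\mathcal{J}(\mathcal{G}_2)\subsetneq \mathcal{I}_\mathcal{J}(\mathcal{G})$) rather than by contradiction, but the logical content is identical.
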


When $\mathcal{G}=(V,E) \in [{\mathcal{G}}_1]_\mathcal{J}$ is maximal in 
$[{\mathcal{G}}_1]_\mathcal{J}$, then we also say that $\mathcal{G}$ is 
\emph{$\mathcal{J}$-maximal} (the equivalence class is implicit as a graph can 
only be maximal in its own equivalence class). A graph is $\mathcal{J}$-maximal 
if the addition of any edge 
will change the $\mathcal{J}$-weak independence model.

\begin{proof}
	If $\mathcal{G}$ is complete, then it is clearly maximal. If 
	$\mathcal{G}\subsetneq {\mathcal{G}}_2$, then $\mathcal{G}\subsetneq 
	\mathcal{G}+e\subseteq {\mathcal{G}}_2$ for some $e\notin E$. We have 
	$\mathcal{I}_\mathcal{J}({\mathcal{G}}_2) \subseteq 
	\mathcal{I}_\mathcal{J}({\mathcal{G}} + e)$ and 
	$\mathcal{I}_\mathcal{J}({\mathcal{G}}+e) \subsetneq 
	\mathcal{I}_\mathcal{J}({\mathcal{G}} 
	)=\mathcal{I}_\mathcal{J}({\mathcal{G}}_1 )$ and therefore 
	${\mathcal{G}}_2\notin [{\mathcal{G}}_1]_\mathcal{J}$.
	
	On the other hand, assume that $\mathcal{G}$ is maximal, and that 
	$\mathcal{G}$ is not complete. It follows from the definition of maximality 
	that $\mathcal{G}+e \notin [\mathcal{G}]_\mathcal{J}$ for all $e\notin E$. 
\end{proof}

If $\mathcal{G}_1 \subseteq \mathcal{G}_2$ then
$\mathcal{I}(\mathcal{G}_2)\subseteq \mathcal{I}(\mathcal{G}_1)$ 
(Proposition \ref{prop:monotonicity}). One may 
ask if $\mathcal{I}(\mathcal{G}_2)\subseteq \mathcal{I}(\mathcal{G}_1)$ 
implies $\mathcal{G}_1 \subseteq \mathcal{G}_2$. The next example shows that 
this is not the case, also not for maximal graphs.  

\begin{exmp}
	We consider two graphs, $\mathcal{G}_1$ and 
	$\mathcal{G}_2$ as shown in Figure \ref{fig:indSubsetNotGraphSubset} (both 
	graphs also have all directed and bidirected loops). Let $\mathcal{S} = 
	\{(A,B,C): \vert 
	A\vert =\vert B\vert = 1, A\cap C=\emptyset \}$. Then 
	$\mathcal{I}(\mathcal{G}_1) \cap \mathcal{S}$ 
	equals
	
	$$
	\biggl\{ \Bigl(2,3,1\Bigr), \Bigl(2,3, \{1,3\}\Bigr), \Bigl(3,2,1\Bigr), 
	\Bigl(3,2,\{1,2\}\Bigr), 
	\Bigl(3,1,1\Bigr), 
	\Bigl(3,1,\{1,2\}\Bigr) 
	\biggr\}.
	$$
	
	\noindent $\mathcal{I}(\mathcal{G}_2)\cap \mathcal{S}$ equals
	
	$$
	\biggl\{\Bigl(2,3,\{1,3\}\Bigl), \Bigl(3,1,1\Bigl) \biggr\}
	$$
	
	\noindent and therefore it is a subset of $\mathcal{I}(\mathcal{G}_1) \cap 
	\mathcal{S}$. Markov equivalence corresponds to $\mathcal{J}$-weak 
	equivalence with $\mathcal{J} = \mathcal{P}$, and by Proposition 
	\ref{prop:singletonGraphIndep}, 
	$\mathcal{I}(\mathcal{G}_2) \subseteq \mathcal{I}(\mathcal{G}_1)$. Both 
	graphs are maximal which means that $2\rightarrow 1$ cannot be 
	added to $\mathcal{G}_2$ Markov equivalently. This illustrates that 
	$\mathcal{I}(\mathcal{G}_2)\subseteq \mathcal{I}(\mathcal{G}_1)$ 
	does not imply $\mathcal{G}_1 \subseteq \mathcal{G}_2$, not even if 
	$\mathcal{G}_2$ is maximal.

		\begin{figure}[h]
			\begin{subfigure}{.49\textwidth}
				\centering
				\begin{tikzpicture}
				\node[shape=circle,draw=black] (a) at (0,0) {$1$};
				\node[shape=circle,draw=black] (b) at (2,0) 
				{$2$};
				\node[shape=circle,draw=black] (c) at (4,0) {$3$};
				
				\path 
				[->](a) edge [bend left = 30] node {} (b)
				[->](a.south) edge [bend left = -25] node {} (c)
				[->](b) edge [bend left = 30] node {} (a)
				[<->](a) edge [bend left = 0] node {} (b);
				\end{tikzpicture}
			\end{subfigure}\hfill
			\begin{subfigure}{.49\textwidth}
				\centering
				\begin{tikzpicture}
				\node[shape=circle,draw=black] (a) at (0,0) {$1$};
				\node[shape=circle,draw=black] (b) at (2,0) 
				{$2$};
				\node[shape=circle,draw=black] (c) at (4,0) {$3$};
				
				\path 
				[->](a) edge [bend left = 30] node {} (b)
				[->](a.south) edge [bend left = -25] node {} (c)
				[->](c) edge [bend left = 0] node {} (b)
				[<->](a) edge [bend left = 0] node {} (b);
				\end{tikzpicture}
			\end{subfigure}
			\caption{Graphs $\mathcal{G}_1$ (left) and $\mathcal{G}_2$ 
			(right) in Example 
				\ref{exmp:indSubsetNotGraphSubset}. Loops are omitted from the 
				visualization.}
			\label{fig:indSubsetNotGraphSubset}
		\end{figure}
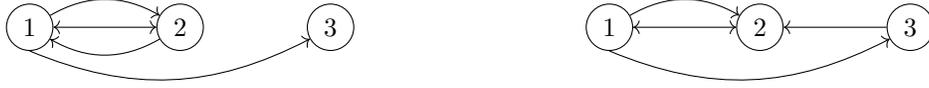
		
		\label{exmp:indSubsetNotGraphSubset}
\end{exmp}

\begin{prop}
	Let $\mathcal{J}_1 \subseteq \mathcal{J}_2$. If $\mathcal{G}$ is 
	$\mathcal{J}_1$-maximal, then it is also $\mathcal{J}_2$-maximal.
	\label{prop:cMaxMono}
\end{prop}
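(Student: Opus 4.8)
The plan is to argue by contraposition, using the characterization of maximality in Proposition \ref{prop:max} to turn the statement about maximal elements into a statement about the addition of single edges, and then to transfer equivalence from $\mathcal{J}_2$ down to $\mathcal{J}_1$ via Proposition \ref{prop:wellorder}. Concretely, rather than proving ``$\mathcal{J}_1$-maximal $\Rightarrow$ $\mathcal{J}_2$-maximal'' directly, I would prove the equivalent implication ``not $\mathcal{J}_2$-maximal $\Rightarrow$ not $\mathcal{J}_1$-maximal''.

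First I would unpack the negation of maximality. By Proposition \ref{prop:max}, a graph is maximal in its equivalence class precisely when it is complete or admits no edge addition that stays in the class; hence $\mathcal{G}$ failing to be $\mathcal{J}_2$-maximal means simultaneously that $\mathcal{G}$ is \emph{not} complete and that there is some edge $e \notin E$ with $\mathcal{G} + e \in [\mathcal{G}]_{\mathcal{J}_2}$. In particular, the troublesome complete-graph case is automatically excluded by this hypothesis, so I need not treat it separately.

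The crucial step is then to move this single witnessing edge from the coarser relation to the finer one. The pair $\mathcal{G}$ and $\mathcal{G} + e$ is $\mathcal{J}_2$-weakly equivalent, so Proposition \ref{prop:wellorder}, together with the assumption $\mathcal{J}_1 \subseteq \mathcal{J}_2$, gives that $\mathcal{G}$ and $\mathcal{G} + e$ are also $\mathcal{J}_1$-weakly equivalent; that is, $\mathcal{G} + e \in [\mathcal{G}]_{\mathcal{J}_1}$. Since $\mathcal{G}$ is not complete and now admits an edge addition remaining in its $\mathcal{J}_1$-class, Proposition \ref{prop:max} yields that $\mathcal{G}$ is not $\mathcal{J}_1$-maximal, completing the contrapositive.

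I do not anticipate any real difficulty, as the proof is a short chaining of two results already established. The one thing to watch is the direction in which Proposition \ref{prop:wellorder} is applied: it propagates equivalence from the larger set $\mathcal{J}_2$ to the smaller set $\mathcal{J}_1$, which is exactly what allows a single non-maximality witness for $\mathcal{J}_2$ to double as one for $\mathcal{J}_1$. Applying it in the reverse direction would be false and would break the argument.
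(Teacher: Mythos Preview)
Your proof is correct and uses essentially the same ingredients as the paper's: Proposition~\ref{prop:max} to reduce to single-edge additions, and the fact that $\mathcal{J}_2$-equivalence implies $\mathcal{J}_1$-equivalence when $\mathcal{J}_1\subseteq\mathcal{J}_2$. The only difference is packaging: the paper argues directly, showing that for every $e\notin E$ the triple witnessing $\mathcal{G}+e\notin[\mathcal{G}]_{\mathcal{J}_1}$ also witnesses $\mathcal{G}+e\notin[\mathcal{G}]_{\mathcal{J}_2}$ (via Proposition~\ref{prop:monotonicity}), whereas you take the contrapositive and invoke Proposition~\ref{prop:wellorder} as a black box---arguably a touch cleaner since it avoids reproving that implication inline.
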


\begin{proof}
	If $\mathcal{G}$ is complete, then it is also $\mathcal{J}_2$-maximal. 
	Assume 
	instead that $\mathcal{G}=(V,E)$ is not complete and $e\notin E$. 
	$\mathcal{G}$ 
	is $\mathcal{J}_1$-maximal, so 
	$\mathcal{G} + e \notin [\mathcal{G}]_{\mathcal{J}_1}$ (Proposition 
	\ref{prop:max}). Using Proposition 
	\ref{prop:monotonicity}, there exist a triple $(A,B,C)$ such that 
	$(A,B,C) \in \mathcal{I}_{\mathcal{J}_1}(\mathcal{G})$ and $(A,B,C) \notin 
	\mathcal{I}_{\mathcal{J}_1}(\mathcal{G} + e)$ and therefore $(A,B,C) \notin 
	\mathcal{I}(\mathcal{G} + e)$. We see that $(A,B,C) \in 
	\mathcal{I}_{\mathcal{J}_2}(\mathcal{G})$ and $(A,B,C) \notin 
	\mathcal{I}_{\mathcal{J}_2}(\mathcal{G} + e)$. It follows that 
	$\mathcal{G}$ is $\mathcal{J}_2$-maximal (Proposition \ref{prop:max}).
\end{proof}

We say that a graph, 
 $\mathcal{G}$, is \emph{$k$-maximal} if is $\mathcal{J}$-maximal for 
 $\mathcal{J} = \{(A,B,C) \in \mathcal{P}: \vert C\vert \leq k \}$ which means 
 that $\mathcal{J}$ induces a $k$-weak equivalence 
 relation.

\begin{cor}
	Let $0\leq k_1\leq k_2 \leq n$. If $\mathcal{G}$ is 
	$k_1$-maximal, then it 
	is also $k_2$-maximal.
	\label{cor:kMaxMono}
\end{cor}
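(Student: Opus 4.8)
The plan is to deduce the corollary directly from Proposition \ref{prop:cMaxMono} by choosing the two sets $\mathcal{J}_1$ and $\mathcal{J}_2$ to be exactly the ones inducing $k_1$- and $k_2$-weak equivalence. Concretely, I would set
$$
\mathcal{J}_1 = \{(A,B,C) \in \mathcal{P}: \vert C\vert \leq k_1 \}, \qquad \mathcal{J}_2 = \{(A,B,C) \in \mathcal{P}: \vert C\vert \leq k_2 \}.
$$
By the definition of $k$-maximality stated just above the corollary, the assertion that $\mathcal{G}$ is $k_1$-maximal is by definition the assertion that $\mathcal{G}$ is $\mathcal{J}_1$-maximal, and likewise $k_2$-maximality is $\mathcal{J}_2$-maximality. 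Thus the corollary is, after this translation, precisely the conclusion of Proposition \ref{prop:cMaxMono} for these two particular sets.

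Next I would verify the single hypothesis of Proposition \ref{prop:cMaxMono}, namely the inclusion $\mathcal{J}_1 \subseteq \mathcal{J}_2$. This is immediate from $k_1 \leq k_2$: if $(A,B,C) \in \mathcal{J}_1$, then $\vert C\vert \leq k_1 \leq k_2$, so $(A,B,C) \in \mathcal{J}_2$. With the inclusion established, Proposition \ref{prop:cMaxMono} applies verbatim and yields that $\mathcal{J}_1$-maximality implies $\mathcal{J}_2$-maximality, which is exactly the claim that $k_1$-maximality implies $k_2$-maximality.

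Since the corollary is simply a specialization of Proposition \ref{prop:cMaxMono}, there is no substantive obstacle to overcome. The only point requiring care is confirming that the translation between `$k$-maximal' and `$\mathcal{J}$-maximal' is the purely definitional one, so that the hypothesis and conclusion of the proposition align with those of the corollary. All the genuine work — separating the complete-graph case from the incomplete case and, in the latter, producing via Proposition \ref{prop:monotonicity} a triple that distinguishes $\mathcal{G}$ from $\mathcal{G}+e$ — has already been carried out inside the proof of Proposition \ref{prop:cMaxMono}, so nothing further is needed here.
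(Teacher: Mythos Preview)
Your proposal is correct and is exactly the intended argument: the paper states the corollary immediately after Proposition~\ref{prop:cMaxMono} without a separate proof, and the definition of $k$-maximality given just before the corollary is precisely $\mathcal{J}$-maximality for $\mathcal{J}=\{(A,B,C)\in\mathcal{P}:\vert C\vert\le k\}$, so your specialization with $\mathcal{J}_1\subseteq\mathcal{J}_2$ is the paper's approach.
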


In particular, if a graph is $k$-maximal for some $k\leq 
n$, 
then it is also the unique 
maximal element 
in its Markov equivalence class.

\begin{prop}[Minimality]
	The graph $\mathcal{G}=(V,E) \in [{\mathcal{G}}_1]_\mathcal{J}$ is 
	minimal in $[{\mathcal{G}}_1]_\mathcal{J}$
	if and only if it is
	empty or if $\mathcal{G} - e\notin [{\mathcal{G}}_1]_\mathcal{J}$ for 
	all edges 
	such that $e\in E$.
	\label{prop:min}
\end{prop}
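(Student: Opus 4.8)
The plan is to mirror the proof of Proposition~\ref{prop:max}, since the minimality statement is precisely the order-theoretic dual of the maximality statement: edge \emph{removal} plays the role of edge addition, and the empty graph plays the role of the complete graph. The single engine behind both directions is the monotonicity of the weak independence model recorded in Proposition~\ref{prop:monotonicity}, namely that $\mathcal{G}' \subseteq \mathcal{G}$ implies $\mathcal{I}_\mathcal{J}(\mathcal{G}) \subseteq \mathcal{I}_\mathcal{J}(\mathcal{G}')$. I would dispose of the empty case first (an empty graph has no proper subgraph, hence is vacuously minimal) and then prove the biconditional for nonempty $\mathcal{G}$.

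For the forward direction I would assume $\mathcal{G}$ is minimal and nonempty and deduce directly from the definition of a minimal element that $\mathcal{G}-e \notin [\mathcal{G}_1]_\mathcal{J}$ for every $e \in E$: each $\mathcal{G}-e$ is a proper subgraph of $\mathcal{G}$, so membership in $[\mathcal{G}_1]_\mathcal{J}$ would contradict minimality. This half is pure unwinding of definitions.

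The reverse direction carries the only genuine idea. Assuming the edge-removal condition, I would show that no proper subgraph $\mathcal{G}_2 \subsetneq \mathcal{G}$ lies in the class. Given such a $\mathcal{G}_2 = (V,E_2)$, I would choose an edge $e \in E \setminus E_2$ and insert the intermediate graph $\mathcal{G}-e$, so that $\mathcal{G}_2 \subseteq \mathcal{G}-e \subsetneq \mathcal{G}$. The hypothesis gives $\mathcal{I}_\mathcal{J}(\mathcal{G}-e) \neq \mathcal{I}_\mathcal{J}(\mathcal{G}_1) = \mathcal{I}_\mathcal{J}(\mathcal{G})$, while monotonicity applied to $\mathcal{G}-e \subseteq \mathcal{G}$ gives $\mathcal{I}_\mathcal{J}(\mathcal{G}) \subseteq \mathcal{I}_\mathcal{J}(\mathcal{G}-e)$; together these force $\mathcal{I}_\mathcal{J}(\mathcal{G}) \subsetneq \mathcal{I}_\mathcal{J}(\mathcal{G}-e)$. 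Applying monotonicity once more to $\mathcal{G}_2 \subseteq \mathcal{G}-e$ yields $\mathcal{I}_\mathcal{J}(\mathcal{G}-e) \subseteq \mathcal{I}_\mathcal{J}(\mathcal{G}_2)$, and chaining the two inclusions produces $\mathcal{I}_\mathcal{J}(\mathcal{G}) \subsetneq \mathcal{I}_\mathcal{J}(\mathcal{G}_2)$, so $\mathcal{G}_2 \notin [\mathcal{G}_1]_\mathcal{J}$.

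The main obstacle—indeed the only step that is not bookkeeping—is this reduction from arbitrary proper subgraphs to single-edge removals: I must ensure that the strict inequality detected by deleting just one edge survives being propagated down to the smaller graph $\mathcal{G}_2$. The care required is entirely in tracking the direction of the inclusions, since monotonicity reverses $\subseteq$; the strictness is preserved precisely because $\mathcal{G}-e$ is sandwiched between $\mathcal{G}_2$ and $\mathcal{G}$ in the subgraph order. Everything else (membership $\mathcal{G} \in [\mathcal{G}_1]_\mathcal{J}$ and the resulting identity $\mathcal{I}_\mathcal{J}(\mathcal{G}) = \mathcal{I}_\mathcal{J}(\mathcal{G}_1)$) is routine, so I expect the proof to be as short as that of Proposition~\ref{prop:max}.
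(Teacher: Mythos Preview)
Your proposal is correct and follows essentially the same argument as the paper: dispose of the empty case, handle the forward direction directly from the definition of minimality, and for the reverse direction sandwich an arbitrary proper subgraph $\mathcal{G}_2$ below $\mathcal{G}-e$ and use monotonicity (Proposition~\ref{prop:monotonicity}) to propagate the strict inclusion $\mathcal{I}_\mathcal{J}(\mathcal{G}) \subsetneq \mathcal{I}_\mathcal{J}(\mathcal{G}-e)$ down to $\mathcal{G}_2$. You are in fact slightly more careful than the paper, which writes ``for $e \in E$'' without explicitly noting that one must choose $e \in E \setminus E_2$ so that $\mathcal{G}_2 \subseteq \mathcal{G}-e$.
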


\begin{proof}
	If it is empty, then it is clearly also minimal. Otherwise, let 
	${\mathcal{G}_2} \subsetneq \mathcal{G}$. We have 
	$\mathcal{I}_\mathcal{J}(\mathcal{G}) \subsetneq 
	\mathcal{I}_\mathcal{J}(\mathcal{G} - e) \subseteq 
	\mathcal{I}_\mathcal{J}({\mathcal{G}}_2)$ for $e \in E$ (Proposition 
	\ref{prop:monotonicity}). Therefore, ${\mathcal{G}}_2\notin 
	[\mathcal{G}]_\mathcal{J}$. 
	
	If $\mathcal{G}$ is minimal in $[{\mathcal{G}}_1]_\mathcal{J}$, then it 
	is either the empty graph, or for all $e\in E$, $\mathcal{G} - e \notin 
	[{\mathcal{G}}_1]_\mathcal{J}$ by definition of minimality.
\end{proof}

\begin{prop}
	Let $\mathcal{J}_1\subseteq \mathcal{J}_2$. If $\mathcal{G}$ is 
	$\mathcal{J}_1$-minimal, then it is also $\mathcal{J}_2$-minimal. 
\end{prop}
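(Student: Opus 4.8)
The plan is to mirror the proof of Proposition \ref{prop:cMaxMono} almost verbatim, exchanging the maximality characterization for the minimality characterization of Proposition \ref{prop:min} and replacing edge addition by edge deletion. Concretely, I would dispose of the trivial case first: if $\mathcal{G}$ is empty it is automatically $\mathcal{J}_2$-minimal by Proposition \ref{prop:min}, so assume $\mathcal{G}=(V,E)$ is nonempty and fix an arbitrary edge $e\in E$. The goal is to show $\mathcal{G}-e\notin[\mathcal{G}]_{\mathcal{J}_2}$; since $e$ is arbitrary, Proposition \ref{prop:min} then yields $\mathcal{J}_2$-minimality.

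The key steps are as follows. First, because $\mathcal{G}$ is $\mathcal{J}_1$-minimal and nonempty, Proposition \ref{prop:min} gives $\mathcal{G}-e\notin[\mathcal{G}]_{\mathcal{J}_1}$, i.e.\ $\mathcal{I}_{\mathcal{J}_1}(\mathcal{G}-e)\neq\mathcal{I}_{\mathcal{J}_1}(\mathcal{G})$. Second, since $\mathcal{G}-e\subseteq\mathcal{G}$, Proposition \ref{prop:monotonicity} gives $\mathcal{I}_{\mathcal{J}_1}(\mathcal{G})\subseteq\mathcal{I}_{\mathcal{J}_1}(\mathcal{G}-e)$, so the inequality is in fact a strict inclusion $\mathcal{I}_{\mathcal{J}_1}(\mathcal{G})\subsetneq\mathcal{I}_{\mathcal{J}_1}(\mathcal{G}-e)$. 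This orients the difference and produces a witness triple $(A,B,C)\in\mathcal{I}_{\mathcal{J}_1}(\mathcal{G}-e)\setminus\mathcal{I}_{\mathcal{J}_1}(\mathcal{G})$. Third, I would lift this witness to level $\mathcal{J}_2$ using only that $\mathcal{I}_{\mathcal{J}_i}(\cdot)=\mathcal{I}(\cdot)\cap\mathcal{J}_i$ and $\mathcal{J}_1\subseteq\mathcal{J}_2$: from $(A,B,C)\in\mathcal{I}(\mathcal{G}-e)$ and $(A,B,C)\in\mathcal{J}_1\subseteq\mathcal{J}_2$ we get $(A,B,C)\in\mathcal{I}_{\mathcal{J}_2}(\mathcal{G}-e)$; conversely $(A,B,C)\notin\mathcal{I}_{\mathcal{J}_1}(\mathcal{G})$ together with $(A,B,C)\in\mathcal{J}_1$ forces $(A,B,C)\notin\mathcal{I}(\mathcal{G})$, hence $(A,B,C)\notin\mathcal{I}_{\mathcal{J}_2}(\mathcal{G})$. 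Thus $\mathcal{I}_{\mathcal{J}_2}(\mathcal{G}-e)\neq\mathcal{I}_{\mathcal{J}_2}(\mathcal{G})$, so $\mathcal{G}-e\notin[\mathcal{G}]_{\mathcal{J}_2}$, which completes the argument.

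I expect no serious obstacle here, as the statement is the minimality dual of an already-established monotonicity result. The only point requiring a little care is the \emph{direction} of the witness triple: the defining condition of weak equivalence is merely set equality of independence models, so a priori the difference $\mathcal{I}_{\mathcal{J}_1}(\mathcal{G}-e)\neq\mathcal{I}_{\mathcal{J}_1}(\mathcal{G})$ does not by itself say which model contains the extra separation. This is exactly what Proposition \ref{prop:monotonicity} resolves, and it is the reason the argument goes through cleanly despite the asymmetry of $\mu$-separation: the whole proof is purely set-theoretic and never invokes the asymmetry of the separation relation itself.
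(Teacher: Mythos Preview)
Your proof is correct and follows essentially the same approach as the paper's: dispose of the empty case, then for each $e\in E$ extract a witness triple $(A,B,C)\in\mathcal{I}_{\mathcal{J}_1}(\mathcal{G}-e)\setminus\mathcal{I}_{\mathcal{J}_1}(\mathcal{G})$ and lift it to $\mathcal{J}_2$ via $\mathcal{J}_1\subseteq\mathcal{J}_2$. Your explicit invocation of Proposition~\ref{prop:monotonicity} to orient the witness is a small clarification that the paper leaves implicit, but otherwise the arguments coincide.
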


The proposition states that the property of being minimal is 
preserved when considering a larger set of independences. An equivalence class 
is finite and nonempty, hence, it always contains a maximal element and a 
minimal element. We will show later that it also contains a greatest element. 
However, a least element need not exist and Example \ref{exmp:DMEG} provides an 
example of this.

\begin{proof}
	If $\mathcal{G} = (V,E)$ is empty, then it is also $\mathcal{J}_2$-minimal. 
	Assume 
	instead that $e \in E$. There exists a triple $(A,B,C)$ such that 
	$(A,B,C)\in \mathcal{I}_{\mathcal{J}_1}(\mathcal{G} - e)$ and 
	$(A,B,C)\notin \mathcal{I}_{\mathcal{J}_1}(\mathcal{G})$. Then $(A,B,C)\in 
	\mathcal{J}_1$ and therefore $(A,B,C)\in 
	\mathcal{J}_2$. It follows that $(A,B,C)\in 
	\mathcal{I}_{\mathcal{J}_2}(\mathcal{G} - e)$ and 
	$(A,B,C)\notin \mathcal{I}_{\mathcal{J}_2}(\mathcal{G})$. As this holds for 
	all $e\in E$, we see that $\mathcal{G}$ is $\mathcal{J}_2$-minimal 
	(Proposition \ref{prop:min}).
\end{proof}

\subsubsection{Marginalization}

We say that a class of graphs, $\mathbb{G}$, is \emph{closed under 
	marginalization} if for every $\mathcal{G}= (V,E)\in \mathbb{G}$ and every 
	$O 
\subseteq V$ 
there exists $\mathcal{M} = (O,E_O) \in \mathbb{G}$ such that for every $A,B,C 
\subseteq O$,

\begin{align}
(A,B,C) \in \mathcal{I}_\star(\mathcal{G}) \Leftrightarrow (A,B,C) \in 
\mathcal{I}_\star(\mathcal{M})
\label{eq:margin}
\end{align}

\noindent where $\mathcal{I}_\star(\mathcal{G})$ is the independence model 
induced by 
$\mathcal{G}$. When $\mathbb{G}$ is 
the class of DMGs, $\mathcal{I}_\star(\cdot)$ could for instance be a 
$\mathcal{J}$-weak independence model. 
Appendix 
\ref{app:margin} shows that DMGs with weak 
equivalence are closed 
under marginalization. This follows directly from the analogous result in the 
case of 
Markov equivalence 
\citep{Mogensen2020a} using a so-called \emph{latent projection} \citep[see 
also][]{vermaEquiAndSynthesis,richardson2017}.

\subsection{$k$-weak equivalence}

In this subsection, we restrict our attention to $k$-weak equivalence 
relations. The following result shows that if $\vert V\vert=n$, then $n$-weak 
and 
$(n-1)$-weak equivalence is the same. By convention, $\beta$ is always 
$\mu$-separated from 
$\alpha$ given $C$ when $\alpha\in C$. If $\vert 
C\vert = n$, then $C = V$, and leads to a trivial 
separation.

\begin{prop}
	Let $\mathcal{G}_1 = (V,E_1)$ and $\mathcal{G}_2 = (V,E_2)$ such that 
	$\vert V\vert = n$. Graphs $\mathcal{G}_1$ and $\mathcal{G}_2$ are 
	$(n-1)$-weakly 
	equivalent if and only if they are $n$-weakly equivalent.
	\label{prop:nn1Equi}
\end{prop}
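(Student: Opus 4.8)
The plan is to prove the statement by showing that the two notions of equivalence agree. Since $n$-weak equivalence is by definition Markov equivalence (the case $k=n$), and since Corollary \ref{cor:wellorder} already gives that $n$-weak equivalence implies $(n-1)$-weak equivalence, the forward direction is immediate. The only content is the reverse implication: if $\mathcal{G}_1$ and $\mathcal{G}_2$ are $(n-1)$-weakly equivalent, then they are $n$-weakly equivalent. Equivalently, I must show that the triples $(A,B,C)$ with $\vert C\vert = n$ add no discriminating power beyond those with $\vert C\vert \leq n-1$.

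**The key observation.**

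First I would observe that if $\vert C\vert = n$ then $C = V$, since $C \subseteq V$ and $\vert V\vert = n$. Now consider any triple $(A,B,V)$ with $A,B \subseteq V$. The plan is to argue that $(A,B,V) \in \mathcal{I}(\mathcal{G})$ for \emph{every} DMG $\mathcal{G}$ on node set $V$, so that triples with $\vert C\vert = n$ are trivially present in every induced independence model and therefore cannot distinguish any two graphs. To see this, recall that a $\mu$-connecting walk from $\alpha \in A$ to $\beta \in B$ given $C$ requires, among other conditions, that $\alpha \notin C$ and that no noncollider lies in $C$. When $C = V$, the condition $\alpha \notin C = V$ is impossible for any $\alpha \in A \subseteq V$. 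Hence there can be no $\mu$-connecting walk from any $\alpha \in A$ to any $\beta \in B$ given $V$, so $B$ is $\mu$-separated from $A$ given $V$ in $\mathcal{G}$, i.e., $(A,B,V) \in \mathcal{I}(\mathcal{G})$. (This matches the remark already made in the text after Definition \ref{def:muSep} that $B$ is $\mu$-separated from $A$ given $C$ whenever $A \subseteq C$.)

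**Assembling the argument.**

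With this in hand the reverse implication is routine. Suppose $\mathcal{G}_1$ and $\mathcal{G}_2$ are $(n-1)$-weakly equivalent, and let $(A,B,C) \in \mathcal{I}_n(\mathcal{G}_1)$, so $\vert C\vert \leq n$. If $\vert C\vert \leq n-1$, then $(A,B,C) \in \mathcal{I}_{n-1}(\mathcal{G}_1) = \mathcal{I}_{n-1}(\mathcal{G}_2) \subseteq \mathcal{I}_n(\mathcal{G}_2)$. If instead $\vert C\vert = n$, then $C = V$ and by the observation above $(A,B,C) = (A,B,V) \in \mathcal{I}(\mathcal{G}_2) = \mathcal{I}_n(\mathcal{G}_2)$ automatically. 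In either case $(A,B,C) \in \mathcal{I}_n(\mathcal{G}_2)$, so $\mathcal{I}_n(\mathcal{G}_1) \subseteq \mathcal{I}_n(\mathcal{G}_2)$; interchanging the roles of $\mathcal{G}_1$ and $\mathcal{G}_2$ gives the reverse inclusion and hence $n$-weak equivalence.

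**Main obstacle.**

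There is no serious obstacle here; the result is essentially a degeneracy statement about the conditioning set $C = V$. The only point requiring care is to justify cleanly that every triple with $C = V$ lies in $\mathcal{I}(\mathcal{G})$ for an arbitrary DMG, which follows directly from the defining clause $\alpha \notin C$ in the definition of a $\mu$-connecting walk. I would make sure to state this explicitly rather than leaning on the informal parenthetical remark in the text, since it is the single fact doing all the work.
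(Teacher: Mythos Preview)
Your proof is correct and uses essentially the same idea as the paper: the only triples with $\vert C\vert = n$ have $C=V$, and these are trivially separated in every DMG since $A\subseteq C$. The paper phrases this slightly differently by first reducing to singleton triples $(\alpha,\beta,C)$ with $\alpha\notin C$ via Proposition~\ref{prop:singletonGraphIndep} and then observing that $\alpha\notin C$ forces $\vert C\vert\leq n-1$, but the content is the same.
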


\begin{proof}
	If $\mathcal{G}_1$ and $\mathcal{G}_2$ are $n$-weakly equivalent, then they 
	are also $(n-1)$-weakly equivalent.
	
	On the other hand, assume that $\mathcal{G}_1$ and $\mathcal{G}_2$ are 
	$(n-1)$-weakly equivalent, and let $(\alpha,\beta,C)\in 
	\mathcal{I}_n(\mathcal{G}_1)$ such that $\alpha,\beta\in V$, 
	$C\subseteq V$, and $\alpha\notin C$. We must then have $\vert C\vert \leq 
	n-1$, and therefore $(\alpha,\beta,C)\in 
	\mathcal{I}_n(\mathcal{G}_2)$ by $(n-1)$-weak equivalence of 
	$\mathcal{G}_1$ and $\mathcal{G}_2$. By Proposition 
	\ref{prop:singletonGraphIndep}, this implies 
	$\mathcal{I}_n(\mathcal{G}_1)\subseteq 
	\mathcal{I}_n(\mathcal{G}_2)$. Changing the roles of 
	$\mathcal{G}_1$ and $\mathcal{G}_2$ completes 
	the argument.
\end{proof}

\begin{exmp}[Weak equivalence class]
		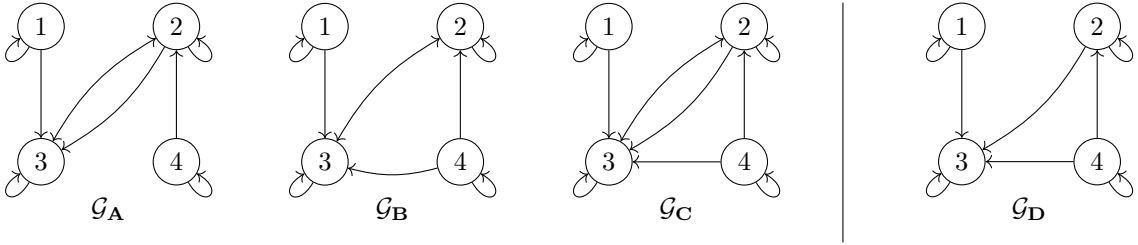
\begin{figure}[h]
			\begin{subfigure}{.19\textwidth}
				\centering
				\begin{tikzpicture}[scale = .9]
				\node[shape=circle,draw=black] (a) at (0,0) {$1$};
				\node[shape=circle,draw=black] (b) at (2,0) 
				{$2$};
				\node[shape=circle,draw=black] (c) at (0,-2) {$3$};
				\node[shape=circle,draw=black] (d) at (2,-2) {$4$};
				
				\node[shape=circle,draw=white] (e) at (1,-2.7) 
				{$\mathcal{G}_\mathrm{\textbf{A}}$};
				
				\path 
				[->](a) edge [bend left = 0] node {} (c)
				[->](b) edge [bend left = 15] node {} (c)
				[->](d) edge [bend left = 0] node {} (b)
				[<->](c) edge [bend left = 15] node {} (b);
	\path			
	[->](a) edge [loop above,  in=210,out=240, min distance=5mm] node {} (a)
	[->](b) edge [loop above,  in=330,out=300, min distance=5mm] node {} (b)
	[->](c) edge [loop above,  in=210,out=240, min distance=5mm] node {} (c)
	[->](d) edge [loop above,  in=330,out=300, min distance=5mm] node {} (d);

				\end{tikzpicture}
			\end{subfigure}\hfill
			\begin{subfigure}{.19\textwidth}
				\centering
				\begin{tikzpicture}[scale = .9]
				\node[shape=circle,draw=black] (a) at (0,0) {$1$};
				\node[shape=circle,draw=black] (b) at (2,0) 
				{$2$};
				\node[shape=circle,draw=black] (c) at (0,-2) {$3$};
				\node[shape=circle,draw=black] (d) at (2,-2) {$4$};
				
				\node[shape=circle,draw=white] (e) at (1,-2.7) 
				{$\mathcal{G}_\mathrm{\textbf{B}}$};
				
				\path
				[->](a) edge [bend left = 0] node {} (c)
				[->](d) edge [bend left = 15] node {} (c)
				[->](d) edge [bend left = 0] node {} (b)
				[<->](c) edge [bend left = 15] node {} (b);
				
	\path			
	[->](a) edge [loop above,  in=210,out=240, min distance=5mm] node {} (a)
	[->](b) edge [loop above,  in=330,out=300, min distance=5mm] node {} (b)
	[->](c) edge [loop above,  in=210,out=240, min distance=5mm] node {} (c)
	[->](d) edge [loop above,  in=330,out=300, min distance=5mm] node {} (d);
				\end{tikzpicture}
			\end{subfigure}\hfill
			\begin{subfigure}{.19\textwidth}
				\centering
				\begin{tikzpicture}[scale = .9]
				\node[shape=circle,draw=black] (a) at (0,0) {$1$};
				\node[shape=circle,draw=black] (b) at (2,0) 
				{$2$};
				\node[shape=circle,draw=black] (c) at (0,-2) {$3$};
				\node[shape=circle,draw=black] (d) at (2,-2) {$4$};
				
				\node[shape=circle,draw=white] (e) at (1,-2.7) 
				{$\mathcal{G}_\mathrm{\textbf{C}}$};
				
				\path 
				[->](a) edge [bend left = 0] node {} (c)
				[->](b) edge [bend left = 15] node {} (c)
				[->](d) edge [bend left = 0] node {} (b)
				[->](d) edge [bend left = 0] node {} (c)
				[<->](c) edge [bend left = 15] node {} (b);
				
	\path			
	[->](a) edge [loop above,  in=210,out=240, min distance=5mm] node {} (a)
	[->](b) edge [loop above,  in=330,out=300, min distance=5mm] node {} (b)
	[->](c) edge [loop above,  in=210,out=240, min distance=5mm] node {} (c)
	[->](d) edge [loop above,  in=330,out=300, min distance=5mm] node {} (d);
				\end{tikzpicture}
			\end{subfigure}\hfill\vline\hfill
			\begin{subfigure}{.19\textwidth}
				\centering
				\begin{tikzpicture}[scale = .9]
				\node[shape=circle,draw=black] (a) at (0,0) {$1$};
				\node[shape=circle,draw=black] (b) at (2,0) 
				{$2$};
				\node[shape=circle,draw=black] (c) at (0,-2) {$3$};
				\node[shape=circle,draw=black] (d) at (2,-2) {$4$};
				
				\node[shape=circle,draw=white] (e) at (1,-2.7) 
				{$\mathcal{G}_\mathrm{\textbf{D}}$};
				
				\path 
				[->](a) edge [bend left = 0] node {} (c)
				[->](b) edge [bend left = 15] node {} (c)
				[->](d) edge [bend left = 0] node {} (b)
				[->](d) edge [bend left = 0] node {} (c);
				
	\path			
	[->](a) edge [loop above,  in=210,out=240, min distance=5mm] node {} (a)
	[->](b) edge [loop above,  in=330,out=300, min distance=5mm] node {} (b)
	[->](c) edge [loop above,  in=210,out=240, min distance=5mm] node {} (c)
	[->](d) edge [loop above,  in=330,out=300, min distance=5mm] node {} (d);
				\end{tikzpicture}
			\end{subfigure}
			\caption{Graphs from 
				Example \ref{exmp:wEqui}. All bidirected loops are present in 
				the graphs 
				but omitted from the 
				visualization.}
			\label{fig:wEqui}
		\end{figure} 	
	
In this example, we restrict our attention to graphs with all loops included in 
which case graphs 
$\mathcal{G}_\mathrm{\textbf{A}}$, $\mathcal{G}_\mathrm{\textbf{B}}$, and 
$\mathcal{G}_\mathrm{\textbf{C}}$ in Figure 
\ref{exmp:wEqui} constitute a 
$2$-weak equivalence class and a $3$-weak equivalence class. Graph 
$\mathcal{G}_C$ 
is the greatest element in both cases. We have that 
$[\mathcal{G}_\mathrm{\textbf{C}}]_2 
\subseteq [\mathcal{G}_\mathrm{\textbf{C}}]_1$ (Corollary 
\ref{cor:wellorder}) and 
$[\mathcal{G}_\mathrm{\textbf{C}}]_1 = 
\{\mathcal{G}_\mathrm{\textbf{A}}, \mathcal{G}_\mathrm{\textbf{B}}, 
\mathcal{G}_\mathrm{\textbf{C}}, \mathcal{G}_\mathrm{\textbf{D}}
 \}$. We 
		see that 
$\mathcal{G}_\mathrm{\textbf{C}}$ and $\mathcal{G}_\mathrm{\textbf{D}}$ are not 
$2$-weakly equivalent as $2$ is 
$\mu$-separated from $3$ given $\{2,4 \}$ in $\mathcal{G}_\mathrm{\textbf{D}}$ 
while this is not 
the case in $\mathcal{G}_\mathrm{\textbf{C}}$.
\label{exmp:wEqui}
\end{exmp}

\begin{exmp}
	We give an example of how `strong connectivity', that is, many similar 
	paths, may 
	lead to more edges in a $k$-weak graph than in the corresponding $n$-weak 
	graph, $k\leq n$. For this purpose, we consider graphs $\mathcal{G}_1$ and 
	$\mathcal{G}_2$ as shown in 
	Figure \ref{fig:demarcWeakEqui}. The graph $\mathcal{G}_2$ is 
	$2$-maximal and therefore it is $k$-maximal for all $k\geq 2$, including $k 
	= n$ (Corollary \ref{cor:kMaxMono}). We construct a smaller graph, 
	$\mathcal{G}_1$, by removing 
	$1\rightarrow 2$. The smaller graph is not Markov equivalent, but 
	it is $(n-3)$-equivalent.
	
	In terms of interpretation, we see that in this class of graphs there are 
	many directed paths from $\alpha$ to $\beta$ and if there are more than 
	$k$, then the edge $\alpha\rightarrow\beta$ can be added $k$-weakly 
	equivalently. In a graphical sense, nodes $\alpha$ and $\beta$ are 
	`strongly' 
	connected as there are more than $k$ disjoint, directed paths from $\alpha$ 
	to 
	$\beta$ and they cannot all be blocked by conditioning on at most $k$ nodes.

	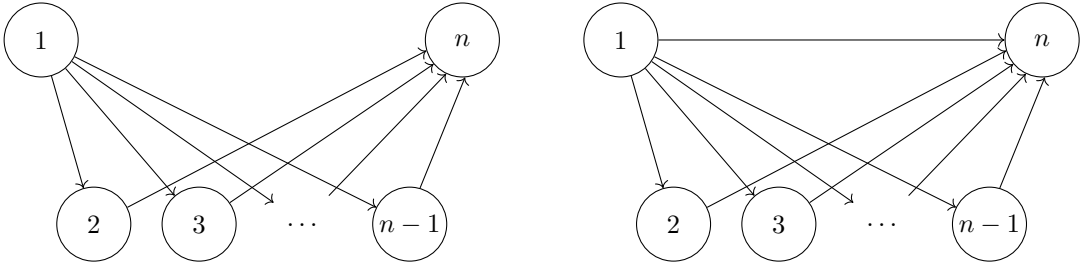
\begin{figure}[h]
		\centering
		\begin{subfigure}{.49\textwidth}
			\begin{tikzpicture}[scale = .7]
			\node[shape=circle,draw=black,inner sep=0pt,minimum size=28pt] (a) 
			at (0,1.5) {$1$};
			\node[shape=circle,draw=black,inner sep=0pt,minimum size=28pt] (b) 
			at (1,-2) 
			{$2$};
			\node[shape=circle,draw=black,inner sep=0pt,minimum size=28pt] (c) 
			at (3,-2) 
			{$3$};
			\node[shape=circle,draw=white,inner sep=0pt,minimum size=28pt] (d) 
			at (5,-2) 
			{\ldots};
			\node[shape=circle,draw=black,inner sep=0pt,minimum size=28pt] (e) 
			at (7,-2) 
			{$n-1$};
			\node[shape=circle,draw=black,inner sep=0pt,minimum size=28pt] (f) 
			at (8,1.5) {$n$};
			
			\path 
			[->](a) edge [bend left = 0] node {} (b)
			[->](a) edge [bend left = 0] node {} (c)
			[->](a) edge [bend left = 0] node {} (d)
			[->](a) edge [bend left = 0] node {} (e)
			[->](b) edge [bend left = 0, in=170, looseness = 0] node {} (f)
			[->](c) edge [bend left = 0, in=185, looseness = 0] node {} (f)
			[->](d) edge [bend left = 0, in=195, looseness = 0] node {} (f)
			[->](e) edge [bend left = 0, in=200, looseness = 0] node {} (f);
			\end{tikzpicture}
		\end{subfigure}\hfill
		\begin{subfigure}{.49\textwidth}
			\begin{tikzpicture}[scale = .7]
			\node[shape=circle,draw=black,inner sep=0pt,minimum size=28pt] 
			(a) 
			at (0,1.5) {$1$};
			\node[shape=circle,draw=black,inner sep=0pt,minimum size=28pt] 
			(b) 
			at (1,-2) 
			{$2$};
			\node[shape=circle,draw=black,inner sep=0pt,minimum size=28pt] 
			(c) 
			at (3,-2) 
			{$3$};
			\node[shape=circle,draw=white,inner sep=0pt,minimum size=28pt] 
			(d) 
			at (5,-2) 
			{\ldots};
			\node[shape=circle,draw=black,inner sep=0pt,minimum size=28pt] 
			(e) 
			at (7,-2) 
			{$n-1$};
			\node[shape=circle,draw=black,inner sep=0pt,minimum size=28pt] 
			(f) 
			at (8,1.5) {$n$};
			
			\path 
			[->](a) edge [bend left = 0] node {} (f)
			[->](a) edge [bend left = 0] node {} (b)
			[->](a) edge [bend left = 0] node {} (c)
			[->](a) edge [bend left = 0] node {} (d)
			[->](a) edge [bend left = 0] node {} (e)
			[->](b) edge [bend left = 0, in=170, looseness = 0] node {} (f)
			[->](c) edge [bend left = 0, in=185, looseness = 0] node {} (f)
			[->](d) edge [bend left = 0, in=195, looseness = 0] node {} (f)
			[->](e) edge [bend left = 0, in=200, looseness = 0] node {} (f);
			\end{tikzpicture}
		\end{subfigure}
		\caption{Graphs $\mathcal{G}_1$ (left) and $\mathcal{G}_2$ (right) from 
			Example \ref{exmp:demarcWeakEqui}. All loops are present in the 
			graphs, but not shown above.}
		\label{fig:demarcWeakEqui}
	\end{figure} 
	\label{exmp:demarcWeakEqui}
\end{exmp}

						We now define \emph{treks} and \emph{directed treks} 
						\cite[see 
						also][]{foygelHalftrek2012,mogensen2020causal}. 
						\cite{foygelHalftrek2012,mogensen2020causal} used paths 
						in their definitions of treks, however, we use walks 
						such 
						that treks between $\alpha$ and $\alpha$ are also 
						allowed.
						
						\begin{defn}[Trek, directed trek]
							Let $\omega$ be a nontrivial walk between $\alpha$ 
							and $\beta$,
							
							$$
							\alpha \sim \ldots \sim_e \beta.
							$$
							
							\noindent We say that $\omega$ is a \emph{trek} if 
							it has no colliders. We 
							say that a trek is \emph{directed} from $\alpha$ to 
							$\beta$ if $\sim_e$ has 
							a head at $\beta$.
							\label{def:trek}
							\end{defn}

							We let $\dtr_\mathcal{G}(\beta)\subseteq V$ denote 
							the set of nodes, $\alpha$, 
							such that there exists a directed trek from 
							$\alpha$ to $\beta$ in 
							$\mathcal{G}$.
							
							\begin{defn}
								Let $\mathcal{G}_1$ and $\mathcal{G}_2$ be 
								DMGs. We say that 
								$\mathcal{G}_1$ and $\mathcal{G}_2$ are 
								\emph{trek equivalent} if for all 
								$\beta\in V$, it holds that
								
								$$
								\dtr_{\mathcal{G}_1}(\beta) = 
								\dtr_{\mathcal{G}_2}(\beta).
								$$
								\label{def:trekEq}
								\end{defn}
								
								A walk is $\mu$-connecting from $\alpha$ to 
								$\beta$ given $\emptyset$ if and 
								only if it is a directed trek from $\alpha$ to 
								$\beta$ which is reflected in the next 
								corollary.
								
								\begin{cor}
									Graphs $\mathcal{G}_1$ and $\mathcal{G}_2$ 
									are $0$-weakly equivalent if and 
									only 
									if they are 
									trek equivalent.
									\label{cor:trekEq}
									\end{cor}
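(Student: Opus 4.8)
The plan is to derive the corollary from the observation recorded immediately before it — that a walk is $\mu$-connecting from $\alpha$ to $\beta$ given $\emptyset$ exactly when it is a directed trek from $\alpha$ to $\beta$ — combined with the singleton reduction of Proposition \ref{prop:singletonGraphIndep}. First I would verify that observation from the definitions: in a $\mu$-connecting walk given $C=\emptyset$ the condition $\alpha\notin C$ is automatic, the requirement that every collider lies in $\an(C)=\an(\emptyset)=\emptyset$ forces the walk to have no colliders, the requirement that no noncollider lies in $\emptyset$ is vacuous, and the final edge must have a head at $\beta$. A nontrivial walk with no colliders whose last edge has a head at $\beta$ is precisely a directed trek from $\alpha$ to $\beta$ (Definition \ref{def:trek}); conversely any directed trek meets these conditions. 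Hence $\beta$ is $\mu$-separated from $\alpha$ given $\emptyset$ if and only if there is no directed trek from $\alpha$ to $\beta$, i.e.\ $(\alpha,\beta,\emptyset)\in\mathcal{I}_0(\mathcal{G})$ if and only if $\alpha\notin\dtr_\mathcal{G}(\beta)$.

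Next I would set up the bookkeeping. Recall that $0$-weak equivalence is $\mathcal{J}$-weak equivalence for $\mathcal{J}=\{(A,B,C)\in\mathcal{P}: \vert C\vert\le 0\}=\{(A,B,\emptyset):A,B\subseteq V\}$, which is homogeneous (take $\mathcal{C}=\{\emptyset\}$ in Definition \ref{def:homogenEqui}) and therefore singleton stable. With $\mathcal{S}=\{(A,B,C)\in\mathcal{P}:\vert A\vert=\vert B\vert=1,\ A\cap C=\emptyset\}$ as in Proposition \ref{prop:singletonGraphIndep}, intersecting with $\mathcal{J}$ forces $C=\emptyset$, so $\mathcal{I}_0(\mathcal{G})\cap\mathcal{S}=\{(\alpha,\beta,\emptyset):\ \alpha\notin\dtr_\mathcal{G}(\beta)\}$ by the previous paragraph. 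Thus agreement of $\mathcal{G}_1$ and $\mathcal{G}_2$ on $\mathcal{I}_0\cap\mathcal{S}$ is literally the statement that $\dtr_{\mathcal{G}_1}(\beta)=\dtr_{\mathcal{G}_2}(\beta)$ for every $\beta\in V$, that is, trek equivalence.

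With this identification the two implications are short. If $\mathcal{G}_1$ and $\mathcal{G}_2$ are $0$-weakly equivalent, then $\mathcal{I}_0(\mathcal{G}_1)=\mathcal{I}_0(\mathcal{G}_2)$, so in particular they agree on $\mathcal{I}_0\cap\mathcal{S}$ and hence are trek equivalent. Conversely, if they are trek equivalent, then $\mathcal{I}_0(\mathcal{G}_1)\cap\mathcal{S}=\mathcal{I}_0(\mathcal{G}_2)\cap\mathcal{S}$, and Proposition \ref{prop:singletonGraphIndep} (applicable since $\mathcal{J}$ is singleton stable) upgrades this to $\mathcal{I}_0(\mathcal{G}_1)\subseteq\mathcal{I}_0(\mathcal{G}_2)$; interchanging the roles of the two graphs gives the reverse inclusion and hence $0$-weak equivalence. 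I do not expect a genuine obstacle here; the only points needing care are matching the $C=\emptyset$ case of the $\mu$-connecting definition to Definition \ref{def:trek} (in particular that $\an(\emptyset)=\emptyset$ rules out colliders and that both notions are restricted to nontrivial walks, so loops $\beta\to\beta$ are treated consistently) and checking that the relevant $\mathcal{J}$ is singleton stable so that Proposition \ref{prop:singletonGraphIndep} may be invoked.
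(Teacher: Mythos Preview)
Your argument is correct. The paper's own proof is the single line ``This follows from Corollary~\ref{cor:kEqkColl}'', which for $k=0$ says that $0$-weak equivalence amounts to agreement, for every $\alpha,\beta$, on the existence of a $\mu$-connecting walk given $\emptyset$ with at most $0$ colliders---i.e.\ a directed trek. Unpacking that corollary (whose proof in turn invokes Proposition~\ref{prop:singletonGraphIndep} and Proposition~\ref{prop:kCollIk}) for $k=0$ yields exactly your argument: the observation that $\mu$-connecting walks given $\emptyset$ are directed treks, plus the singleton reduction. So your route is not genuinely different, just more self-contained; you argue the $k=0$ case directly instead of citing the general $k$-collider characterization from the appendix. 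The trade-off is that the paper's version is one line but relies on machinery placed later, while yours is longer but stays within the main text and makes the $\an(\emptyset)=\emptyset$ step explicit.
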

									
									\begin{proof}
										This follows from Corollary 
										\ref{cor:kEqkColl}.
										\end{proof}
										
In Corollary \ref{cor:trekEq}, it is important to 
	define treks using walks, not 
	paths. For instance, the graph in Figure \ref{fig:trek} is $0$-weak 
	equivalent with 
	the complete graph, but the only directed treks from $1$ to $2$ is not are 
	paths. Therefore, the result in Corollary \ref{cor:trekEq} does not hold 
	if directed treks are required to be paths. We say that a DMG $\mathcal{G} 
	= (V,E)$, $V = \{1,2,\ldots,n\}$, \emph{contains 
		a directed cycle} if there is some permutation of $V$, $\sigma$, such 
		that 
	$\sigma(1) \rightarrow \sigma(2) \rightarrow \ldots \rightarrow \sigma(n-1) 
	\rightarrow \sigma(n) \rightarrow \sigma(1)$ in $\mathcal{G}$ (see an 
	example in Figure \ref{fig:dirCycle}).

\begin{figure}	
	\begin{minipage}[t]{.45\textwidth}
			\centering
			\begin{tikzpicture}[scale = .9]
			\node[shape=circle,draw=white,inner sep=0pt,minimum size=18pt] (a1) 
			at (0,0) {};
			\node[shape=circle,draw=white,inner sep=0pt,minimum size=18pt] (b1) 
			at (2,0) 
			{};
			\node[shape=circle,draw=black,inner sep=0pt,minimum size=18pt] (a) 
			at (0,-1) {$1$};
			\node[shape=circle,draw=black,inner sep=0pt,minimum size=18pt] (b) 
			at (2,-1) 
			{$2$};
			\node[shape=circle,draw=white,inner sep=0pt,minimum size=18pt] (c) 
			at (0,-2) {};
			\node[shape=circle,draw=white,inner sep=0pt,minimum size=18pt] (d) 
			at (2,-2) {};

			\path 
			[->](b) edge [bend left = 0] node {} (a);
			\path
	[->](a) edge [loop above,  in=90,out=120, min distance=5mm] node {} (a)
	[->](b) edge [loop above,  in=90,out=120, min distance=5mm] node {} (b);
			\end{tikzpicture}
		\caption{The above graph is trek equivalent with 
			the complete graph, and therefore also $0$-weakly equivalent with 
			the complete graph (Corollary \ref{cor:trekEq}).}
		\label{fig:trek}	
	\end{minipage}\hfill
	\begin{minipage}[t]{.45\textwidth}
		\centering
		\begin{tikzpicture}[scale = .9]
		\node[shape=circle,draw=black,inner sep=0pt,minimum size=18pt] (a) at 
		(0,0) {$1$};
		\node[shape=circle,draw=black,inner sep=0pt,minimum size=18pt] (b) at 
		(2,0) 
		{$2$};
		\node[shape=circle,draw=black,inner sep=0pt,minimum size=18pt] (c) at 
		(4,0) {$3$};
		\node[shape=circle,draw=black,inner sep=0pt,minimum size=18pt] (d) at 
		(4,-2) {$4$};
		\node[shape=circle,draw=white,inner sep=0pt,minimum size=18pt] (e) at 
		(2,-2) {\ldots};
		\node[shape=circle,draw=black,inner sep=0pt,minimum size=18pt] (f) at 
		(0,-2) {$n$};

		\path 
		[->](a) edge [bend left = 0] node {} (b)
		[->](b) edge [bend left = 0] node {} (c)
		[->](c) edge [bend left = 0] node {} (d)
		[->](d) edge [bend left = 0] node {} (e)
		[->](e) edge [bend left = 0] node {} (f)
		[->](f) edge [bend left = 0] node {} (a);
		
			\path
			[->](a) edge [loop above,  in=90,out=120, min distance=5mm] node {} 
			(a)
			[->](b) edge [loop above,  in=90,out=120, min distance=5mm] node {} 
			(b)
			[->](c) edge [loop above,  in=90,out=120, min distance=5mm] node {} 
			(c)
			[->](d) edge [loop above,  in=270,out=240, min distance=5mm] node 
			{} (d)
			[->](f) edge [loop above,  in=270,out=240, min distance=5mm] node 
			{} (f);
		\end{tikzpicture}
		\caption{Directed cycle, see Proposition 
			\ref{prop:cycle}.}
		\label{fig:dirCycle}	
	\end{minipage}
\end{figure}
										
										\begin{prop}
											Let $\mathcal{G} = (V,E)$ be a DMG, 
											$V = \{1,2,\ldots,n\}$, which 
											contains a directed 
											cycle. If 
											every node has a loop, then the
											complete 
											DMG on $V$ is the greatest 
											element of both $[\mathcal{G}]_0$ 
											and 
											$[\mathcal{G}]_1$.
											\label{prop:cycle}
											\end{prop}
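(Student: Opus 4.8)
The plan is to let $\mathcal{K}$ denote the complete DMG on $V$ and to prove the two memberships $\mathcal{K}\in[\mathcal{G}]_0$ and $\mathcal{K}\in[\mathcal{G}]_1$. Since $\mathcal{K}$ contains every admissible edge, every DMG on $V$ is a subgraph of $\mathcal{K}$, so once these memberships are established, $\mathcal{K}$ is automatically the greatest element of each class (Definition \ref{def:greatDMG}); that is, the entire problem reduces to showing weak equivalence. Throughout I would relabel the nodes so the directed cycle reads $1\rightarrow 2\rightarrow\cdots\rightarrow n\rightarrow 1$ (indices modulo $n$). A first observation I would record is that, because the cycle visits every node, every node reaches every node, so $\an_\mathcal{G}(x)=V$ for all $x$.

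For the $0$-weak case I would invoke trek equivalence (Corollary \ref{cor:trekEq}): it suffices to check $\dtr_\mathcal{G}(\beta)=\dtr_\mathcal{K}(\beta)$ for all $\beta$. In $\mathcal{K}$ the edge $\alpha\rightarrow\beta$ is a directed trek for every $\alpha$ (a loop when $\alpha=\beta$), so $\dtr_\mathcal{K}(\beta)=V$; in $\mathcal{G}$ the forward arc $\alpha\rightarrow\cdots\rightarrow\beta$ is a directed trek when $\alpha\neq\beta$ and the full cycle is one when $\alpha=\beta$, so $\dtr_\mathcal{G}(\beta)=V$ as well. This settles $0$-weak equivalence.

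For the $1$-weak case I set $\mathcal{J}=\{(A,B,C):\vert C\vert\leq 1\}$, which is homogeneous and hence singleton stable, and work through Proposition \ref{prop:singletonGraphIndep} with $\mathcal{S}=\{(A,B,C):\vert A\vert=\vert B\vert=1,\ A\cap C=\emptyset\}$. My strategy is to prove the stronger statement that neither graph has any nontrivial singleton separation at this level, i.e.\ $\mathcal{I}_1(\mathcal{G})\cap\mathcal{S}=\mathcal{I}_1(\mathcal{K})\cap\mathcal{S}=\emptyset$; Proposition \ref{prop:singletonGraphIndep}, applied in both directions, then upgrades this to $\mathcal{I}_1(\mathcal{G})=\mathcal{I}_1(\mathcal{K})$. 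For $\mathcal{K}$ emptiness is immediate, since $\alpha\rightarrow\beta$ is $\mu$-connecting given any $C$ with $\alpha\notin C$. The substance is the claim for $\mathcal{G}$: for every $\alpha,\beta$ and every $c\neq\alpha$ there is a $\mu$-connecting walk from $\alpha$ to $\beta$ given $\{c\}$ (the case $C=\emptyset$ is already covered by the treks above). Because $\an_\mathcal{G}(\{c\})=V$, every collider is admissible, so I only need a walk ending with a head at $\beta$ on which $c$ is never an internal noncollider.

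The main obstacle, and precisely where the loop hypothesis is used, is constructing this walk when $c$ lies strictly inside the forward arc from $\alpha$ to $\beta$, so that the naive directed path through $c$ is blocked. Here I would delete $c$ from the cycle, leaving a directed path $P$ through the remaining $n-1$ nodes that still contains both $\alpha$ and $\beta$. If $\alpha$ precedes $\beta$ on $P$, its forward subpath connects them while avoiding $c$. If $\alpha$ follows $\beta$ on $P$, I traverse $P$ \emph{backwards} from $\alpha$ to $\beta$: each intermediate node is a noncollider different from $c$, and the walk arrives at $\beta$ along a tail, which is not yet $\mu$-connecting. A single loop at $\beta$ (directed or bidirected) then converts this tail into a head at $\beta$, the extra occurrence of $\beta$ being a harmless noncollider since $\beta\neq c$. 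The boundary cases are handled directly: $\alpha=\beta$ by a loop at $\beta$, and $\beta=c$ by the forward arc, on which $c=\beta$ occurs only as the endpoint. Assembling these cases gives $\mathcal{I}_1(\mathcal{G})\cap\mathcal{S}=\emptyset$ and finishes the proof. I expect the only delicate point to be the head/tail bookkeeping along the backward walk and at the final loop; everything else follows routinely from Corollary \ref{cor:trekEq} and Proposition \ref{prop:singletonGraphIndep}.
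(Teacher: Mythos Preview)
Your argument is correct and follows the same route as the paper's proof: trek equivalence for $k=0$, and for $k=1$ the dichotomy ``forward arc if $c$ does not block it, otherwise backward arc capped with a loop at $\beta$.'' Your packaging via ``delete $c$ and look at the remaining path $P$'' is equivalent to the paper's direct case split; note, however, that in the obstacle case you single out ($c$ strictly inside the forward arc) the branch ``$\alpha$ precedes $\beta$ on $P$'' is vacuous, so that subcase can be dropped.
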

											
											\begin{proof}
												For $k = 0$, this follows from 
												Corollary \ref{cor:trekEq} as 
												there is a 
												directed trek between any 
												ordered pair of nodes in 
												$\mathcal{G}$. Let $k=1$ 
												and consider nodes $\alpha$ and 
												$\beta$. We show that there is 
												no 
												separating set, $C$, such that 
												$C\leq 1$. If $C=\emptyset$, 
												this is clear. 
												If $C = \{\gamma\}$, 
												$\gamma\neq \alpha$, then 
												either $\alpha 
												\starrightarrow 
												\ldots \rightarrow 
												\beta$ is open, or $\gamma\neq 
												\beta$ and $\alpha\leftarrow 
												\ldots 
												\leftarrow \beta 
												\starrightarrow \beta$ is open.
												\end{proof}

\section{Greatest elements under homogeneous weak equivalences}
\label{sec:greatElemWEqui}

In the rest of the paper, we assume every weak 
equivalence relation to be \emph{homogeneous} (Definition 
\ref{def:homogenEqui}) as this leads to the existence of a greatest element in 
each equivalence class which we will prove in Subsection \ref{ssec:existGreat}. 
\cite{Mogensen2020a} showed the analogous result in the case of Markov 
equivalence classes. The notions of \emph{$C$-potential siblings} and 
\emph{$C$-potential parents} are central to 
this proof and are introduced in the next subsection.

\subsection{$C$-potential siblings and $C$-potential parents}

The existence of a greatest element in each $\mathcal{J}$-weak equivalence 
class 
can be proven using \emph{$C$-potential 
siblings} 
and 
\emph{$C$-potential parents} as introduced in Definitions \ref{def:Cps} and 
\ref{def:Cpp}. We say that two graphs, $\mathcal{G}_1=(V,E_1)$ and 
$\mathcal{G}_2=(V,E_2)$, are 
\emph{$C$-equivalent}, $C\subseteq V$, if for all $\gamma,\delta\in V$,

\begin{align*}
	(\gamma,\delta,C) \in \mathcal{I}(\mathcal{G}_1) \Leftrightarrow 
	(\gamma,\delta,C) \in \mathcal{I}(\mathcal{G}_2).
\end{align*}

\noindent Let $\alpha,\beta\in V$ and let $e$ be the 
edge $\alpha\leftrightarrow\beta$. The conditions (cs1)-(cs3) in 
Definition \ref{def:Cps} are 
sufficient and necessary
for $\mathcal{G}$ and $\mathcal{G} + e$ to be $C$-equivalent. When $e$ is 
directed, the conditions (cp1)-(cp4) 
in Definition \ref{def:Cpp} 
are 
analogously necessary and sufficient for $\mathcal{G}$ and $\mathcal{G} + e$ to 
be $C$-equivalent. The sufficiency is proven in 
Lemmas \ref{lem:Cps} and \ref{lem:Cpp} and the necessity follows from applying 
Propositions \ref{prop:bidirEdgeCps} and \ref{prop:dirEdgeCpp} to $\mathcal{G} 
+ e$.

Definitions \ref{def:Cps} and \ref{def:Cpp} use an abstract independence model, 
$\mathcal{I}$, while Propositions \ref{prop:gps} and \ref{prop:gpp} describe 
the content of those definitions in the case of a graphical independence model, 
$\mathcal{I} = \mathcal{I}(\mathcal{G})$.

\begin{defn}[$C$-potential sibling]
		Let $\mathcal{I}$ be an independence model over $V$, let 
		$\alpha,\beta\in 
		V$, and let $C\subseteq V$. We say that $\alpha$ and $\beta$ are 
		\emph{$C$-potential siblings} in $\mathcal{I}$ if (cs1)-(cs3) hold.
	\begin{itemize}
		\item[(cs1)] \begin{itemize}[leftmargin=0pt]
			\item[] if $\alpha\notin
			C$: $(\alpha,\beta,C) \notin \mathcal{I}$, and
			\item[] if $\beta\notin C$: $(\beta,\alpha,C) \notin \mathcal{I}$ 
		\end{itemize}
		\item[(cs2)] if $\beta\in C$: for all $\gamma\in V$, $$
		(\gamma,\alpha,C) \in \mathcal{I} \Rightarrow (\gamma,\beta,C) \in 
		\mathcal{I}
		$$ 
		\item[(cs3)] if $\alpha\in C$: for all $\gamma\in V$, $$
		(\gamma,\beta,C) \in \mathcal{I} \Rightarrow (\gamma,\alpha,C) \in 
		\mathcal{I}
		$$ 
	\end{itemize}
	\label{def:Cps}
\end{defn}

\begin{defn}[$C$-potential parent]
	Let $\mathcal{I}$ be an independence model over $V$, let $\alpha,\beta\in 
	V$, and let $C\subseteq V$. We say that $\alpha$ is a \emph{$C$-potential 
	parent} of $\beta$ in $\mathcal{I}$ if (cp1)-(cp4) hold.
	\begin{itemize}
		\item[(cp1)] if $\alpha\notin C$: $(\alpha,\beta,C) \notin \mathcal{I}$
		\item[(cp2)] if $\alpha\notin C$: for all $\gamma\in V$, $$ 
		(\gamma,\beta, C) \in \mathcal{I} \Rightarrow (\gamma,\alpha,C) \in 
		\mathcal{I}$$
		\item[(cp3)] if $\alpha\notin C,\beta\in C$: for all $\gamma,\delta\in 
		C$, $$(\gamma,\delta, C) \in \mathcal{I} \Rightarrow (\gamma,\beta,C) 
		\in \mathcal{I} \vee (\alpha,\delta,C) \in \mathcal{I} $$
		\item[(cp4)] if $\alpha,\beta \notin C$: for all $\gamma\in V$, $$ 
		(\beta,\gamma,C) \in \mathcal{I} \Rightarrow (\alpha,\gamma,C) \in 
		\mathcal{I}$$	
		\end{itemize}
	\label{def:Cpp}
\end{defn}

If $\mathcal{I}$ is graphical, $\mathcal{I} = \mathcal{I}(\mathcal{G})$, and 
$\alpha$ and $\beta$ are $C$-potential siblings in $\mathcal{I}(\mathcal{G})$, 
we will say that
$\alpha\leftrightarrow\beta$ is a 
\emph{$C$-potential sibling edge} between $\alpha$ and $\beta$. Similarly, we 
will say that 
$\alpha\rightarrow\beta$ is a 
\emph{$C$-potential parent edge} from $\alpha$ to $\beta$ if $\alpha$ is a 
$C$-potential parent of 
$\beta$ in $\mathcal{I}(\mathcal{G})$. The following two propositions simply 
rewrite Definitions \ref{def:Cps} and 
\ref{def:Cpp} to explicitly use $\mu$-connecting walks in the case of a 
graphical independence model. Their proofs follow directly from the 
definitions of $\mu$-separation and the independence model 
$\mathcal{I}_\mathcal{J}(\mathcal{G})$.

\begin{prop}[Graphical version of $C$-potential siblings]
Let $\mathcal{I}_\mathcal{J}(\mathcal{G})$ be the weak independence model 
induced by 
$\mathcal{G} = (V,E)$. Let $C\subseteq V$ and let $\mathcal{C}$ be the 
collection of conditioning sets of $\mathcal{J}$. Nodes $\alpha$ and $\beta$ 
are 
$C$-potential siblings if and only if $C \notin \mathcal{C}$ or (gcs1)-(gcs3) 
holds.

	\begin{itemize}
		\item[(gcs1)] \begin{itemize}[leftmargin=0pt]
			\item[] If $\alpha\notin C$, there exists a $\mu$-connecting walk 
			from $\alpha$ to 
			$\beta$ 
			given $C$, and
			\item[] if $\beta\notin C$, there exists a $\mu$-connecting walk 
			from $\beta$ 
			to $\alpha$ 
			given $C$.
		\end{itemize} 
		\item[(gcs2)] If $\beta\in C$, then for all $\gamma\in V$ such that 
		there 
		exists a $\mu$-connecting walk from $\gamma$ to $\beta$ given $C$,
		there also exists a $\mu$-connecting walk from $\gamma$ to $\alpha$ 
		given $C$.
		\item[(gcs3)] If $\alpha\in C$, then for all $\gamma\in V$ such that 
		there 
		exists a $\mu$-connecting walk from $\gamma$ to $\alpha$ given $C$, 
		there also exists a $\mu$-connecting walk from $\gamma$ to $\beta$ 
		given $C$.
	\end{itemize}
	\label{prop:gps}
\end{prop}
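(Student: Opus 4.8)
The plan is to reduce the proposition to a direct translation between the abstract conditions (cs1)--(cs3) of Definition~\ref{def:Cps}, instantiated with $\mathcal{I} = \mathcal{I}_\mathcal{J}(\mathcal{G})$, and the graphical conditions (gcs1)--(gcs3). The only facts needed are the defining identity $\mathcal{I}_\mathcal{J}(\mathcal{G}) = \mathcal{I}(\mathcal{G}) \cap \mathcal{J}$ and the observation that, since $\mathcal{J}$ is homogeneous with collection of conditioning sets $\mathcal{C}$, a triple $(\gamma,\delta,C)$ lies in $\mathcal{J}$ precisely when $C \in \mathcal{C}$. The argument splits into the two cases $C \notin \mathcal{C}$ and $C \in \mathcal{C}$.

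First I would dispose of the case $C \notin \mathcal{C}$. Here no triple of the form $(\gamma,\delta,C)$ lies in $\mathcal{J}$, hence none lies in $\mathcal{I}_\mathcal{J}(\mathcal{G})$. Condition (cs1) is then satisfied because the required non-memberships $(\alpha,\beta,C),(\beta,\alpha,C) \notin \mathcal{I}_\mathcal{J}(\mathcal{G})$ are automatic, while (cs2) and (cs3) hold vacuously because their hypotheses $(\gamma,\alpha,C) \in \mathcal{I}_\mathcal{J}(\mathcal{G})$ and $(\gamma,\beta,C) \in \mathcal{I}_\mathcal{J}(\mathcal{G})$ can never hold. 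Thus $\alpha$ and $\beta$ are always $C$-potential siblings when $C \notin \mathcal{C}$, matching the disjunct ``$C \notin \mathcal{C}$'' on the right-hand side, so the biconditional holds trivially in this case.

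For $C \in \mathcal{C}$, membership of $(\gamma,\delta,C)$ in $\mathcal{I}_\mathcal{J}(\mathcal{G})$ coincides with membership in $\mathcal{I}(\mathcal{G})$, i.e.\ with $\delta$ being $\mu$-separated from $\gamma$ given $C$, equivalently with the absence of a $\mu$-connecting walk from $\gamma$ to $\delta$ given $C$. Substituting this equivalence into (cs1) yields (gcs1) at once, since the non-separations $(\alpha,\beta,C),(\beta,\alpha,C)\notin\mathcal{I}_\mathcal{J}(\mathcal{G})$ become the existence of $\mu$-connecting walks from $\alpha$ to $\beta$ and from $\beta$ to $\alpha$, respectively. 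For (cs2) and (cs3) I would pass to the contrapositive: the implication $(\gamma,\alpha,C)\in\mathcal{I}_\mathcal{J}(\mathcal{G}) \Rightarrow (\gamma,\beta,C)\in\mathcal{I}_\mathcal{J}(\mathcal{G})$ is equivalent to the assertion that every $\gamma$ admitting a $\mu$-connecting walk to $\beta$ given $C$ also admits one to $\alpha$ given $C$, which is exactly (gcs2); condition (cs3) becomes (gcs3) symmetrically. As the disjunct ``$C \notin \mathcal{C}$'' is false here, the right-hand side reduces to (gcs1)--(gcs3) and the equivalence follows.

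The argument is essentially bookkeeping, so I do not anticipate a genuine obstacle; the points that require care are the asymmetry of $\mu$-separation in its two node arguments --- so that the walks in (gcs2) and (gcs3) are oriented into the correct node --- and the faithful transcription of the guard clauses (``if $\alpha\notin C$'', ``if $\beta\in C$'', and so on), which must be carried through each translation unchanged. I would also state explicitly that the two cases are exhaustive and that in each the proposition's biconditional has been verified.
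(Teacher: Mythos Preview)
Your proposal is correct and matches the paper's approach: the paper states that the proof ``follow[s] directly from the definitions of $\mu$-separation and the independence model $\mathcal{I}_\mathcal{J}(\mathcal{G})$'' without giving further detail, and what you have written is precisely the careful unpacking of that claim via the two cases $C\notin\mathcal{C}$ and $C\in\mathcal{C}$.
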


\begin{prop}[Graphical version of $C$-potential parents]
	Let $\mathcal{I}_\mathcal{J}(\mathcal{G})$ be the weak independence model 
	induced by 
	$\mathcal{G} = (V,E)$. Let $C\subseteq V$ and let $\mathcal{C}$ be the 
	collection of conditioning sets of $\mathcal{J}$. The node $\alpha$ is a 
	$C$-potential parent of $\beta$ if and only if $C\notin \mathcal{C}$ or 
	(gcp1)-(gcp4) holds.
	
	\begin{itemize}
		\item[(gcp1)] If $\alpha\notin C$, there exists a $\mu$-connecting walk 
		from $\alpha$ to 
		$\beta$ 
		given $C$.
		\item[(gcp2)] If $\alpha\notin C$, then for all $\gamma\in V$ such that 
		there 
		exists a $\mu$-connecting walk from $\gamma$ to $\alpha$ given $C$, 
		there also exists a $\mu$-connecting walk from $\gamma$ to $\beta$ 
		given $C$.
		\item[(gcp3)] If $\alpha\notin C$ and $\beta\in C$, then for all 
		$\gamma,\delta\in V$ such that there exists a $\mu$-connecting walk 
		from 
		$\gamma$ to $\beta$ given $C$ and a $\mu$-connecting walk from $\alpha$ 
		to $\delta$ given $C$, there also exists a $\mu$-connecting walk 
		from $\gamma$ to $\delta$ given $C$.
		\item[(gcp4)] If $\alpha,\beta\notin C$ then for all $\gamma\in V$ such 
		that
		there 
		exists a 
		$\mu$-connecting walk 
		from $\alpha$ to $\gamma$ given $C$, there also exists a 
		$\mu$-connecting 
		walk 
		from $\beta$ to $\gamma$ given $C$.
	\end{itemize}
	
	\label{prop:gpp}
\end{prop}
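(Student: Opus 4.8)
The plan is to prove the biconditional by a case split on whether the conditioning set $C$ lies in the collection $\mathcal{C}$ of conditioning sets of $\mathcal{J}$, in each case unfolding membership in $\mathcal{I}_\mathcal{J}(\mathcal{G})$ into a statement about $\mu$-connecting walks. Two facts drive everything. First, since $\mathcal{J}$ is homogeneous we have $(A,B,C)\in\mathcal{J}$ if and only if $C\in\mathcal{C}$; hence, as $\mathcal{I}_\mathcal{J}(\mathcal{G})=\mathcal{I}(\mathcal{G})\cap\mathcal{J}$, whether a triple with conditioning set $C$ lies in $\mathcal{I}_\mathcal{J}(\mathcal{G})$ is governed jointly by the membership $C\in\mathcal{C}$ and by $\mu$-separation. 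Second, by the definition of $\mu$-separation, $(A,B,C)\in\mathcal{I}(\mathcal{G})$ means exactly that there is no $\mu$-connecting walk from $A$ to $B$ given $C$, so (when $C\in\mathcal{C}$) the statement $(A,B,C)\notin\mathcal{I}_\mathcal{J}(\mathcal{G})$ is precisely the existence of such a walk.

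First I would dispatch the case $C\notin\mathcal{C}$. Homogeneity then excludes every triple with conditioning set $C$ from $\mathcal{J}$, and therefore from $\mathcal{I}_\mathcal{J}(\mathcal{G})$. Checking Definition \ref{def:Cpp}, condition (cp1) holds because $(\alpha,\beta,C)\notin\mathcal{I}_\mathcal{J}(\mathcal{G})$, and (cp2)--(cp4) hold vacuously, each having a hypothesis of the form $(\cdot,\cdot,C)\in\mathcal{I}_\mathcal{J}(\mathcal{G})$ that is never satisfied. Thus $\alpha$ is automatically a $C$-potential parent of $\beta$, matching the right-hand disjunct $C\notin\mathcal{C}$.

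Next I would treat $C\in\mathcal{C}$, where $\mathcal{I}_\mathcal{J}(\mathcal{G})$ and $\mathcal{I}(\mathcal{G})$ agree on all triples with conditioning set $C$. Here I substitute ``$(A,B,C)\in\mathcal{I}$'' by ``no $\mu$-connecting walk from $A$ to $B$ given $C$'' throughout (cp1)--(cp4) and take the contrapositive of each implication. Then (cp1), which is not an implication, translates directly to the bare existence of a $\mu$-connecting walk $\alpha\to\beta$, i.e. (gcp1); the contrapositive of (cp2) reads ``whenever a connecting walk $\gamma\to\alpha$ exists, so does one $\gamma\to\beta$'', i.e. (gcp2); and (cp4) transcribes analogously to (gcp4). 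Throughout I keep the side conditions (``if $\alpha\notin C$'', ``if $\beta\in C$'', and so on) unchanged, and I use the convention that $(\gamma,\delta,C)\in\mathcal{I}(\mathcal{G})$ holds automatically when $\gamma\in C$ to see that the quantifier ranges on the abstract and graphical sides agree. Combining the two cases, $\alpha$ is a $C$-potential parent of $\beta$ exactly when $C\notin\mathcal{C}$ or (gcp1)--(gcp4) hold.

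The one step deserving genuine care is (cp3), whose conclusion is a disjunction. Negating ``$(\gamma,\beta,C)\in\mathcal{I}\vee(\alpha,\delta,C)\in\mathcal{I}$'' yields the simultaneous existence of connecting walks $\gamma\to\beta$ and $\alpha\to\delta$, and this conjunction together with the negated premise must be shown equivalent to the connecting walk $\gamma\to\delta$ required by (gcp3); reconciling the quantifier range for $\gamma$ and $\delta$ (using that a walk from $\gamma$ forces $\gamma\notin C$, together with the separation convention for endpoints lying in $C$) is the only fiddly bookkeeping. Everything else is a direct substitution from the definitions of $\mu$-separation and $\mathcal{I}_\mathcal{J}(\mathcal{G})$, and the companion Proposition \ref{prop:gps} for $C$-potential siblings follows by the same two-case template applied to (cs1)--(cs3).
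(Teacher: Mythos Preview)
Your proposal is correct and matches the paper's approach exactly. The paper does not give an explicit proof of this proposition; it simply remarks that the proof ``follow[s] directly from the definitions of $\mu$-separation and the independence model $\mathcal{I}_\mathcal{J}(\mathcal{G})$,'' and your case split on $C\in\mathcal{C}$ together with the contrapositive translation of (cp1)--(cp4) into (gcp1)--(gcp4) is precisely that unfolding.
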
 

The next two propositions show that if $\alpha\leftrightarrow\beta$ 
($\alpha\rightarrow\beta$) is in a graph, then $\alpha$ and $\beta$ are 
$C$-potential siblings ($\alpha$ is a $C$-potential parent of $\beta$)  in the 
independence model of the graph for all 
$C\subseteq V$. The edge $e$ is therefore a $C$-potential sibling edge 
($C$-potential parent edge) in $\mathcal{I}(\mathcal{G} + e)$, and if 
$\mathcal{G}$ and 
$\mathcal{G} + e$ are $C$-equivalent, then $e$ is also a $C$-potential sibling 
edge ($C$-potential parent edge) in $\mathcal{I}(\mathcal{G})$. This means that 
$\mathcal{I}(\mathcal{G})$ satisfying the 
conditions in Definitions \ref{def:Cps} and \ref{def:Cpp} is necessary for 
$C$-equivalence of $\mathcal{G}$ and $\mathcal{G} + e$.

\begin{prop}
	Let $\mathcal{J}$ be homogeneous. If $\alpha \leftrightarrow \beta$ is in 
	$\mathcal{G}$, then $\alpha$ and 
	$\beta$ are
	$C$-potential siblings in $\mathcal{I}_\mathcal{J}(\mathcal{G})$ 
	for all $C \subseteq V$.
	\label{prop:bidirEdgeCps}
\end{prop}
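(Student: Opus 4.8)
The plan is to invoke the graphical characterization of $C$-potential siblings in Proposition \ref{prop:gps}, which is available precisely because $\mathcal{J}$ is homogeneous and hence carries a well-defined collection of conditioning sets $\mathcal{C}$. I would fix an arbitrary $C \subseteq V$. By Proposition \ref{prop:gps} it suffices to show that either $C \notin \mathcal{C}$, in which case $\alpha$ and $\beta$ are $C$-potential siblings by fiat, or else that (gcs1)--(gcs3) all hold. So I would assume $C \in \mathcal{C}$ and verify the three conditions, using throughout the single elementary observation that the one-edge walk $\alpha \leftrightarrow \beta$ has no internal nodes and therefore contains neither colliders nor noncolliders.

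For (gcs1), I would note that whenever $\alpha \notin C$ the walk $\alpha \leftrightarrow \beta$ is itself $\mu$-connecting from $\alpha$ to $\beta$ given $C$: it is nontrivial, its unique edge has a head at $\beta$, and the collider/noncollider requirements are vacuous. Symmetrically, whenever $\beta \notin C$ the walk $\beta \leftrightarrow \alpha$ is $\mu$-connecting from $\beta$ to $\alpha$ given $C$. Thus (gcs1) follows immediately from the presence of the edge.

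For (gcs2), I would suppose $\beta \in C$ and take any $\gamma$ admitting a $\mu$-connecting walk $w$ from $\gamma$ to $\beta$ given $C$. Appending $\beta \leftrightarrow \alpha$ to $w$ yields a walk $w'$ from $\gamma$ to $\alpha$. Since $w$ ends with a head at $\beta$ and the appended edge also has a head at $\beta$, the now-internal instance of $\beta$ is a collider on $w'$; as $\beta \in C \subseteq \an(C)$, this collider meets the requirement that every collider lie in $\an(C)$, while all other colliders and noncolliders of $w'$ are inherited unchanged from $w$. Because the appended edge has a head at $\alpha$ and $\gamma \notin C$ (inherited from $w$), the walk $w'$ is $\mu$-connecting from $\gamma$ to $\alpha$ given $C$, which gives (gcs2). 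Condition (gcs3) is the mirror image: assuming $\alpha \in C$, I would append $\alpha \leftrightarrow \beta$ to a $\mu$-connecting walk ending at $\alpha$, turning that instance of $\alpha$ into a collider lying in $C \subseteq \an(C)$.

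The argument is almost entirely bookkeeping; the only point requiring genuine care is the collider analysis in (gcs2)/(gcs3), where I must confirm that extending a connecting walk by the bidirected edge converts the terminal node into a \emph{collider} (rather than a noncollider) and that this new collider is automatically in $\an(C)$ precisely because the node in question already lies in $C$. The escape clause $C \notin \mathcal{C}$, and therefore the homogeneity hypothesis, is what lets the conclusion hold for \emph{every} $C \subseteq V$ even when $C$ is not among the conditioning sets that $\mathcal{J}$ compares.
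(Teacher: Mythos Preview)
Your proof is correct and matches the paper's own proof essentially line for line: both invoke Proposition~\ref{prop:gps}, dispose of the case $C \notin \mathcal{C}$ immediately, and then verify (gcs1)--(gcs3) by using the edge $\alpha \leftrightarrow \beta$ directly for (gcs1) and by concatenating it with a given $\mu$-connecting walk for (gcs2)/(gcs3), noting that the terminal node becomes a collider in $C$. Your write-up is slightly more explicit about why the new collider lies in $\an(C)$, but the argument is identical.
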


\begin{proof}
	If $C\notin \mathcal{C}$, then it follows immediately. We assume $C\in 
	\mathcal{C}$ and prove (gcs1)-(gcs3). (gcs1) If $\alpha\notin C$, then 
	$\alpha\leftrightarrow\beta$ is a 
	$\mu$-connecting 
	walk in $\mathcal{G}$ given $C$. The proof of the other statement is 
	analogous. (gcs2) Assume that $\beta\in C$ and let $\gamma\in V$ such that 
	there exists a $\mu$-connecting walk from $\gamma$ 
	to $\beta$ given $C$. Composing this with $\beta\leftrightarrow\alpha$ 
	gives a $\mu$-connecting walk from $\gamma$ to $\alpha$ given $C$ as 
	$\beta\in C$. (gcs3) This is shown similarly to 
	(gcs2).
\end{proof}

\begin{prop}
	Let $\mathcal{J}$ be homogeneous. If $\alpha \rightarrow \beta$ is in 
	$\mathcal{G}$, then $\alpha$ is a 
	$C$-potential parent of $\beta$ in $\mathcal{I}_\mathcal{J}(\mathcal{G})$ 
	for all $C \subseteq V$.
	\label{prop:dirEdgeCpp}
\end{prop}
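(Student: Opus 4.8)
The plan is to mirror the proof of Proposition \ref{prop:bidirEdgeCps}, replacing the siblings characterization by the graphical version of $C$-potential parents (Proposition \ref{prop:gpp}). As in that proof, if $C\notin\mathcal{C}$ the conclusion is immediate from the ``if and only if'' in Proposition \ref{prop:gpp}, so I would fix $C\in\mathcal{C}$ and verify the four graphical conditions (gcp1)--(gcp4) directly, each time exhibiting the required $\mu$-connecting walk by splicing the edge $\alpha\rightarrow\beta$ (traversed in one direction or the other) onto a walk whose existence is hypothesized in the condition. The recurring bookkeeping is to track, at the junction node of each splice, whether that node becomes a collider or a noncollider and to check it against $C$.

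For (gcp1), when $\alpha\notin C$ the single-edge walk $\alpha\rightarrow\beta$ is already $\mu$-connecting from $\alpha$ to $\beta$ given $C$: it is nontrivial, its final edge has a head at $\beta$, and it has no interior nodes to classify. For (gcp2), given a $\mu$-connecting walk from $\gamma$ to $\alpha$ (so its last edge has a head at $\alpha$), I would append $\alpha\rightarrow\beta$. At the spliced instance of $\alpha$ the incoming edge has a head and the outgoing edge a tail, so $\alpha$ is a \emph{noncollider}; since $\alpha\notin C$ this is permitted, the new final edge has a head at $\beta$, and $\gamma\notin C$ is inherited, so the concatenation is $\mu$-connecting from $\gamma$ to $\beta$ given $C$.

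For (gcp3) (here $\alpha\notin C$, $\beta\in C$), I would concatenate the walk from $\gamma$ to $\beta$, the reversed edge $\beta\leftarrow\alpha$, and the walk from $\alpha$ to $\delta$. The middle instance of $\beta$ then has two heads and is a \emph{collider}, which is open precisely because $\beta\in C\subseteq\an(C)$; the instance of $\alpha$ has a tail on the $\alpha\rightarrow\beta$ side and is therefore a noncollider, open since $\alpha\notin C$. The remaining colliders and noncolliders are inherited from the two given walks, and the final edge still has a head at $\delta$, giving a $\mu$-connecting walk from $\gamma$ to $\delta$. For (gcp4) (here $\alpha,\beta\notin C$), given a $\mu$-connecting walk from $\alpha$ to $\gamma$ I would prepend $\beta\leftarrow\alpha$; the instance of $\alpha$ again acquires a tail from this edge and is a noncollider with $\alpha\notin C$, while $\beta$ is merely an endpoint, so the result is $\mu$-connecting from $\beta$ to $\gamma$.

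I expect the only real subtlety to be the collider/noncollider accounting at each splice, and in particular the single place where a junction node is a collider rather than a noncollider: condition (gcp3), where the edge $\alpha\rightarrow\beta$ must be entered at its head $\beta$. That step works only because the hypothesis of (gcp3) supplies $\beta\in C$, which places $\beta$ in $\an(C)$ and keeps the collider open; this is the analogue of the use of $\beta\in C$ in (gcs2) in the proof of Proposition \ref{prop:bidirEdgeCps}. Everything else is routine: the junction at $\alpha$ is always a noncollider (the directed edge contributes a tail there) and is open because the relevant condition supplies $\alpha\notin C$, repeated nodes along walks cause no difficulty since collider status is per-instance, and the endpoint constraints (namely $\gamma\notin C$ and a head at the terminal node) are inherited from the hypothesized walks.
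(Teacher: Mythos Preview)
Your proposal is correct and follows essentially the same approach as the paper's own proof: handle $C\notin\mathcal{C}$ trivially via Proposition \ref{prop:gpp}, then for $C\in\mathcal{C}$ verify (gcp1)--(gcp4) by splicing the edge $\alpha\rightarrow\beta$ onto the hypothesized $\mu$-connecting walks, using $\alpha\notin C$ to keep the noncollider at $\alpha$ open and $\beta\in C$ in (gcp3) to keep the collider at $\beta$ open. Your account is in fact a bit more explicit about the collider/noncollider bookkeeping than the paper's, which simply states the concatenations and cites the relevant membership conditions.
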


\begin{proof}
	If $C\notin \mathcal{C}$, then this again follows immediately. We instead 
	assume $C\in\mathcal{C}$ and prove (gcp1)-(gcp4). (gcp1) If $\alpha\notin 
	C$, then $\alpha\rightarrow\beta$ is a 
	$\mu$-connecting walk given $C$. (gcp2) Assume that $\alpha\notin C$ 
	and let $\gamma\in V$, and assume there is a $\mu$-connecting walk from 
	$\gamma$ to $\alpha$ given $C$. Concatenating 
	this with the edge $\alpha\rightarrow\beta$ gives a $\mu$-connecting walk 
	from $\gamma$ to $\beta$ given $C$ as $\alpha\notin C$. (gcp3) 
	Assume that $\alpha\notin C,\beta\in C$ and let $\gamma,\delta\in V$ such 
	that there exist a 
	$\mu$-connecting walk from $\gamma$ to $\beta$ given $C$ and a 
	$\mu$-connecting walk from $\alpha$ to $\delta$ given $C$. Concatenating 
	them with the edge $\alpha\rightarrow\beta$ gives a $\mu$-connecting walk 
	from  $\gamma$ to $\delta$ given $C$ as 
	$\beta\in C$ and $\alpha\notin C$. (gcp4) Assume 
	$\alpha,\beta\notin C$ and let $\gamma\in V$ such that 
	there exists a $\mu$-connecting walk from $\alpha$ to $\gamma$ given 
	$C$. Concatenating the edge $\alpha \rightarrow \beta$ with this walk gives 
	a $\mu$-connecting walk from $\beta$ to $\gamma$ given $C$ as 
	$\alpha,\beta\notin C$.
\end{proof}

\subsection{Existence of greatest elements}
\label{ssec:existGreat}

Markov equivalence classes of DMGs are known to contain a greatest element 
\citep{Mogensen2020a}. This means that for an equivalence class 
$[\mathcal{G}]$, there exists a graph $\bar{\mathcal{G}} \in [\mathcal{G}]$ 
such that  $\bar{\mathcal{G}}$ is a supergraph of all graphs 
$\tilde{\mathcal{G}} \in [\mathcal{G}]$. This is a very 
convenient result as it allows a succinct representation of the entire Markov 
equivalence class as illustrated in Example \ref{exmp:greatME}. The main 
result of this section, Theorem \ref{thm:weGreat}, shows that 
$\mathcal{J}$-weak 
equivalence classes enjoy the 
same property when $\mathcal{J}$ is homogeneous. This means that we can 
represent weak equivalence classes in a similar way. Section 
\ref{sec:representkWeak} discusses this further and introduces a hierarchy of 
$k$-weak equivalence 
classes for different values of $k$.

\begin{lem}
	Let $\mathcal{G}_1$ be a DMG. Let $\mathcal{J}$ be homogeneous and let 
	$\mathcal{C}$ be the collection of conditioning sets of $\mathcal{J}$. If 
	$\alpha$ and $\beta$ are $C$-potential siblings for all $C\in \mathcal{C}$ 
	and e denotes the edge $\alpha\leftrightarrow\beta$, 
	then $\mathcal{I}_\mathcal{J}(\mathcal{G}) = 
	\mathcal{I}_\mathcal{J}(\mathcal{G} + e)$.  If 
	$\alpha$ is a $C$-potential parent of $\beta$ for all $C\in \mathcal{C}$ 
	and e denotes the edge $\alpha\rightarrow\beta$, 
	then $\mathcal{I}_\mathcal{J}(\mathcal{G}) = 
	\mathcal{I}_\mathcal{J}(\mathcal{G} + e)$. 
	\label{lem:addJequi}
\end{lem}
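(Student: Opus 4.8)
The plan is to prove both implications by the same template, reducing the equality of the full $\mathcal{J}$-weak models to agreement on singleton triples whose conditioning sets lie in $\mathcal{C}$. Write $e$ for the edge added in either case. Since $\mathcal{G}\subseteq \mathcal{G}+e$, Proposition \ref{prop:monotonicity} immediately yields $\mathcal{I}_\mathcal{J}(\mathcal{G}+e)\subseteq \mathcal{I}_\mathcal{J}(\mathcal{G})$, so only the reverse inclusion $\mathcal{I}_\mathcal{J}(\mathcal{G})\subseteq \mathcal{I}_\mathcal{J}(\mathcal{G}+e)$ requires work. A homogeneous $\mathcal{J}$ is singleton stable, so by Proposition \ref{prop:singletonGraphIndep} (applied with $\mathcal{G}_1=\mathcal{G}$, $\mathcal{G}_2=\mathcal{G}+e$) it suffices to prove this inclusion on $\mathcal{S}=\{(A,B,C):\vert A\vert=\vert B\vert=1,\ A\cap C=\emptyset\}$. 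Concretely, I would show that for every singleton triple $(\gamma,\delta,C)$ with $\gamma\notin C$ and $C\in\mathcal{C}$, membership $(\gamma,\delta,C)\in\mathcal{I}(\mathcal{G})$ forces $(\gamma,\delta,C)\in\mathcal{I}(\mathcal{G}+e)$; homogeneity then guarantees the triple stays inside $\mathcal{J}$, so it lands in $\mathcal{I}_\mathcal{J}(\mathcal{G}+e)\cap\mathcal{S}$.

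The per-triple implication is exactly the nontrivial direction of $C$-equivalence of $\mathcal{G}$ and $\mathcal{G}+e$, for each fixed $C\in\mathcal{C}$. This is precisely the sufficiency of conditions (cs1)--(cs3) in the sibling case and (cp1)--(cp4) in the parent case, recorded in Lemmas \ref{lem:Cps} and \ref{lem:Cpp} and restated graphically as Propositions \ref{prop:gps} and \ref{prop:gpp}. The hypothesis of the present lemma supplies these conditions for \emph{all} $C\in\mathcal{C}$, so I would simply aggregate: $\mathcal{G}$ and $\mathcal{G}+e$ are $C$-equivalent simultaneously for every admissible conditioning set, which delivers the required agreement on $\mathcal{S}$ and hence, through Proposition \ref{prop:singletonGraphIndep}, the inclusion $\mathcal{I}_\mathcal{J}(\mathcal{G})\subseteq\mathcal{I}_\mathcal{J}(\mathcal{G}+e)$. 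Combined with the monotonicity inclusion this gives equality, and the parent statement follows in the same way with $e=\alpha\rightarrow\beta$.

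The technical heart, which I would either cite from Lemmas \ref{lem:Cps} and \ref{lem:Cpp} or reprove by contraposition, is the rerouting of a $\mu$-connecting walk around the newly added edge: given a $\mu$-connecting walk from $\gamma$ to $\delta$ given $C$ in $\mathcal{G}+e$, produce one in $\mathcal{G}$. In the sibling case each traversal of $\alpha\leftrightarrow\beta$ is replaced by a subwalk present in $\mathcal{G}$, using (gcs1) to re-enter the walk with the correct orientation when the relevant endpoint lies outside $C$ and (gcs2)/(gcs3) to route through a connecting walk to the other endpoint when it lies in $C$, always preserving the endpoint head and the collider/noncollider status at the splice points. The parent case is analogous but asymmetric, splitting on whether $\alpha,\beta\in C$ and invoking (gcp1)--(gcp4). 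I expect this walk-surgery --- verifying that every collider on the rerouted walk still lies in $\an(C)$ and that no noncollider enters $C$ --- to be the main obstacle; the surrounding scaffolding (monotonicity, the singleton reduction, and the aggregation over $C\in\mathcal{C}$ enabled by homogeneity) is routine.
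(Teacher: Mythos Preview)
Your proposal is correct and matches the paper's proof essentially line for line: one inclusion by Proposition~\ref{prop:monotonicity}, the reverse via the singleton reduction of Proposition~\ref{prop:singletonGraphIndep} (using that homogeneous $\mathcal{J}$ is singleton stable), and then for each $C\in\mathcal{C}$ invoking Lemma~\ref{lem:Cps} or~\ref{lem:Cpp} to reroute a $\mu$-connecting walk in $\mathcal{G}+e$ to one in $\mathcal{G}$. The paper phrases the last step by contraposition but the content is identical.
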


\begin{proof}
	The inclusion $
	\mathcal{I}_\mathcal{J}(\mathcal{G} + e)\subseteq 
	\mathcal{I}_\mathcal{J}(\mathcal{G})$ follows from Proposition 
	\ref{prop:monotonicity}. We show the other inclusion by contraposition. 
	Proposition \ref{prop:singletonGraphIndep} implies that it is enough to 
	consider triples of the form $(\gamma,\delta,D)$, $\gamma,\delta\in V$, 
	$D\subseteq V$, $\gamma\notin D$.
	Assume $(\gamma,\delta,D) \notin \mathcal{I}_\mathcal{J}(\mathcal{G} + e)$. 
	If $(\gamma,\delta,D) \notin \mathcal{J}$, then $(\gamma,\delta,D) \notin 
	\mathcal{I}_\mathcal{J}(\mathcal{G})$. If instead $(\gamma,\delta,D) \in 
	\mathcal{J}$, then $(\gamma,\delta,D) \notin 
	\mathcal{I}(\mathcal{G} + e)$ and $D\in\mathcal{C}$. In this case, there 
	exist a $\mu$-connecting walk from $\gamma$ to $\delta$ given $D$ in 
	$\mathcal{G}+e$. Nodes $\alpha$ and $\beta$ are $C$-potential siblings (or 
	$\alpha$ is a $C$-potential parent of $\beta$) for all $C\in \mathcal{C}$, 
	and therefore also for $D\in \mathcal{C}$. Lemma \ref{lem:Cps} (Lemma 
	\ref{lem:Cpp}) gives the result.
\end{proof}

Lemmas \ref{lem:Cps} and \ref{lem:Cpp} that are used in the above proof are 
adaptations of lemmas in \cite{Mogensen2020a}. Appendix \ref{app:proofs} 
describes how to make this generalization.

From an independence model $\mathcal{I}_\mathcal{J}(\mathcal{G})$ such that 
$\mathcal{J}$ is homogeneous we now define a graph on nodes $V$, $\mathcal{G} = 
(V,E)$. As $\mathcal{J}$ is homogeneous, we know that $\mathcal{J} = \{(A,B,C) 
\in \mathcal{P}: C \in \mathcal{C} \}$ for some $\mathcal{C} \subseteq  \{C: 
C\subseteq V \}$. For all $\alpha,\beta\in V$, we include 
the directed edge $\alpha\rightarrow\beta$ if and only if $\alpha$ is a 
$C$-potential parent of $\beta$ for all $C\in \mathcal{C}$. We include the 
bidirected edge $\alpha\leftrightarrow\beta$ if and only if $\alpha$ and 
$\beta$ are $C$-potential siblings for all $C\in \mathcal{C}$. We denote the 
resulting graph by $\mathcal{N}$. We  see that 
${\mathcal{N}}$ is uniquely defined 
from the $\mathcal{J}$-independence model of $\mathcal{G}$, 
$\mathcal{I}_\mathcal{J}(\mathcal{G})$, and is therefore the same for all 
elements of the equivalence class $[\mathcal{G}]_\mathcal{J}$. The following 
shows that $\mathcal{N}$ is a unique maximal element, that is, a greatest 
element, in $[\mathcal{G}]_\mathcal{J}$.

\begin{thm}
	Let $\mathcal{G}$ be a DMG and let $\mathcal{J}$ be homogeneous. The graph 
	$\mathcal{N}$ defined above is $\mathcal{J}$-weakly equivalent with 
	$\mathcal{G}$ and it is the unique maximal element in 
	$[\mathcal{G}]_\mathcal{J}$.
	\label{thm:weGreat}
\end{thm}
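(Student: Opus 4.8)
The plan is to leverage a single invariance observation and then assemble the claim from the lemmas already available. The key point is that the conditions defining $\mathcal{N}$ — namely, the $C$-potential sibling and $C$-potential parent relations ranging over $C \in \mathcal{C}$ — are determined entirely by the $\mathcal{J}$-weak independence model $\mathcal{I}_\mathcal{J}(\mathcal{G})$, and are hence constant across the equivalence class. Concretely: since $\mathcal{J}$ is homogeneous, every triple $(\gamma,\delta,C)$ with $C\in\mathcal{C}$ lies in $\mathcal{J}$, so $(\gamma,\delta,C)\in\mathcal{I}(\mathcal{G})$ iff $(\gamma,\delta,C)\in\mathcal{I}_\mathcal{J}(\mathcal{G})$; as conditions (cs1)--(cs3) and (cp1)--(cp4) only reference triples whose conditioning set is the fixed $C$, the $C$-potential sibling/parent status for $C\in\mathcal{C}$ is a function of $\mathcal{I}_\mathcal{J}(\mathcal{G})$ alone. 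In particular, if two graphs are $\mathcal{J}$-weakly equivalent, they induce the same such relations, and therefore the same graph $\mathcal{N}$.

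First I would record that $\mathcal{G}\subseteq\mathcal{N}$. If $\alpha\rightarrow\beta$ is an edge of $\mathcal{G}$, then Proposition \ref{prop:dirEdgeCpp} gives that $\alpha$ is a $C$-potential parent of $\beta$ in $\mathcal{I}_\mathcal{J}(\mathcal{G})$ for every $C\subseteq V$, hence for every $C\in\mathcal{C}$, so $\alpha\rightarrow\beta$ is in $\mathcal{N}$ by construction; the bidirected case is identical using Proposition \ref{prop:bidirEdgeCps}. To prove $\mathcal{N}\in[\mathcal{G}]_\mathcal{J}$, I would enumerate the edges $e_1,\ldots,e_m$ of $\mathcal{N}$ not present in $\mathcal{G}$, set $\mathcal{G}_0=\mathcal{G}$ and $\mathcal{G}_i=\mathcal{G}_{i-1}+e_i$, and show $\mathcal{I}_\mathcal{J}(\mathcal{G}_i)=\mathcal{I}_\mathcal{J}(\mathcal{G})$ by induction. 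In the inductive step, the endpoints of $e_i$ form a $C$-potential sibling (resp. parent) pair for all $C\in\mathcal{C}$ in $\mathcal{I}_\mathcal{J}(\mathcal{G})$ because $e_i$ is an edge of $\mathcal{N}$; by the induction hypothesis $\mathcal{I}_\mathcal{J}(\mathcal{G}_{i-1})=\mathcal{I}_\mathcal{J}(\mathcal{G})$, so by the invariance above the same relation holds in $\mathcal{I}_\mathcal{J}(\mathcal{G}_{i-1})$, and Lemma \ref{lem:addJequi} yields $\mathcal{I}_\mathcal{J}(\mathcal{G}_i)=\mathcal{I}_\mathcal{J}(\mathcal{G}_{i-1})=\mathcal{I}_\mathcal{J}(\mathcal{G})$. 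Taking $i=m$ gives $\mathcal{I}_\mathcal{J}(\mathcal{N})=\mathcal{I}_\mathcal{J}(\mathcal{G})$, so $\mathcal{N}$ is $\mathcal{J}$-weakly equivalent with $\mathcal{G}$.

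Finally, for the maximality claim I would show $\mathcal{N}$ is a greatest element of $[\mathcal{G}]_\mathcal{J}$. Let $\bar{\mathcal{G}}\in[\mathcal{G}]_\mathcal{J}$ and let $e$ be any edge of $\bar{\mathcal{G}}$. Applying Proposition \ref{prop:dirEdgeCpp} or \ref{prop:bidirEdgeCps} to $\bar{\mathcal{G}}$ shows the endpoints of $e$ form a $C$-potential parent (resp. sibling) pair in $\mathcal{I}_\mathcal{J}(\bar{\mathcal{G}})$ for all $C\subseteq V$; since $\mathcal{I}_\mathcal{J}(\bar{\mathcal{G}})=\mathcal{I}_\mathcal{J}(\mathcal{G})$ and the relevant conditions depend only on this model, the same holds in $\mathcal{I}_\mathcal{J}(\mathcal{G})$, so $e$ is an edge of $\mathcal{N}$. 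Hence $\bar{\mathcal{G}}\subseteq\mathcal{N}$ for every member of the class, so $\mathcal{N}$ is the greatest element; being greatest, it is also the unique maximal element.

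The step I expect to require the most care is the inductive edge-addition argument. Lemma \ref{lem:addJequi} demands that the potential sibling/parent hypothesis hold in the intermediate graph $\mathcal{G}_{i-1}$, not in $\mathcal{G}$ itself, and the argument hinges precisely on the invariance observation that transports that hypothesis along the chain of $\mathcal{J}$-weakly equivalent graphs. Pinning down that invariance cleanly — and checking that it covers the sibling and parent conditions uniformly and that it is exactly the homogeneity of $\mathcal{J}$ that makes the conditions depend only on $\mathcal{I}_\mathcal{J}$ — is the crux; the remainder is assembly of Propositions \ref{prop:monotonicity}, \ref{prop:bidirEdgeCps}, \ref{prop:dirEdgeCpp}, and Lemma \ref{lem:addJequi}.
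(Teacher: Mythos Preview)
Your proposal is correct and follows essentially the same approach as the paper: use Propositions \ref{prop:bidirEdgeCps} and \ref{prop:dirEdgeCpp} to show every $\bar{\mathcal{G}}\in[\mathcal{G}]_\mathcal{J}$ is a subgraph of $\mathcal{N}$, then reach $\mathcal{N}$ from $\mathcal{G}$ by sequentially adding edges and applying Lemma \ref{lem:addJequi} at each step. Your explicit treatment of the invariance point---that the $C$-potential parent/sibling conditions depend only on $\mathcal{I}_\mathcal{J}(\mathcal{G})$ and hence transfer to each intermediate graph $\mathcal{G}_{i-1}$---is exactly the content the paper leaves implicit when it says ``Lemma \ref{lem:addJequi} shows that all graphs in this sequence are $\mathcal{J}$-weakly equivalent with $\mathcal{G}$,'' so you have in fact made that step more transparent than the original.
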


\begin{proof}
Let $\bar{\mathcal{G}} \in [\mathcal{G}]_\mathcal{J}$. If a directed edge, 
$\alpha\rightarrow\beta$, 
is in $\bar{\mathcal{G}}$, then $\alpha$ is a $C$-potential parent of $\beta$ 
in $\mathcal{I}_\mathcal{J}(\bar{\mathcal{G}}) = 
\mathcal{I}_\mathcal{J}(\mathcal{G})$ for 
all $C$ (Proposition  \ref{prop:dirEdgeCpp}). This 
means that the directed edge is in ${\mathcal{N}}$. 
Similarly, for bidirected 
edges (Proposition \ref{prop:bidirEdgeCps}), and $\mathcal{N}$ is a supergraph 
of all graphs in $[\mathcal{G}]_\mathcal{J}$. 
	
	Every edge in 
	$\mathcal{N}$ is a $C$-potential edge in $[\mathcal{G}]_\mathcal{J}$ for 
	all $C\in 
	\mathcal{C}$. We can construct a finite sequence of graphs starting from 
	$\mathcal{G}$ and adding the edges that are in $\mathcal{N}$, but not in 
	$\mathcal{G}$, sequentially. Lemma 
	\ref{lem:addJequi} shows that all graphs in this sequence are 
	$\mathcal{J}$-weakly 
	equivalent with $\mathcal{G}$, and therefore so is $\mathcal{N}$.

	In conclusion, ${\mathcal{N}}$ is a greatest element of the 
	equivalence class.
\end{proof}

Theorem \ref{thm:weGreat} is central in our development of graphical modeling
based on weak equivalence as it provides a unique and interpretable 
representative of each equivalence class. We give examples of applications of 
this theorem in Section 
\ref{sec:representkWeak}.

\subsubsection{Comparison with Markov equivalence case}

The above definitions and results are related to results in the case of
Markov equivalence \citep{Mogensen2020a}. Definitions \ref{def:Cps} and 
\ref{def:Cpp} can be thought of as $C$-specific versions of
Definitions 5.1 and 5.5 in \cite{Mogensen2020a}. This leads to $C$-specific 
versions of Propositions 
\ref{prop:bidirEdgeCps} and \ref{prop:dirEdgeCpp} that are analogous to 
propositions in 
\cite{Mogensen2020a}.

Importantly, the potential parent conditions of \cite{Mogensen2020a} use 
multiple conditioning sets and are therefore not amenable as a foundation for 
the proof of Theorem \ref{thm:weGreat}. The conditions in this paper use a 
single $C$ which facilitates the generalization from Markov equivalence classes 
to weak equivalence classes. The reformulation of the definitions also entails 
an 
important change of 
perspective. Instead of describing conditions such that the addition of an edge 
does not change the independence model for \emph{any} conditioning set (Markov 
equivalence), the above conditions describe conditions such that the addition 
of an edge 
does not change the independence model when restricted to a \emph{specific} 
conditioning set. This allows us to aggregate these conditions for any set of 
conditioning sets as defined by a homogeneous $\mathcal{J}$, and from this we 
can prove the existence of a greatest element in this more general setting.

\section{Representation of weak equivalence classes}
\label{sec:representkWeak}

The previous section proved the existence of a greatest element in each weak 
equivalence class when $\mathcal{J}$ is homogeneous. In Subsection 
\ref{ssec:DMEG}, 
we first desribe how this leads to a simple and concise representation of an 
entire equivalence class, and Subsection \ref{ssec:alarm2} illustrates this 
representation using the alarm example. In Subsection \ref{ssec:hier}, we 
restrict our 
attention to $k$-weak equivalence and describe a \emph{hierarchy} of $k$-weak 
equivalence classes. Choosing a 
$k=0,1,\ldots, n-1$ leads to different notions of equivalence with different 
levels of granularity. The 
hierarchy in Subsection \ref{ssec:hier} provides a graphical representation of 
$k$-weak equivalence classes across different values of $k$ which is meant to 
illuminate how equivalence classes change across different values of $k$.

\subsection{Directed mixed equivalence graph}
\label{ssec:DMEG}

The following definition provides a graphical object representing an entire 
weak equivalence class. \cite{Mogensen2020a} gave the same definition in the 
context of Markov equivalence as illustrated in Example \ref{exmp:greatME}.

\begin{defn}[Directed mixed equivalence graph (DMEG)]
	Let $\mathcal{J}$ be homogeneous and assume that $\mathcal{N}=(V,F)$ is 
	$\mathcal{J}$-maximal and $\mathcal{N} \in [\mathcal{G}]_\mathcal{J}$. We 
	define 
	$\bar{F}\subseteq F$ such that $e\in 
	\bar{F}$ if and only if $e\in F$ and there exists $\mathcal{G} =(V,E) \in 
	[\mathcal{G}]_\mathcal{J}$ such 
	that $e\notin E$. We define the \emph{directed mixed weak equivalence 
	graph (DMEG)} of $[\mathcal{G}]_\mathcal{J}$ as the triple 
	$(V,F,\bar{F})$. 
	\label{def:DMEG}
\end{defn}

We visualize a directed mixed weak equivalence graph by drawing the 
corresponding maximal graph and making all edges in $\bar{F}$ dashed (see the 
example in Figure \ref{fig:DMEG}). A DMEG summarizes the equivalence class in 
the following sense. Let 
$\mathcal{N}$ be a $\mathcal{J}$-maximal element such that $\mathcal{N}\in 
[\mathcal{G}]_\mathcal{J}$, that is, $\mathcal{N}$ is the greatest 
element of $[\mathcal{G}]_\mathcal{J}$, and let $\mathcal{N}'$ be the 
corresponding DMEG. If an edge is solid in $\mathcal{N}'$, then this edge is in 
every ${\mathcal{G}}_1\in [\mathcal{G}]_\mathcal{J}$. If an edge is absent in 
$\mathcal{N}'$, then no ${\mathcal{G}}_1\in [\mathcal{G}]_\mathcal{J}$ 
contains 
this edge. If an edge, $e$, is dashed in $\mathcal{N}'$, then there exists a 
${\mathcal{G}}_1 = (V,E)\in [\mathcal{G}]_\mathcal{J}$ such that $e\notin E$. 
Clearly $e$ is in $\mathcal{N} \in [\mathcal{G}]_\mathcal{J}$ and therefore
$e$ is in some elements of $[\mathcal{G}]_\mathcal{J}$, but not in others. 
One 
should note that removing multiple dashed edges from $\mathcal{N}'$ does not 
necessarily lead to a $\mathcal{J}$-weakly equivalent graph as removing an
edge may 
impose 
restrictions on which other edges can be removed while maintaining 
$\mathcal{J}$-weak equivalence. This is related to the fact that a weak 
equivalence class need not contain a least element (see Figure \ref{fig:DMEG}).

\begin{exmp}[Directed mixed equivalence graph]
			\begin{figure}[h]
				\begin{subfigure}{.19\textwidth}
					\centering
					\begin{tikzpicture}[scale = .9]
					\node[shape=circle,draw=black] (a) at (0,0) {$1$};
					\node[shape=circle,draw=black] (b) at (2,0) 
					{$2$};
					\node[shape=circle,draw=black] (c) at (0,-2) {$3$};
					\node[shape=circle,draw=black] (d) at (2,-2) {$4$};
					
					\node[shape=rectangle,draw=white] (e) at 
					(1,-2.7) 
					{$\mathcal{G}_\mathrm{\textbf{A}}$};
					
					\path 
					[->](a) edge [bend left = 0] node {} (c)
					[->](b) edge [bend left = 15] node {} (c)
					[->](d) edge [bend left = 0] node {} (b)
					[<->](c) edge [bend left = 15] node {} (b);
					\end{tikzpicture}
				\end{subfigure}\hfill
				\begin{subfigure}{.19\textwidth}
					\centering
					\begin{tikzpicture}[scale = .9]
					\node[shape=circle,draw=black] (a) at (0,0) {$1$};
					\node[shape=circle,draw=black] (b) at (2,0) 
					{$2$};
					\node[shape=circle,draw=black] (c) at (0,-2) {$3$};
					\node[shape=circle,draw=black] (d) at (2,-2) {$4$};
					
					\node[shape=rectangle,draw=white] (e) at 
					(1,-2.7) 
					{$\mathcal{G}_\mathrm{\textbf{B}}$};
					
					\path
					[->](a) edge [bend left = 0] node {} (c)
					[->](d) edge [bend left = 15] node {} (c)
					[->](d) edge [bend left = 0] node {} (b)
					[<->](c) edge [bend left = 15] node {} (b);
					\end{tikzpicture}
				\end{subfigure}\hfill
				\begin{subfigure}{.19\textwidth}
					\centering
					\begin{tikzpicture}[scale = .9]
					\node[shape=circle,draw=black] (a) at (0,0) {$1$};
					\node[shape=circle,draw=black] (b) at (2,0) 
					{$2$};
					\node[shape=circle,draw=black] (c) at (0,-2) {$3$};
					\node[shape=circle,draw=black] (d) at (2,-2) {$4$};
					
					\node[shape=rectangle,draw=white] (e) at 
					(1,-2.7) 
					{$\mathcal{G}_\mathrm{\textbf{C}}$};
					
					\path 
					[->](a) edge [bend left = 0] node {} (c)
					[->](b) edge [bend left = 15] node {} (c)
					[->](d) edge [bend left = 0] node {} (b)
					[->](d) edge [bend left = 0] node {} (c)
					[<->](c) edge [bend left = 15] node {} (b);
					\end{tikzpicture}
				\end{subfigure}\hfill\vline\hfill
				\begin{subfigure}{.19\textwidth}
					\centering
					\begin{tikzpicture}[scale = .9]
					\node[shape=circle,draw=black] (a) at (0,0) {$1$};
					\node[shape=circle,draw=black] (b) at (2,0) 
					{$2$};
					\node[shape=circle,draw=black] (c) at (0,-2) {$3$};
					\node[shape=circle,draw=black] (d) at (2,-2) {$4$};
					
					\node[shape=rectangle,draw=white] (e) at 
					(1,-2.7) 
					{\footnotesize DMEG};
					
					\path 
					[->](a) edge [bend left = 0] node {} (c)
					[->](b) edge [bend left = 15, dashed] node {} (c)
					[->](d) edge [bend left = 0] node {} (b)
					[->](d) edge [bend left = 0, dashed] node {} (c)
					[<->](c) edge [bend left = 15] node {} (b);
					\end{tikzpicture}
				\end{subfigure}
				\caption{Graphs from 
					Example \ref{exmp:DMEG}. Loops are omitted from the 
					visualization.}
				\label{fig:DMEG}
			\end{figure}
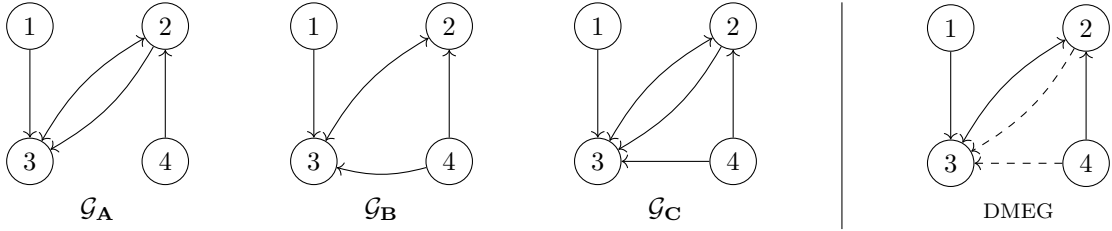 
			
	Graphs $\mathcal{G}_\mathbf{A}$, 
	$\mathcal{G}_\mathbf{B}$, and $\mathcal{G}_\mathbf{C}$ in Figure 
	\ref{fig:DMEG}
	constitute a 2-weak and a 3-weak equivalence class when restricting to DMGs 
	that have all loops present (for simplicity we make this assumption). The 
	graph 
	$\mathcal{G}_\mathbf{C}$ is the greatest element. The corresponding DMEG is 
	also shown in Figure \ref{fig:DMEG}, see Definition \ref{def:DMEG}. 
	The 3-weak equivalence class (2-weak equivalence class) does not contain a 
	least element as 
	removing both $4\rightarrow 3$ and $2\rightarrow 3$ does 
	not lead to a 3-weakly equivalent graph (2-weakly equivalent graph).		
			
	\label{exmp:DMEG}
\end{exmp}

\begin{exmp}
	This example describes a setting which leads to a weak equivalence with a 
	homogeneous $\mathcal{J}$ which is not a $k$-weak equivalence. We 
	consider a setting where a $5$-dimensional process is observed, 
	$V=\{1,2,3,4,5\}$, but not 
	every coordinate process is observed simultaneously. This is essentially a 
	setting with
	\emph{overlapping variable sets}, see, e.g.,
	\cite{danks2002learning,danks2008integrating,triantafillou2010learning,huang2020causal}.
	We assume that data contains observations of $X_t^R$ over an interval 
	$T_R$ for $R \in 
	\mathcal{R}$, 
	
	\begin{align*}
	\Bigl\{ \{1,2,3,4 \}, \{1,2,3,5 \}, \{1,2,4,5 \},\{3,4,5 \} \Bigr\}.
	\end{align*}
	
	\noindent The intervals are disjoint, $T_{R_1} \cap T_{R_2} = \emptyset$ 
	for $R_1\neq 
	R_2$. We will approach this problem by restricting the local independences 
	that can be tested using this data and require that there exists 
	$R\in\mathcal{R}$ such 
	that $A,B,C \subseteq R$ for us to be able to test the local independence 
	$(A,B,C)$.
	
	We see that all local independences, $(\alpha,\beta,C)$, such that 
	$\alpha,\beta\in V$ and $\vert C\vert 
	\leq 1$ can be 
	tested from this data as every triple, $\{\alpha,\beta,\gamma \}$, 
	$\alpha,\beta,\gamma\in V$, is observed simultaneously (that is, 
	$\alpha,\beta,\gamma\in R$
	for some 
	$R \in \mathcal{R}$). We can also test $(\alpha,\beta,\{1,2 \})$ for all 
	$\alpha,\beta \in \{1,2,3,4,5\}$, but not $(4,5,\{1,3 \})$. This means that 
	we can model this using $k$-weak equivalence, but only for $k = 0$ or $k = 
	1$. We can obtain further information by defining 
	
	\begin{align*}
		\mathcal{J} = \Bigl\{(\alpha,\beta,C): \alpha,\beta\in V, \vert C\vert 
		\leq 
		1 \Bigr\}\ \bigcup\ \Bigl\{(\alpha,\beta,C): \alpha,\beta\in V, C = 
		\{1,2\} \Bigr\}.
	\end{align*}
	
	\noindent This leads to a homogeneous weak equivalence relation which is 
	not a $k$-weak equivalence.

	\label{exmp:nonkWeak}
\end{exmp}



\subsection{Hierarchy of $k$-weak equivalence}
\label{ssec:hier}

The previous section describes a graph, the \emph{directed mixed equivalence 
graph}, which can help us understand a single weak equivalence class for a 
fixed, homogeneous $\mathcal{J}$. In this section, we restrict our attention to 
$k$-weak equivalence relations and study a description of $k$-weak equivalence 
classes for varying values of $k$. We consider a fixed node set, $V$. For each 
value of $k$, the $k$-weak equivalence classes form a partition of the DMGs on 
node set $V$, with smaller $k$ corresponding to more coarse partitions. Each 
weak equivalence class can be represented by its maximal element and there is 
an interpretable structure between $k$-weak equivalence classes for 
different values of $k$ which can 
help us understand the connection between these different notions of 
equivalence. This section describes this \emph{hierarchy} 
of $k$-weak equivalences.	

\subsubsection{Levels of granularity}
Let $\mathcal{G}$ be a DMG, and let $k_1 < k_2$. Let $\mathcal{N}_1$ denote the 
greatest element of $[\mathcal{G}]_{k_1}$ and let $\mathcal{N}_2$ denote the 
greatest element of $[\mathcal{G}]_{k_2}$. We know that $[\mathcal{G}]_{k_2} 
\subseteq [\mathcal{G}]_{k_1}$ and it follows that $\mathcal{N}_2\subseteq 
\mathcal{N}_1$. The graphs $\mathcal{N}_1$ and $\mathcal{N}_2$ are both 
representatives of $\mathcal{G}$, but at different levels of granularity. The 
$k_2$-equivalence class of $\mathcal{G}$ is smaller, thus $k_2$-weak 
equivalence is more expressive than $k_1$-weak equivalence. We may ask what 
`approximation error' 
we make by using $k_1$-weak equivalence instead of $k_2$-weak equivalence. Let 
$e$ be an edge in $\mathcal{N}_1$ which is not in $\mathcal{N}_2$. 
We know that $\mathcal{G}$ and $\mathcal{G} + e$ are $k_1$-weakly equivalent, 
so they can only differ on $\mu$-separations with $C$ such that $\vert C\vert > 
k_1$. The approximation error induced by including $e$ is therefore restricted 
to 
`large' conditioning sets. From a practical point 
of view, local independence tests with large conditioning sets are expected to 
perform poorly. This means that the loss of information when testing local 
independences from finite samples may be 
small.

\subsubsection{Forest representation}
We can provide a convenient representation of the $k$-weak equivalence 
hierarchy using \emph{trees} and \emph{forests}. A \emph{tree}, $\mathcal{T} = 
(V_\mathcal{T}, E_\mathcal{T})$, is an undirected graph in which each pair of 
distinct nodes are connected by exactly one path. A \emph{forest} is the 
disjoint union of a set of trees. We can construct a \emph{forest} in the 
following way. 
For a 
fixed $V$, $\vert V\vert = n$, and $k \in K = \{0,1,\ldots,n-1\}$, we consider 
the 
set 
of $k$-weak equivalence classes of DMGs on node set $V$. We let $n_k$ denote 
the number 
of such 
equivalence classes. The $i$'th $k$-weak equivalence class, $i=1,\ldots,n_k$, 
contains a unique maximal 
element and we 
denote this graph by $\mathcal{G}_{k,i}$. We 
do this for every $k\in \{0,1,\ldots,n-1\}$ and define a node set

$$
V_{\mathrm{g}} = \bigcup_{k \in K} \{ (\mathcal{G}_{k,i}, k): i = 
1,2,\ldots,n_k 
\}.
$$

\noindent Note that we write this as a disjoint union as the same graph may be 
a maximal element for different $k$. Therefore, the set 
$V_{\mathrm{g}}$ contains 
pairs $(\mathcal{G}, k)$ such that $\mathcal{G}$ is $k$-maximal. For instance, 
if $\mathcal{G}$ is a maximal element of a $k_1$-weak equivalence class and of 
a $k_2$-weak equivalence class, then $(\mathcal{G}, k_1) \in V_{\mathrm{g}}$ 
and 
$(\mathcal{G}, k_2)\in V_{\mathrm{g}}$ and these are different nodes. 

We now 
construct a forest with node set $V_\mathrm{g}$ in the following way. For 
each $(\mathcal{G}, k)$ such that $k>0$, there exist a unique $(k-1)$-maximal 
graph, 
$\bar{\mathcal{G}}$, such that $\mathcal{G} \in 
[\bar{\mathcal{G}}]_{k-1}$, and we join $({\mathcal{G}},k)$ to 
$(\bar{\mathcal{G}}, k -1)$ by an undirected edge. We call the resulting graph 
the \emph{weak equivalence hierarchy} over $V$ and denote it by 
$\mathcal{H}_V$. For $k < n-1$, we will use $\mathrm{up}(\mathcal{G},k))$ to 
denote the 
(nonempty) set of graphs $\bar{\mathcal{G}}$ such that 
$(\mathcal{G},k)$ and $(\bar{\mathcal{G}},\bar{k})$ are adjacent in 
$\mathcal{H}_V$ and such that $\bar{k} = k+1$. For $k > 0 $, we will use 
$\mathrm{down}(\mathcal{G},k)$ to denote the unique graph 
$\bar{\mathcal{G}}$ such that $(\mathcal{G},k)$ and 
$(\bar{\mathcal{G}},\bar{k})$ are adjacent in $\mathcal{H}_V$ and such that 
$\bar{k} = k-1$. Example \ref{exmp:H1234} and Figure \ref{fig:hierarchy4Weak} 
describe (parts of) the weak hierarchy over $V = \{1,2,3,4 \}$.

\paragraph{Properties of $\mathcal{H}_V$} We first argue that $\mathcal{H}_V$ 
is a forest. The nodes $(\mathcal{G}_{0,i}, 0)$, 
$i 
= 1,\ldots,n_0$, must be in different connected components as for each node 
there is at most a single edge down in the hierarchy. Using induction on $k$ 
and 
Corollary \ref{cor:wellorder}, we see that if $\mathcal{G}_{k,i}\in 
[\mathcal{G}_{0,j}]_0$, then there is a path between $(\mathcal{G}_{k,i},k)$ 
and $(\mathcal{G}_{0,j}, 0)$, and $V_j = 
\{(\mathcal{G}_{k,i}, k):  \mathcal{G}_{k,i}\in [\mathcal{G}_{0,j}]_0\}$ is 
therefore a connected subset of $V_{\mathrm{g}}$. It contains exactly $\vert 
V_j\vert - 1$ 
edges and is 
thus a tree. This means that 
$\mathcal{H}_V$
consists of $n_0$ 
disjoint trees, each tree rooted at $\mathcal{G}_{0,j}$ for some $j = 
1,2,\ldots,n_0$. Corollary \ref{cor:trekEq} characterizes $0$-weak equivalence.

When $i_1\neq 
i_2$, $[\mathcal{G}]_{k_1,i_1}$ and 
$[\mathcal{G}]_{k_2,i_2}$ 
are disjoint 
when $k_1=k_2$, but need not be when $k_1\neq k_2$. 
For $k_2\geq 
k_1$ and $i_1 =1,\ldots,n_{k_1}$, there exist $i_2$ such that 
$\mathcal{G}_{k_1,i_1} = 
\mathcal{G}_{k_2,i_2}$ which is due to the fact that if a graph is 
$k_1$-maximal, then it is also $k_2$-maximal (Corollary 
\ref{cor:kMaxMono}). The leaves of the trees are the 
greatest elements of the Markov equivalence 
classes (Proposition \ref{prop:nn1Equi}).

The graph $\mathcal{H}_V$ represents the entire system of $k$-weak equivalence 
classes and 
can be conveniently drawn in levels such that the vertical placement is 
determined by $k$ (see Figure \ref{fig:hierarchy4Weak}). 
Let $[\mathcal{G}_{k,i}]_k$ be a $k$-weak equivalence class represented by its 
greatest element $\mathcal{G}_{k,i}$. If we 
move along the unique edge towards 
a 
$(k-1)$-maximal graph, we 
obtain the maximal element of the $(k-1)$-weak equivalence class containing 
graph the $\mathcal{G}_{k,i}$ by definition of $\mathcal{H}_V$. If we move to 
the $(k+1)$-level, one of the 
$(k+1)$-equivalence classes will be represented by 
$\mathcal{G}_{k,i}$ itself. Naturally, moving towards larger $k$ in the 
hierarchy achieves 
smaller equivalence classes as if $\mathcal{G}$ is $(k-1)$-maximal, then 
$[\mathcal{G}]_{k-1} = 
\bigcup_{\bar{\mathcal{G}} \in \mathrm{up}(\mathcal{G}, k-1)} 
[\bar{\mathcal{G}}]_{k}$.

\paragraph{Dashed edges in the hierarchy}
In $\mathcal{H}_V$, one may use DMEGs instead of the corresponding maximal 
DMGs, and in this paragraph we think of a node $(\mathcal{G},k)$ in 
$\mathcal{H}_V$ as a pair consisting of a DMEG and an integer. In this case, 
there is also a certain structure in the dashed/solid 
status of edges across levels of $k$. If an edge $\alpha\sim\beta$ is solid in 
$(\mathcal{G},k)$, then it is 
also solid in all graphs $\bar{\mathcal{G}}\in\mathrm{up}(\mathcal{G},k)$. 
This is seen from the fact that if $\tilde{\mathcal{G}} \in 
[\bar{\mathcal{G}}]_{k+1}$ then $\tilde{\mathcal{G}}  \in [\mathcal{G}]_k$ and 
every graph in this equivalence class contains $\alpha\sim\beta$ which is why 
all graphs in $[\bar{\mathcal{G}}]_{k+1}$ also contain 
it. If the edge $\alpha\sim\beta$ is dashed in $(\mathcal{G},k)$, then it is 
also dashed in $\bar{\mathcal{G}} = \mathrm{down}(\mathcal{G},k-1)$. This is 
because 
there exists a graph $\tilde{\mathcal{G}}\in [{\mathcal{G}}]_{k}$ without 
this edge, and $\tilde{\mathcal{G}} \in [\bar{\mathcal{G}}]_{k-1}$. On 
the other hand, the edge is in the maximal element of $[{\mathcal{G}}]_{k}$, 
thus the edge must be present in $\bar{\mathcal{G}}$ and dashed. 

On the other hand, moving up (towards larger values of $k$) in the hierarchy a 
dashed edge may be removed, 
become solid, or remain dashed. Moving down (towards smaller values of $k$) in 
the hierarchy a solid edge may 
become dashed.

\begin{exmp}[$k$-weak hierarchy over $V=\{1,2,3,4\}$]
Figure \ref{fig:hierarchy4Weak} shows a subgraph of $\mathcal{H}_V$ for $V = 
\{1,2,3,4\}$. A node in $\mathcal{H}_V$, $(\mathcal{G},k)$, is shown as 
$\mathcal{G}$ (or rather, the corresponding DMEG), and $k$ determines the 
vertical placement of the node. All loops are present in the maximal graphs, 
but omitted from the visualization for 
simplicity. We use the edge $\alpha\allEdges\beta$ to indicate that all 
three possible edges between a pair of nodes, $\alpha$ and 
$\beta$, are present in the graph, that is, 
$\alpha\rightarrow\beta,\alpha\leftrightarrow\beta,\alpha\leftrightarrow\beta$. 
The letters $(x,y)$, to the right of a graph index the graphs 
shown in the figure.

Figure \ref{fig:hierarchy4Weak}
shows two subtrees of trees in the hierarchy. We see that the two graphs shown 
on level 
$k=0$, $(a,a)$ and $(e,a)$, are 
not $0$-weak equivalent as there is no directed trek from $1$ to $2$ in $(e,a)$ 
(see also Corollary \ref{cor:trekEq}).

In the figure, a red undirected edge indicates graph equality, for example, 
the edge between $(a,c)$ and $(a,d)$. As noted above, 
if 
$\mathcal{G}$ is $k$-maximal, then $\mathcal{G} \in \mathrm{up}(\mathcal{G},k)$ 
and when $\mathcal{G}$ is drawn both in levels $k$ and $k+1$, we indicate this 
by making the undirected edge connecting them red.

\label{exmp:H1234}
\end{exmp}

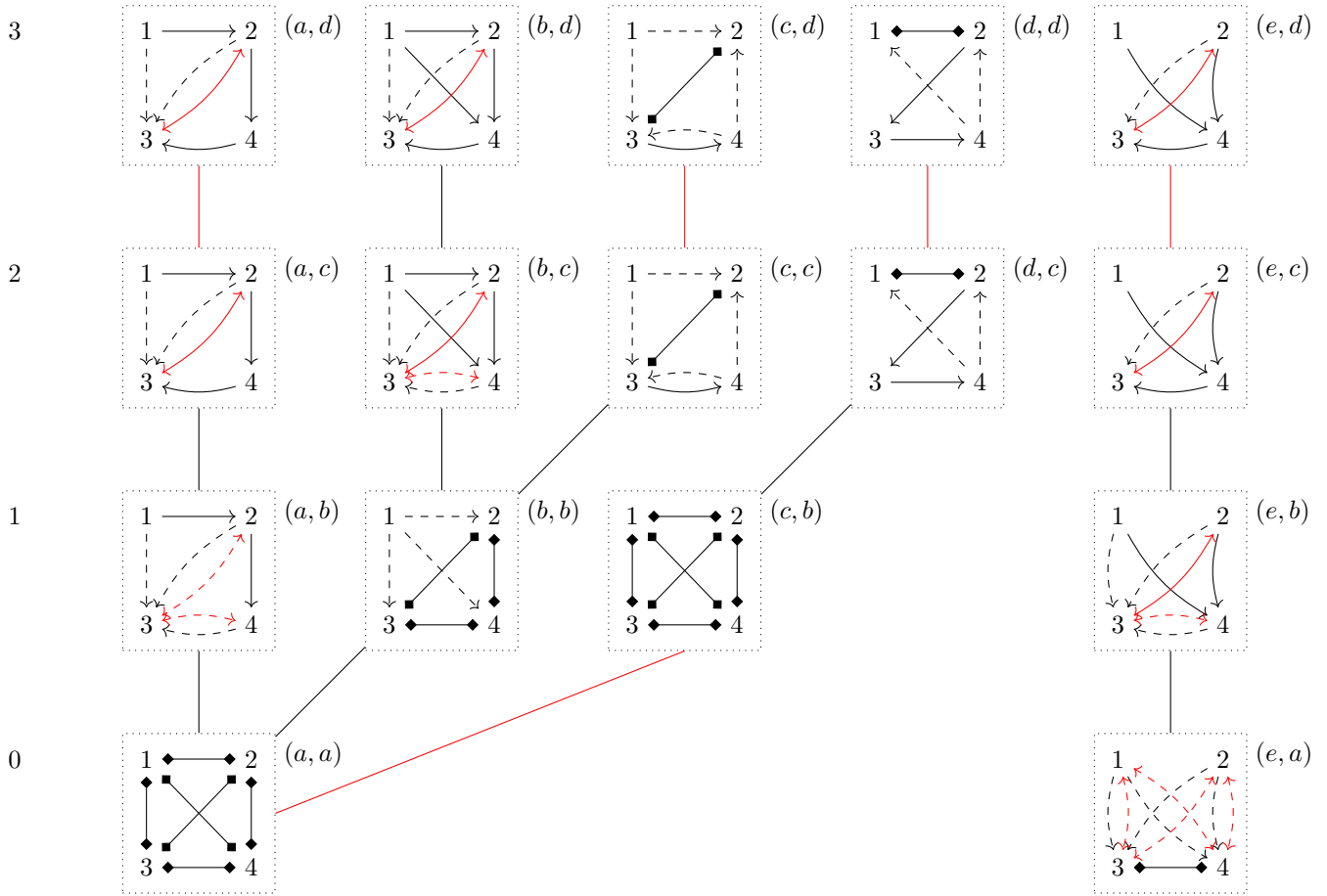
\begin{figure}
	\makebox[\textwidth][c]{%
	\begin{tikzpicture}
	\def\xshift{33mm}\def\yshift{33mm}\def\xint{20mm}
	\node[] (initial) at (0,0) {0};
	\node[] (k1) at (0,\yshift) {1};
	\node[] (k2) at (0,2*\yshift) {2};
	\node[] (k3) at (0,3*\yshift) {3};
	\begin{scope}[local bounding box=B00, shift=(initial.west), 
	xshift=\xint + 0*\xshift, 
	yshift = 0]
	\node (a) {$1$};
	\node (b) [right=of a] {$2$};
	\node (c) [below=of a] {$3$};
	\node (d) [below=of b] {$4$};
	\node[draw,dotted,fit=(B00)] (B00) {};
    \node[anchor=north west,inner sep=3pt] at (B00.north east) 
    {$(a,a)$};
	\begin{scope}[every node/.style={scale=.7}]
	\path[+-+]
	(a) edge (b)
	(a) edge (c)
	(a) edge (d)
	(b) edge (c)
	(b) edge (d)
	(c) edge (d);
	\end{scope}
	\end{scope}
		
	\begin{scope}[local bounding box=B05, shift=(initial.west), 
	xshift=\xint + 4*\xshift, 
	yshift = 0]
	\node (a) {$1$};
	\node (b) [right=of a] {$2$};
	\node (c) [below=of a] {$3$};
	\node (d) [below=of b] {$4$};
	\node[draw,dotted,fit=(B05)] (B05) {};
    \node[anchor=north west,inner sep=3pt] at (B05.north east) 
    {$(e,a)$};
	\begin{scope}[every node/.style={scale=.7}]
	\path[->] 
	(a) edge [bend right = 15, dashed]  (c)    
	(a) edge [bend right = 15, dashed]  (d)  
	(b) edge [bend right = 15, dashed]  (c)
	(b) edge [bend right = 15, dashed]  (d);
	\path[<->, red] 
	(a) edge [bend right = -15, dashed]  (c) 
	(a) edge [bend right = -15, dashed]  (d)     
	(b) edge [bend right = -15, dashed]  (c)
	(b) edge [bend right = -15, dashed]  (d);
	\path[+-+] 
	(c) edge  (d);
	\end{scope}
	\end{scope}

	\begin{scope}[local bounding box=B10, shift=(initial.west), 
	xshift=\xint + 0*\xshift, yshift = \yshift]
	\node (a) {$1$};
	\node (b) [right=of a] {$2$};
	\node (c) [below=of a] {$3$};
	\node (d) [below=of b] {$4$};
	\node[draw,dotted,fit=(B10)] (B10) {};
    \node[anchor=north west,inner sep=3pt] at (B10.north east) 
    {$(a,b)$};
	\begin{scope}[every node/.style={scale=.7}]
	\path[->] 
	(a) edge  (b)    
	(a) edge [dashed]  (c)
	(b) edge [bend right = 15, dashed]  (c)
	(d) edge [bend left = 15, dashed]  (c)
	(b) edge  (d);
	\path[<->, red]
	(b) [bend left = 15, dashed] edge (c)
	(c) [bend left = 15, dashed] edge (d);
	\end{scope}
	\end{scope}
	
	\begin{scope}[local bounding box=B11, shift=(initial.west), 
	xshift=\xint + 1*\xshift, yshift = \yshift]
	\node (a) {$1$};
	\node (b) [right=of a] {$2$};
	\node (c) [below=of a] {$3$};
	\node (d) [below=of b] {$4$};
	\node[draw,dotted,fit=(B11)] (B11) {};
	\node[anchor=north west,inner sep=3pt] at (B11.north east) 
	{$(b,b)$};
	\begin{scope}[every node/.style={scale=.7}]
	\path[->] 
	(a) edge [dashed] (b)
	(a) edge [dashed]  (c) 
	(a) edge [dashed]  (d);
	\path[+-+]
	(b) edge (c)
	(b) edge (d)
	(c) edge (d);
	\end{scope}
	\end{scope}
	
	\begin{scope}[local bounding box=B12, shift=(initial.west), 
	xshift=\xint + 2*\xshift, 
	yshift = \yshift]
	\node (a) {$1$};
	\node (b) [right=of a] {$2$};
	\node (c) [below=of a] {$3$};
	\node (d) [below=of b] {$4$};
	\node[draw,dotted,fit=(B12)] (B12) {};
	\node[anchor=north west,inner sep=3pt] at (B12.north east) 
	{$(c,b)$};
	\begin{scope}[every node/.style={scale=.7}]
	\path[+-+]
	(a) edge (b)
	(a) edge (c)
	(a) edge (d)
	(b) edge (c)
	(b) edge (d)
	(c) edge (d);
	\end{scope}
	\end{scope}

	\begin{scope}[local bounding box=B15, shift=(initial.west), 
	xshift=\xint + 4*\xshift, 
	yshift = \yshift]
	\node (a) {$1$};
	\node (b) [right=of a] {$2$};
	\node (c) [below=of a] {$3$};
	\node (d) [below=of b] {$4$};
	\node[draw,dotted,fit=(B15)] (B15) {};
	\node[anchor=north west,inner sep=3pt] at (B15.north east) 
	{$(e,b)$};
	\begin{scope}[every node/.style={scale=.7}]
	\path[->] 
	(a) edge [bend right = 15, dashed]  (c)    
	(a) edge [bend right = 15]  (d)  
	(b) edge [bend right = 15, dashed]  (c)
	(b) edge [bend right = 15]  (d)
	(d) edge [bend right = -15, dashed]  (c);
	\path[<->, red]    
	(b) edge [bend right = -15]  (c)
	(c) edge [bend right = -15, dashed]  (d);
	\end{scope}
	\end{scope}

	\begin{scope}[local bounding box=B20, shift=(initial.west), 
	xshift=\xint + 0*\xshift, yshift = 2*\yshift]
	\node (a) {$1$};
	\node (b) [right=of a] {$2$};
	\node (c) [below=of a] {$3$};
	\node (d) [below=of b] {$4$};
	\node[draw,dotted,fit=(B20)] (B20) {};
    \node[anchor=north west,inner sep=3pt] at (B20.north east) 
    {$(a,c)$};
	\begin{scope}[every node/.style={scale=.7}]
	\path[->] 
	(a) edge  (b)    
	(a) edge [dashed]  (c)
	(b) edge [bend right = 15, dashed]  (c)
	(d) edge [bend left = 15]  (c)
	(b) edge  (d);
	\path[<->, red]
	(b) [bend left = 15] edge (c);
	\end{scope}
	\end{scope}
	
	\begin{scope}[local bounding box=B21, shift=(initial.west), 
	xshift=\xint + 1*\xshift, yshift = 2*\yshift]
	\node (a) {$1$};
	\node (b) [right=of a] {$2$};
	\node (c) [below=of a] {$3$};
	\node (d) [below=of b] {$4$};
	\node[draw,dotted,fit=(B21)] (B21) {};
	\node[anchor=north west,inner sep=3pt] at (B21.north east) 
	{$(b,c)$};
	\begin{scope}[every node/.style={scale=.7}]
	\path[->] 
	(a) edge  (b)    
	(a) edge [dashed]  (c)
	(a) edge  (d)
	(b) edge [bend right = 15, dashed] (c)
	(b) edge (d)
	(d) edge [bend left = 15, dashed] (c);
	\path[<->, red]
	(b) edge [bend right = -15]  (c)
	(d) edge [bend left = -15, dashed]  (c);
	\end{scope}
	\end{scope}

	\begin{scope}[local bounding box=B22, shift=(initial.west), 
	xshift=\xint + 2*\xshift, yshift = 2*\yshift]
	\node (a) {$1$};
	\node (b) [right=of a] {$2$};
	\node (c) [below=of a] {$3$};
	\node (d) [below=of b] {$4$};
	\node[draw,dotted,fit=(B22)] (B22) {};
	\node[anchor=north west,inner sep=3pt] at (B22.north east) 
	{$(c,c)$};
	\begin{scope}[every node/.style={scale=.7}]
	\path[->] 
	(a) edge [dashed]  (b)    
	(a) edge [dashed] (c)
	(d) edge [dashed] (b)
	(d) edge [bend right = 15, dashed] (c)
	(c) edge [bend right = 15] (d);
	\path[+-+]
	(b) edge [bend right = 0]  (c);
	\end{scope}
	\end{scope}
	
	\begin{scope}[local bounding box=B23, shift=(initial.west), 
	xshift=\xint + 3*\xshift, yshift = 2*\yshift]
	\node (a) {$1$};
	\node (b) [right=of a] {$2$};
	\node (c) [below=of a] {$3$};
	\node (d) [below=of b] {$4$};
	\node[draw,dotted,fit=(B23)] (B23) {};
	\node[anchor=north west,inner sep=3pt] at (B23.north east) 
	{$(d,c)$};
	\begin{scope}[every node/.style={scale=.7}]
	\path[->] 
	(b) edge  (c)    
	(c) edge  (d)
	(d) edge [dashed] (b)
	(d) edge [bend right = 0, dashed] (a);
	\path[+-+]
	(a) edge [bend right = 0]  (b);
	\end{scope}
	\end{scope}
	
	\begin{scope}[local bounding box=B25, shift=(initial.west), 
	xshift=\xint + 4*\xshift, 
	yshift = 2*\yshift]
	\node (a) {$1$};
	\node (b) [right=of a] {$2$};
	\node (c) [below=of a] {$3$};
	\node (d) [below=of b] {$4$};
	\node[draw,dotted,fit=(B25)] (B25) {};
	\node[anchor=north west,inner sep=3pt] at (B25.north east) 
	{$(e,c)$};
	\begin{scope}[every node/.style={scale=.7}]
	\path[->]    
	(a) edge [bend right = 15]  (d)  
	(b) edge [bend right = 15, dashed]  (c)
	(b) edge [bend right = 15]  (d)
	(d) edge [bend right = -15]  (c);
	\path[<->, red]    
	(b) edge [bend right = -15]  (c);
	\end{scope}
	\end{scope}
	
	\begin{scope}[local bounding box=B30, shift=(initial.west), 
	xshift=\xint + 0*\xshift, yshift = 3*\yshift]
	\node (a) {$1$};
	\node (b) [right=of a] {$2$};
	\node (c) [below=of a] {$3$};
	\node (d) [below=of b] {$4$};
	\node[draw,dotted,fit=(B30)] (B30) {};
    \node[anchor=north west,inner sep=3pt] at (B30.north east) 
    {$(a,d)$};
	\begin{scope}[every node/.style={scale=.7}]
	\path[->] 
	(a) edge  (b)    
	(a) edge [dashed] (c)
	(b) edge [bend right = 15, dashed]  (c)
	(d) edge [bend left = 15]  (c)
	(b) edge  (d);
	\path[<->, red]
	(b) [bend left = 15] edge (c);
	\end{scope}
	\end{scope}

	\begin{scope}[local bounding box=B31, shift=(initial.west), 
	xshift=\xint + 1*\xshift, yshift = 3*\yshift]
	\node (a) {$1$};
	\node (b) [right=of a] {$2$};
	\node (c) [below=of a] {$3$};
	\node (d) [below=of b] {$4$};
	\node[draw,dotted,fit=(B31)] (B31) {};
	\node[anchor=north west,inner sep=3pt] at (B31.north east) 
	{$(b,d)$};
	\begin{scope}[every node/.style={scale=.7}]
	\path[->] 
	(a) edge  (b)    
	(a) edge [dashed]  (c)
	(a) edge  (d)
	(b) edge [bend right = 15, dashed] (c)
	(b) edge (d)
	(d) edge [bend left = 15] (c);
	\path[<->, red]
	(b) edge [bend right = -15]  (c);
	\end{scope}
	\end{scope}
	
	\begin{scope}[local bounding box=B32, shift=(initial.west), 
	xshift=\xint + 2*\xshift, yshift = 3*\yshift]
	\node (a) {$1$};
	\node (b) [right=of a] {$2$};
	\node (c) [below=of a] {$3$};
	\node (d) [below=of b] {$4$};
	\node[draw,dotted,fit=(B32)] (B32) {};
	\node[anchor=north west,inner sep=3pt] at (B32.north east) 
	{$(c,d)$};
	\begin{scope}[every node/.style={scale=.7}]
	\path[->] 
	(a) edge [dashed] (b)    
	(a) edge [dashed] (c)
	(d) edge [dashed] (b)
	(d) edge [bend right = 15, dashed] (c)
	(c) edge [bend right = 15] (d);
	\path[+-+]
	(b) edge [bend right = 0]  (c);
	\end{scope}
	\end{scope}
	
	\begin{scope}[local bounding box=B33, shift=(initial.west), 
	xshift=\xint + 3*\xshift, yshift = 3*\yshift]
	\node (a) {$1$};
	\node (b) [right=of a] {$2$};
	\node (c) [below=of a] {$3$};
	\node (d) [below=of b] {$4$};
	\node[draw,dotted,fit=(B33)] (B33) {};
	\node[anchor=north west,inner sep=3pt] at (B33.north east) 
	{$(d,d)$};
	\begin{scope}[every node/.style={scale=.7}]
	\path[->] 
	(b) edge  (c)    
	(c) edge  (d)
	(d) edge [dashed] (b)
	(d) edge [bend right = 0, dashed] (a);
	\path[+-+]
	(a) edge [bend right = 0]  (b);
	\end{scope}
	\end{scope}
	
	\begin{scope}[local bounding box=B35, shift=(initial.west), 
	xshift=\xint + 4*\xshift, 
	yshift = 3*\yshift]
	\node (a) {$1$};
	\node (b) [right=of a] {$2$};
	\node (c) [below=of a] {$3$};
	\node (d) [below=of b] {$4$};
	\node[draw,dotted,fit=(B35)] (B35) {};
	\node[anchor=north west,inner sep=3pt] at (B35.north east) 
	{$(e,d)$};
	\begin{scope}[every node/.style={scale=.7}]
	\path[->]    
	(a) edge [bend right = 15]  (d)  
	(b) edge [bend right = 15, dashed]  (c)
	(b) edge [bend right = 15]  (d)
	(d) edge [bend right = -15]  (c);
	\path[<->, red]    
	(b) edge [bend right = -15]  (c);
	\end{scope}
	\end{scope}

	\begin{scope}[every node/.style={scale=.7}]
	\path[-] 
	(B00) edge  (B10)
	(B00) edge  (B11)
	(B00.east) edge [red] (B12.south)
	(B11) edge (B21)
	(B11) edge (B22)
	(B10) edge  (B20)
	(B12) edge (B23)
	(B23) edge [red] (B33)
	(B20) edge [red]  (B30)
	(B21) edge  (B31)
	(B22) edge [red] (B32)
	(B05) edge  (B15)
	(B15) edge  (B25)
	(B25) edge [red]  (B35);
	\end{scope}
	\end{tikzpicture}
}
	\caption{Subtrees from the $k$-weak equivalence hierarchy on $V = \{1,2,3,4 
	\}$. For simplicity, we use $\alpha\allEdges\beta$ to indicate that 
	$\alpha\rightarrow\beta;\alpha\leftarrow\beta;\alpha\leftrightarrow\beta$ 
	are all present in the graph. We use $\alpha\allEdges\beta$ regardless of 
	whether some or all of these underlying edges are dashed. In this figure, 
	bidirected edges are red for better legibility.}
	\label{fig:hierarchy4Weak}

\end{figure}

\subsection{Alarm network}
\label{ssec:alarm2}

We return to the alarm example from Subsection \ref{ssec:alarm1}. This is a 
network of moderate size with $10$ observable coordinate processes. If we 
consider graphical modeling of this network using a $k$-weak equivalence 
relation, different values of $k \in \{0,1,\ldots,10\}$ lead to different 
levels 
of granularity as larger values of $k$ will give us smaller equivalence 
classes. Let $\mathcal{G}$ denote the latent projection of the system (see 
Figure \ref{fig:alarm1}), and let $\mathcal{N}_k$ denote the greatest element 
of $[\mathcal{G}]_k$. Figure \ref{fig:alarm2} shows the DMEGs of 
$\mathcal{N}_{10}$ and of $\mathcal{N}_3$. We know that 
$\mathcal{N}_{10}\subseteq \mathcal{N}_3$. In this example, we see that the 
only difference between the two DMEGs in Figure \ref{fig:alarm2} is the 
bidirected edge between $3$ and $10$. This edge is necessarily dashed as 
$\mathcal{N}_{10} \in [\mathcal{G}]_3 = [\mathcal{N}_3]_3$. The added 
complexity of using $k=10$ does therefore not provide much additional 
information in this example.

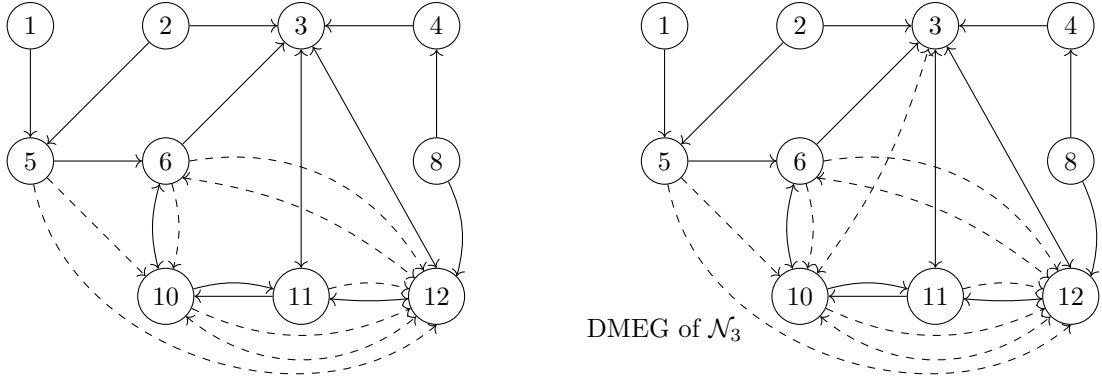
\begin{figure}[h]
	\begin{subfigure}{.47\textwidth}
		\centering
		\begin{tikzpicture}[scale = .9]
		\node[shape=circle,draw=black] (a) at (0,0) {$1$};
		\node[shape=circle,draw=black] (b) at (2,0) 
		{$2$};
		\node[shape=circle,draw=black] (c) at (4,0) {$3$};
		\node[shape=circle,draw=black] (d) at (6,0) {$4$};
		\node[shape=circle,draw=black] (e) at (0,-2) {$5$};
		\node[shape=circle,draw=black] (f) at (2,-2) 
		{$6$};
		\node[shape=circle,draw=black] (h) at (6,-2) {$8$};
		\node[shape=circle,draw=black] (j) at (2,-4) 
		{$10$};
		\node[shape=circle,draw=black] (k) at (4,-4) {$11$};
		\node[shape=circle,draw=black] (l) at (6,-4) {$12$};

		\path 
		[->](a) edge [bend left = 0] node {} (e)
		[->](b) edge [bend left = 0] node {} (e)
		[->](b) edge [bend left = 0] node {} (c)
		[->](d) edge [bend left = 0] node {} (c)
		[->](e) edge [bend left = 0] node {} (f)
		[->](e) edge [bend left = 0, dashed] node {} (j)
		[->](e) edge [bend left = -60, dashed] node {} (l.south)
		[->](f) edge [bend left = 0] node {} (c)
		[->](h) edge [bend left = 0] node {} (d)
		[->](h) edge [bend left = 20] node {} (l.north east)
		[->](j) edge [bend left = 15] node {} (k)
		[->](k) edge [bend left = 0] node {} (j)
		[->](f.east) edge [bend left = 35, dashed] node {} (l)
		[->](f) edge [bend left = 15, dashed] node {} (j)
		[->](j) edge [bend left = -25, dashed] node {} (l)
		[->](k) edge [bend left = 15, dashed] node {} (l);
		
		\path
		[<->](c) edge [bend left = 0] node {} (k)
		[<->](c) edge [bend left = 0] node {} (l.north)
		[<->](f) edge [bend left = -15] node {} (j)
		[<->](f.south east) edge [bend left = 15, dashed] node {} (l)
		[<->](j) edge [bend left = -40, dashed] node {} (l.south west)
		[<->](k) edge [bend left = -5] node {} (l);
		\end{tikzpicture}
	\end{subfigure}\hfill
	\begin{subfigure}{.47\textwidth}
		\centering
		\begin{tikzpicture}[scale = .9]
		\node[shape=circle,draw=black] (a) at (0,0) {$1$};
		\node[shape=circle,draw=black] (b) at (2,0) 
		{$2$};
		\node[shape=circle,draw=black] (c) at (4,0) {$3$};
		\node[shape=circle,draw=black] (d) at (6,0) {$4$};
		\node[shape=circle,draw=black] (e) at (0,-2) {$5$};
		\node[shape=circle,draw=black] (f) at (2,-2) 
		{$6$};
		\node[shape=circle,draw=black] (h) at (6,-2) {$8$};
		\node[shape=circle,draw=black] (j) at (2,-4) 
		{$10$};
		\node[shape=circle,draw=black] (k) at (4,-4) {$11$};
		\node[shape=circle,draw=black] (l) at (6,-4) {$12$};
		
		\node[shape=rectangle,draw=white] (x) at (0,-4.5) {DMEG of 
			$\mathcal{N}_3$};

		\path 
		[->](a) edge [bend left = 0] node {} (e)
		[->](b) edge [bend left = 0] node {} (e)
		[->](b) edge [bend left = 0] node {} (c)
		[->](d) edge [bend left = 0] node {} (c)
		[->](e) edge [bend left = 0] node {} (f)
		[->](e) edge [bend left = 0, dashed] node {} (j)
		[->](e) edge [bend left = -60, dashed] node {} (l.south)
		[->](f) edge [bend left = 0] node {} (c)
		[->](h) edge [bend left = 0] node {} (d)
		[->](h) edge [bend left = 20] node {} (l.north east)
		[->](j) edge [bend left = 15] node {} (k)
		[->](k) edge [bend left = 0] node {} (j)
		[->](f.east) edge [bend left = 35, dashed] node {} (l)
		[->](f) edge [bend left = 15, dashed] node {} (j)
		[->](j) edge [bend left = -25, dashed] node {} (l)
		[->](k) edge [bend left = 15, dashed] node {} (l);
		
		\path
		[<->](c) edge [bend left = 0] node {} (k)
		[<->](c) edge [bend left = 0] node {} (l.north)
		[<->](f) edge [bend left = -15] node {} (j)
		[<->](f.south east) edge [bend left = 15, dashed] node {} (l)
		[<->](j) edge [bend left = -40, dashed] node {} (l.south west)
		[<->](k) edge [bend left = -5] node {} (l)
		[<->](j) edge [bend left = -10, dashed] node {} (c);
		\end{tikzpicture}
	\end{subfigure}
	\caption{Graphs from 
		Subsection \ref{ssec:alarm2}. Loops are omitted from the 
		visualization. Left: 
		\emph{directed mixed equivalence graph} of 
		$\mathcal{N}_{10}$ which is the greatest element of
		$[\mathcal{G}]_k$ for $k = 
		4,5,\ldots,10$. Right: \emph{directed 
			mixed equivalence 
			graph} of $\mathcal{N}_{3}$ which is the greatest element 
		of $[\mathcal{G}]_k$ for $k = 2,3$. The only 
		difference between the two DMEGs is the dashed, bidirected 
		edge betweeen $2$ and 
		$3$. In $\mathcal{N}_{3}$, we see that $5\rightarrow 10 
		\leftrightarrow 3$ is a $\mu$-connecting walk from $5$ to 
		$3$ given $\{ 2,6,10,11 \}$. In $\mathcal{N}_{10}$, there 
		is 
		no 
		such connecting walk, and this illustrates that 
		$\mathcal{N}_{10}$ and $\mathcal{N}_3$ are not $4$-weakly 
		equivalent.}
	\label{fig:alarm2}
\end{figure} 

\section{Algorithms for weak equivalence}
\label{sec:algo}

The results in Section \ref{sec:hard} imply that several computational tasks 
that occur naturally when using $\mu$-separation and local independence for 
graphical modeling of stochastic processes are not feasible, even for a 
moderate number of coordinate processes. Section \ref{sec:we} introduces a more 
flexible notion of equivalence to circumvent these issues and Section 
\ref{sec:greatElemWEqui} shows that the convenient theory of Markov equivalence 
classes translates seamlessly to the more general notion of weak equivalence. 
As a last component of this paper, we argue that this more general theory leads 
to algorithms that are in fact feasible from a computational point of view.

\subsection{A parametrized hierarchy of graphical equivalence}

We start this subsection by providing a formal definition of the weak 
equivalence decision problem.

\begin{dec}[Weak Markov equivalence in DMGs]
	Let $\mathcal{G}_1 = (V,E)_1$ and $\mathcal{G}_2 = (V,E_2)$ be DMGs. Are 
	$\mathcal{G}_1$ and $\mathcal{G}_2$ $\mathcal{J}$-weakly equivalent?
	\label{dc:wME}
\end{dec}

Decision problem \ref{dc:wME} is coNP-complete 
as it is a more general problem 
than Decision problem \ref{dc:ME}. We restrict this to $k$-weak equivalence and 
obtain a \emph{parametrized} decision problem.

\begin{dec}[Weak Markov equivalence in DMGs]
	Let $k$ be a nonnegative integer, and let $\mathcal{G}_1 = (V,E)_1$ and 
	$\mathcal{G}_2 = (V,E_2)$ be DMGs. Are 
	$\mathcal{G}_1$ and $\mathcal{G}_2$ $k$-weakly equivalent?
	\label{dc:kME}
\end{dec}

A decision problem is said to be \emph{slicewise polynomial} if there 
exists an algorithm which solves the problem in $\mathcal{O}(n^{g(k)})$ steps 
for a computable function $g$, input length $n$, and parameter $k$. For fixed 
$k$, we can decide $k$-weak 
equivalence of two DMGs by simply 
checking every possible triple $(\alpha,\beta,C)$, $\alpha,\beta\in 
V,C\subseteq V$. This can be done in time bounded by $n^{g(k)}$ as the number 
of 
conditioning sets is bounded by $n^k$. This 
shows that parametrized $k$-weak equivalence is a \emph{slicewise polynomial} 
problem, in that for a fixed $k$ it is solvable by an algorithm which is 
polynomial in $n$. One should note that this is different from the $m$-sparse 
decision problems (e.g, Decision problem \ref{dc:MEsparse}) as they remain hard 
for a fixed $m$ whenever $m \geq 16$.

Intuitively, the unrestricted 
Markov equivalence problem is computationally hard as the maximal size of the 
conditioning sets also grows with $n$. On the other hand, if we consider 
$k$-weak equivalence for a fixed $k$ then the maximal size of the conditioning 
sets 
is fixed, and the problem can be solved in time which scales polynomially in 
$n$.

\subsection{Computing greatest elements and directed mixed equivalence graphs}

As explained above, for a fixed $k$ one can decide $k$-weak equivalence in 
polynomial time. The same applies to the related computational problems. 

Assume we have a graph $\mathcal{G}$ and want to find the maximal element of 
$[\mathcal{G}]_k$. A simple algorithm checks for each edge if its addition 
violates any of the independences in $[\mathcal{G}]_k$ and adds the edge if and 
only if this is not the case. For a fixed $k$, this is done in polynomial time.

When considering a weak equivalence class as represented by its greatest 
element, we are interested in computing the associated directed mixed 
equivalence graph (DMEG) as this graph represents the entire equivalence class 
concisely. We may remove a single edge at a time and decide Markov equivalence 
to obtain the corresponding DMEG from a greatest element.

\section{Learning}
\label{sec:learn}

There is a large literature on methods for recovering a graph from 
observational data \cite{spirtes2018search}. In the case of DAG-based models, 
many methods use 
tests of conditional independence. Similarly, it 
is possible to learn local 
independence graphs using tests of local 
independence. In this section, we briefly discuss graphical structure learning 
based on tests of 
local independence as described by \cite{meek2014toward} and its connection to 
weak equivalence of DMGs. 
\cite{mogensenUAI2018} described a learning algorithm outputting the Markov 
equivalence DMEG from 
tests of local independence. \cite{absar2021discovering} implemented a PC-like 
algorithm based on $\mu$-separation. \cite{bhattacharjya2022process} 
studied independence tests in proximal graphical event models and graphical 
structure learning based on tests of local independences. Other work 
described tests of local 
independence (\cite{thams2021local} and \cite{christgau2022nonparametric}) and 
good tests 
are of course a 
prerequisite for constraint-based structure learning. The learning problem has 
also 
been studied in the discrete-time processes \citep{eichler2013}.

As argued in previous sections, constrained-based algorithms that learn the 
Markov equivalence class of a 
partially observed local independence graph and
are correct in the oracle setting scale poorly with the size of the graph. 
Therefore, $k$-weak 
equivalence classes may constitute more reasonable targets for graphical 
structure learning. The oracle learning algorithm in \cite{mogensenUAI2018} 
leveraged the potential sibling and potential parent criteria to ensure 
correctness, though the number of these conditions also scales poorly with 
graph size, $n$. This naturally leads to the idea of using $C$-potential 
sibling and $C$-potential parent criteria 
directly for learning. In the oracle case this leads to a straightforward 
learning 
algorithm by starting from the complete DMG. For each pair of nodes, 
$(\alpha,\beta)$, one may test the $C$-potential parent criteria for all $\vert 
C\vert \leq k$. If one of these criteria is violated, one simply removes 
$\alpha\rightarrow\beta$, and similarly for the bidirected edges. For fixed 
$k$, this leads to a polynomial-time oracle learning algorithm which outputs 
the maximal $k$-weakly equivalent graph of the true graph. This is similar to 
early stopping in FCI \citep{spirtes2001anytimeFCI} as it only uses tests with 
small 
conditioning sets $C$. While smaller values of $k$ lead 
to less informative output (larger equivalence classes), the interpretation of 
a learned DMEG 
remains the same as when using $k=n$ as shown by the theory in 
previous sections.

Outside of the oracle setting, actual 
tests of local independence output a $p$-value. When learning local 
independence graphs, one may compute $p$-values from the local independence 
tests that comprise the $C$-potential parent/sibling criteria, $\vert C\vert$, 
and use these $p$-values to output a maximal graph which is in 
minimum violation with the data, see e.g. \cite{hyttinenASP} for a similar idea 
in 
DAG-based graphical structure learning. 

\section{Discussion}
\label{sec:discuss}

The results in Section \ref{sec:hard} show that deciding Markov equivalence is 
computationally hard, even under a sparsity constraint. This also implies that 
finding the unique maximal element of a Markov equivalence class is hard and 
that constraint-based learning algorithms 
that are correct in oracle versions need exponentially many tests in the worst 
case.

The theory developed in this paper provides a new interpretation of 
$\mu$-separation in directed mixed graphs as representations of local 
independence 
in partially observed stochastic processes. This leads to equivalence relations 
on directed mixed graphs that are weaker than Markov equivalence. Under a weak 
equivalence relation, each equivalence class of directed mixed 
graphs have a simple representation and interpretation using the existence of a 
greatest element. Importantly, they retain a clear interpretation and a 
convenient graphical 
representation of an entire $k$-weak equivalence class is available, just as in 
the case of Markov equivalence classes. The greatest element of an equivalence 
class also provides a 
feasible 
learning target, and 
one can give a constructive characterization of this element (the collection of 
$C$-potential sibling and $C$-potential parent conditions). The Markov 
equivalence class is often the learning target when trying to recover a 
graph from observational data, however, the complexity results in this paper 
imply that this target may be too expressive. The previous sections give the 
theoretical underpinning for feasible learning algorithms that output graphs 
that are less expressive than the Markov equivalence class.

A subset of the weak equivalence relations, $k$-weak equivalence relations, are 
naturally para\-metrized by a natural number $k$. Varying $k$, one obtains more 
or less granular graphical modeling, and a 
simple hierarchy of equivalence classes can be described across $k$. The 
parameter $k$ specifies both the granularity of the equivalence class and the 
complexity of, e.g., finding a 
maximal graph. The work in this paper mostly focused on the $k$-weak 
equivalence, however, 
the central 
results hold more general weak equivalences, and one may find applications of 
other types of equivalence relations, e.g., with inspiration from specific 
applications.

\section{Acknowledgments}

This work was funded by a DFF-International Postdoctoral Grant (0164-00023B) 
from
Independent Research Fund Denmark. The author is a member of the ELLIIT 
Strategic
Research Area at Lund University. The author thanks Karin Rathsman for 
discussing alarm handling at the European Spallation Source.


\bibliographystyle{plainnat}
\bibliography{C:/Users/swmo/Desktop/-/forsk/references}

\appendix

\section{Decision problems}

We list the formal decision problems used in Section \ref{sec:hard}.

\begin{dec}[Add-1 bidirected Markov equivalence in DMGs]
	Let $\mathcal{G}_1 = (V,E_1)$ and $\mathcal{G}_2 = (V,E_2)$ be DMGs such 
	that $E_2 = E_1 \cup \{ e\}$ and $e$ is bidirected edge. Are 
	$\mathcal{G}_1$ and $\mathcal{G}_2$ Markov equivalent?
	\label{dc:add1bidir}
\end{dec}

\begin{dec}[Add-1 directed Markov equivalence in DMGs]
	Let $\mathcal{G}_1 = (V,E_1)$ and $\mathcal{G}_2 = (V,E_2)$ be DMGs such 
	that $E_2 = E_1 \cup \{ e\}$ and $e$ is directed edge. Are 
	$\mathcal{G}_1$ and $\mathcal{G}_2$ Markov equivalent?
	\label{dc:add1dir}
\end{dec}

The next decision problems are sparse versions of Decision problems 
\ref{dc:add1bidir} and \ref{dc:add1dir}.

\begin{dec}[Add-1 birected Markov equivalence in sparse DMGs]
	Let $m$ be a nonnegative integer and let $\mathcal{G}_1 = (V,E_1)$ and 
	$\mathcal{G}_2 = (V,E_2)$ be $m$-sparse 
	DMGs such that $E_2 = E_1 \cup \{ e\}$ and $e$ is bidirected edge. Are 
	$\mathcal{G}_1$ and $\mathcal{G}_2$ Markov equivalent?
	\label{dc:add1bidirSparse}
\end{dec}

\begin{dec}[Add-1 directed Markov equivalence in sparse DMGs]
	Let $m$ be a nonnegative integer and let $\mathcal{G}_1 = (V,E_1)$ and 
	$\mathcal{G}_2 = (V,E_2)$ be $m$-sparse 
	DMGs such that $E_2 = E_1 \cup \{ e\}$ and $e$ is directed edge. Are 
	$\mathcal{G}_1$ and $\mathcal{G}_2$ Markov equivalent?
	\label{dc:add1dirSparse}
\end{dec}

\section{Node connectivity in DMGs}
\label{app:nodeConn}

In this section, we elaborate on the discussion in Subsection 
\ref{ssec:sparseDMGs} on different notions of node connectivity in a DMG. For a 
DMG, $\mathcal{G}=(V,E)$ and a node $\beta\in V$, we define $\beta$'s 
\emph{indegree}, $\id_\mathcal{G}(\beta)$, to be number of nodes, $\alpha\in 
V$, such that 
$\alpha\starrightarrow\beta$. Similarly, we define $\beta$'s \emph{outdegree}, 
$\od_\mathcal{G}(\beta)$, 
as the number of nodes, $\alpha\in V$, such that $\beta\starrightarrow\alpha$. 
This is an adaptation of the common definitions of in- and outdegree in DAGs. 
If 
$\alpha\starrightarrow\beta$ in $\mathcal{G}$, then $\alpha\in 
u(\beta,\mathcal{I}(\mathcal{G}))$, and it follows that the indegree of 
$\beta$ is less than or equal to
$\con_{\mathcal{G}_1}^{\scaleto{\rightarrow}{1.8pt}}(\beta)$. Similarly, the 
outdegree of $\beta$ is less than or equal to 
$\con_{\mathcal{G}_1}^{\scaleto{\leftarrow}{1.8pt}}(\beta)$. It holds that 
$\sum_{\beta\in V} \con_{\mathcal{G}_1}^{\scaleto{\rightarrow}{1.8pt}}(\beta) = 
\sum_{\beta\in V} \con_{\mathcal{G}_1}^{\scaleto{\leftarrow}{1.8pt}}(\beta)$. 
However, as illustrated in Figure \ref{fig:starDMG} it is possible for 
$\con_{\mathcal{G}_1}^{\scaleto{\rightarrow}{1.8pt}}(\beta)$ for some $\beta$ 
to be large while $\con_{\mathcal{G}_1}^{\scaleto{\leftarrow}{1.8pt}}(\alpha)$ 
is small for all $\alpha\in V$.

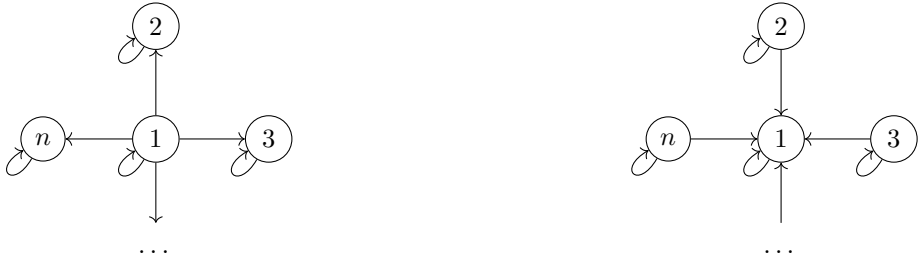
\begin{figure}[H]
	\def\x{1.5}
	\begin{subfigure}{.45\textwidth}
			\centering
	\begin{tikzpicture}
	\node[shape=circle,draw=black] (a) 
	at (0,0) {$1$};
	\node[shape=circle,draw=black] (b) 
	at (0,\x) 
	{$2$};
	\node[shape=circle,draw=black] (c) 
	at (\x,0) {$3$};
	\node[shape=circle,draw=white] (d) 
	at (0,-\x) {\ldots};
	\node[shape=circle,draw=black] (e) 
	at (-\x,0) {$n$};
	
	\path 
	[->](a) edge [bend left = 0] node {} (b)
	[->](a) edge [bend left = 0] node {} (c)
	[->](a) edge [bend left = 0] node {} (d)
	[->](a) edge [bend left = 0] node {} (e);
	\path
	[->](a) edge [loop above,  in=210,out=240, min distance=5mm] node {} (a)
	[->](b) edge [loop above,  in=210,out=240, min distance=5mm] node {} (b)
	[->](c) edge [loop above,  in=210,out=240, min distance=5mm] node {} (c)
	[->](e) edge [loop above,  in=210,out=240, min distance=5mm] node {} (e);
	\end{tikzpicture}
	\end{subfigure}\hfill
	\begin{subfigure}{.45\textwidth}
		\centering
		\begin{tikzpicture}
		\node[shape=circle,draw=black] (a) 
		at (0,0) {$1$};
		\node[shape=circle,draw=black] (b) 
		at (0,\x) 
		{$2$};
		\node[shape=circle,draw=black] (c) 
		at (\x,0) {$3$};
		\node[shape=circle,draw=white] (d) 
		at (0,-\x) {\ldots};
		\node[shape=circle,draw=black] (e) 
		at (-\x,0) {$n$};
		
		\path 
		[<-](a) edge [bend left = 0] node {} (b)
		[<-](a) edge [bend left = 0] node {} (c)
		[<-](a) edge [bend left = 0] node {} (d)
		[<-](a) edge [bend left = 0] node {} (e);
		\path
		[->](a) edge [loop above,  in=210,out=240, min distance=5mm] node {} (a)
		[->](b) edge [loop above,  in=210,out=240, min distance=5mm] node {} (b)
		[->](c) edge [loop above,  in=210,out=240, min distance=5mm] node {} (c)
		[->](e) edge [loop above,  in=210,out=240, min distance=5mm] node {} 
		(e);
		\end{tikzpicture}
	\end{subfigure}
	\caption{Graphs $\mathcal{G}_n^1$ and $\mathcal{G}_n^2$. In 
	$\mathcal{G}_n^1$ (left), 
	$\con_{\mathcal{G}_n^1}^{\scaleto{\leftarrow}{1.8pt}}(1) = 5$ while 
	$\con_{\mathcal{G}_n^1}^{\scaleto{\rightarrow}{1.8pt}}(\alpha) \leq 2$ for 
	all nodes $\alpha$. Similarly, in $\mathcal{G}_n^2$ above (right) 
	$\con_{\mathcal{G}_n^2}^{\scaleto{\rightarrow}{1.8pt}}(1) = 5$ while 
	$\con_{\mathcal{G}_n^2}^{\scaleto{\leftarrow}{1.8pt}}(\alpha) \leq 2$ for 
	all nodes $\alpha$. Therefore, we use a maximum over both measures of node 
	connectivity in Section \ref{ssec:sparseDMGs}.}
	\label{fig:starDMG}
\end{figure}

The indegree and outdegree of a node $\beta$ need not equal the 
$\con_{\mathcal{G}_1}^{\scaleto{\rightarrow}{1.8pt}}(\beta)$ and 
$\con_{\mathcal{G}_1}^{\scaleto{\leftarrow}{1.8pt}}(\beta)$, respectively (see 
the example in Figure \ref{fig:adjacDMG}). 
Moreover, the indegree and outdegree need not be the same for Markov equivalent 
graphs (Figure \ref{fig:adjacDMG}). 

The example in Figure 
\ref{fig:completeME} is exploiting non-maximality of the graph. In each Markov 
equivalence class, $[\mathcal{G}]$, there is 
a greatest element, 
$\mathcal{N}$ and one could 
define sparsity of the nodes in $\mathcal{G}$ by counting adjacencies 
in the $\mathcal{N}$ which is invariant under Markov 
equivalence. However, the in- and outdegree of $\beta$ in 
$\mathcal{N}$ may still be 
strictly less than $\con_{\mathcal{G}_1}^{\scaleto{\rightarrow}{1.8pt}}(\beta)$ 
and $\con_{\mathcal{G}_1}^{\scaleto{\leftarrow}{1.8pt}}(\beta)$, respectively 
(Figure \ref{fig:adjacDMG}). In fact, one can find a family of graphs, 
$\{\mathcal{G}_n = (V_n,E_n)\}$, and a node $\beta \in V_n$ for all $n$ such 
that $\con_{\mathcal{G}_1}^{\scaleto{\rightarrow}{1.8pt}}(\beta)$ is unbounded 
while the indegree and outdegree are fixed (see the example in Figure 
\ref{fig:adjacDMG3}).

If $\alpha$ is inseparable into $\beta$ and 
$\beta$ is inseparable into $\alpha$ in a maximal DMG, they need not be 
adjacent (see the example in Figure \ref{fig:adjacDMG2}).

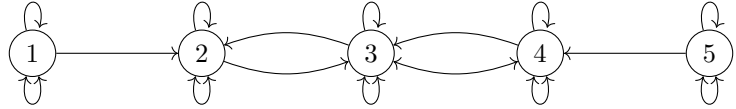
\begin{figure}[H]
	\centering
	\begin{tikzpicture}
	\node[shape=circle,draw=black] (a) 
	at (0,0) {$1$};
	\node[shape=circle,draw=black] (b) 
	at (2.25,0) 
	{$2$};
	\node[shape=circle,draw=black] (c) 
	at (4.5,0) {$3$};
	\node[shape=circle,draw=black] (d) 
	at (6.75,0) {$4$};
	\node[shape=circle,draw=black] (e) 
	at (9,0) {$5$};
	
	\path 
	[->](a) edge [bend left = 0] node {} (b)
	[->](b) edge [bend left = -20] node {} (c)
	[->](c) edge [bend left = -20] node {} (b)
	[->](d) edge [bend left = -20] node {} (c)
	[->](e) edge [bend left = 0] node {} (d);
	\path 
	[<->](c) edge [bend left = -20] node {} (d);
	\path
	[->](a) edge [loop above] node {} (a)
	[->](b) edge [loop above] node {} (b)
	[->](c) edge [loop above] node {} (c)
	[->](d) edge [loop above] node {} (d)
	[->](e) edge [loop above] node {} (e);
	\tikzset{every loop/.style={<->}}
	\path
	[<->](a) edge [loop below] node {} (a)
	[<->](b) edge [loop below] node {} (b)
	[<->](c) edge [loop below] node {} (c)
	[<->](d) edge [loop below] node {} (d)
	[<->](e) edge [loop below] node {} (e);
	\end{tikzpicture}
	\caption{The graph above, $\mathcal{G}=(V,E)$, is Markov equivalent 
		with the graph obtained by adding the edge $5\rightarrow 3$, 
		$\mathcal{N}=(V,F)$ where $F= E\cup \{5 \rightarrow 3\}$. This 
		shows that in- and outdegrees can be different for two Markov 
		equivalent graphs. The graph $\mathcal{N}$ is the greatest element 
		of $\mathcal{G}$'s Markov equivalence class. We see that $4$ cannot 
		be separated from $2$ by any subset of $\{1,3,4,5\}$, and therefore 
		$\id_\mathcal{N}(4) < 
		\con_{\mathcal{G}_1}^{\scaleto{\rightarrow}{1.8pt}}(4)$ and 
		$\od_\mathcal{N}(2) 
		<\con_{\mathcal{G}_1}^{\scaleto{\leftarrow}{1.8pt}}(2)$, even 
		though $\mathcal{N}$ is maximal.}
		\label{fig:adjacDMG}
		\end{figure}

\begin{figure}[H]
	\centering
	\begin{tikzpicture}
	\node[shape=circle,draw=black] (a) 
	at (0,0) {$1$};
	\node[shape=circle,draw=black] (b) 
	at (2.25,0) 
	{$2$};
	\node[shape=circle,draw=black] (c) 
	at (4.5,1) {$3$};
	\node[shape=circle,draw=black] (d) 
	at (4.5,-1) {$4$};
	\node[shape=circle,draw=black] (e) 
	at (6.75,0) {$5$};
	\node[shape=circle,draw=black] (f) 
	at (9,0) {$6$};
	
	\path 
	[->](a) edge [bend left = 0] node {} (b)
	[->](b) edge [bend left = -20] node {} (c)
	[->](c) edge [bend left = -20] node {} (b)
	[->](d) edge [bend left = -20] node {} (e)
	[->](e) edge [bend left = -20] node {} (d)
	[->](f) edge [bend left = 0] node {} (e);
	\path 
	[<->](c) edge [bend left = 0] node {} (e)
	[<->](b) edge [bend left = 0] node {} (d);
	\path
	[->](a) edge [loop above] node {} (a)
	[->](b) edge [loop above] node {} (b)
	[->](c) edge [loop above] node {} (c)
	[->](d) edge [loop above] node {} (d)
	[->](e) edge [loop above] node {} (e)
	[->](f) edge [loop above] node {} (f);
	\tikzset{every loop/.style={<->}}
	\path
	[<->](a) edge [loop below] node {} (a)
	[<->](b) edge [loop below] node {} (b)
	[<->](c) edge [loop below] node {} (c)
	[<->](d) edge [loop below] node {} (d)
	[<->](e) edge [loop below] node {} (e)
	[<->](f) edge [loop below] node {} (f);
	\end{tikzpicture}
	\caption{The graph above, $\mathcal{G}=(V,E)$, is maximal, however there is 
	no set $C\subset \{1,3,4,5,6\}$ such that $5$ is $\mu$-separated from $2$ 
	by $C$, and there is also no set $C\subset \{1,2,3,4,6\}$ such that $2$ is 
	$\mu$-separated from $5$ by $C$.}
	\label{fig:adjacDMG2}
\end{figure}
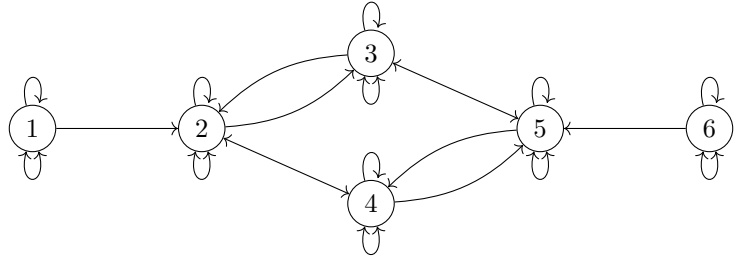

\begin{figure}[H]
	\centering
	\def\x{2.25}
	\def\mps{28pt}
	\begin{tikzpicture}
	\node[shape=circle,draw=black,inner sep=0pt,minimum size=\mps] (z) 
	at (2*\x,0) {$0$};
	\node[shape=circle,draw=black,inner sep=0pt,minimum size=\mps] (c) 
	at (-2*\x,\x) {$3$};
	\node[shape=circle,draw=black,inner sep=0pt,minimum size=\mps] (d) 
	at (-\x,\x/2) 
	{$4$};
	\node[shape=circle,draw=white,inner sep=0pt,minimum size=\mps] (e) 
	at (-2*\x,0) {\vdots};
	\node[shape=circle,draw=white,inner sep=0pt,minimum size=\mps] (f) 
	at (-\x,0) {\vdots};
	\node[shape=circle,draw=black,inner sep=0pt,minimum size=\mps] (h) 
	at (-\x,-\x/2) {$2n$};
	\node[shape=circle,draw=black,inner sep=0pt,minimum size=\mps] (g) 
	at (-2*\x,-\x) {$2n{-}1$};
	\node[shape=circle,draw=black,inner sep=0pt,minimum size=\mps] (b) 
	at (0,0) {$2$};
	\node[shape=circle,draw=black,inner sep=0pt,minimum size=\mps] (a) 
	at (\x,0) {$1$};
	
	\path 
	[->](z) edge [bend left = 0] node {} (a)
	[->](b) edge [bend left = -20] node {} (d)
	[->](d) edge [out = -30, in = 150, looseness = 0] node {} (b)
	[->](b) edge [bend left = -10] node {} (f)
	[->](f) edge [bend left = -10] node {} (b)
	[->](b) edge [bend left = 20] node {} (h)
	[->](h) edge [out = 30, in = -150, looseness = 0] node {} (b)
	[->](c) edge [bend left = 0] node {} (d)
	[->](g) edge [bend left = 0] node {} (h);
	\path 
	[<->](a) edge [bend left = 0] node {} (b);
	\path
	[->](z) edge [loop above] node {} (z)
	[->](a) edge [loop above] node {} (a)
	[->](b) edge [loop above, in=30,out=60, min distance=5mm] node {} (b)
	[->](c) edge [loop above] node {} (c)
	[->](d) edge [loop above] node {} (d)
	[->](g) edge [loop above] node {} (g)
	[->](h) edge [loop, in=120,out=150, min distance=5mm] node {} (h);
	\tikzset{every loop/.style={<->}}
	\path
	[<->](z) edge [loop below] node {} (z)
	[<->](a) edge [loop below] node {} (a)
	[<->](b) edge [loop below, in=300,out=330, min distance=5mm] node {} (b)
	[<->](c) edge [loop below] node {} (c)
	[<->](d) edge [loop below, in=210,out=240, min distance=5mm] node {} (d)
	[<->](g) edge [loop below] node {} (g)
	[<->](h) edge [loop below] node {} (h);
	\end{tikzpicture}
	\caption{We consider a sequence of graphs, $\mathcal{G}_n$, $n \geq 3$, as 
	illustrated 
	above. The graph $\mathcal{G}_n$ has $2n+1$ nodes, and $\mathcal{G}_n$ is 
	maximal for each $n$. For every $n$, the indegree of $1$ in the graph 
	$\mathcal{G}_n$ is three. On the other hand, 
	$\con_{\mathcal{G}_n}^{\scaleto{\rightarrow}{1.8pt}}(1)$ equals $n+2$, thus 
	is unbounded in this family of graphs.}
	\label{fig:adjacDMG3}
\end{figure}
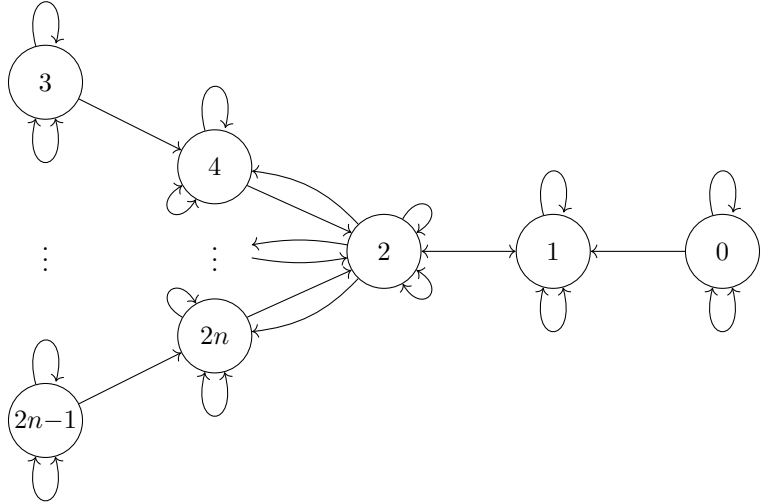

\section{Marginalization}
\label{app:margin}

This section argues that the representation of weak equivalence is closed under 
marginalization in the sense that we can marginalize any graph, $\mathcal{G}$, 
onto a smaller 
node set, $O$, which represents the same independence model as the original 
graph when restricting independence statements to triples $(A,B,C)$ such that 
$A,B,C\subseteq O$. This is formalized in Equation \ref{eq:margin}. A so-called 
\emph{latent projection} of $\mathcal{G}$ satisfies this requirement. The 
latent projection was also used in \cite{Mogensen2020a}, and earlier in 
\cite{vermaEquiAndSynthesis,richardson2017}.

\begin{defn}[Latent projection]
	We denote the latent projection on $\mathcal{G}$ on $O$ by 
	$m(\mathcal{G},O)$.
\end{defn}

The latent projection of a graph on a node set represents a marginalized 
version of the independence model, as formalized by the following corollary. 
\cite{Mogensen2020a} proved this result in the case of $\mathcal{J} = 
\mathcal{P}$, that is, in the case of Markov equivalence \citep[Theorem 
3.12]{Mogensen2020a}. The general case follows 
directly from the Markov equivalence result.

\begin{cor}
	Let $\mathcal{G}(V,E)$, $O\subseteq V$, and let $\mathcal{M} = 
	m(\mathcal{G},O)$. For $A,B,C \subseteq O$, it holds that
	\begin{align*}
		(A,B,C) \in \mathcal{I}_\mathcal{J}(\mathcal{G}) \Leftrightarrow 
		(A,B,C) 
		\in \mathcal{I}_\mathcal{J}(\mathcal{M}).
		\end{align*}
\end{cor}

\begin{proof}
	Theorem 3.12 of \cite{Mogensen2020a} shows that
	
	\begin{align*}
		(A,B,C) \in \mathcal{I}(\mathcal{G}) \Leftrightarrow (A,B,C) \in 
		\mathcal{I}(\mathcal{M}),
	\end{align*}
	
	\noindent and the result follows immediately.
\end{proof}

\cite{Mogensen2020a} stated an algorithm to output the latent projection of a 
DMG (Algorithm 1). This was similar to earlier algorithms in of other 
classes of graphs \cite{koster1999,sadeghi2013}. The following proposition was 
proved by \cite{Mogensen2020a}.

\begin{prop}[\cite{Mogensen2020a}]
	Let $\mathcal{G} = (V,E)$ be a DMG and $O\subseteq V$. Algorithm 
	\ref{algo:marg} outputs its latent projection, $m(\mathcal{G}, O)$. 
\end{prop}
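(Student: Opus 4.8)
The plan is to verify that the edge set produced by Algorithm~\ref{algo:marg} coincides, edge by edge, with the walk-based characterization of the latent projection $m(\mathcal{G},O)$, and then to inherit the separation-preserving property from the marginalization result already available. I would first recall the edge rules defining $m(\mathcal{G},O)$ on node set $O$: a directed edge $\alpha\rightarrow\beta$ is present exactly when there is a directed walk from $\alpha$ to $\beta$ in $\mathcal{G}$ whose intermediate nodes all lie in $V\setminus O$, and a bidirected edge $\alpha\leftrightarrow\beta$ is present exactly when there is a walk from $\alpha$ to $\beta$ with a head at each endpoint, no collider among its intermediate nodes, and all intermediate nodes in $V\setminus O$ (that is, a trek in the sense of Definition~\ref{def:trek} acquiring heads at both ends). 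Since Algorithm~\ref{algo:marg} proceeds by eliminating the nodes of $V\setminus O$ one at a time while inserting composite edges at each step, the natural device is induction on the number of eliminated nodes.

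Concretely, I would fix the elimination order $V\setminus O=\{\gamma_1,\ldots,\gamma_r\}$, set $\mathcal{G}_0=\mathcal{G}$, and let $\mathcal{G}_t$ be the graph after $\gamma_1,\ldots,\gamma_t$ have been removed. The inductive hypothesis would assert that, for any surviving pair $\alpha,\beta$, the graph $\mathcal{G}_t$ contains a directed (respectively bidirected) edge from $\alpha$ to $\beta$ if and only if $\mathcal{G}$ admits a directed walk (respectively a head-ended trek) from $\alpha$ to $\beta$ with all intermediate nodes in $\{\gamma_1,\ldots,\gamma_t\}$. The base case $t=0$ holds because $\mathcal{G}_0=\mathcal{G}$. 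For the inductive step I would show that every composite edge created when $\gamma_{t+1}$ is eliminated arises by concatenating two edges incident to $\gamma_{t+1}$ in $\mathcal{G}_t$, and conversely that any qualifying walk through $\gamma_{t+1}$ splits at each visit into subwalks that the hypothesis already represents as single edges of $\mathcal{G}_t$. Because all nodes carry loops and $\mu$-separation is defined by walks rather than paths, repeated visits to $\gamma_{t+1}$ and reused nodes create no difficulty.

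The main obstacle will be the exhaustive case analysis in the inductive step, which must respect the asymmetry of directed edges and the head/tail bookkeeping that separates a projected directed edge from a projected bidirected one. I would check each way the two half-edges meet at $\gamma_{t+1}$ --- tail-out following tail-in, head-in following head-out, and so on --- and confirm that the composite acquires a head at an endpoint exactly when the underlying walk does, so that no spurious bidirected edge is manufactured from a noncollider configuration and no directed edge is dropped. Once this combinatorial equivalence is in place, the output of Algorithm~\ref{algo:marg} is literally $m(\mathcal{G},O)$, and the property in Equation~\ref{eq:margin} then follows from the preceding corollary, which specializes Theorem~3.12 of \cite{Mogensen2020a}. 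Since \cite{Mogensen2020a} already establish this statement for $\mu$-separation, the proof may ultimately be presented as a direct appeal to their construction, with the induction above included only to make correctness transparent.
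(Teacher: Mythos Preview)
Your proposal rests on a misreading of Algorithm~\ref{algo:marg}. The algorithm does \emph{not} eliminate the nodes of $M=V\setminus O$ one at a time in some fixed order. Instead, it keeps the full node set $V$ throughout the while loop and at each step picks an arbitrary noncolliding triroute $\alpha\sim m\sim\beta$ with $m\in M$ (any $m$, in any order, possibly revisiting the same $m$ later), adds the endpoint-identical shortcut edge $\alpha\sim\beta$, and repeats until no such triroute lacks its shortcut. Only after saturation does it take the induced subgraph on $O$. There is thus no intermediate graph ``after $\gamma_1,\ldots,\gamma_t$ have been removed'', and your induction variable does not correspond to any state of the algorithm. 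The natural induction for this algorithm is on the number of edges added (to show every added edge corresponds to a qualifying walk in $\mathcal{G}$) together with an induction on walk length (to show every qualifying walk eventually produces its shortcut edge); alternatively one could first prove that the saturation procedure is equivalent to node-by-node elimination and then run your argument, but that equivalence is itself a lemma you have not supplied.

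That said, the paper does not actually prove this proposition: it simply attributes the result to \cite{Mogensen2020a} and moves on. Your closing remark that ``the proof may ultimately be presented as a direct appeal to their construction'' is therefore exactly what the paper does, and for the purposes of this paper that citation is the entire argument. Your walk-based characterization of the edges of $m(\mathcal{G},O)$ is correct, and the overall strategy---match the output to that characterization, then invoke the marginalization theorem---is sound; only the mechanics of the induction need to be aligned with what the algorithm actually does.
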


One should note that the marginalization of a (weakly) maximal graph need not 
be (weakly) maximal as illustrated in Figure \ref{fig:margNotMax}.

\begin{figure}[h]
	\begin{subfigure}{.49\textwidth}
		\centering
		\begin{tikzpicture}
		\node[shape=circle,draw=black] (a) at (0,0) {$1$};
		\node[shape=circle,draw=black] (b) at (2,0) 
		{$2$};
		\node[shape=circle,draw=black] (c) at (4,0) {$3$};
		\node[shape=circle,draw=black] (d) at (3,-1) {$4$};
		
		\path 
		[->](a) edge [bend left = 0] node {} (b)
		[->](a) edge [bend left = 20, white] node {} (c)
		[->](b) edge [bend left = 0] node {} (c)
		[->](d) edge [bend left = 0] node {} (c)
		[->](d) edge [bend left = 0] node {} (b);
		;				\end{tikzpicture}
	\end{subfigure}\hfill
	\begin{subfigure}{.49\textwidth}
		\centering
		\begin{tikzpicture}
		\node[shape=circle,draw=black] (a) at (0,0) {$1$};
		\node[shape=circle,draw=black] (b) at (2,0) 
		{$2$};
		\node[shape=circle,draw=black] (c) at (4,0) {$3$};
		\node[shape=circle,draw=black,white] (d) at (3,-1) {$4$};
		
		\path 
		[->](a) edge [bend left = 0] node {} (b)
		[->](b) edge [bend left = 0] node {} (c)
		[<->](c) edge [bend left = 30] node {} (b);
		\end{tikzpicture}
	\end{subfigure}
	\caption{Graphs $\mathcal{G}_1$ (left) and $\mathcal{G}_2$ 
		(right) illustrating that marginalizations of (weakly) maximal 
		graphs need not be (weakly) maximal. $\mathcal{G}_2$ is the 
		marginalization of $\mathcal{G}_1$ over $O = \{ 1,2,3\} $. 
		$\mathcal{G}_1$ is 
		$2$-maximal, and therefore maximal for all $k\geq 2$. 
		$\mathcal{G}_2$ is 
		a marginalization, but is not $k$-maximal for any $k$, $0\leq k 
		\leq 3$. Loops are 
		omitted from the 
		visualization.}
	\label{fig:margNotMax}
\end{figure}
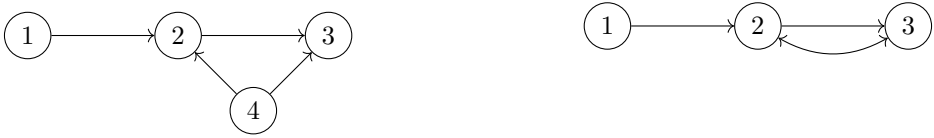 
\begin{algorithm}[H]
	\SetKwInOut{Input}{input}
	\SetKwInOut{Output}{output}
	\Input{$\mathcal{G} = (V,E)$ and $M\subseteq V$}
	\Output{$\mathcal{M} = (O, \bar{E})$}
	Initialize $E_0 = E$, $\mathcal{M}_0 = (V,E_0)$, $k = 0$\;
	\While{$\Omega_M(\mathcal{M}_k) \neq \emptyset$}{
		Choose $\theta =\ _{\theta}(\alpha, m, \beta) \in 
		\Omega_M(\mathcal{M}_k)$\;
		Set $e_{k+1}$ to be the edge between $\alpha$ and $\beta$ which is 
		endpoint-identical to $\theta$\;
		Set $E_{k+1} = E_k \cup \{e_{k+1}\}$\;
		Set $\mathcal{M}_{k+1} = (V,E_{k+1})$\;
		Update $k = k + 1$	
	}
	\Return  $(\mathcal{M}_k)_{O}$
	\newline
	\label{algo:marg}
	\caption{Computing the latent projection of a DMG \citep{Mogensen2020a}. We 
	consider a DMG, $\mathcal{G} = (V,E)$, and $M\subseteq V$ over which we 
	will 
		marginalize. We let $ O = V \setminus M$. A \emph{triroute} is a walk 
		$\alpha\sim \gamma \beta$ such that $\gamma\neq \alpha,\beta$. This is 
		similar 
		to a \emph{tripath} in \cite{lauritzen2018}, but we allow 
		$\alpha=\beta$. We say that a triroute is \emph{noncolliding} if 
		$\gamma$ is not a collider on the triroute. We say that heads and tails 
		are \emph{(edge) marks}. We say that two walks between 
		$\alpha$ and $\beta$, $(\alpha, e_1^\alpha \ldots, e_1^\beta,\beta)$ 
		and $(\alpha, e_2^\alpha \ldots, e_2^\beta,\beta)$, are 
		\emph{endpoint-identical} if 
		$e_1^\alpha$ and $e_2^\alpha$ have the same mark at $\alpha$ and 
		$e_1^\beta$ and $e_2^\beta$ have the same mark at $\beta$ (note that 
		this may depend on the orientation of directed edges on the walk). We 
		say that a walk between $\alpha$ and $\beta$, $\omega$, is 
		endpoint-identical with an edge $e$ between $\alpha$ and $\beta$ if 
		$\omega$ is endpoint-identical with the walk $(\alpha,e,\beta)$. The 
		set 
		$\Omega_M(\mathcal{G})$ is the set of noncolliding triroutes, $\alpha 
		\sim m \sim \beta$, such that $m\in M$ and such that an 
		endpoint-identical edge $\alpha \sim \beta$ is not in $\mathcal{G}$. We 
		let $(\mathcal{G})_O$ denote the \emph{induced subgraph} on 
		node set 
		$O$, that is, $(\mathcal{G})_O = (O,E_O)$ where $E_O$ is the subset of 
		$E$ 
		consisting of all edges, $e$, such that when $e$ is between $\alpha$ 
		and 
		$\beta$ then both of these edges are in $O$.
		}
\end{algorithm}

\section{Proofs and lemmas}
\label{app:proofs}

The proofs of the following lemmas are adaptations of the 
proofs of Lemmas 5.4 and 5.8 in \cite{Mogensen2020a}. We include them for 
completeness to show how the appropriate changes are made. Lemmas 5.4 and 5.8 
in \cite{Mogensen2020a} did not use the $C$-specific 
conditions that are essential in obtaining the stronger results that we present 
in this paper. 

\begin{defn}[Route]
	We say that a walk, $\omega 
	=(\gamma_1,e_1,\gamma_2,\ldots,e_l,\gamma_{l+1})$, is a \emph{route} if the 
	node $\gamma_{l+1}$ occurs at most twice on $\omega$ and no other node 
	occurs more than once on $\omega$.
	\label{def:route}
\end{defn}

Routes characterize $\mu$-connections in DMGs \citep{Mogensen2020a}, and we use 
them in the next proofs. Note that the below lemmas are formulated using 
$\mathcal{I}(\mathcal{G})$, not the 
restricted version  $\mathcal{I}_\mathcal{J}(\mathcal{G})$.

\begin{lem}
	Let $C \subseteq V$ and let $e$ be a $C$-potential sibling edge between 
	$\alpha$ and $\beta$ in 
	$\mathcal{I}(\mathcal{G})$. Let $\gamma,\delta\in V$.
	If there is a 
	$\mu$-connecting walk from $\gamma$ to $\delta$ given $C$ in $\mathcal{G} + 
	e$, then 
	there is a $\mu$-connecting walk from $\gamma$ to $\delta$ given $C$ in 
	$\mathcal{G}$.
	\label{lem:Cps}
	\end{lem}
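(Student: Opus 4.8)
The plan is to argue by induction on the number of times the edge $e=\alpha\leftrightarrow\beta$ is traversed by a given $\mu$-connecting walk $\omega$ from $\gamma$ to $\delta$ given $C$ in $\mathcal{G}+e$. If $e$ is never traversed, $\omega$ is already a walk in $\mathcal{G}$ and there is nothing to prove. Since $e$ is a $C$-potential sibling edge in $\mathcal{I}(\mathcal{G})$, I may invoke the graphical conditions (gcs1)--(gcs3) of Proposition \ref{prop:gps} (taking $\mathcal{J}=\mathcal{P}$, so that the alternative $C\notin\mathcal{C}$ never occurs); these supply $\mu$-connecting walks that genuinely live in $\mathcal{G}$ and hence cannot reintroduce $e$.

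First I would isolate the first occurrence of $e$, writing $\omega=\omega_1\cdot e\cdot\omega_2$, where $\omega_1$ (from $\gamma$ to $\xi$) is $e$-free and $\{\xi,\eta\}=\{\alpha,\beta\}$ with the traversal $\xi\leftrightarrow\eta$. The decisive local facts come from the edge $f$ (last edge of $\omega_1$, into $\xi$) and the edge $g$ (first edge of $\omega_2$, out of $\eta$), together with the heads that $e$ contributes at $\xi$ and $\eta$. If $f$ has a head at $\xi$, then $\xi$ is a collider of $\omega$, so $\xi\in\an(C)$ and $\omega_1$ is itself a $\mu$-connecting walk from $\gamma$ to $\xi$ given $C$ in $\mathcal{G}$; if $f$ has a tail at $\xi$, then $\xi$ is a noncollider, forcing $\xi\notin C$. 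The same dichotomy applies to $\eta$ via $g$, and the boundary situations are subsumed ($\xi=\gamma\notin C$ behaves like the tail case, and $\eta=\delta$ simply means the new walk must end with a head at $\delta$).

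I would then reconnect the two sides inside $\mathcal{G}$ according to whether $\xi\in C$. When $\xi\notin C$, condition (gcs1) yields a $\mu$-connecting walk $\pi$ from $\xi$ to $\eta$ given $C$ in $\mathcal{G}$ (with a head at $\eta$), and $\omega_1\cdot\pi\cdot\omega_2$ is the desired walk. When $\xi\in C$ (which forces $f$ to have a head at $\xi$, so that $\omega_1$ is $\mu$-connecting to $\xi$), condition (gcs3) if $\xi=\alpha$ (resp. (gcs2) if $\xi=\beta$) converts $\omega_1$ into a $\mu$-connecting walk $\pi'$ from $\gamma$ straight to $\eta$ in $\mathcal{G}$, and $\pi'\cdot\omega_2$ is the desired walk. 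Because $\omega_1,\pi,\pi'$ are all $e$-free, the new walk traverses $e$ strictly fewer times than $\omega$, which closes the induction.

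The main obstacle is the splice-point bookkeeping: in every combination of the head/tail marks of $f$ and $g$ and of the $C$-membership of $\alpha$ and $\beta$, one must check that the junction node where $\omega_1$ (or $\pi'$) meets $\pi$ (or $\omega_2$) satisfies the $\mu$-connection constraints, namely that a collider junction lies in $\an(C)$ and a noncollider junction lies outside $C$. This is precisely the information that (gcs1)--(gcs3) are tailored to certify---(gcs1) when both endpoints of $e$ are noncolliders, and (gcs2)/(gcs3) when an endpoint is forced into $C$---so the verification reduces to matching each case against the corresponding clause, using that a collider endpoint of $\omega$ lies in $\an(C)$ while a noncollider endpoint lies outside $C$.
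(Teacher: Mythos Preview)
Your proof is correct and follows essentially the same approach as the paper. The only organizational difference is that the paper first reduces to a $\mu$-connecting \emph{route} (so that $e$ appears at most twice) and then handles the cases of zero, one, or two occurrences explicitly, whereas you argue by induction on the number of traversals of $e$ in a general walk; the core replacement step via (gcs1)--(gcs3) and the splice-point verification are identical, and the fact that $e$ is bidirected (so $\an_{\mathcal{G}+e}(C)=\an_{\mathcal{G}}(C)$) is what makes the collider bookkeeping go through in both versions.
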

	
\begin{proof}
	Consider any $\mu$-connecting walk from $\gamma$ to $\delta$ given $C$ in 
	$\mathcal{G} + e$. We can also find a $\mu$-connecting route from $\gamma$ 
	to $\delta$ given $C$ in $\mathcal{G} + e$ \citep{Mogensen2020a}, and we 
	denote this route by 
	$\rho$. If $\alpha\notin C$, then there exists a $\mu$-connecting walk from 
	$\alpha$ to $\beta$ given $C$ in $\mathcal{G}$ using (cs1) of Definition 
	\ref{def:Cps}. If $\beta\notin C$, then there exists a $\mu$-connecting 
	walk from $\beta$ to $\alpha$ given $C$ in $\mathcal{G}$, also using (cs1). 
	We denote these walks by $\nu_1$ and $\nu_2$, respectively, if they exist.
	
	If $e$ does not occur on $\rho$, then $\rho$ is $\mu$-connecting given $C$ 
	in $\mathcal{G}$. If $e$ occurs twice, then either $\rho$ contains a 
	subroute $\alpha\leftrightarrow\beta\leftrightarrow\alpha$ and 
	$\delta=\alpha$ or $\rho$ contains a subroute 
	$\beta\leftrightarrow\alpha\leftrightarrow\beta$ and $\delta=\beta$. Assume 
	first the former. There is either a $\mu$-connecting subroute from $\gamma$ 
	to $\alpha$, or $\alpha\notin C$. If $\beta\in C$, then consider the 
	subroute between $\gamma$ and $\alpha$. This subroute is either trivial or 
	has a tail 
	at $\alpha$. In either case, composing it with $\nu_1$ gives a 
	$\mu$-connecting walk from $\gamma$ to $\beta$ given $C$ 
	in $\mathcal{G}$, and using (cs2) there is also a $\mu$-connecting walk 
	from $\gamma$ to $\alpha$ given $C$ in $\mathcal{G}$. If $\beta\notin C$, 
	then we can compose the subroute from $\gamma$ to $\alpha$ with $\nu_1$ and 
	$\nu_2$. 
	The resulting walk will be $\mu$-connecting as $\beta\in \an(C)\setminus 
	C$. The argument is the same when 
	$\beta\leftrightarrow\alpha\leftrightarrow\beta$ and $\delta=\beta$.
	
	We now assume that $e$ occurs only once on $\rho$ and assume first that 
	
	\begin{align*}
		\underbrace{\gamma \sim \ldots \sim \alpha}_{\rho_1} \leftrightarrow 
		\underbrace{\beta \sim \ldots \starrightarrow
		\delta}_{\rho_2}.
	\end{align*}
	
	\noindent If $\alpha\notin C$, then we can compose $\rho_1$, $\nu_1$, and 
	$\rho_2$ to obtain a $\mu$-connecting walk given $C$. Note that this also 
	holds if $\rho_1$ is trivial. If $\alpha\in C$, then $\rho_1$ is not 
	trivial and it has a head at $\alpha$. Using (cs3), there exists a 
	$\mu$-connecting walk from $\gamma$ to $\beta$ and composing it with 
	$\rho_2$ gives the result. If instead 
	
	\begin{align*}
	\gamma \sim \ldots \sim \beta \leftrightarrow 
	\alpha \sim \ldots \starrightarrow
		\delta,
	\end{align*}
	
	\noindent the same arguments work, now using (cs2).
\end{proof}
	
	\begin{lem}
		Let $C \subseteq V$ and let $e$ be a $C$-potential parent edge from 
		$\alpha$ to $\beta$ in 
		$\mathcal{I}(\mathcal{G})$. Let $\gamma,\delta\in V$.
		If there is a 
		$\mu$-connecting walk from $\gamma$ to $\delta$ given $C$ in 
		$\mathcal{G} + 
		e$, then 
		there is a $\mu$-connecting walk from $\gamma$ to $\delta$ given $C$ in 
		$\mathcal{G}$.
		\label{lem:Cpp}
		\end{lem}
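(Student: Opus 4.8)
The plan is to mirror the proof of Lemma \ref{lem:Cps}, now for the directed edge $e = \alpha\rightarrow\beta$, adapting the corresponding lemma (Lemma 5.8) of \cite{Mogensen2020a} so that it uses the single conditioning set $C$. First I would reduce to routes: by \cite{Mogensen2020a} a $\mu$-connecting walk from $\gamma$ to $\delta$ given $C$ in $\mathcal{G}+e$ can be replaced by a $\mu$-connecting route $\rho$ (Definition \ref{def:route}) given $C$ in $\mathcal{G}+e$, and it suffices to produce some $\mu$-connecting walk given $C$ in $\mathcal{G}$. If $e$ does not occur on $\rho$, then $\rho$ already lies in $\mathcal{G}$ and we are done.

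The key structural observation, which makes the directed case behave differently from the bidirected one, is that $e$ has a \emph{tail} at $\alpha$. Hence every non-endpoint instance of $\alpha$ incident to $e$ on a $\mu$-connecting route is a noncollider, so $\alpha\notin C$ (and when $\alpha$ is an endpoint the requirement $\alpha\notin C$ for $\mu$-connection gives the same conclusion). Consequently all four conditions (cp1)--(cp4) of Definition \ref{def:Cpp}, in the graphical form of Proposition \ref{prop:gpp}---each premised on $\alpha\notin C$---are available. These are the tools for rerouting around $e$: (gcp1) supplies a $\mu$-connecting walk $\alpha\rightarrow\beta$ in $\mathcal{G}$; (gcp2) transfers a $\mu$-connecting walk ending at $\alpha$ (head at $\alpha$) to one ending at $\beta$; (gcp4) transfers a $\mu$-connecting walk emanating from $\alpha$ to one emanating from $\beta$ when additionally $\beta\notin C$; and (gcp3) merges a $\mu$-connecting walk $\gamma\rightarrow\beta$ with one $\alpha\rightarrow\delta$ into a walk $\gamma\rightarrow\delta$ when $\beta\in C$.

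Next I would split on the number of occurrences of $e$ and, when $e$ occurs once, on the order in which $\rho$ visits $\alpha$ and $\beta$ and on whether $\beta\in C$. Writing the single occurrence as $\rho_1 \sim\ldots\sim \alpha\rightarrow\beta\sim\ldots\starrightarrow\delta$ (or in the reversed order $\ldots\sim\beta\leftarrow\alpha\sim\ldots$), I would delete $e$, leaving sub-routes $\rho_1$ from $\gamma$ to $\alpha$ and $\rho_2$ from $\beta$ to $\delta$, both in $\mathcal{G}$. When $\rho_1$ reaches $\alpha$ with a head it is $\mu$-connecting to $\alpha$, and (gcp2) yields a walk $\gamma\rightarrow\beta$ in $\mathcal{G}$ to be concatenated with $\rho_2$; one then checks that the status of $\beta$ at the splice (collider in $\an(C)$, or noncollider not in $C$) is inherited from its status on $\rho$. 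When instead $\rho$ visits $\beta$ before $\alpha$, the sub-route $\rho_2$ is already a $\mu$-connecting walk $\alpha\rightarrow\delta$ in $\mathcal{G}$ (it ends with the same head at $\delta$ and $\alpha\notin C$), and I would use (gcp4) when $\beta\notin C$, or (gcp3) when $\beta\in C$ (so $\beta$ is a collider receiving the head of $e$), to transport it to a walk reaching $\delta$ that joins the $\gamma$-side. The double-occurrence case forces $\delta\in\{\alpha,\beta\}$, yielding the subroutes $\alpha\rightarrow\beta\leftarrow\alpha$ ($\delta=\alpha$) or $\beta\leftarrow\alpha\rightarrow\beta$ ($\delta=\beta$), which I would dispatch by the same transfer conditions together with (gcp1).

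The main obstacle I anticipate is the bookkeeping of collider/noncollider status at the two splice points $\alpha$ and $\beta$: one must ensure that replacing $e$ by a guaranteed walk never creates a collider outside $\an(C)$ nor a noncollider inside $C$. The genuinely delicate case is $\beta\in C$, where $\beta$ is necessarily a collider on $\rho$ and a naive concatenation at $\beta$ is blocked; here the disjunctive conclusion of (cp3)/(gcp3) is exactly what is needed, and matching its hypotheses to the available sub-routes (a $\gamma\rightarrow\beta$ walk and an $\alpha\rightarrow\delta$ walk, both in $\mathcal{G}$) is the crux. Since these are the $C$-specific analogues already used in Lemma \ref{lem:Cps}, I expect the verification to parallel that proof, with the fixed orientation of $e$ streamlining the $\alpha\notin C$ bookkeeping.
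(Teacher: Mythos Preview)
Your proposal has a genuine gap: you overlook that adding the \emph{directed} edge $e=\alpha\rightarrow\beta$ changes the ancestor relation, i.e., $\an_\mathcal{G}(C)$ may be a strict subset of $\an_{\mathcal{G}+e}(C)$. This does not happen in Lemma \ref{lem:Cps} (a bidirected edge leaves ancestry untouched), which is why the parallel you draw breaks down. Concretely, your claim ``If $e$ does not occur on $\rho$, then $\rho$ already lies in $\mathcal{G}$ and we are done'' is false: the edges of $\rho$ may all be in $\mathcal{G}$, yet a collider on $\rho$ can be in $\an_{\mathcal{G}+e}(C)\setminus\an_\mathcal{G}(C)$, so $\rho$ need not be $\mu$-connecting given $C$ in $\mathcal{G}$. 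The same issue infects your subroutes $\rho_1$ and $\rho_2$ in every case of your analysis---they are open in $\mathcal{G}+e$, not a priori in $\mathcal{G}$---so the concatenations you propose with walks supplied by (gcp1)--(gcp4) are not known to be $\mu$-connecting in $\mathcal{G}$.

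The paper's proof addresses this explicitly by introducing \emph{newly closed} colliders (those in $\an_{\mathcal{G}+e}(C)\setminus\an_\mathcal{G}(C)$), observing that their existence forces $\alpha\notin C$ and $\beta\in\an_\mathcal{G}(C)$, and then invoking the argument of Lemma 5.8 in \cite{Mogensen2020a} to replace $\rho$ by a $\mu$-connecting walk $\tilde{\omega}$ in $\mathcal{G}+e$ with \emph{no} newly closed colliders and at most one occurrence of $e$. Only after this reduction does the case split you outline (order of traversal, $\beta\in C$ versus $\beta\notin C$, use of (cp2)/(cp3)/(cp4)) go through. There is also a further reduction in the $\beta\notin C$ subcase---again borrowed from Lemma 5.8 of \cite{Mogensen2020a}---to arrange a tail at $\beta$ before applying (cp4). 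Your ``main obstacle'' paragraph anticipates splice-point bookkeeping, but the real missing ingredient is this preliminary elimination of newly closed colliders; without it the subsequent case analysis does not start from walks that are open in $\mathcal{G}$.
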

		
\begin{proof}
	We consider a $\mu$-connecting walk from $\gamma$ to $\delta$ given $C$ in 
	$\mathcal{G} + e$. If $\alpha\notin C$, then by (cp1) there exists a 
	$\mu$-connecting walk from $\alpha$ to $\beta$ given $C$, and we denote 
	this walk by $\nu$ when it exists. We can find a $\mu$-connecting route 
	from $\gamma$ to $\delta$ given $C$ in $\mathcal{G} + e$, and we denote 
	this route by $\rho$.
	
	In this proof, we will say that a collider on a walk is \emph{newly closed} 
	if the collider is in $\an_{\mathcal{G} + e}(C)$, but not in 
	$\an_\mathcal{G}(C)$. If there exists a newly closed collider, then 
	$\alpha\notin C$ and $\beta\in\an_\mathcal{G}(C)$. We assume first that $e$ 
	occurs at most once on $\rho$. If there are newly closed colliders on 
	$\rho$, the proof of Lemma 5.8 in \cite{Mogensen2020a} shows that we can 
	find a $\mu$-connecting 
	walk in $\mathcal{G} + e$ with no newly closed colliders such that $e$ 
	occurs at most once, and we denote this walk by $\tilde{\omega}$.
	
	If $\tilde{\omega}$ does not contain $e$, then the result follows. If it 
	does contain $e$, we split into two cases. Assume first that
	
	\begin{align*}
	\underbrace{\gamma \sim \ldots \sim \alpha}_{\rho_1} \rightarrow 
	\underbrace{\beta \sim \ldots \starrightarrow
		\delta}_{\rho_2}.
	\end{align*}
	
	We see that $\alpha\notin C$. If $\rho_1$ is trivial or if it has a tail at 
	$\alpha$, then composing $\rho_1$, $\nu$, and $\rho_2$ gives a 
	$\mu$-connecting walk. If $\rho_1$ has a head at $\alpha$, then (cp2) gives 
	a $\mu$-connecting walk from $\gamma$ to $\beta$ that we can compose with 
	$\rho_2$. Assume instead that 
	
	\begin{align*}
	\underbrace{\gamma \sim \ldots \sim \beta}_{\rho_1} \leftarrow 
	\underbrace{\alpha \sim \ldots \starrightarrow
		\delta}_{\rho_2}.
	\end{align*}
	
	If $\rho_1$ has a head at $\beta$ and $\beta\in C$, then (cp3) gives the 
	result. If $\beta\notin C$, we can find a walk in $\mathcal{G}+e$ with no 
	newly closed colliders and only one occurrence of $e$ of the type

	\begin{align*}
	\underbrace{\gamma \sim \ldots \leftarrow \beta}_{\rho_1} \leftarrow 
	\underbrace{\alpha \sim \ldots \starrightarrow
		\delta}_{\rho_2}.
	\end{align*}

	\noindent where $\rho_1$ can be trivial, using the same argument as in the 
	proof of 
	Lemma 5.8 in \cite{Mogensen2020a}. We have $\alpha,\beta\notin C$ and there 
	is a $\mu$-connecting walk from $\alpha$ to $\delta$. Using (cp4) there is 
	also one from $\beta$ to $\delta$. Composing this with $\rho_1$ gives the 
	result since $\rho_1$ is either trivial or has a tail at $\beta$.
	
	Finally, if $e$ occurs twice on $\rho$, we must have $\alpha\notin C$. We 
	can use the same arguments as in the proof of Lemma 5.8 in 
	\cite{Mogensen2020a} using the walk $\nu$ and condition (cp2). 
\end{proof}

\section{Additional results}
\label{app:add}

When we count the number of colliders on a 
walk, we count them with 
multiplicity, that is, if 

$$
(\gamma_1 ,e_1, \gamma_2 ,e_2,\ldots, e_{l-1}, \gamma_{l} ,e_l,\gamma_{l+1})
$$

\noindent is a walk, $\omega$, then the number of colliders on this walk equals 
the number of $i$, $2\leq i\leq l$, such that $e_{l-1}$ and $e_i$ both have 
heads at $\gamma_i$ on $\omega$. Note that the 
endpoints, $\gamma_1$ and $\gamma_{l+1}$ are not colliders by definition. The 
next 
lemma is useful for giving a characterization of $k$-weak equivalence 
in terms of $\mu$-connecting walks. 

\begin{lem}
	If there is a $\mu$-connecting walk from $\alpha$ 
	to 
	$\beta$ given $C$ in $\mathcal{G}$, then there is a 
	$\mu$-connecting walk from $\alpha$ to $\beta$ given $C$ in $\mathcal{G}$ 
	with at most $\vert C\vert$ colliders, all of which are in $C$.
	\label{lem:collInC}
\end{lem}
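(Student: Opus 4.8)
The plan is to take any $\mu$-connecting walk from $\alpha$ to $\beta$ given $C$ and repeatedly simplify it, using a well-chosen potential to guarantee termination. To each $\mu$-connecting walk $\omega$ I associate the pair $\Phi(\omega) = (a(\omega), b(\omega)) \in \mathbb{N}\times\mathbb{N}$, where $a(\omega)$ is the number of collider instances on $\omega$ that are \emph{not} in $C$ and $b(\omega)$ is the total number of collider instances, both counted with multiplicity. Since the set of $\mu$-connecting walks from $\alpha$ to $\beta$ given $C$ is nonempty by hypothesis, and $\mathbb{N}\times\mathbb{N}$ is well-ordered under the lexicographic order, I may fix a walk $\omega$ for which $\Phi(\omega)$ is lexicographically least, and then show that this $\omega$ has all its colliders in $C$ and at most $\vert C\vert$ of them.

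First I would argue that $a(\omega)=0$. Suppose some collider instance $\gamma_i$ of $\omega$ satisfies $\gamma_i\notin C$. By the definition of a $\mu$-connecting walk, $\gamma_i\in\an(C)$, so there is a directed path $\gamma_i\rightarrow\cdots\rightarrow c$ with $c\in C$ its only node in $C$ (take a shortest directed walk from $\gamma_i$ into $C$). I then reroute $\omega$ at $\gamma_i$ by inserting this directed path followed by its reverse, so that $\ldots\sim_{i-1}\gamma_i\sim_i\ldots$ becomes $\ldots\sim_{i-1}\gamma_i\rightarrow\cdots\rightarrow c\leftarrow\cdots\leftarrow\gamma_i\sim_i\ldots$. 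A direct check of the edge marks shows that both new instances of $\gamma_i$ and every interior node of the path are noncolliders outside $C$, while $c$ becomes a collider lying in $C$; the endpoints and the final head at $\beta$ are untouched, so the result is again $\mu$-connecting given $C$. This operation converts one collider not in $C$ into one collider in $C$ and creates no other colliders, so it strictly decreases $a$, contradicting minimality of $\Phi(\omega)$. Hence every collider of $\omega$ lies in $C$.

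Next I would bound $b(\omega)$. With $a(\omega)=0$, if $b(\omega) > \vert C\vert$ then by pigeonhole two collider instances carry the same label $c\in C$, say $\omega = \alpha\cdots c\cdots c\cdots\beta$ with both displayed occurrences colliders. Splicing out the segment strictly between them, $\omega\mapsto\alpha\cdots c\cdots\beta$, leaves $c$ a collider at the splice point (an incoming head inherited from the first occurrence, an outgoing head from the second) and only deletes colliders, so the spliced walk is still $\mu$-connecting, still has all colliders in $C$, and has strictly smaller $b$ while keeping $a=0$. This again contradicts lexicographic minimality, so $b(\omega)\le\vert C\vert$, which completes the proof.

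The main obstacle is verifying the rerouting step rigorously: one must confirm, head-by-head and tail-by-tail along the inserted path and its reverse, that no new noncollider falls in $C$ and no collider outside $\an(C)$ is created, so that $\mu$-connectedness is genuinely preserved and the potential $a$ strictly drops. The splicing step is comparatively routine once one observes that collider status at the splice node is inherited from the two original occurrences, and that all of $C$ lies in $\an(C)$ so that the retained colliders remain admissible.
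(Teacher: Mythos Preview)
Your proof is correct and follows essentially the same approach as the paper: reroute each collider outside $C$ into $C$ by inserting a directed path to $C$ and its reverse, then splice out repeated collider labels to bound the count by $\vert C\vert$. The only cosmetic difference is that you package the two-stage reduction as a single lexicographic minimization, whereas the paper performs the rerouting for all colliders at once and then iteratively removes duplicate-collider loops.
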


\begin{proof}
	Let $\gamma_1,\gamma_2,\ldots,\gamma_l$ denote the colliders on the 
	$\mu$-connecting walk. We know 
	that 
	$\gamma_i\in \an(C)$ and therefore there exist a directed path 
	$\gamma_i 
	\rightarrow \delta_1 \rightarrow \ldots \rightarrow \delta_{l_i}$ such 
	that 
	$\delta_{l_i}\in C$ and such that $\delta_{l_i}\in C$ is the only node 
	in 
	$C$ on this directed path. If $\gamma_i\in C$, then the path is 
	trivial, 
	that is, contains no edges and just a single node, $\gamma_i$. Adding 
	$\gamma_i 
	\rightarrow \delta_1 \rightarrow \ldots \rightarrow \delta_{l_i} 
	\leftarrow 
	\ldots \leftarrow \delta_1 \leftarrow \gamma_i$ for each $i$ creates a 
	walk which is 
	$\mu$-connecting from $\alpha$ to $\beta$ given $C$ such that every 
	collider is in $C$. If a node occurs as a collider more than once, we 
	can remove 
	the 
	loop. The resulting walk is also $\mu$-connecting, also if $\beta$ is a 
	collider, and it has strictly fewer colliders. We 
	can 
	repeat this to find a $\mu$-connecting walk with at most $\vert C\vert$ 
	colliders.
\end{proof}

\begin{prop}
	Let $\mathcal{G}$ be a DMG. Let 
	$\alpha,\beta\in V$ and $C\subseteq V$ such that $\vert C\vert \leq 
	k$. We 
	have $(\alpha,\beta,C) \in \mathcal{I}_k(\mathcal{G})$ if and only 
	if there 
	is no $\mu$-connecting walk from $\alpha$ to $\beta$ given $C$ in 
	$\mathcal{G}$ with at most $k$ colliders.
	\label{prop:kCollIk}
\end{prop}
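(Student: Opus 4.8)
The plan is to prove the equivalence by contraposition on the nontrivial direction, with the crux being a direct appeal to Lemma \ref{lem:collInC}. First I would unwind the definitions: since $\vert C\vert \leq k$ by hypothesis, the membership $(\alpha,\beta,C) \in \mathcal{I}_k(\mathcal{G})$ is equivalent to $(\alpha,\beta,C)\in\mathcal{I}(\mathcal{G})$, which by Definition \ref{def:muSep} means precisely that there is \emph{no} $\mu$-connecting walk from $\alpha$ to $\beta$ given $C$ at all, regardless of its number of colliders. So the proposition amounts to showing that ``no $\mu$-connecting walk exists'' is equivalent to ``no $\mu$-connecting walk with at most $k$ colliders exists.''

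The forward implication is immediate and requires no work: if there is no $\mu$-connecting walk whatsoever from $\alpha$ to $\beta$ given $C$, then in particular there is none having at most $k$ colliders.

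For the reverse implication I would argue by contraposition. Suppose $(\alpha,\beta,C)\notin\mathcal{I}_k(\mathcal{G})$, so that there exists some $\mu$-connecting walk from $\alpha$ to $\beta$ given $C$. Lemma \ref{lem:collInC} then supplies a $\mu$-connecting walk from $\alpha$ to $\beta$ given $C$ with at most $\vert C\vert$ colliders, all of which lie in $C$. Since $\vert C\vert \leq k$, this is a $\mu$-connecting walk with at most $k$ colliders, contradicting the hypothesis that no such walk exists. Hence, if there is no $\mu$-connecting walk with at most $k$ colliders, there can be no $\mu$-connecting walk at all, i.e., $(\alpha,\beta,C)\in\mathcal{I}_k(\mathcal{G})$.

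The main obstacle is entirely absorbed into Lemma \ref{lem:collInC}, which I am assuming: the delicate step is converting an \emph{arbitrary} connecting walk into one whose collider count is bounded by $\vert C\vert$. That lemma achieves this by exploiting the fact that every collider on a $\mu$-connecting walk is an ancestor of $C$, appending for each collider a short directed path out to $C$ and back (so that each collider now lies in $C$), and then removing loops at repeated colliders to cut the number of distinct colliders down to at most $\vert C\vert$. Granting this lemma, the proposition follows in only a few lines via the two implications above, so no further technical machinery is needed.
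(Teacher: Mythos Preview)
Your proposal is correct and follows essentially the same approach as the paper: one direction is immediate, and the other is obtained by contraposition via Lemma \ref{lem:collInC}, using $\vert C\vert\leq k$ to bound the collider count. The paper's proof is just a terser version of exactly this argument.
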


\begin{proof}
	If there is a $\mu$-connecting walk given $C$, then clearly 
	$(\alpha,\beta,C)\notin \mathcal{I}_k(\mathcal{G})$. On the 
	other hand, if 
	$(\alpha,\beta,C)\notin \mathcal{I}_k(\mathcal{G})$ then there 
	is a 
	$\mu$-connecting walk from $\alpha$ to $\beta$ given $C$ and 
	Lemma 
	\ref{lem:collInC} gives the result.
\end{proof}

This means that the restriction of the independence models to 
$k$-weak equivalence ignores
$\mu$-connecting walks 
with more than $k$ colliders.

\begin{cor}
	Graphs $\mathcal{G}_1$ and $\mathcal{G}_2$ are $k$-weak 
	equivalent if and only if it 
	holds for all 
	$\alpha,\beta\in V$ and $C\subseteq V$ such that $\vert 
	C\vert\leq k$ that there 
	is 
	a 
	$\mu$-connecting walk from $\alpha$ to $\beta$ given $C$ in 
	$\mathcal{G}_1$ 
	with at most $k$ colliders if and only if there is a 
	$\mu$-connecting walk from $\alpha$ to $\beta$ given $C$ in 
	$\mathcal{G}_2$ 
	with at most $k$ colliders. 
	\label{cor:kEqkColl}
\end{cor}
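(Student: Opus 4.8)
The plan is to combine the two immediately preceding results, Propositions \ref{prop:singletonGraphIndep} and \ref{prop:kCollIk}, with the observation that $k$-weak equivalence of $\mathcal{G}_1$ and $\mathcal{G}_2$ is precisely the equality $\mathcal{I}_k(\mathcal{G}_1) = \mathcal{I}_k(\mathcal{G}_2)$. Recall that $k$-weak equivalence is $\mathcal{J}$-weak equivalence for the homogeneous set $\mathcal{J} = \{(A,B,C)\in\mathcal{P}: \vert C\vert \leq k\}$, and a homogeneous $\mathcal{J}$ is singleton stable, so Proposition \ref{prop:singletonGraphIndep} applies with this $\mathcal{J}$.

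First I would reduce to singleton triples. Let $\mathcal{S} = \{(A,B,C)\in\mathcal{P}: \vert A\vert = \vert B\vert = 1,\ A\cap C = \emptyset\}$. Since $\mathcal{S}\subseteq\mathcal{P}$, the inclusion $\mathcal{I}_k(\mathcal{G}_1)\subseteq\mathcal{I}_k(\mathcal{G}_2)$ trivially gives $\mathcal{I}_k(\mathcal{G}_1)\cap\mathcal{S}\subseteq\mathcal{I}_k(\mathcal{G}_2)\cap\mathcal{S}$, while Proposition \ref{prop:singletonGraphIndep} supplies the converse implication. Applying this in both directions shows that $\mathcal{G}_1$ and $\mathcal{G}_2$ are $k$-weakly equivalent if and only if $\mathcal{I}_k(\mathcal{G}_1)\cap\mathcal{S} = \mathcal{I}_k(\mathcal{G}_2)\cap\mathcal{S}$; that is, if and only if for every $\alpha,\beta\in V$ and every $C\subseteq V$ with $\vert C\vert\leq k$ and $\alpha\notin C$ we have $(\alpha,\beta,C)\in\mathcal{I}_k(\mathcal{G}_1) \iff (\alpha,\beta,C)\in\mathcal{I}_k(\mathcal{G}_2)$.

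Next I would translate each such membership into a statement about walks. For $\alpha\notin C$ and $\vert C\vert\leq k$, Proposition \ref{prop:kCollIk} states that $(\alpha,\beta,C)\in\mathcal{I}_k(\mathcal{G})$ holds if and only if there is no $\mu$-connecting walk from $\alpha$ to $\beta$ given $C$ in $\mathcal{G}$ with at most $k$ colliders. Negating, the equality $\mathcal{I}_k(\mathcal{G}_1)\cap\mathcal{S} = \mathcal{I}_k(\mathcal{G}_2)\cap\mathcal{S}$ is equivalent to the assertion that, for all such $\alpha,\beta,C$, there is a $\mu$-connecting walk from $\alpha$ to $\beta$ given $C$ with at most $k$ colliders in $\mathcal{G}_1$ if and only if there is one in $\mathcal{G}_2$. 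To match the quantifier in the statement, which ranges over all $C$ with $\vert C\vert\leq k$ rather than only those with $\alpha\notin C$, I would note that when $\alpha\in C$ there is no $\mu$-connecting walk from $\alpha$ in either graph, since $\mu$-connection requires $\alpha\notin C$; hence the biconditional holds vacuously in that case and including it changes nothing. This yields exactly the stated characterization.

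There is no real obstacle here, as the two propositions do the heavy lifting; the only points requiring care are applying Proposition \ref{prop:singletonGraphIndep} in both inclusion directions and verifying that the boundary case $\alpha\in C$ contributes nothing, which I would flag explicitly to avoid an apparent mismatch between the quantifier in the corollary and the one implicit in $\mathcal{S}$.
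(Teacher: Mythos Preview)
Your proof is correct and follows essentially the same approach as the paper: both arguments combine Proposition \ref{prop:singletonGraphIndep} (to reduce to singleton $A$, $B$ with $A\not\subseteq C$) with Proposition \ref{prop:kCollIk} (to translate membership in $\mathcal{I}_k$ into the walk condition). Your organization is slightly more streamlined in that you set up the singleton reduction as a single equivalence before invoking the walk characterization, and you make the vacuous $\alpha\in C$ case explicit, but the content is the same.
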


\begin{proof}
	Assume first that $\mathcal{G}_1\in [\mathcal{G}_2]_k$, 
	and that $\omega$ 
	is a $\mu$-connecting walk from $\alpha$ to $\beta$ 
	given $C$, $\vert 
	C\vert \leq C$, in $\mathcal{G}_1$ with at most $k$ 
	colliders. Proposition 
	\ref{prop:kCollIk} gives that $(\alpha,\beta,C)\notin 
	\mathcal{I}_k(\mathcal{G}_1)$ and therefore 
	$(\alpha,\beta,C)\notin 
	\mathcal{I}_k(\mathcal{G}_2)$. Using Proposition 
	\ref{prop:kCollIk} again gives 
	the result.
	
	Assume instead that for all $\alpha,\beta, C$ such that 
	$\vert C\vert\leq 
	k$ it holds that there is a $\mu$-connecting walk from 
	$\alpha$ to $\beta$ 
	given $C$ with at most $k$ colliders in $\mathcal{G}_1$ 
	if and only if there is 
	one in 
	$\mathcal{G}_2$. If $(\alpha,\beta,C) \in 
	\mathcal{I}_k(\mathcal{G}_1)$, $\alpha\notin C$, then 
	there is no 
	$\mu$-connecting walk from 
	$\alpha$ to $\beta$ 
	given $C$ in $\mathcal{G}_1$ and therefore 
	also no $\mu$-connecting walk with at most $k$ 
	colliders in $\mathcal{G}_2$, and Propositions 
	\ref{prop:singletonGraphIndep} and \ref{prop:kCollIk}
	give the result.
\end{proof}


\end{document}